\definecolor{gr}{rgb}   {0.,   0.69,   0.23 }
\definecolor{bl}{rgb}   {0.,   0.5,   1. }
\definecolor{mg}{rgb}   {0.85,  0.,    0.85}
\definecolor{yl}{rgb}   {0.8,  0.7,   0.}
\definecolor{or}{rgb}  {0.7,0.2,0.2}
\newtheorem{theorem}{Theorem} [section]
\newtheorem{lemma}[theorem]{Lemma}
\newtheorem{proposition}[theorem]{Proposition}
\newtheorem{remark}[theorem]{Remark}
\DeclareMathOperator*{\supp}{supp}
\newcommand{\I}{\hspace{0.5mm}\text{I}\hspace{0.5mm}}
\newcommand{\II}{\text{I \hspace{-2.8mm} I} }
\newcommand{\III}{\text{I \hspace{-2.9mm} I \hspace{-2.9mm} I}}
\newcommand{\noi}{\noindent}
\newcommand{\Z}{\mathbb{Z}}
\newcommand{\R}{\mathbb{R}}
\newcommand{\C}{\mathbb{C}}
\newcommand{\T}{\mathbb{T}}
\newcommand{\bul}{\bullet}
\newcommand{\hi}{\textup{hi}}
\newcommand{\HI}{\textup{HI}}
\newcommand{\lo}{\textup{lo}}
\newcommand{\LO}{\textup{LO}}
\newcommand{\W}{\mathcal{W}}
\let\Re=\undefined\DeclareMathOperator*{\Re}{Re}
\let\P= \undefined
\newcommand{\P}{\mathbf{P}}
\newcommand{\Q}{\mathbf{Q}}
\renewcommand{\L}{\mathcal{L}}
\newcommand{\J}{\mathcal{J}}
\newcommand{\GG}{\mathcal{G}}
\newcommand{\F}{\mathcal{F}}
\newcommand{\al}{\alpha}
\newcommand{\be}{\beta}
\newcommand{\dl}{\delta}
\newcommand{\eps}{\varepsilon}
\newcommand{\kk}{\kappa}
\newcommand{\g}{\gamma}
\newcommand{\ld}{\lambda}
\newcommand{\s}{\sigma}
\newcommand{\ft}{\widehat}
\newcommand{\Ft}{{\mathcal{F}}}
\newcommand{\wt}{\widetilde}
\newcommand{\cj}{\overline}
\newcommand{\dx}{\partial_x}
\newcommand{\dt}{\partial_t}
\newcommand{\embeds}{\hookrightarrow}
\newcommand{\LRA}{\Longrightarrow}
\newcommand{\ta}{\theta}
\renewcommand{\l}{\ell}
\newcommand{\les}{\lesssim}
\newcommand{\ges}{\gtrsim}
\newcommand{\jb}[1]
{\langle #1 \rangle}
\newcommand{\ind}{\mathbf 1}
\renewcommand{\S}{\mathcal{S}}
\newcommand{\N}{\mathbb{N}}
\newcommand{\NN}{\mathcal{N}}
\newtheorem*{ackno}{Acknowledgements}
\renewcommand{\H}{\mathcal{H}}
\def\sgn{\textup{sgn}}
\newcommand{\Pbhi}{\mathbf{P}_{\textup{hi}} }
\newcommand{\Pblo}{\mathbf{P}_{\textup{lo}} }
\newcommand{\Pbhip}{\mathbf{P}_{\textup{+,hi}} }
\newcommand{\PbHIp}{\mathbf{P}_{\textup{+,HI}} }
\newcommand{\PbHI}{\mathbf{P}_{\textup{HI}}}
\newcommand{\PbLO}{\mathbf{P}_{\textup{LO}}}
\newcommand{\TT}{\mathcal{T}}
\newcommand{\QQ}{\mathcal{Q}}
\newcommand{\Id}{\textup{Id}}
\newcommand{\lax}{\mathcal{L}}
\newcommand{\peter}{\mathcal{P}}
\newcommand{\op}{\textup{op}}
\newcommand{\Pih}{\Pi_{+,h}}
\numberwithin{equation}{section}
\numberwithin{theorem}{section}
\begin{document}
\baselineskip = 14pt

\title[Well-posedness for INLS]{On the well-posedness of the intermediate nonlinear Schr\"{o}dinger equation on the line}

\author[A.~Chapouto, J.~Forlano, T.~Laurens]
{Andreia Chapouto, Justin Forlano, Thierry Laurens}

\address{Andreia Chapouto,
CNRS, and Laboratoire de math\'ematiques de Versailles, UVSQ, Universit\'e Paris-Saclay, CNRS, 45 avenue des \'Etats-Unis, 78035 Versailles Cedex, France}

\email{andreia.chapouto@uvsq.fr}

\address{Justin Forlano,
School of Mathematics, Monash University, VIC 3800, Australia}

\email{justin.forlano@monash.edu}

\address{Thierry Laurens,
Department of Mathematics, University of Wisconsin–Madison, WI, 53706, USA}
\email{laurens@math.wisc.edu}

\subjclass[2020]{35A01, 35Q35, 35Q55, 37K10}

\keywords{intermediate nonlinear Schr\"{o}dinger equation, Calogero-Moser models, Lax pair,
 deep-water limit, well-posedness, global well-posedness, gauge transform, derivative nonlinear Schr\"{o}dinger equation, Tilbert transform}


\begin{abstract}
We consider a family of intermediate nonlinear Schr\"{o}dinger equations (INLS) on the real line, which includes the continuum Calogero-Moser models (CCM). We prove that INLS is locally well-posed in $H^{s}(\R)$ for any $s>\frac 14$, which improves upon the previous best result of $s>\frac 12$ by de Moura-Pilod (2008). This result is also new in the special case of CCM, as the initial condition is not required to lie in any Hardy space.

Our approach is based on a gauge transformation, exploiting the remarkable structure of the nonlinearity together with bilinear Strichartz estimates, 
which allows to recover some of the derivative loss. 
This turns out to be sufficient to establish our main results for CCM in the Hardy space. 
For INLS and CCM outside of the Hardy space, the main difficulty comes from the lack of the Hardy space assumption, which we overcome by implementing a refined decomposition of the solutions, which observes a nonlinear smoothing effect in part of the solution. 

We also discover a new Lax pair for INLS and use it to establish global well-posedness in $H^{s}(\R)$ for any $s>\frac 14$ under the additional assumption of small $L^2$-norm.
\end{abstract}

\maketitle

\tableofcontents

\section{Introduction}

We consider the Cauchy problem for the
intermediate nonlinear Schr\"{o}dinger equation (INLS):
\begin{equation}
\left\{
\begin{aligned}
  & \dt u+i\dx^2 u = \be u (1+i\TT_{h})\dx(|u|^2)+ i\g |u|^2 u,\\
   & u|_{t=0}=u_0,
\end{aligned}
 \right. \label{INLS}
\end{equation}
 where $u:\R\times \R\to \C$, $\be,\g\in \R$, and 
  $\TT_{h}$ is the singular integral operator with kernel 
  \begin{align}
\mathcal{T}_{h}f(x) = \frac{1}{2h} \text{p.v.} \int_{-\infty}^{\infty} \coth\bigg( \frac{\pi(x-y)}{2h} \bigg) f(y)dy, \quad 0<h<\infty,\label{tilbert}
\end{align}
 where $\text{p.v.}$ denotes the principal value. By taking the Fourier transform, we see that $\TT_{h}$ is a Fourier multiplier operator with multiplier 
\begin{align}
\F\{\TT_{h} f\}(\xi)  =-i\coth(h\xi) \ft f(\xi), 
\quad \xi \in \R\setminus\{0\}. 
\label{TTft}
\end{align}
The INLS equation \eqref{INLS} with $\g=0$ was derived by Pelinovsky~\cite{Pel1} as a model for the evolution of quasi-harmonic internal waves in a two fluid layer system, where the bottom fluid is of a finite depth $h>0$. In the same work, Pelinovsky demonstrated the existence of multi-soliton solutions, strongly indicating that \eqref{INLS} is completely integrable. This was verified by Pelinovsky-Grimshaw~\cite{Pel3} where they developed the inverse scattering transform, used it to explain the multi-solitons, and found an infinite sequence of conservation laws.  
The scattering transform (and Lax pair) in \cite{Pel3} involves $2\times 2$ matrix operators, which can be heuristically explained since in the shallow-depth limit $(h\to 0)$, and after a suitable change of variables, \eqref{INLS} formally converges to the cubic nonlinear Schr\"{o}dinger equation. 
Later, Pelinovsky-Grimshaw~\cite{Pel2} took into account higher order effects leading to \eqref{INLS} for $\g \in  \R$. 

Regarding the well-posedness theory for INLS \eqref{INLS}, little appears to be known. 
An important aspect of this is to determine scaling critical spaces, which suggest where the barrier to well-posedness lies. 
Given $\ld \geq 1$ and a smooth solution $u$ to \eqref{INLS}, the rescaled solution $u_{\ld}(t,x) = \ld^{-\frac 12} u(\ld^{-2}t, \ld^{-1}x) $
 satisfies
 \begin{align}
\dt u_{\ld} +i\dx^2 u_{\ld} = 2\be u_{\ld} \TT_{h\ld}\dx(|u_{\ld}|^2) +i\ld^{-1}\g |u_{\ld}|^2 u_{\ld}
\notag
\end{align}
with initial data $u_{0,\ld}(x) = u_0(\ld^{-1}x)$. When $\g=0$, the scaling is not an exact symmetry; rather, the family of equations \eqref{INLS} with depth parameters $0<h<\infty$ remains invariant under scaling.  
This scaling then reveals that $L^2(\R)$ is the scaling critical space.  When $\g \neq 0$, the contribution from the term $|u|^2 u$ scales favourably so we still expect criticality in $L^2 (\R)$.
In this direction, de Moura~\cite{demoura1} established the local well-posedness in $H^s (\R)$ for any $s\geq 1$ and for small initial data. Using a gauge transformation, de Moura-Pilod \cite{PMP} proved local well-posedness in $H^{s}(\R)$ for any $s>\frac 12$ and removed the small data restriction. Later, Barros-de Moura-Santos \cite{BdMS} proved local well-posedness for \eqref{INLS} with sufficiently small initial data in the Besov space $B^{\frac 12}_{2,1}(\R)$.
It was shown in \cite{PdM} that, if it exists, the solution map cannot be $C^{3}$ at the origin in $H^{s}(\R)$ for any $s<0$. 
One of the main goals of this 
article is to go beyond the results of \cite{PMP, BdMS} and make progress towards the scaling critical space $L^2(\R)$ without using complete integrability. This opens up handling perturbations of \eqref{INLS} such as the physically relevant case with $\g \neq 0$, which may not be completely integrable.

 For fixed $\xi\neq 0$, from \eqref{TTft}, we see that $-i\coth(h\xi) \to -i\sgn(\xi)$, which we recognise as the Fourier multiplier associated to the Hilbert transform $\H$. Thus, by formally taking $h\to \infty$ in \eqref{INLS} we arrive at our second equation of interest in this paper:
 \begin{equation}
   \dt u+i\dx^2 u = \be u (1+i\H)\dx(|u|^2)+ i\g |u|^2 u.
\label{CCM0}
\end{equation}
This discussion suggests defining $\TT_{\infty}=\H$, extending \eqref{INLS} to the case when $h=\infty$.
We rewrite \eqref{CCM0} into a more familiar form.  Given a set $A\subseteq \R$, we write $\ind_{A}$ to be the characteristic function of the set $A$. Then, with $\P_{\pm}$ defined as the Fourier multiplier operator with symbol $\ind_{\{ \pm \xi>0\}}$, we have
 \begin{align}
\P_{+}+\P_{-} =\text{Id}\label{pm1}
\end{align}
and 
\begin{align}
\H= -i \P_{+}+i\P_{-} \quad \text{so that} \quad 1+i\H = 2\P_+. \label{Hilbert}
\end{align}
Inserting the second identity in \eqref{Hilbert} into \eqref{CCM0}, we arrive at the equation:
 \begin{equation}
   \dt u+i\dx^2 u = 2\be u \P_{+} \dx(|u|^2)+ i\g |u|^2 u.
\label{CCMg0}
\end{equation}
When $\g=0$, we recognise \eqref{CCMg0} as the continuum Calogero-Moser equation (CCM):
 \begin{equation}
   \dt u+i\dx^2 u = 2\be u \P_{+} \dx(|u|^2).
\label{CCM}
\end{equation}
 The sign of $\be$ determines the type of CCM we consider: defocusing if $\be>0$ or focusing if $\be<0$. The defocusing and focusing varieties arise in unique physical contexts. The defocusing CCM \eqref{CCM} was derived by Pelinovsky \cite{Pel1} as the infinite depth limit of \eqref{INLS}, whereas the focusing CCM \eqref{CCM} was formally derived in \cite{Abanov} as a continuum limit of classical Calogero-Moser particle systems.

Remarkably, the infinite-depth model~\eqref{CCM} formally leaves the Hardy space $L^2_{+}(\R)$ invariant (see \eqref{hardy}).
More precisely, if $u_0 \in L^2_{+}(\R)\cap H^{\infty}(\R)$, then the same is true for the solution $u$. Indeed, by the Fourier convolution theorem, we have 
\begin{align}
\F\{ u\P_{+}\dx(|u|^2)\}(\xi)  =\int_{\xi=\xi_1-\xi_2+\xi_3} i(\xi_3-\xi_2) \ind_{\{\xi_3-\xi_2>0\}} \ft u(\xi_1) \cj{\ft u(\xi_2)} \ft u(\xi_3) d\xi_1 d\xi_2. 
\label{CCMnonlin}
\end{align}
If we assume that $\P_{+}u=u$, then the projection $\P_{+}$, the fact that $\xi_1>0$ and the hyperplane condition $\xi=\xi_1-\xi_2+\xi_3$ imply that $\xi>0$, showing that the nonlinearity in \eqref{CCM} preserves the Hardy space assumption. 
We point out that whilst both focusing and defocusing CCM \eqref{CCM} preserve the Hardy space, only the focusing variety was derived in the context of solutions in $L^2_{+}(\R)$ in \cite{Abanov}; Pelinovky's derivation of CCM in the defocusing case  \cite{Pel1} does not impose a Hardy space assumption.
More remarkably, CCM on $L^2_{+}(\R)$ is completely integrable, and a Lax pair was observed by G\'erard-Lenzmann~\cite{GL}.
In their influential paper, they established the local well-posedness in $H^s_+(\R)$ for $s>\frac12$ (see \eqref{hardy}), investigated the global well-posedness and construction of multi-solitons, and demonstrated that the focusing CCM displays turbulent behaviour. 
In particular, they observed that there is a threshold for global solutions to the focusing CCM~\eqref{CCM} in the Hardy space due to the unique static solution
\begin{align}
u(t,x) = \mathcal{R}(x) = \tfrac{\sqrt{2}}{x+i} \in H^{1}_{+}(\R) \label{static}
\end{align} 
which has mass $M(\mathcal{R})=\int_{\R}|\mathcal{R}|^2 dx = 2\pi$. Here $\mathcal{R}$ is the unique (up to symmetries) ground state for the energy $ \int_{\R} | \dx u  -i\P_{+}(|u|^2)u|^2 dx.$
The value $M(\mathcal{R})$ is then the discriminator for when the conservation laws fail to control the $H^s$-norms of solutions. For $s\geq 1$, \cite{GL} showed that solutions exists globally-in-time if $M(u_0)\leq M(\mathcal{R})$, with an emphasis on the equality case here.
This threshold has since been shown to be sharp \cite{HoganKowalski,KKK1}.
We mention that the inverse scattering theory for CCM in the defocusing \cite{Matsuno6} and focusing \cite{FrankRead} cases has also been studied, as well as for defocusing INLS \cite{Pel3}.
Much attention has also been given to special solutions of these models, including the defocusing \cite{Rana2,Matsuno1, Matsuno2, Matsuno3, Matsuno4, Matsuno5, Matsuno7} and focusing \cite{Rana2,Matsuno8} CCM equations, and the defocusing \cite{Pel1} and focusing \cite{Tutiya} INLS equations.

As for low-regularity well-posedness, 
the third author with Killip and Vi\c{s}an~\cite{KLV} proved global well-posedness of CCM \eqref{CCM} in the scaling critical space $L^2_{+}(\R)$ for the defocusing version of \eqref{CCM} and for the focusing version below $M(\mathcal{R})$.
Their argument crucially exploits the complete integrability of \eqref{CCM} and an explicit formula for solutions in the Hardy space akin to that first discovered by G\'{e}rard~\cite{Gerard} for the Benjamin-Ono equation (BO).
Other interesting recent developments include investigating the zero dispersion limit of CCM \cite{Rana3}  and well-posedness for the defocusing CCM on a constant background \cite{Chen}. 

 Following the construction of finite-time blow-up solutions in~\cite{KKK1}, Kim-Kwon \cite{KKK2} established a long-time resolution result for generic $H^1(\R)$ solutions with no Hardy space assumption.  Their result provides a list that contains all possible asymptotic behaviours for both global and finite-time blow-up solutions. See also \cite{JeongKim} for a construction of blow-up solutions for focusing CCM with a different rates.

On the circle $\T$, CCM~\eqref{CCM} has also been recently studied.
The global well-posedness  in $L^{2}_{+}(\T)$ was proved by Badreddine \cite{Rana1} for small (large) data in the (de)focusing setting. The method is based on an explicit formula for solutions in the Hardy space coming from the completely integrable structure.
In the companion paper \cite{CFL1}, we consider the circle setting for INLS \eqref{INLS} and prove local well-posedness in $H^{s}(\T)$ for any $s>\frac 12$,  establish the infinite depth limit $h\to \infty$, and show unconditional uniqueness in the energy space~$H^{1}(\T)$.

\subsection{Main results}

Our goal in this paper is to make the first steps in investigating the low-regularity well-posedness for INLS \eqref{INLS} beyond the $H^{\frac 12}(\R)$ result of de Moura-Pilod~\cite{PMP} and without relying on complete integrability. 
We make the first step towards bridging the gap in the well-posedness theory for \eqref{INLS}, towards the scaling critical space $L^2(\R)$, where the best previous result is only known for CCM \eqref{CCM} in the Hardy space \cite{KLV}.
We point out that an approach exploiting complete integrability appears to lose its effectiveness in this setting. Indeed, the conservation laws lose their coercivity outside of the Hardy space irrespective of the defocusing/focusing nature of the nonlinearity! 
For example, the momentum 
\begin{align}
\textstyle{P(u)  = \int_{\R} iu \dx \cj{u} +\frac{\be}{2}\int_{\R} |u|^4  dx}
 \label{momentum}
\end{align}
 is a conserved quantity for INLS~\eqref{INLS}. For CCM in $L^2_{+}(\R)$, $P(u)$ is coercive since $\int_{\R} iu\dx \cj{u} dx = \| u\|_{\dot{H}^{1/2}}^2$. 
 However, outside of the Hardy space or for INLS, this is no longer true. We instead take a Fourier analytic approach, which has the additional advantage of being applicable to perturbations of CCM and INLS, such as \eqref{INLS} with $\g\neq 0$.
We now state our first main~result.
 
 \begin{theorem}\label{THM:main}
 Let $s>\frac 14$. Then, for any $0<h\leq \infty$ and $\g,\be\in \R$, \eqref{INLS} is locally well-posed in $H^{s}(\R)$.
 More precisely, for any $u_0\in H^s(\R)$, there exist $0<\dl\ll 1$ and $T=T(\|u_0\|_{H^s})>0$ and a unique solution $u$ to \eqref{INLS} in the space 
 \begin{align}
 C([0,T];H^{s}(\R))\cap L^{4}_{T}W^{s,4}_{x}\cap ( X^{s-\frac 14-11\dl, \frac 12}_{T}+X^{s-1,1}_{T}),
 \label{uspace}
\end{align}
satisfying $u(0)=u_0$ and 
\begin{align}
v= \Pbhip( e^{i \be F[u]}u) \in X^{s,\frac 12+\dl}_{T} \quad \text{and} \quad \P_{-,\textup{hi}}u \in X^{s,\frac 12+\dl}_{T}, \label{vspace}
\end{align}
  where $F$ is a primitive of $|u|^2$ defined in \eqref{F}, the spaces $X^{s,b}_{T}$ are the usual Fourier restriction norm spaces defined in \eqref{Xsb}, and $\P_{\pm, \hi} = \P_{\pm}\P_\hi$ are as in \eqref{projs}.

 \end{theorem}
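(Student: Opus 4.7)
The plan is to perform a gauge transformation tailored to the nonlinearity and then close a contraction in the mixed function space appearing in \eqref{uspace}--\eqref{vspace}. Since $\be u(1+i\TT_h)\dx(|u|^2)$ loses one full derivative, a direct Picard iteration in $H^{s}$ fails; but at high frequencies $(1+i\TT_h)$ is essentially $2\P_+$ (as the symbol $-i\coth(h\xi)$ tends to $-i\sgn(\xi)$), so on the positive high-frequency piece of $u$ the derivative loss can be removed by an exponential phase, in the spirit of the classical gauge for derivative NLS.

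\emph{Gauge setup and equation for $v$.} Let $F[u]$ be the primitive of $|u|^2$ specified in \eqref{F}, and set $v=\Pbhip(e^{i\be F[u]}u)$. Applying $(\dt+i\dx^{2})$ to this identity and using \eqref{INLS}, the ``diagonal'' term $\be u(1+i\TT_h)\dx(|u|^2)$ is cancelled, at positive high output frequencies, by the time derivative of $e^{i\be F}$ together with the two factors of $\dx$ produced when commuting $\dx^{2}$ past $e^{i\be F}$. The resulting equation for $v$ contains only (i) a quintic term (which is high-regularity safe) coming from $(\dt F)\cdot e^{i\be F}u$ and the quadratic phase terms, (ii) a trilinear residual in which every derivative falls on $\Pblo u$ or on $\mathbf{P}_{-,\hi}u$, and (iii) a trilinear non-resonant term where the output frequency is opposite in sign to one of the inputs, producing a large modulation. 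I would place $v\in X^{s,\frac12+\dl}_T$ and estimate each of these pieces using the multilinear estimates below.

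\emph{The negative and low frequency components.} The low-frequency projection is handled trivially, since any $\dx$ on a low-frequency factor costs nothing. The main novelty is $\mathbf{P}_{-,\hi}u$: it is not in the Hardy space and is not tamed by the above gauge. Writing the equation for $\mathbf{P}_{-,\hi}u$ directly and exploiting that $(1+i\TT_h)\dx(|u|^2)$ is essentially concentrated in positive frequencies at high frequency, the fact that the output frequency of $u\cdot \Pbhip\dx(|u|^2)$ lies in $\{\xi<0\}$ forces the external $u$ to carry a positive high frequency of comparable size. The resulting cancellation in the linear dispersive symbol gives large temporal modulation; integrating by parts in $\tau$ then extracts a nonlinear smoothing effect, which explains the decomposition in \eqref{uspace}: the ``regular'' part of $u$ lives in $X^{s-\frac14-11\dl,\frac12}_T$ (reflecting the bilinear-Strichartz gain of $\frac14+O(\dl)$ derivatives), while the ``contact-type'' remainder produced by integration by parts is placed in $X^{s-1,1}_T$, where the trivial time integration absorbs the full derivative loss.

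\emph{Multilinear estimates and contraction.} The analytic tools are the $L^{4}_{t,x}$ Strichartz estimate (governing the $L^{4}_{T}W^{s,4}_{x}$ norm in \eqref{uspace}), the $X^{s,\frac12+\dl}\hookrightarrow C_tH^{s}\cap L^{4}_{t,x}$ embeddings, and Bourgain's bilinear Strichartz estimate for dyadically separated frequencies, which gains essentially one full derivative on the smaller frequency and, once shared between the two derivatives in $\dx(|u|^2)$, yields a $\frac14$-derivative improvement in the key trilinear terms. Combined with the modulation gain for non-resonant interactions and the smoothing decomposition for $\mathbf{P}_{-,\hi}u$, this closes the nonlinear estimates in the ball defined by \eqref{uspace}--\eqref{vspace} for any $s>\frac14$, and a standard Banach fixed point argument gives local existence, uniqueness, and continuous dependence, uniformly in $0<h\leq\infty$ and in $\be,\g\in\R$. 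The hardest step will be the analysis of $\mathbf{P}_{-,\hi}u$: since the gauge transform is useless there, one must exploit the precise frequency/modulation geometry of $u\cdot\Pbhip\dx(|u|^2)$ restricted to negative output to produce the nonlinear smoothing, and the bookkeeping for the $X^{s-\frac14-11\dl,\frac12}_T+X^{s-1,1}_T$ splitting must interact cleanly with both the $v$-equation and the Strichartz norm in the contraction.
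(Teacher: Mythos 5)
Your proposal captures the broad architecture (gauge transform for $v$, separate treatment of $\mathbf{P}_{-,\hi}u$, bilinear Strichartz, modulation gain), but there are two substantive gaps that would prevent it from closing.

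\textbf{Contraction mapping does not close.} You propose a ``standard Banach fixed point argument'' in the ball defined by \eqref{uspace}--\eqref{vspace}. This cannot work here: the gauged equation for $v$ is not self-contained --- the nonlinearity $-2\be\Pbhip[v\,\P_{-}\dx(|u|^2)]$ still depends on the original solution $u$, and $v$ itself is a nonlinear function $\Pbhip(e^{i\be F[u]}u)$ of $u$. There is no single map on $(u,v,w)$ whose fixed points are solutions, because the constraint $v=\Pbhip(e^{i\be F[u]}u)$ is not preserved by a naive iteration. The paper instead proves a priori and difference estimates for smooth solutions (from the known $H^{s_1}$ theory with $s_1>\tfrac12$) and passes to the limit, i.e.\ a bootstrap on three coupled quantities $(u,v,w)$ rather than a contraction. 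This distinction is essential, not cosmetic.

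\textbf{The good/bad decomposition of the nonlinearity is missing.} The key new idea that pushes $s>\tfrac14$ without the Hardy assumption is to decompose the nonlinearity $u\P_{+}\dx(|u|^2)$ itself, not $u$ via ``integration by parts in $\tau$.'' Concretely, one isolates the dangerous interactions $\P_{\pm,\HI}\bigl[(\P_{N_1}w)\Pbhip\dx(|u|^2)\bigr]$ with $N\ll N_1\sim N_{23}$ --- a LOW output produced by a HIGH $w$ hitting a HIGH derivative --- into a piece $B(u,u,w)$, and then writes $u=\bigl(u-\mathcal{I}[B]\bigr)+\mathcal{I}[B]$. The first summand is controlled in $X^{s-1,1}_T$ by putting the remaining (good) nonlinearity in $L^2_T H^{s-1}_x$ exactly as for CCM; the second summand is estimated directly in $X^{s-\frac14-11\dl,\frac12}_T$ using bilinear Strichartz (Lemma~\ref{LEM:bilin}). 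Your proposal gestures at ``nonlinear smoothing'' but does not identify this specific frequency regime, nor the decomposition $u=u_g+u_b$ of \eqref{udecomp}, and no amount of modulation gain alone can handle $N\ll N_1\sim N_{23}$ below $s=\tfrac12$, since no norm on $u$ at frequency $N$ sees the high derivative. Also, two smaller errors: if the output of $u\,\Pbhip\dx(|u|^2)$ is negative and the derivative factor is positive, the external factor carries a \emph{negative} (not positive) high frequency; and the bilinear Strichartz gain is $N^{-1/2}$ (half a derivative), not one full derivative.
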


 Theorem~\ref{THM:main} extends the known local well-posedness of INLS \eqref{INLS} to any $s>\frac 14$. We find this somewhat surprising in view of (a) our Fourier analytic method of proof of Theorem~\ref{THM:main} and (b) the similarity of \eqref{INLS} to other physically relevant nonlinear Schr\"{o}dinger equations with a cubic derivative nonlinearity whose well-posedness study has been stubbornly restricted to $H^{\frac 12}(\R)$ and above when using Fourier analytic methods. Firstly, 
 Ozawa-Tsutsumi \cite{OT} studied the Cauchy problem for the following derivative NLS equation\footnote{The linear operator in \cite{OT} is $i\dt +\dx^2$. We have transformed the equation via $u\to \cj{u}$ for easier comparison with \eqref{INLS}.}:
 \begin{align}
\dt u +i\dx^2 u = 2 u\dx(|u|^2). \label{OTNLS}
\end{align}
They introduced the gauge transform:
\begin{align}
v (x) = e^{i \int_{-\infty}^{x} |u(y)|^2dy} u(x) \label{DNLSgauge}
\end{align}
which takes a solution $u$ to \eqref{OTNLS} to the gauged function $v$, which solves
\begin{align}
\dt v+ i\dx^2 v =  i|v|^4 v. \label{gDNLS}
\end{align}
Thus, the gauge has completely ameliorated the derivative nonlinear term! 
They then established local well-posedness for \eqref{gDNLS} and hence \eqref{OTNLS} in $H^{\frac 12}(\R)$. Moreover, they also observed that the bilinear form $\dx( \cj{f}g)$
has a certain null structure related to the bilinear Strichartz estimate:
\begin{align}
\big\| |\dx|^{\frac 12} ( \cj{e^{i t\dx^2} \phi} \cdot  e^{it \dx^2} \psi)\big\|_{L^{2}_{t,x}(\R\times \R)}\les \|\phi\|_{L^2_x} \|\psi\|_{L^2_x}. \label{bilinIntro}
\end{align}
\noi 
 This observation will be crucial to us for the case of \eqref{INLS} as we discuss in the next section. Later, Takaoka \cite{Takaoka} considered the derivative nonlinear Schr\"{o}dinger equation:
 \begin{align}
\dt u + i\dx^2 u  = 2\dx(| u|^2 u), \label{DNLS}
\end{align}
which is $L^2$-critical.
By distributing the derivative, one sees that \eqref{DNLS} has two kinds of nonlinear terms: $u^{2}\dx \cj{u}$ and  $|u|^2 \dx u$. As described in \cite{Takaoka}, the Fourier restriction norm method can handle the former term but is inapplicable for the latter one. Thus, by employing the gauge transform of \cite{OT}, the latter term can be removed and local well-posedness could be established for \eqref{DNLS} in $H^{\frac 12}(\R)$ using a contraction mapping argument. This barrier of regularity stood for over 20 years until the complete integrability of \eqref{DNLS} was used in \cite{HGKNV} to prove the global well-posedness in the critical space $L^2(\R)$.
We refer to Remark~\ref{RMK:DNLS} for a further comparison between \eqref{INLS} and our result in Theorem~\ref{THM:main} and \eqref{DNLS}. 

Lastly, we point out the connection of \eqref{INLS} to another interesting cubic derivative-type nonlinear Schr\"{o}dinger equation, which is the Kinetic DNLS (KDNLS):
\begin{align}
\dt u + i\dx^2 u =  \dx(|u|^2 u) - \be \dx[ \H(|u|^2)u], \label{KDNLS}
\end{align}
where $\be \in \R$. Note that \eqref{KDNLS} reduces to \eqref{DNLS} if $\be=0$. 
The nonlinearity in KDNLS looks very similar to that in \eqref{CCM0}, but there is a crucial difference: for \eqref{KDNLS}, the parameter $\be$ is real-valued, while \eqref{CCM0}  is \eqref{KDNLS} with $\be=-i$. Indeed, when $\be>0$, \eqref{KDNLS} has a dissipative structure as the $L^2$-norm of solutions is decreasing. This dissipative structure can be exploited to study \eqref{KDNLS} even below $H^{\frac 12}$. In particular, the effect is stronger on $\T$, where global well-posedness holds in $H^{s}(\T)$ for any $s>\frac 14$ \cite{Kishi-1, Kishi0}. On the line, it is more difficult to exploit the dissipativity. We refer to \cite{Kishi1} for a priori bounds in $H^{s}(\R)$ for any $s>\frac 14$ and asymptotic behaviour of solutions under a decay condition~\cite{Kishi2, Kishi3}.

 We point out that the space \eqref{uspace} where the solution lives in Theorem~\ref{THM:main} simplifies if we additionally assume that $u_0\in L^2_{+}(\R)$ when $h=\infty$ and $\g=0$.
 Moreover, the second property in \eqref{vspace} becomes trivial since $\P_{-,\text{hi}}u=0$. We discuss this further in the Subsection~\ref{SUBSEC:H12}.

We also discover a Lax pair for INLS \eqref{INLS} on $\R$ for $\g=0$ (see \eqref{Lax}), which differs from that in~\cite{Pel3} and appears to be new in the literature.
When $h=\infty$ and $u\in L^2_+$, our Lax pair reduces to the one that has been used for CCM in \cite{Rana1,Rana2,Rana3,FrankRead,GL,KLV}.
However, outside of the Hardy space, we are only aware of the papers \cite{ChenPel, KKK2} which provide Lax pairs for CCM.
In order to make sense of our Lax operator outside of the Hardy space, we need to assume that the potentials belong to $L^4(\R)$, which by Sobolev embedding leads to the restriction of potentials in $H^{\frac 14}(\R)$. 
We use the Lax pair to establish low regularity a priori bounds in a manner inspired by \cite{KLV}.
In order to close a bootstrap argument, we will need to assume that the initial data has small $L^2$-norm. We then can then globalise our solutions under such an assumption of small $L^2$-norm.

 \begin{theorem}\label{THM:GWP}
 Let $\frac 14<s\leq 1$, $0<h\leq \infty$, $\be\in \R$ and $\g=0$. Then, there exists $r>0$ such that for any $A\geq 0$, there exists $B>0$ so that all (global) $H^{\infty}(\R)$-solutions to \eqref{INLS} satisfy 
 \begin{align}
\|u(0)\|_{L^2}\leq r \quad \textup{and} \quad \|u(0)\|_{H^{s}}\leq A \quad \LRA \quad \sup_{t\in \R} \|u(t)\|_{H^s}\leq B, \label{apriori}
\end{align}
where $r$ and $B$ can be chosen uniformly in $1\leq h\leq \infty$.
Consequently, \eqref{INLS} is globally well-posed in $H^{s}(\R)\cap B_{r}(0)$, where $B_{r}(0)=\{ u\in L^2(\R)\, :\, \|u\|_{L^2}< r\}$. 
 \end{theorem}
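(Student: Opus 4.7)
The plan is to combine the local well-posedness of Theorem~\ref{THM:main} with an a priori bound extracted from the new Lax pair \eqref{Lax}. Because the local lifespan depends only on $\|u(t)\|_{H^s}$, it suffices to prove that $\|u(t)\|_{H^s}$ cannot escape to infinity in finite time under the hypothesis $\|u_0\|_{L^2} \leq r$; this is the content of the implication~\eqref{apriori}. Once \eqref{apriori} is established for smooth solutions, a density/mollification argument combined with the continuous dependence in Theorem~\ref{THM:main} transfers the bound to arbitrary $u_0 \in H^s(\R) \cap B_r(0)$ and yields global-in-time existence. Conservation of the $L^2$-norm under \eqref{INLS} with $\g=0$ (which follows from the momentum \eqref{momentum} and mass conservation at $\g=0$) preserves membership in $B_r(0)$ for all time, so the bootstrap iterates indefinitely.

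The main work is to produce, for smooth solutions, a conserved (or quasi-conserved) quantity comparable to $\|u\|_{H^s}^2$ for $\tfrac14 < s \leq 1$. The strategy, inspired by~\cite{KLV}, is to study the Lax operator $\lax(u)$ from~\eqref{Lax} as a self-adjoint perturbation of a free operator and to consider a regularized perturbation determinant, schematically
\begin{align*}
\alpha(\kappa; u) \;=\; \tr\!\Bigl\{ \log\bigl(1 + R_0(\kappa)^{1/2}\,V(u)\,R_0(\kappa)^{1/2}\bigr) - \text{polynomial counterterms in } V(u)\Bigr\},
\end{align*}
where $R_0(\kappa)$ is the free resolvent at spectral parameter $\kappa$ and $V(u)$ encodes the $u$-dependent part of $\lax(u)$. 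The Lax identity $\dt \lax = [\peter,\lax]$ implies that $\alpha(\kappa;u(t))$ is time-independent along smooth flows, and the requirement $u \in L^4(\R)$ (so that $V(u) \in \mathfrak{S}_4$ after dressing by $R_0(\kappa)^{1/2}$) is exactly what forces the Sobolev threshold $s>\tfrac14$ via $H^{1/4}\hookrightarrow L^4$. By expanding $\alpha(\kappa;u)$ in a Neumann series in $V(u)$ and integrating against a suitable weight in $\kappa$, one isolates a quadratic-in-$u$ contribution equivalent to $\|u\|_{H^s}^2$ and controls the remainder, of the form $\sum_{k\geq 2} \|u\|_{L^2}^{2(k-1)} \|u\|_{H^s}^2$, by $r^2\|u\|_{H^s}^2$ provided $r$ is small enough to absorb all higher-order terms back into the main part. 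This yields $\|u(t)\|_{H^s}^2 \les \|u(0)\|_{H^s}^2$, which is \eqref{apriori}; uniformity as $h\to\infty$ follows because the kernel of $\TT_h$ in $\lax(u)$ converges to that of $\H$ locally uniformly in frequency.

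The main obstacle is the absence of a Hardy-space assumption: in~\cite{KLV} the restriction $u\in L^2_+$ drastically simplifies the structure of $\lax(u)$ (it reduces to a concrete Toeplitz-type operator on $L^2_+$) and in particular produces trace-class perturbations after a single dressing. For $u \in H^s(\R)$ with neither $\P_+u$ nor $\P_-u$ trivial, both chiralities contribute and the resolvent expansion must be carried out in a larger Schatten ideal, which is precisely where the smallness $\|u\|_{L^2}\leq r$ enters: it ensures that the Hilbert-Schmidt norm (or $\mathfrak{S}_4$ norm) of $R_0(\kappa)^{1/2} V(u) R_0(\kappa)^{1/2}$ is below the radius of convergence of the logarithm uniformly in $\kappa$, so $\alpha(\kappa;u)$ is well-defined and the Neumann expansion converges. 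An analogous smallness threshold for $\g \neq 0$ is not expected to arise from this Lax analysis alone, which is consistent with the hypothesis $\g=0$ in the statement. The finite-depth case $0<h<\infty$ is handled by the same argument with $\TT_h$ in place of $\H$, using that $\TT_h$ is bounded on $L^2$ uniformly in $h$ and that its kernel differs from $-i\sgn(\xi)$ only in a low-frequency region absorbed into mass terms.

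Finally, to promote the a priori bound to an unconditional global existence statement in $H^s$, we approximate $u_0\in H^s(\R)\cap B_r(0)$ by smooth data $u_0^{(n)}\in H^\infty(\R) \cap B_r(0)$ via low-frequency truncation (which preserves both $\|\cdot\|_{L^2}$ and $\|\cdot\|_{H^s}$ modulo a constant), apply the a priori bound along the corresponding smooth global solutions of~\cite{demoura1,PMP}, and pass to the limit using the continuous dependence provided by Theorem~\ref{THM:main} on each compact time interval. This gives a global solution $u\in C(\R; H^s(\R))$ satisfying the bound in~\eqref{apriori}, completing the proof of Theorem~\ref{THM:GWP}.
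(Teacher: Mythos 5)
Your high-level strategy (Lax pair a priori bounds, smallness in $L^2$ to close a bootstrap, iterate local well-posedness, then approximate) matches the paper's, but the central conserved quantity you propose is genuinely different. You sketch a regularized perturbation determinant $\alpha(\kappa;u)$ expanded in a Neumann series in Schatten ideals, with a coercive quadratic piece extracted by integrating in $\kappa$. The paper instead observes from the Lax relation \eqref{lax} and the unitary flow \eqref{U1}--\eqref{U2} that $\langle u(t), f(\lax_{u(t);h}^2+\kappa^2)u(t)\rangle$ is conserved for \emph{any} function $f$ (conservation is automatic since both $u(t)$ and $\lax_{u(t);h}$ evolve by the same unitary conjugation), and then applies Loewner's theorem on operator-monotone functions together with the quadratic-form comparison \eqref{mono} to pick $f(x)=x^{s}$ for $\tfrac14\le s\le 1$. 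This sidesteps entirely the Schatten-class Neumann expansion, the counterterm bookkeeping, and the delicate step of isolating a coercive quadratic-in-$u$ contribution; it also explains transparently why the argument cuts off at $s=1$, namely because $x\mapsto x^{s}$ stops being operator monotone there. Your route is in the spirit of the perturbation-determinant methods in the DNLS/KdV literature and is plausible, but would require more work and would not yield the $s\le 1$ endpoint limitation as cleanly.

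One concrete gap: you assert that ``$\TT_h$ is bounded on $L^2$ uniformly in $h$.'' This is false. The multiplier $-i\coth(h\xi)$ in \eqref{TTft} diverges like $-i/(h\xi)$ as $\xi\to 0$, so $\TT_h$, and hence $\Pi_{+,h}=\tfrac12(1+i\TT_h)$, is unbounded on $L^2(\R)$ for every finite $h$. Consequently the perturbation $u\Pi_{+,h}\bar u$ cannot be dressed directly as you describe. The paper handles this in Proposition~\ref{PROP:Lax} by decomposing $\Pi_{+,h}=\tfrac{i}{2}\mathcal J_h+\tfrac12(1+i\mathcal K_h)$ (see \eqref{Pihdecomp}), where $\mathcal K_h$ is Calder\'on--Zygmund bounded uniformly in $h\ge 1$ (Lemma~\ref{LEM:Kh}) and the singular low-frequency piece $\mathcal J_h$ is controlled via the bound $\|\mathcal J_hf\|_{L^\infty}\lesssim h^{-1}\|f\|_{L^1}$ from \eqref{JLinfty}, which contributes a term $\sim h^{-1}\kappa^{-1}\|u\|_{L^2}^2$ that is absorbed by taking $\kappa$ large. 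Your sketch needs this decomposition (or an equivalent device) to make sense of the resolvent expansion at finite $h$, and to obtain the claimed uniformity in $1\le h\le\infty$.
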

 
 We point that the small $L^2$-norm assumption cannot be removed in the focusing case for CCM~\eqref{CCM}. Indeed, by applying the pseudo-conformal transformation to the static solution $\mathcal{R}$ in \eqref{static}, one obtains explicit blow-up solutions
 \begin{align*}
u_{\text{sing}}(t,x) = t^{-\frac 12}e^{\tfrac{ix^2}{4t}} \mathcal{R}(\tfrac{x}{t})
\end{align*}
for all $t>0$. Whilst the profile $\mathcal{R}\in L^2_+ (\R)$, $u_{\text{sing}}(t)\notin L^2_{+}(\R)$. Moreover, it can be shown that $u_{\text{sing}}(t)\in H^{s}(\R)$ for any $0\leq s<\frac 12$ with $M(\mathcal{R})=2\pi$. See also \cite{KKK2} for blow-up solutions with $M(\mathcal{R})>2\pi$ for focusing CCM in the Hardy space.
Above $H^1(\R)$, global well-posedness of INLS \eqref{INLS} (even with $\g\neq 0$) with large $L^2$-data in the defocusing case follows from conservation of the energy for a suitably gauged version of \eqref{INLS}. See \cite{demoura1}.

In Section~\ref{SEC:GWP}, we provide a further application of our Lax pair by proving that the ``polynomial" conservation laws for INLS \eqref{INLS} (with $\g=0$) converge to those for CCM~\eqref{CCM}, when $h\to\infty$. See Proposition~\ref{PROP:poly}.

By combining Theorem~\ref{THM:GWP} with the local-in-time convergence of solutions as $h\to\infty$ (even when $\g\neq 0$), which follows essentially as a consequence of our proof of Theorem~\ref{THM:main}, we obtain our third result on the global-in-time infinite depth limit. 

\begin{theorem} \label{THM:limit}
Fix $s>\frac 14$, $\be\in \R$, $\g=0$, and $r>0$ as in Theorem~\ref{THM:GWP}. Let $u_0\in H^{s}(\R)\cap B_{r}(0)$ and $\{u_{0,h}\}_{1 \le h <\infty} \subset H^s(\R) \cap B_r(0)$ a net such that $u_{0,h} \to u_0$ in $H^s(\R)$ as $h\to\infty$. Then, let $u_{\infty}$ and $u_h$ denote the global solutions to \eqref{CCM} and \eqref{INLS}, respectively, with $u_{\infty}\vert_{t=0}=u_0$ and $u_{h} \vert_{t=0} = u_{0,h}$ constructed in Theorem~\ref{THM:GWP}. Then, $u_{h}$ converges to $u_{\infty}$ as $h\to \infty$ in $C(\R;H^{s}(\R))$.
\end{theorem}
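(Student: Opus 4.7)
The plan is to combine the uniform-in-$h$ global $H^s$-bound supplied by Theorem~\ref{THM:GWP} with a local-in-time convergence statement extracted from the contraction mapping argument in the proof of Theorem~\ref{THM:main}, and then iterate on consecutive time windows of fixed length. To begin, since $u_{0,h} \to u_0$ in $H^s$, the norms $\|u_{0,h}\|_{H^s}$ and $\|u_0\|_{H^s}$ are uniformly bounded by some $A>0$. Theorem~\ref{THM:GWP} then produces a constant $B = B(A)>0$, independent of $h$, with
\[
\sup_{t\in\R} \|u_h(t)\|_{H^s} \le B \qquad \text{and} \qquad \sup_{t\in\R}\|u_\infty(t)\|_{H^s} \le B.
\]
Moreover, conservation of $L^2$-mass for \eqref{INLS} with $\g=0$ keeps $u_h(t), u_\infty(t) \in B_r(0)$ for all $t$, so the smallness hypothesis is preserved along the flow.

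The second ingredient, to be extracted from the proof of Theorem~\ref{THM:main}, is a local existence time $T^* = T^*(B) > 0$ depending only on $B$, together with joint continuity of the flow map in $(v_0, h)$: for any $t_0 \in \R$ and any net $\{v_{0,h}\}_{1 \le h < \infty} \subset H^s(\R) \cap B_r(0)$ with $\|v_{0,h}\|_{H^s} \le B$ and $v_{0,h} \to v_0$ in $H^s$ as $h \to \infty$, the corresponding solutions satisfy
\[
v_h \longrightarrow v_\infty \quad \text{in} \quad C([t_0, t_0 + T^*]; H^s(\R)).
\]
The mechanism is that the Fourier symbol $-i\coth(h\xi)$ of $\TT_h$ is uniformly bounded in $h\ge 1$ and converges pointwise (and uniformly on $\{|\xi|\ge\eps\}$) to the symbol $-i\sgn(\xi)$ of $\H$ as $h\to\infty$. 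Dominated convergence then lets the bilinear and multilinear estimates driving the contraction---in particular the bilinear Strichartz bounds and the $X^{s,b}$-type estimates for the gauged variable $v = \Pbhip(e^{i\be F[u]} u)$ and for $\P_{-,\hi} u$ appearing in \eqref{vspace}---pass to the $h=\infty$ limit, so that the fixed points for \eqref{INLS} converge to the fixed point for \eqref{CCM}.

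With these two ingredients in hand, the globalisation is routine. Fix $T>0$ and partition $[0,T]$ into $N = \lceil T/T^*\rceil$ subintervals of length at most $T^*$. On $[0, T^*]$ the local convergence gives $u_h \to u_\infty$ in $C([0,T^*]; H^s)$; in particular $u_h(T^*) \to u_\infty(T^*)$ in $H^s$. Since the $H^s$-norms at time $T^*$ remain bounded by $B$ and the $L^2$-norms remain below $r$, the local convergence may be reapplied on $[T^*, 2T^*]$, and so on. Iterating $N$ times---and symmetrically on $[-T,0]$, since the local theory of Theorem~\ref{THM:main} applies equally for negative times---yields $u_h \to u_\infty$ in $C([-T,T]; H^s)$. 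As $T>0$ was arbitrary, this is convergence in $C(\R; H^s(\R))$ in the topology of uniform convergence on compact sets.

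The hard part will be the audit: verifying that every constant appearing in the proof of Theorem~\ref{THM:main}---in the gauge estimates, in the high/low-frequency and $\P_\pm$-projected decomposition \eqref{vspace}, in the bilinear Strichartz bounds, and in the $X^{s,b}$-norm estimates on the nonlinearity---is uniform in $h\ge 1$, and that each bilinear/multilinear form converges to its $h=\infty$ counterpart. Concretely, this reduces to quantitative control of $\TT_h - \H$ as a Fourier multiplier, which follows from the elementary pointwise bound $|\coth(h\xi) - \sgn(\xi)| \les e^{-2h|\xi|}$ for $\xi$ bounded away from zero, together with the uniform boundedness of $\{\TT_h\}_{h\ge 1}$ on $L^p$.
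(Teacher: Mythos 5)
Your proposal captures the correct global strategy (uniform $H^s$ bound from Theorem~\ref{THM:GWP}, local-in-time convergence, iteration over windows of fixed length), and your observation that $L^2$-conservation preserves the smallness hypothesis along the flow is exactly right. However, the mechanism you propose for the local convergence step does not match the paper's and has a genuine gap.

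The issue is your claim that the ``audit'' reduces to quantitative control of $\TT_h - \H$ as a Fourier multiplier via $|\coth(h\xi) - \sgn(\xi)| \lesssim e^{-2h|\xi|}$. That bound only holds for $\xi$ bounded away from zero; near $\xi = 0$, $\coth(h\xi) \sim (h\xi)^{-1}$ blows up while $\sgn(\xi)$ stays bounded, so the difference $\TT_h - \H$ is \emph{not} a bounded operator, uniformly in $h$ or otherwise. What \emph{is} bounded (uniformly, with norm $\lesssim h^{-1}$) is $\GG_h = (\H - \TT_h)\dx$, precisely because the $\dx$ cancels the singularity; this is Lemma~\ref{LEM:GGh}. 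The paper exploits this by writing \eqref{INLS} in the form \eqref{INLS2}, i.e.\ as CCM plus the bounded perturbative term $u\QQ_h(|u|^2)$ with $\QQ_h - \QQ_\infty = -i\be\GG_h$. The local convergence is then obtained not by revisiting the multilinear estimates via dominated convergence, but by re-running the already-established Lipschitz difference estimate \eqref{LipbdT} for two solutions with \emph{different} $h$; the additional term this produces is exactly $\max_{p\in\{2,4\}}\|\QQ_h - \QQ_\infty\|_{L^p\to L^p} \lesssim h^{-1}$, giving \eqref{conv}. This is both cleaner and quantitatively stronger than a dominated-convergence argument (which would produce no rate and does not obviously apply, since the local theory is closed via a bootstrap on smooth solutions, not via an explicit contraction of multilinear maps). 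To repair your proposal, replace the symbol-convergence step by the $\QQ_h$-perturbation viewpoint and invoke the difference estimates of Lemma~\ref{LEM:diffests} (and \eqref{LipbdT}) with the $\GG_h$-dependent constants made explicit; the rest of your iteration argument then goes through.
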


 A similar convergence result holds locally-in-time in the case of large $L^2$-norm. We opted to provide a global-in-time convergence statement in Theorem~\ref{THM:limit} for simplicity.

 \subsection{Going beyond \texorpdfstring{$H^{\frac12}$}{H\textonehalf}}
 \label{SUBSEC:H12}

We now discuss the ideas behind the proof of Theorem~\ref{THM:main}. We reiterate that the overall source of difficulty and the new ideas used to overcome this are due to the low-regularity below $H^{\frac 12}(\R)$.

The first step is to view INLS \eqref{INLS} as a perturbation of CCM \eqref{CCM}. 
By defining the operator
\begin{align}
\GG_h : = (\H- \mathcal{T}_{h})\dx. \label{Lh}
\end{align}
 we see that \eqref{INLS} becomes
\begin{align}
\dt u+i\dx^2 u = 2\be u \P_{+}\dx(|u|^2)-i\be u \GG_{h}(|u|^2)+i\g |u|^2 u. \label{INLS22}
\end{align}
To justify this rewriting, we note  that there is a strong smoothing effect in the difference $\H-\TT_{h}$ operator so that $\GG_{h}$ is $L^p(\R)$-bounded for any $1<p<\infty$. See Lemma~\ref{LEM:GGh}. For INLS, this observation goes back to \cite{PMP}. More generally, the regularising mapping properties of $\GG_{h}$ have been a key ingredient in the recent progress on the well-posedness for the intermediate long wave equation $\dt u +\TT_{h}\dx^2 u = \dx(u^2)$
 by viewing it as a perturbation of the BO equation:
 \begin{align}
\dt u  + \H \dx^2 u = \dx(u^2). 
\notag
\end{align}
See \cite{IS}, followed by \cite{Gli, CLOP, CFLOP-2, FLZ, GL}.

As our method relies on a Fourier analytic approach and not on complete integrability, we need to perform a gauge transformation to ameliorate the bad interactions in the nonlinear term $u \P_{+}\dx(|u|^2)$.  As discussed further in \cite{PMP}, the gauge transform for DNLS in \eqref{DNLSgauge} is not helpful here. Thus, following \cite{PMP}, we use a frequency localised version of \eqref{DNLSgauge}  given by the new variable $v$ in \eqref{vspace}. Frequency localised versions of these gauge transformations originally go back to the work of Tao~\cite{TAO04} for BO. 
As shown in Lemma~\ref{LEM:gaugeeqns}, $v$ satisfies
\begin{align}
\dt v + i\dx^2 v= -2\be \Pbhip[ v \P_{-}\dx(|u|^2)] + \text{l.o.t}. \label{veqintro}
\end{align}
 Like the case of BO and unlike the case of DNLS \eqref{DNLS}, the gauge equation \eqref{veqintro} is not closed; it still depends on $u$ and the derivative has not been completely removed.
Compare \eqref{veqintro} with the gauged DNLS equation in \eqref{gDNLS}.
  This means that one needs to run a bootstrap type argument (rather than a contraction mapping argument) to construct solutions, juggling both $u$ and $v$ at the same time, and in the case of INLS \eqref{INLS22}, also the third variable $w :=\P_{-,\text{hi}}u$. In this regard, we follow a similar overall strategy as in \cite{MP} for BO with some additional inputs from \cite{GLM}. 

 The gain in \eqref{veqintro} 
 comes from having a nonlinearity with a kind of null structure of the~type 
 \begin{align}
\P_{\pm }[ \P_{\pm}f \cdot \P_{\mp}\dx g] \label{signs}
\end{align}
 which tames the derivative to an extent:
the first input function $f$ has a higher frequency than the second input function $g$ and can thus always help to control the derivative $\dx$. In the case of \eqref{veqintro}, this means that, in the \textit{worst} case, $v$ and only one of the functions $\cj{u}$ or $u$ have high enough frequencies, which can be used to control the derivative. In the absence of any \textit{additional} smoothing  and regardless of the use of any Strichartz type $L^{p}_{t,x}$-space,
this means that $v$ and $u/\cj{u}$ must use their derivatives to control $\dx$. If they can take $s$-derivatives each, this forces $2s\geq 1$, i.e., $s\geq 1/2$, which roughly shows where the $H^{\frac 12}(\R)$ barrier appears. Notice that having the Hardy space assumption for the terms $\cj{u}/u$ is useless to remove these High-High-Low interactions.

One of the main new ingredients in this paper is the use of a bilinear Strichartz estimate as in \eqref{bilinIntro}, more precisely Lemma~\ref{LEM:bilin}, in order to gain additional smoothing to weaken the effect of the derivative. In theory, this gives a $1/2$ derivative gain which changes the above numerology related to \eqref{signs} to $2s\geq 1/2$, i.e., $s\geq 1/4$. In practice, we are (currently) limited by only being able to use this once with the gauged variable $v$, and it is not available for the functions $u$ and $\cj{u}$. Let us explain this further. One of the main steps in the approach of \cite{MP} (and in our case) is a  trilinear estimate for the nonlinear term $\Pbhip[ v \P_{-}\dx(|u|^2)] $ in \eqref{veqintro} in the setting of the Fourier restriction norm spaces $X^{s,\frac 12+}$ of Bourgain~\cite{BO93}; see Section~\ref{SEC:tri}. 
Here, the above heuristics about \eqref{signs}  are not quite indicative of reality since we also have to control the derivative $\jb{\dx}^{s}$ from the norm as well as the $\dx$ in the nonlinearity. Eventually though, one will have to rely on the additional smoothing coming from the multilinear dispersion through the phase function $\Phi(\cj{\xi})$ and the relation \eqref{mods}. This is standard fare in the Fourier restriction norm method, although we mention that the phase function here is not strongly non-resonant as is the case for the  gauged BO equation, so we rely on a careful case separation to avoid the resonant set.

The moral in the above discussion is that we need to understand $X^{s,b}$-information for the solutions $u$ to \eqref{INLS22} and \eqref{CCM} whose nonlinearities cannot be (entirely) controlled in $X_{T}^{s,\frac 12+}$. In \cite{MP}, Molinet-Pilod show that smooth solutions to BO belong to $X_{T}^{s-1,1}$, which by interpolation with $X_{T}^{s,0}$, gives control on, say, $X_{T}^{s-\frac 12-,\frac 12+}$. 
This $X_{T}^{s-1,1}$-property is proved by showing that the nonlinear term $\dx(u^2)$
belongs to $L^{2}_{T}H^{s-1}_x$. The choice $s-1$ is the minimum amount of smoothing needed to control the derivative $\dx$. For the bilinear estimates for BO, whilst one loses a lot of spatial regularity in placing $u$ into $X^{s-1,1}_{T}$, one only does this when $u$ has a large modulation which allows to counteract the loss in spatial regularity by a full gain of the phase function.

Here, we arrive at the first issue in using these methods to study CCM \eqref{CCM} and INLS \eqref{INLS}: these equations cannot be put into a conservative form:
the derivative is embedded into the nonlinearity. For CCM \eqref{CCM} with the Hardy space assumption, we see from \eqref{CCMnonlin} that the signs $\P_{+}$ on the derivative $\xi_3-\xi_2$ and, here is the crucial part, on the first function with frequency $\xi_1$, imply that 
\begin{align}
|\xi_1| + |\xi_3-\xi_2| = \xi = |\xi|, \label{CCMsigns1}
\end{align}
so that the output frequency can be used to control the derivative in the nonlinearity and measuring the nonlinearity in $L^2_{T}H^{s-1}_x$ is effective to show that $u\in X_{T}^{s-1,1}$. This happens in Lemma~\ref{LEM:CCMXsb}.

However, 
this discussion relies heavily on the Hardy space assumption to ensure \eqref{CCMsigns1}. We can no longer make this assumption when studying INLS \eqref{INLS22}. Consequently, whenever the first input function has a very high \textit{negative} frequency, \eqref{CCMsigns1} becomes $\xi + |\xi_1|=|\xi_3-\xi_2|$
and there are now dangerous interactions of the form 
\begin{align}
|\xi_1|\sim |\xi_3-\xi_2|\gg |\xi|. \label{CCMsigns2}
\end{align}
These are fatal to us below $H^{\frac 12}(\R)$ as we would have to rely on the smoothing from at most two input functions forcing $s\geq 1/2$ and no amount of weakening the topology to say $L^{2}_{T}H^{s-100}_{x}$ is going to help. Our way out of this issue is to decompose $u$ into two-parts: 
\begin{align}
u=u_{g}+u_{b}. \label{udecomp}
\end{align}
The part $u_{g}$ precisely avoids the interactions \eqref{CCMsigns2} and has the desired $X^{s-1,1}_{T}$ regularity. Conversely, $u_{b}$ contains these bad interactions in \eqref{CCMsigns2} and we instead control it in $X^{s-\frac 14-,\frac 12+}_{T}$. 
This allows us to use duality and the bilinear Strichartz estimate to recover about half of the derivative and consequently control $u_{b}$ even when $1/4 < s\leq 1/2$. Moreover, whilst this term has worse temporal regularity, we have gained\footnote{The gain is actually better at $\frac 12-s-$ but it is enough for us to use the least amount which is $\frac 12-\frac 14=\frac 14$.} about $1/4$ in spatial smoothing! We detail the precise version of the decomposition \eqref{udecomp} in Subsection~\ref{SEC:INLSdecomp}. The extra spatial smoothing counteracts the worse phase gain when we establish the trilinear estimate (Proposition~\ref{PROP:tri2}) in the setting of INLS \eqref{INLS22}. 

We find it interesting that we need extra smoothing from the bilinear Strichartz estimate for proving both the trilinear estimates \textit{and} in establishing the $X^{s,b}_{T}$-property for smooth solutions to the original equation.
We do not believe that the regularity restriction $s>1/4$ in Theorem~\ref{THM:main} is sharp.  It seems possible that by implementing additional ideas such as local smoothing estimates and finer  decompositions into our approach, the threshold may be lowered to some $0<s_0\leq 1/4$. However, such an approach then requires corresponding maximal function estimates, which are difficult to establish below $H^{\frac 12}(\R)$, and it is not currently clear to us if the full sub-critical range could be covered this way.
For the sake of global well-posedness, it is interesting that the threshold $1/4$ also appears in making sense of the Lax operator. See also Remark~\ref{RMK:DNLS} for another instance of this numerology.

The remaining of the paper is organised as follows.
In Section~\ref{SEC:Prelim}, we introduce relevant notation, and review properties of the operators $\TT_h$ and $\GG_h$ in \eqref{tilbert} and \eqref{Lh}, and related quantities. 
Section~\ref{SEC:main} introduces the gauged variables $v, w$ in \eqref{gauge}, establishes the regularity properties of $u,v,w$ for both CCM \eqref{CCM} and INLS \eqref{INLS}, as well as the finer decomposition needed for the latter (Subsection~\ref{SEC:INLSdecomp}). 
The crucial trilinear estimates to handle the nonlinearity are proven in Section~\ref{SEC:tri}. 
In Section~\ref{SEC:LWP} we present the proof of Theorem~\ref{THM:main} and Theorem~\ref{THM:limit} on well-posedness of INLS and CCM, and convergence in the limit $h\to\infty$. Lastly, Section~\ref{SEC:GWP} presents the new Lax pair for INLS \eqref{INLS}, which is used to obtain the long-time bounds and global well-posedness in Theorem~\ref{THM:GWP}, as well as study of convergence of the family of polynomial conservation laws of INLS \eqref{INLS} in the infinite-depth limit.

We conclude this section with some additional remarks.

 \begin{remark}\rm
Our result in Theorem~\ref{THM:main} also extends to suitable perturbations of \eqref{CCM}, such as for the following equation:
\begin{align}
 \dt u+i\dx^2 u = 2\be u\P_{+}\dx(|u|^2)+i\g \P_{+}( |u|^2 u) 
 \notag
\end{align}
where $\g\in \R$.  This equation preserves the Hardy space assumption for any $\g\in \R$, and when $\g\neq 0$, we do not know if it is completely integrable. Notice that if  $\be=0$, we get a dispersive version of the Szeg\"{o} equation \cite{GG}.  
\end{remark}

\begin{remark}\rm \label{RMK:DNLS}
In the following discussion, we compare at a more technical level, the good nonlinearity $v^2 \dx \cj{v}$ in the gauged version of DNLS \eqref{DNLS} and the derivative nonlinearity in the gauged equation \eqref{veqintro} for INLS \eqref{INLS22}. 
Consider the spatial multiplier 
that naturally appears in the $X^{s,b}$-analysis of  $v^2 \dx \cj{v}$:
\begin{align}\label{multiDNLS}
\frac{ \jb{\xi}^{s} |\xi_2|}{\jb{\xi_1}^s \jb{\xi_2}^{s} \jb{\xi_3}^s} \frac{1}{\jb{ (\xi_3-\xi_2)(\xi_1-\xi_2)}^{\frac 12}},
\end{align}
where the second factor is the gain of half of the phase function morally coming from the Fourier restriction norm method. In the case when $|\xi|\sim|\xi_1|\sim |\xi_2|\sim|\xi_3|$, the phase function is not helpful to control the first factor, which imposes the condition $s\geq \frac 12$ to control the numerator.
Now consider the nonlinear term $v\P_{-}\dx(|v|^2)$.
The analogue of Fourier multiplier in \eqref{multiDNLS} for this term is:
\begin{align}
\frac{ \jb{\xi}^{s} |\xi_3-\xi_2|}{\jb{\xi_1}^s \jb{\xi_2}^{s} \jb{\xi_3}^s} \frac{1}{\jb{ (\xi_3-\xi_2)(\xi_1-\xi_2)}^{\frac 12}}.
\label{multINLS}
\end{align}
Assuming that $|\xi_1-\xi_2|\ges 1$ and that the derivative is large, then \eqref{multINLS} simplifies to
 \begin{align}
\frac{ \jb{\xi}^{s} |\xi_3-\xi_2|^{\frac 12}}{\jb{\xi_1}^s \jb{\xi_2}^{s} \jb{\xi_3}^s} \frac{1}{\jb{ \xi_1-\xi_2}^{\frac 12}}  .
\notag
\end{align}
In the same nearly resonant situation where all frequencies are similar, we see that the first factor is now controlled precisely as soon as $s\geq \frac 14$. This heuristically explains the numerology in Theorem~\ref{THM:main}.
However, in practice, this analogy is not accurate
as the nonlinearity on the gauged side for INLS \eqref{INLS22} is actually $\Pbhip[ v \P_{-}\dx(|u|^2)]$ and we do not have the same $X^{s,b}$-properties for $v$ and the original solution $u$. Indeed, we only know that $u\in X^{s-\frac 12, \frac 12}$ and the above numerology does not work. 
Consequently, we cannot always gain derivatives through the phase function, and this is where the bilinear Strichartz estimate is helpful.  
\end{remark}

 \begin{remark}
\rm
Our proof of Theorem~\ref{THM:main} does not rely on the specific form of the linear operator $\GG_{h}$ in \eqref{Lh}. In particular, we only need $\GG_{h}$ to be $L^p(\R)\to L^p(\R)$ bounded for $2\leq p\leq 4$ and to preserve real-valuedness: if $f$ is real-valued, then $\GG_{h}f$ is also real-valued.
However, at this point, we need the specific form of  $\TT_{h}$ in \eqref{tilbert} to write a Lax pair for \eqref{INLS}, namely the fact that it satisfies a Cotlar-type identity; see \eqref{Tilb1}. It seems possible to simply use the Lax pair for CCM (obtained by putting $h=\infty$ in \eqref{Lax}) and to use a Gronwall argument to handle the perturbative term in \eqref{INLS2}, similar to \cite{CFLOP-2}. Such a bound would likely not be uniform in time, unlike \eqref{apriori}.
To obtain the convergence result in Theorem~\ref{THM:limit}, we also specialise $\GG_{h}$ as in \eqref{Lh}. 
 \end{remark}

\section{Preliminaries}\label{SEC:Prelim}

\subsection{Notation}

In this subsection, we introduce relevant notation, projections, and function spaces, which will be used throughout.

We use $A\les B$ to denote $A\leq C B$ for some constant $C>0$,
$A\ll B$ if there is a small $c>0$ such that $A\le cB$, and $A\sim B$ if both $A\les B$ and $B\les A$ hold. 
The notation $a-$ refers to $a-\eps$ for any $\eps>0$. 
Also, $a\land b$ and $a\lor b$ denote the minimum and the maximum between $a$ and $b$, respectively.

Given a function $f$ on $\R$, we use $\F f$ and $\ft f$ to denote its Fourier transform
\begin{align}
\ft f(\xi) = \frac{1}{\sqrt{2\pi}} \int_\R f(x) e^{-i\xi x} dx. 
    \notag
\end{align}
For space-time functions $u:\R\times\R \to \C$, we may use the notation $\F_t u$ and $\F_x u$ to indicate the Fourier transform with respect to the time and space variables. We omit this indexing, when clear from context.

Let $s\in \R$ and $1 \le p \le \infty$. We define the $L^p$-based Sobolev spaces $W^{s, p}(\R)$ by the norm:
\begin{align*}
    \| f\|_{W^{s, p}} 
    = \| J^s f \|_{L^p}
    = \big\| \Ft^{-1} \big( \jb{\xi}^s \ft f(\xi) \big) \big\|_{L^p}, 
\end{align*}
where $J^s$ denotes the Bessel potential with Fourier multiplier $\jb{\xi}^s$, where $\jb{x} = (1+|x|^2)$ and $\Ft^{-1}$ stands for the inverse Fourier transform. 
We also use $\dot{W}^{s,p}(\R)$ for the homogeneous Sobolev spaces with norm
\begin{align*}
\| f\|_{\dot{W}^{s, p}} 
    = \| D^s f \|_{L^p}
    = \big\| \Ft^{-1} \big( |\xi|^s \ft f(\xi) \big) \big\|_{L^p}
    ,
\end{align*}
where $D^s$ is the Riesz potential, with Fourier multiplier $|\xi|^s$.
When $p=2$, we write $W^{s,2}(\R) = H^s(\R)$ for the $L^2$-based Sobolev spaces, with norm
\begin{align*}
    \| f \|_{H^s} = \| \jb{\xi}^s \ft f (\xi) \|_{L^2_\xi}. 
\end{align*}

We define the inner product on $L^2(\R)$ by 
\begin{align*}
\jb{f,g} = \int_{\R} \cj{f} g dx, 
\end{align*}
and the dyadic $L^p$-spaces, for $1<p<\infty$, via the norm
\begin{align}
\| f\|_{ \wt{L^p_{t,x}}} = \bigg( \sum_{N\in \Z} \| \P_{N}f\|_{L^p_{t,x}}^{2} \bigg)^{\frac 12}.
\notag
\end{align}
By the Littlewood-Paley square function theorem and Minkowski's inequality, it holds that 
\begin{align}\label{Lpwt}
\|f\|_{L^{p}_{t,x}} \les \|f\|_{\wt{L^{p}_{t,x}}}
\end{align}
for any $2\leq p<\infty$.

Lastly, 
given $1\leq p\leq \infty$ and an operator $R:L^p(\R) \to L^{p}(\R)$, we use $\|R\|_{L^p \to L^p}$ to denote its operator norm on $L^p$.
When $p=2$, we will use the shorthand notation $\|R\|_{\text{op}}$ to denote the $L^2\to L^2$ operator norm of $R$. 
When working with space-time functions, given $T>0$, we often use the shorthand notation $L^p_T W^{s, q}_x$ for $L^p([0,T]; W^{s,q}(\R))$ and $L^p_T L^q_x$ for $L^p([0,T]; L^q(\R))$.

In the analysis of CCM \eqref{CCM}, we also use the Hardy-Sobolev space $H^s_+(\R)$, defined as 
\begin{align}\label{hardy}
    H^s_+ (\R) 
    =
    \big\{
    f \in H^s(\R) : \ \supp \ft f \subset [0,\infty)
    \big\} , 
\end{align}
and the Hardy space $L^2_+(\R)$ when $s=0$.

We now introduce notation to perform Litlewood-Paley decompositions. 
Let $\eta:\R\to [0,1]$ be a smooth function supported on $[-2,2]$ and equal to $1$ on $[-1,1]$.
Given $N\in 2^{\Z}$, let $\eta_{N}(\xi)=\eta(\frac{\xi}{N})$ and $\psi_N(\xi)=\eta(\frac{\xi}{N})-\eta(\frac{2\xi}{N})$. 
Note that
\begin{align*}
\sum_{N\geq 1} \psi_{N}(\xi) = 1- \eta_{\frac 12}(\xi)
\quad \text{when } \xi \in \R\setminus\{0\}
.
\end{align*}
Moreover, we use $\P_{\le N}$ and $\P_N$ to denote
 the Littlewood-Paley projectors defined by
 \begin{align*}
  \F \,(\P_{\leq N} f)  &= \eta_{N} \ft f 
, 
\\
\F(\P_1 f ) 
&= 
\eta_1  \ft f 
\quad
\text{and}
\quad
\F\, \P_N f 
= \F \,\P_{\leq N} f -\F\, 
\P_{\leq \frac{N}{2}} f 
=\psi_N \ft f, \quad \text{when } N\ge 2 ,
 \end{align*}
 and $\P_{>N}:= 1-\P_{\leq N}$. Note that
\begin{align}
\sum_{N\ge 1} \P_N f = f ,  
\notag
\end{align}
which we will often use in our estimates, where by abuse of notation, we assume to sum over dyadic numbers in $2^{\Z_{\ge0}}$. 
 We use $\wt\P_{N}$ for the wider projector with multiplier $\wt{\psi}_{N}(\xi) =\psi_{N}(\frac{\xi}{2}) + \psi_{N}(\xi) +\psi_{N}(2\xi)$.
 We then set
 \begin{equation}
\begin{alignedat}{3}
 \F\,( \P_{+} f)(\xi) &= \ind_{\xi> 0} \ft f(\xi), 
 &
 \qquad   \F\, ( \P_{-} f)(\xi)  &= \ind_{\xi< 0} \ft f(\xi),
 \\
\Pbhi & = \sum_{N\geq 2}\P_{N},  
&
\quad 
\PbHI & = \sum_{N\geq 8}\P_N, 
\\
\Pblo &= \text{Id}-\Pbhi, 
& \quad 
\PbLO &=\text{Id} - \PbHI.
\end{alignedat}
\label{projs}
\end{equation}
Moreover, we define $\P_{\ll 1} = \P_{\leq 2^{-2}}$ and $\P_{\ges 1} =\text{Id}-\P_{\ll 1}$, which satisfy $\P_{\ll 1}\Pblo =\P_{\ll 1}$. We also define the shorthand $\P_{\pm, \text{hi}}=\P_{\pm}\Pbhi$ and similarly for $\P_\HI, \P_\lo, \P_\LO$.

For space-time functions $u: \R\times \R \to \C$, we define frequency projectors on the space-time Fourier variables $(\tau, \xi)$: given $K\in 2^{\N}$, we set 
\begin{align}
\label{Qpro}
\begin{split}
\mathcal{F}_{t,x}\{ \Q_{\ll K}u\}(\tau,\xi) &= \eta_{10^{-10}K}(\tau +\xi^2)\ft u(\tau,\xi),\\
 \mathcal{F}_{t,x}\{ \Q_{\ges K}u\}(\tau,\xi) &=(1- \eta_{10^{-10}K})(\tau +\xi^2)\ft u(\tau,\xi).
\end{split}
\end{align}

\subsection{Product estimates}

In this subsection, we recall the fractional Leibniz rule and show relevant product estimates involving the function $e^{i\be \dx^{-1}|u|^2}$ which appears in the gauge transformation in Subsection~\ref{SEC:Gauge}.

We will extensively use the fractional Leibniz rule; see \cite{CW,GO,BL}.

\begin{lemma}[Fractional Leibniz rule]
\label{LEM:leib}
Let $s\ge0$ and $1<p_j,q_j,r\leq \infty$, $j=1,2$,
such that $\frac1r=\frac{1}{p_j}+\frac{1}{q_j}$. 
Then, we have
\begin{align*}
  \| J^s(fg)\|_{L^r (\R)} \les \|J^s f\|_{L^{p_1}(\R)} \|g\|_{L^{q_1}(\R)}  + \|f\|_{L^{p_2}(\R)} \| J^s  g\|_{L^{q_2}(\R)}.
\end{align*}
\end{lemma}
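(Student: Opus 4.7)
The plan is to proceed via Bony's paraproduct decomposition. First I would write
\[
fg \;=\; \Pi_1(f,g) + \Pi_2(f,g) + \Pi_3(f,g),
\]
where $\Pi_1(f,g) := \sum_{N\ge 1} \P_{\le N/8} f \cdot \P_N g$ is the low-high paraproduct, $\Pi_2(f,g) := \Pi_1(g,f)$ is the symmetric high-low piece, and $\Pi_3(f,g) := \sum_{N\ge 1} \P_N f \cdot \wt\P_N g$ is the high-high remainder. For $\Pi_1$, each summand $\P_{\le N/8} f \cdot \P_N g$ is frequency localized near $|\xi|\sim N$ on the output side, so $J^s$ acts essentially as $N^s$ on the $\P_N g$-factor; more precisely, the symbol $m_N(\xi_1,\xi_2) := N^{-s}\jb{\xi_1+\xi_2}^s \eta_{N/8}(\xi_1)\psi_N(\xi_2)$ lies in the Coifman--Meyer bilinear multiplier class uniformly in $N$. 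The Coifman--Meyer multiplier theorem, combined with the Littlewood--Paley square function theorem to sum in $N$, then yields
\[
\|J^s \Pi_1(f,g)\|_{L^r} \les \|f\|_{L^{p_2}} \|J^s g\|_{L^{q_2}},
\]
and symmetrically $\|J^s \Pi_2(f,g)\|_{L^r} \les \|J^s f\|_{L^{p_1}} \|g\|_{L^{q_1}}$.

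The hard part is the high-high remainder $\Pi_3(f,g)$, since the output frequency $M$ of $\P_M \Pi_3$ can be arbitrarily smaller than the common input frequency $N \gtrsim M$, and the derivative cannot be automatically absorbed by the output. The key observation is that $M^s \les N^s \les \jb{\xi_1}^s + \jb{\xi_2}^s$ in each relevant summand, which reduces matters to bounding $\sum_N J^s\P_N f \cdot \wt\P_N g$ (and its symmetric counterpart) in $L^r$. Cauchy--Schwarz in the $N$-sum and H\"older's inequality in $L^r$, together with the Fefferman--Stein vector-valued maximal inequality (valid for all exponents in $(1,\infty)$) and the Littlewood--Paley square function theorem, then yield
\[
\|J^s\Pi_3(f,g)\|_{L^r} \les \|J^s f\|_{L^{p_1}}\|g\|_{L^{q_1}} + \|f\|_{L^{p_2}}\|J^s g\|_{L^{q_2}}.
\]

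Combining the estimates for $\Pi_1, \Pi_2, \Pi_3$ gives the claim. The delicate point is the $L^\infty$ endpoint permitted in the hypotheses, where the Fefferman--Stein inequality and the Littlewood--Paley square function characterization both fail; here one must substitute BMO-based estimates and the duality between $H^1$ and BMO, or equivalently invoke the pointwise Kato--Ponce refinements carried out in \cite{CW,GO,BL}. Since no cancellation between $f$ and $g$ is exploited, the argument is uniform in $s \ge 0$ and carries over to all non-integer $s$ without modification.
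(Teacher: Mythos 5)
The paper does not prove Lemma~\ref{LEM:leib}; it simply cites the references \cite{CW,GO,BL} for this standard fractional Leibniz (Kato--Ponce) inequality. Your sketch via Bony's paraproduct decomposition and the Coifman--Meyer multiplier theorem is exactly the standard route behind those references, so there is nothing in the paper to compare against: you have reconstructed a proof where the authors supplied only a citation. At the level of detail offered, the outline is sound. For $\Pi_1$ and $\Pi_2$ the observation that $J^s$ can be commuted onto the high-frequency factor at the cost of a Coifman--Meyer multiplier (uniformly in $N$), followed by square-function summation, is correct. For $\Pi_3$ the reduction $M^s\les N^s\les\jb{\xi_1}^s$ on the diagonal blocks is the right move, and Cauchy--Schwarz in $N$ plus Fefferman--Stein and Littlewood--Paley theory closes the argument in the open range $1<p,q,r<\infty$. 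You also correctly identify that this machinery breaks at the $L^\infty$ endpoints permitted by the hypotheses, and that this is precisely where the Bourgain--Li refinement \cite{BL} (or a BMO/$H^1$-duality substitute) is needed; this is the real content of the endpoint case and the reason the paper cites \cite{BL} in addition to \cite{CW,GO}. The only cosmetic slip is the closing remark about non-integer $s$: the paraproduct argument makes no distinction between integer and non-integer $s\ge0$, so the caveat is unnecessary, but it does no harm.
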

\noi

The gauge transformation from \cite{PMP}, which we recall in \eqref{gauge}, requires understanding the function $e^{i\be \dx^{-1}|u|^2}$. To make this precise, we define the primitive
\begin{align}
F=F[u] = \dx^{-1}(|u|^2) = \int_{-\infty}^{x} |u(t,y)|^2 dy \label{F}
\end{align}
and note that 
\begin{align*}
\dx F = |u|^2.
\end{align*}
We then consider the function $e^{i\be F}$, which satisfies $|e^{i\be F}|=1$ since $F$ is real-valued.
It follows that $e^{i\be F}\in L^{\infty}(\R)\setminus L^2(\R)$. Consequently, $e^{i\be F}$ is merely a tempered distribution. Nonetheless, as $u\in L^2(\R)$, $\dx e^{i\be F}\in L^1(\R)$, and we have that for almost every $\xi\in\R$, 
\begin{align*}
\mathcal{F}_x\{e^{i\be F}\}(\xi) = \frac{1}{i\xi} \int_{\R} e^{-ix\xi} \dx(e^{i\be F}) dx.
\end{align*}
Whilst $\Pbhi e^{i\be F}$, $\PbHI e^{i\be F}$, and $\Pbhip e^{i\be F}$ are well-defined and belong to $L^2(\R)$, due to the non-integrable singularity at the origin, the quantities $\P_{\pm}(e^{i\be F})$ are ill-defined. Moreover, we need to carefully define $\Pblo e^{i\be F}$ and $\PbLO e^{i\be F}$. Here, these are understood as 
\begin{align*}
\Pblo e^{i\be F} : = e^{i\be F} -\Pbhi e^{i\be F} \quad \text{and} \quad \PbLO e^{i\be F} : = e^{i\be F} -\PbHI e^{i\be F}
\end{align*}
and it follows that
\begin{align}
\PbHI \Pblo(e^{i\be F}) &= \PbHI e^{i\be F} - \PbHI e^{i\be F} = 0, 
\notag
\\
\dx \Pblo(e^{i\be F}) &= \Pblo \dx e^{i\be F}, \label{PloeiF}
\end{align}  where the $\Pblo$ appearing on the right-hand side of \eqref{PloeiF} agrees with an honest Littlewood-Payley projection to frequencies $\{|\xi|\les 1\}$.

Given $0<\al \leq 1$, we have that $D^{\al}_{x} e^{i\be F}$ belongs to the subspace of tempered distributions described in \cite[Definition 1.26]{BCD}. In particular, it follows from \cite[Proposition 2.14]{BCD} that we have the following equality in the sense of tempered distributions: 
\begin{align*}
\sum_{k\in \Z} \dot{\P}_k [D^{\al}_{x} e^{i\be F}] = D^{\al}_{x} e^{i\be F},
\end{align*}
where the $\{ \dot{\P}_k \}_{k}$ are homogeneous Littlewood-Payley projectors. In particular, this implies
 \begin{align}
 \begin{split}
D^{\al}_{x} \Pblo e^{i\be F} & = D^{\al}_{x} e^{i\be F} - D^{\al}_{x} \Pbhi e^{i\be F}  \\
&=\sum_{k\in \Z} \dot{\P}_k [D^{\al}_{x} e^{i\be F}] - \sum^{\infty}_{k= 2}D^{\al}_{x}\dot{\P}_k [ e^{i\be F} ]  \\
& = \sum_{k<2} \dot{\P}_k [D^{\al}_{x} e^{i\be F}] 
\end{split} \label{Dalcomp}
\end{align}
Then, 
\begin{align*}
\dot{\P}_k [D^{\al}_{x} e^{i\be F}] (x) = \int m_{k,\al}(x-y)e^{i\be F(y)}dy = 2^{k(1+\al)}\int m_{1,\al}(2^{k}(x-y))e^{i\be F(y)}dy,
\end{align*}
where $m_{1,\al} = \F^{-1} \big\{ |\cdot|^\al \psi \big\}$ and $m_{k,\al } = 2^{k(1+\al) } m_{1,\al}(2^k\cdot )$,
and thus
\begin{align}
\| D^{\al}_{x} \Pblo e^{i\be F}\|_{L^{\infty}_{x}} \leq  \sum_{k<2}  \| \dot{\P}_k [D^{\al}_{x} e^{i\be F}] \|_{L^{\infty}_{x}}  \leq \sum_{k<2} 2^{k \al} \| m_{1,\al}\|_{L_x^1} \les 1. \label{DeLinfty}
\end{align}
Similarly, we have the difference estimate
\begin{align}
\| D^{\al}_{x} \Pblo [e^{i F_1}-e^{iF_2}]\|_{L^{\infty}_{x}} \les \|e^{iF_1}-e^{iF_2}\|_{L^{\infty}_x}. \label{DeLinfty2}
\end{align}

We will need a product estimate involving products of functions with the exponential factors $e^{i\be F}$.

\begin{lemma}
Let $0\leq s\leq \frac23$. Given $F_j$, $j=1,2$,  two real-valued functions such that $\dx F_j= |f_j|^2$ with $f_j\in L^4(\R)$ and $g \in H^{s}(\R)\cap L^3(\R)$, it holds that
\begin{align}
\| J^{s} \Pbhi[ e^{iF_1}g]\|_{L^{2}_{x}} \les \|f_1\|_{L^4_x}^2 & \big\{ \|\P_{\ll 1}g\|_{L^{\infty}_x}+  \|g\|_{L_x^{3}}\big\}   +\|D^{s}\P_{\ges 1}g\|_{L_x^2} 
, 
  \label{L2F1} \\
\| J^{s} \Pbhi[ (e^{iF_1}-e^{iF_2})g]\|_{L^{2}_{x}} & \les \Big[
(\|f_1\|_{L^4_x}+\|f_2\|_{L^4_x})\|f_1-f_2\|_{L^4_x} \notag\\ 
& \hphantom{XX} +(1+\|f_2\|_{L^4_x}^2)\|F_1-F_2\|_{L^{\infty}_x} \Big]  \notag \\
& \hphantom{XX}\times (\|\P_{\ll1} g\|_{L^{\infty}_x} +\|D^{s}\P_{\ges 1}g\|_{L_x^2} +\|g\|_{L^{3}_x}). 
 \label{L2F2}
\end{align}
\end{lemma}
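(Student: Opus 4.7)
The plan is to prove \eqref{L2F1} and \eqref{L2F2} by a paraproduct-style decomposition of both factors $e^{iF_1}$ and $g$. Starting with \eqref{L2F1}, I write $g = \P_{\ll 1}g + \P_{\ges 1}g$ and $e^{iF_1} = \P_{\ll 1}e^{iF_1} + \P_{\ges 1}e^{iF_1}$ and expand the product. A frequency support argument shows that the low-low piece $\Pbhi[\P_{\ll 1}e^{iF_1} \cdot \P_{\ll 1}g]$ vanishes (the product is supported on $|\xi|\le 1$, where the multiplier of $\Pbhi$ is zero), leaving three terms to estimate: a high-low piece, a low-high piece, and a high-high piece.

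Two inputs drive the estimates. First, combining the $L^\infty_x$-boundedness of $\P_{\ll 1}$ with \eqref{DeLinfty} (using $\P_{\ll 1}\Pblo = \P_{\ll 1}$) gives $\|D^{s} \P_{\ll 1} e^{iF_1}\|_{L^\infty_x} \les 1$. Second, since $\dx e^{iF_1} = i|f_1|^2 e^{iF_1}$, we have $\|\dx e^{iF_1}\|_{L^2_x} = \|f_1\|_{L^4_x}^2$, and the frequency localization of $\P_{\ges 1}$ to $|\xi|\ges 1$ together with the Sobolev embedding $H^{1/3}(\R) \hookrightarrow L^6(\R)$ yields
\begin{align*}
\|J^{s} \P_{\ges 1} e^{iF_1}\|_{L^6_x} \les \|\P_{\ges 1} e^{iF_1}\|_{H^{s+1/3}_x} \les \|\dx e^{iF_1}\|_{L^2_x} = \|f_1\|_{L^4_x}^2,
\end{align*}
where the last inequality requires $s + \tfrac13 \leq 1$, i.e., $s \leq \tfrac23$, so that the symbol $\jb{\xi}^{s+1/3}/|\xi|$ is bounded on $|\xi| \ges 1$. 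The same reasoning also gives $\|\P_{\ges 1} e^{iF_1}\|_{H^s_x} \les \|f_1\|_{L^4_x}^2$, and the triangle inequality yields $\|\P_{\ges 1} e^{iF_1}\|_{L^\infty_x} \les \|e^{iF_1}\|_{L^\infty_x} + \|\P_{\ll 1} e^{iF_1}\|_{L^\infty_x} \les 1$.

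With these in hand, Lemma~\ref{LEM:leib} and H\"{o}lder bound each sub-term. The high-low piece $\P_{\ges 1}e^{iF_1} \cdot \P_{\ll 1}g$ contributes $\|f_1\|_{L^4_x}^2 \|\P_{\ll 1}g\|_{L^\infty_x}$, using $\|J^s\P_{\ll 1}g\|_{L^\infty_x}\les\|\P_{\ll 1}g\|_{L^\infty_x}$ (from the fact that $J^s\P_{\ll 1}$ is convolution with a Schwartz function). The low-high piece $\P_{\ll 1}e^{iF_1} \cdot \P_{\ges 1}g$ is bounded by the first input above, producing $\|D^{s} \P_{\ges 1} g\|_{L^2_x}$ after recalling $\|\P_{\ges 1}g\|_{L^2_x}\les \|D^s\P_{\ges 1}g\|_{L^2_x}$ since $s\ge 0$. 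The high-high piece $\P_{\ges 1}e^{iF_1} \cdot \P_{\ges 1}g$, handled with Lemma~\ref{LEM:leib} using $(p_1, q_1) = (6, 3)$ and $(p_2, q_2) = (\infty, 2)$, produces $\|f_1\|_{L^4_x}^2 \|g\|_{L^3_x} + \|D^s \P_{\ges 1} g\|_{L^2_x}$. Summing these three contributions yields \eqref{L2F1}.

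The difference estimate \eqref{L2F2} follows by the same decomposition with $e^{iF_1}$ replaced by $e^{iF_1} - e^{iF_2}$. The new input replacing the second ingredient above is the bound
\begin{align*}
\|\dx(e^{iF_1} - e^{iF_2})\|_{L^2_x} \les (\|f_1\|_{L^4_x} + \|f_2\|_{L^4_x})\|f_1 - f_2\|_{L^4_x} + \|f_2\|_{L^4_x}^2 \|F_1 - F_2\|_{L^\infty_x},
\end{align*}
obtained from the identity $\dx(e^{iF_1} - e^{iF_2}) = i(|f_1|^2 - |f_2|^2)e^{iF_1} + i|f_2|^2(e^{iF_1} - e^{iF_2})$ together with $\|e^{iF_1} - e^{iF_2}\|_{L^\infty_x} \leq \|F_1 - F_2\|_{L^\infty_x}$. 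The additional $\|F_1 - F_2\|_{L^\infty_x}$ summand appearing without the weight $\|f_2\|_{L^4_x}^2$ arises from the low-frequency factor via \eqref{DeLinfty2}. The main technical point is that the threshold $s \leq \tfrac23$ is dictated precisely by needing to absorb one derivative of $e^{iF_1}$ into $H^{s+1/3}$ via Sobolev embedding; the careful paraproduct splitting of both factors (rather than a cruder estimate placing the whole gauge factor in $L^\infty$) is what produces the unweighted $\|D^s \P_{\ges 1} g\|_{L^2_x}$ term on the right-hand side.
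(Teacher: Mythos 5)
Your proof is correct and follows essentially the same route as the paper's: split $g$ into low ($\P_{\ll 1}g$) and high ($\P_{\ges 1}g$) frequencies, use the frequency-support constraint from the outer $\Pbhi$ to put a high-frequency projection on the gauge factor in the first case, use \eqref{DeLinfty}/\eqref{DeLinfty2} for the low-frequency part of the gauge factor in the second case, and place the remaining gauge-factor derivative into $L^6$ via the identity $\dx e^{iF_1}=i|f_1|^2e^{iF_1}$ and Sobolev embedding, which is exactly where the threshold $s\le\frac23$ enters. The only cosmetic difference is that you symmetrically split both factors with $\P_{\ll 1}/\P_{\ges 1}$ and note the low-low term vanishes, whereas the paper splits $g$ first and then uses $\Pblo/\Pbhi$ on the gauge factor; the estimates and the inputs used (fractional Leibniz, Bernstein, Sobolev, and the identity \eqref{expdiffs} plus the mean value theorem for the difference bound) are the same.
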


\begin{proof}
We begin with \eqref{L2F1}. We write $g=\P_{\ll 1} g+ \P_{\ges 1}g$ and consider the contribution coming from each of these parts. First, the projections imply $\Pbhi[ e^{iF_1} \P_{\ll 1}g]= \Pbhi[ \P_{\ges 1}(e^{iF_1})\P_{\ll 1}g]$ and thus by the fractional Leibniz rule and Bernstein's inequality,
\begin{align*}
\| J^{s} \Pbhi[ e^{iF_1} \P_{\ll1}g]\|_{L^{2}_{x}}& \sim \| D^{s} \Pbhi[ (\P_{\ges 1}e^{iF_1})\P_{\ll 1}g]\|_{L^{2}_{x}} \\
& \les \| D^{s} \P_{\ges 1} e^{iF_1}\|_{L^{2}_x} \|\P_{\ll 1} g\|_{L^{\infty}_x} + \|\P_{\ges 1}e^{iF_1}\|_{L^2} \|D^{s}\P_{\ll 1}g\|_{L^{\infty}_x} \\
& \les \| D^{s-1} \P_{\ges 1}( |f_1|^2 e^{iF_1})\|_{L^{2}_x}  \|\P_{\ll 1} g\|_{L^{\infty}_x}  + \|\P_{\ges 1}(|f_1|^2 e^{iF_1})\|_{L^2_x} \|\P_{\ll 1} g\|_{L^2_x}  \\
& \les \|f_1\|_{L^4_x}^2 \| \P_{\ll 1}g\|_{L^{\infty}_x}.
\end{align*} 
\noi
Now we consider the contribution from $\P_{\ges 1}g$. By the fractional Leibniz rule (Lemma~\ref{LEM:leib}),  and \eqref{DeLinfty},  we have 
\begin{align*}
 \| J^{s} \Pbhi[ \Pblo(e^{iF_1})\P_{\ges  1}g]\|_{L^{2}_{x}}  & \les \|\Pblo(e^{iF_1})\|_{L^{\infty}_x}\|D^s \P_{\ges 1}g\|_{L^2_x} + \|D^{s}\Pblo(e^{iF_1})\|_{L^{\infty}_x} \|\P_{\ges 1}g\|_{L^2_x} \\
 &\les \|D^s \P_{\ges1}g\|_{L^2_x}.
\end{align*}
Similarly, by fractional Leibniz, \eqref{DeLinfty}, and Sobolev's embedding,  we have
\begin{align*}
 \| J^{s} \Pbhi[ \Pbhi(e^{iF_1})\P_{\ges  1}g]\|_{L^{2}_{x}}  & \les \|\Pbhi(e^{iF_1})\|_{L^{\infty}_x}\|D^{s}\P_{\ges 1}g\|_{L^2_x} + \|D^{s}\Pbhi(e^{iF_1})\|_{L^{6}_x} \|\P_{\ges 1}g\|_{L^{3}_x}  \\
 &\les \|D^{s}\P_{\ges 1}g\|_{L^2_x} + \|D^{s-1}\Pbhi( |f_1|^2 e^{iF_1})\|_{L^{6}_x} \|g\|_{L^{3}_x} \\
  &\les \|D^{s}\P_{\ges 1}g\|_{L^2_x} + \|D^{s-\frac 23}\Pbhi( |f_1|^2 e^{iF_1})\|_{L^{2}_x} \|g\|_{L^{3}_x} \\
 & \les \|D^{s}\P_{\ges 1}g\|_{L^2_x}+\|f_1\|_{L^4_x}^2 \|g\|_{L^{3}_x}, 
\end{align*}
given that $s \le \frac23$. We obtain \eqref{L2F1} from combining the estimates above.

For \eqref{L2F2}, we apply the same argument as above but with \eqref{DeLinfty2} and the following estimate:
\begin{align*}
 \big\||f_1|^2 e^{iF_1} -|f_2|^2 e^{iF_2} \big\|_{L^{2}_x} &\les  \big\| |f_1|^2 -|f_2|^2 \big\|_{L^{2}_x} + \| f_2\|_{L^4_x}^2 \|e^{iF_1}-e^{iF_2}\|_{L^{\infty}_x} \\
 & \les (\|f_1\|_{L^4_x}+\|f_2\|_{L^4_x}) \|f_1-f_2\|_{L^4_x} + \|f_2\|_{L^4_x}^{2}  \|e^{iF_1}-e^{iF_2}\|_{L^{\infty}_x}.
\end{align*} 
By the mean value theorem, we have $\|e^{iF_1}-e^{iF_2}\|_{L^{\infty}_x}\les \|F_1-F_2\|_{L^{\infty}_x}$ and this completes the proof of \eqref{L2F2}.
\end{proof}

\subsection{The operators \texorpdfstring{$\TT_{h}$}{Th}
 and \texorpdfstring{$\GG_{h}$}{Gh}}

We recall some known facts about the operators $\TT_{h}$ in \eqref{tilbert} and $\GG_{h}$ in \eqref{Lh}.
The operator $\TT_{h}$ satisfies the following Cotlar-type identity: for sufficiently nice $f,g$, it holds that
\begin{align}
 \TT_{h}[ f\TT_{h} g +g \TT_{h}h] = \TT_{h} f \cdot \TT_{h}g- fg -M_{f}M_{g} ,
 \label{Tilb1}
\end{align}
where $M_{f}  : = \frac{1}{2h} \int_{\R} f  dx$. 

When $|\xi|\ll 1$, $\coth(\xi) \approx \frac{1}{2\xi}$, 
so the Fourier multiplier for $\TT_h$ in \eqref{TTft} behaves like an antiderivative but it is singular unless applied to functions whose Fourier transform is vanishing sufficiently fast near the origin. 
By subtracting this antiderivative term from $\TT_h$, we obtain a better behaved operator. More precisely, we consider the singular integral operator
\begin{align}
\mathcal{K}_{h}f(x) &= \frac{1}{2h} \text{p.v.} \int_{\R} \bigg[ \coth\bigg( \frac{\pi(x-y)}{2h} \bigg) - \sgn\bigg( \frac{\pi(x-y)}{2h} \bigg)\bigg] f(y)dy,   \label{Kh}
\end{align}
for which the following holds. 

\begin{lemma}\label{LEM:Kh}
Let $0<h<\infty$ and $1<p<\infty$.  Then, $\mathcal{K}_{h}$ is $L^p(\R)\to L^p(\R)$ bounded and the operator norm is uniformly bounded in $h$ for $1\leq h<\infty$.
\end{lemma}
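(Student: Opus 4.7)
The plan is to compute the Fourier multiplier of $\mathcal{K}_{h}$ explicitly and invoke the Mikhlin-H\"ormander theorem with constants independent of $h$. The key observation is that the subtraction of $\sgn$ in the definition of $\mathcal{K}_h$ is precisely what cancels the $(h\xi)^{-1}$ singularity of the symbol of $\mathcal{T}_h$ at the origin, producing a genuinely bounded multiplier.

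First I would write $\mathcal{K}_h = \mathcal{T}_h - \mathcal{S}_h$ where $\mathcal{S}_h f := \tfrac{1}{2h}(\sgn * f)$. Since $\partial_x(\sgn * f) = 2f$, the Fourier multiplier of $\mathcal{S}_h$ equals $-i/(h\xi)$, and combined with \eqref{TTft} this gives
$$
\F\{\mathcal{K}_{h} f\}(\xi) = m_h(\xi) \ft f(\xi),
\qquad
m_h(\xi) = -i\big[\coth(h\xi) - (h\xi)^{-1}\big].
$$
Setting $g(\eta) := \coth(\eta) - \eta^{-1}$ so that $m_h(\xi) = -i\,g(h\xi)$, the Laurent expansion $\coth(\eta) = \eta^{-1} + \eta/3 + O(\eta^3)$ shows that $g$ extends to a smooth odd function on $\R$ with $g(0) = 0$, while $g(\eta) \to \pm 1$ and $g'(\eta) = \eta^{-2} - \operatorname{csch}^2(\eta) \to 0$ (exponentially) as $|\eta| \to \infty$. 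In particular, both $g$ and $\eta \mapsto \eta g'(\eta)$ belong to $L^{\infty}(\R)$.

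By the chain rule, these translate into $|m_h(\xi)| \le \|g\|_{L^\infty}$ and $|\xi\, m_h'(\xi)| = |(h\xi)\,g'(h\xi)| \le \|\eta g'(\eta)\|_{L^\infty}$, both with constants independent of $h$. The Mikhlin-H\"ormander multiplier theorem then yields $\|\mathcal{K}_h\|_{L^p \to L^p} \le C_p$ for every $1<p<\infty$, with $C_p$ independent of $h > 0$ (in particular uniform for $h \ge 1$, as claimed). The only subtle point -- the regularity of $g$ at $\eta = 0$ -- is immediate from the Taylor expansion above, so I do not expect serious obstacles. As a backup route, one could instead work at the kernel level by decomposing $K_h(x) = \tfrac{1}{\pi x}\mathbf{1}_{|x|\le h}(x) + R_h(x)$ into a truncated Hilbert kernel (bounded on $L^p$ uniformly in the truncation radius) plus an $L^1$-remainder whose norm is controlled by the exponential decay of $\coth(\eta)-\sgn(\eta)$ at infinity, but the multiplier approach is cleaner.
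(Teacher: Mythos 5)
Your proof is correct, and it takes a genuinely different route from the paper. The paper argues on the kernel side: it observes that $K_h$ is a Calder\'on--Zygmund convolution kernel, that $K_h(x) = h^{-1} K_1(x/h)$ is an exact scaling, and therefore the Calder\'on--Zygmund constants (and hence the $L^p$ operator norm) are independent of $h$. You argue on the multiplier side: you identify the symbol of $\mathcal{K}_h$ as $m_h(\xi) = -i\,g(h\xi)$ with $g(\eta) = \coth\eta - \eta^{-1}$, check that $g$ and $\eta \mapsto \eta g'(\eta)$ are bounded on $\R$ (using the Laurent expansion at $0$ and the exponential decay of $\coth\eta - \sgn\eta$ at infinity), note that the Mikhlin conditions $|m_h(\xi)|$, $|\xi\,m_h'(\xi)|$ are then bounded by $h$-independent constants because of the scaling $m_h(\xi)=-ig(h\xi)$, and invoke Mikhlin--H\"ormander. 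One small check you glossed over: the multiplier identity $\widehat{\mathcal{S}_h f}(\xi) = -\tfrac{i}{h\xi}\widehat f(\xi)$ does hold with the paper's normalization $\ft f(\xi) = (2\pi)^{-1/2}\int f e^{-ix\xi}\,dx$ (via $\sgn' = 2\delta$ and $\widehat{f*g}=\sqrt{2\pi}\,\widehat f\,\widehat g$), so the reduction to $m_h = -i[\coth(h\xi)-(h\xi)^{-1}]$ is correct. Both proofs are sound and of comparable length; the multiplier approach arguably makes the $h$-uniformity most transparent since the Mikhlin bounds are manifestly scale-invariant (in fact you get uniformity over all $h>0$, not just $h\geq 1$, which the paper's scaling argument also gives but states only for $h \geq 1$), while the kernel approach has the advantage of not requiring one to compute the symbol and matches the paper's later treatment of $\GG_h$.
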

\begin{proof}
The kernel $K_h$ of the integral operator $\mathcal{K}_h$ can be shown to be a Calder\'on--Zygmund convolution kernel (see \cite[Proposition 5.4.4]{Grafakos}) and thus the claimed $L^p(\R)$ boundedness follows. Moreover, it satisfies the scaling property
\begin{align*}
    K_{h}(x) = \tfrac 1h K_{1}(\tfrac{x}{h}) \quad \text{for all} \,\, h>0
\end{align*}
 and thus it satisfies the conditions of \cite[Proposition 5.4.4]{Grafakos}
uniformly over $1\leq h<\infty$. 
\end{proof}

For the operator $\GG_h$ in \eqref{Lh}, the presence of the derivative $\dx$ ameliorates the singularity at the origin. Thus, the symbol of $\GG_h$ acts like the identity near the origin and is exponentially decaying. By the Mikhlin-H\"{o}rmander multiplier theorem \cite[Theorem 6.2.7]{Grafakos}, we then obtain:

\begin{lemma}\label{LEM:GGh}
Let $0<h<\infty$ and $1<p<\infty$.  Then, $\mathcal{G}_{h}$ is $L^p(\R)\to L^p(\R)$ bounded and it holds that 
\begin{align}
    \| \GG_{h}\|_{L^{p}(\R)\to L^p(\R)} \les \tfrac{1}{h}
    ,
    \label{GGconv}
\end{align}
where the implicit constant is uniform in $1\leq h<\infty$.
\end{lemma}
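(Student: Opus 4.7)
The plan is to reduce $\GG_h$ to a Fourier multiplier operator satisfying the Mikhlin--H\"ormander condition, with the $1/h$ factor emerging naturally from a rescaling of the symbol. First, combining the multipliers $-i\sgn(\xi)$ of $\H$ (from \eqref{Hilbert}), $-i\coth(h\xi)$ of $\TT_h$ (from \eqref{TTft}), and $i\xi$ of $\dx$, I would observe that $\GG_h = (\H - \TT_h)\dx$ is the Fourier multiplier operator with symbol
\begin{align*}
m_h(\xi) = \big({-}i\sgn(\xi) + i\coth(h\xi)\big) \cdot i\xi = |\xi| - \xi\coth(h\xi) = \tfrac{1}{h}\, n(h\xi), \qquad n(y) := |y| - y\coth(y).
\end{align*}
Using the identity $\coth(y) = \sgn(y)\big(1 + 2/(e^{2|y|}-1)\big)$, one can furthermore rewrite $n$ in the closed form $n(y) = -2|y|/(e^{2|y|}-1)$, which makes several of its properties transparent.

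From this formula I would verify that $n$ satisfies the Mikhlin condition $\sup_{y\neq 0}\big(|n(y)| + |y n'(y)|\big) < \infty$: continuity and boundedness on all of $\R$ are immediate, with $n(0) = -1$ by L'H\^opital (the singularity of $\coth$ at $0$ is absorbed by the factor $y$); smoothness on $\R \setminus \{0\}$ is clear; and the exponential factor gives $n(y), n'(y) = O(|y|e^{-2|y|})$ as $|y|\to \infty$. The only point needing a comment is that $n'$ has a jump at $y = 0$ inherited from $|y|$, but this is harmless since the Mikhlin hypothesis is only required as a pointwise bound on $\R \setminus \{0\}$.

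With this in hand, the Mikhlin--H\"ormander multiplier theorem (e.g.\ \cite[Theorem 6.2.7]{Grafakos}) yields $L^p(\R) \to L^p(\R)$ boundedness of the operator with multiplier $n(\xi)$ for every $1 < p < \infty$, with operator norm depending only on $p$. Since the Mikhlin condition is invariant under the dilation $\xi \mapsto h\xi$ (with the same constant), the operator with symbol $n(h\xi)$ has the same $L^p$-operator norm for every $h > 0$. Multiplying by $1/h$ then produces \eqref{GGconv} with an implicit constant depending only on $p$, which is uniform for all $h > 0$ and in particular for $1 \le h < \infty$. The argument is essentially computational; the only step requiring any care is the verification of the Mikhlin bound for $n$ near the origin, which the explicit formula reduces to inspection. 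I do not foresee a genuine obstacle.
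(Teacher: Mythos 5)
Your proof is correct and in the same spirit as the paper's, but it is executed on the Fourier/symbol side rather than on the physical/kernel side. The paper works with the integral kernel $G_h$ of $\GG_h$, notes the scaling relation $G_h(x) = h^{-2} G_1(x/h)$, and invokes the Calder\'on--Zygmund convolution-kernel criterion \cite[Proposition 5.4.4]{Grafakos} applied to $G_1$; the extra factor of $h^{-1}$ appears because the kernel scales like $h^{-2}$ rather than $h^{-1}$. You instead compute the symbol explicitly, finding $m_h(\xi) = |\xi| - \xi\coth(h\xi) = h^{-1} n(h\xi)$ with $n(y) = -2|y|/(e^{2|y|}-1)$, verify the one-dimensional Mikhlin condition for $n$ by inspection of this closed form (boundedness near zero, exponential decay at infinity, and the harmless corner in $n'$ at the origin), and then use the fact that the Mikhlin constant is invariant under dilation of the symbol. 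Both arguments rest on exactly the same dilation-scaling observation; the paper itself gestures at the Mikhlin--H\"ormander theorem \cite[Theorem 6.2.7]{Grafakos} in the sentence immediately preceding the lemma before giving the kernel proof. Your version has the minor advantage that the symbol $n$ has an explicit elementary formula, so the hypothesis check is a direct computation rather than a citation; the paper's kernel route matches the template already set up for $\mathcal{K}_h$ in Lemma~\ref{LEM:Kh}. One small bonus of your argument is that it makes transparent that the constant is in fact uniform over all $h>0$, not just $1 \le h < \infty$.
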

\begin{proof}
The proof is similar to that of Lemma~\ref{LEM:Kh}. We simply note that the kernel $G_{h}$ of the integral operator $\GG_{h}$ satisfies the scaling relation:
\begin{align*}
    G_{h}(x) = \tfrac{1}{h^2}G_{1}(\tfrac{x}{h}) \quad \text{for all} \, \, h>0.
\end{align*}
The integral operator with kernel $\tfrac{1}{h^2}G_{1}(\tfrac{x}{h})$ is then a Calder\'{o}n-Zygmund operator and is $L^p(\R)\to L^p(\R)$ bounded, for $1<p<\infty$, uniformly in $1\leq h<\infty$. The extra factor of $h^{-1}$ then accounts for its appearance in \eqref{GGconv}.
\end{proof}

In INLS \eqref{INLS22}, we have two kinds of harmless cubic terms: the local one $|u|^2 u$ and the nonlocal one $u\GG_{h}(|u|^2)$. We can deal with these terms simultaneously by defining the operator
\begin{align}
\QQ_{h}: =  -i\be \GG_{h}+i\g \text{Id}. \label{Qh}
\end{align}
Then, \eqref{INLS22} can be written more succinctly as
\begin{align}
\dt u+i\dx^2 u = 2\be u \P_{+}\dx(|u|^2)+u \QQ_{h}(|u|^2). \label{INLS2}
\end{align}
It is clear that $\QQ_{h}$ has at least the same $L^{p}\to L^p$ mapping properties as $\L_{h}$.

\section{The gauge transform and properties of solutions}
\label{SEC:main}

\subsection{Fourier restriction norm spaces}

For $s,b\in \R$, we consider the Fourier restriction norm spaces $X^{s,b}(\R\times \R)$ as the completion of $\S(\R\times \R)$ under the norm \cite{BO93}:
\noi
\begin{align}
\| u\|_{X^{s, b}(\R\times \R)} &= \big\| \jb{\tau+\xi^2}^{b}\jb{\xi}^{s} \ft u(\tau, \xi)\big\|_{L^2_{\tau,\xi}}. \label{Xsb}
\end{align}
\noi
Given a time interval $I \subset \R$, we define localised in time versions of these spaces as follows: if $u: I \times \R \to \C$, then 
\begin{align}
\|u\|_{X^{s,b}_{I}}: =\inf\{ \|\wt{u}\|_{X^{s,b}} \, : \, \wt{u}:\R\times \R \to \C, \,\, \wt{u}\vert_{I\times \R} = u\}. \label{localspace}
\end{align}
When $I=[0,T]$ for some $T>0$, we use the notation $X^{s,b}_{I}=X^{s,b}_{T}$.
For any $b>\frac 12$, the following embedding holds:
\begin{align}
X^{s,b}_{T} \embeds C([0,T];H^{s}(\R)). \label{YsCTHs}
\end{align}
We recall the following linear estimates related to the Fourier restriction norm spaces. See~\cite{MP}, for example.

\begin{lemma}\label{LEM:linXsb}
Let $s, b\in \R$, $T>0$, and $\eta$ denote a smooth time-cutoff.

\noi
\textup{(i)} The following estimate holds
\begin{align}
\| \eta(t) S(t)f\|_{X^{s,b}} \les \|f\|_{H^{s}}, 
\notag
\end{align}
where $S(t)$ denotes the linear Schr\"odinger propagator $e^{-it \dx^2}$.

\noi
\textup{(ii)} Let $0<\dl<\frac 12$. Then, 
\begin{align}\label{lin2}
\bigg\| \eta(t) \int_{0}^{t} S(t-t') g(t')dt' \bigg\|_{X^{s,\frac 12+\dl}} & \les \|g\|_{X^{s, -\frac 12 +\dl}}
.
\end{align}

\noi
\textup{(iii)} Given $-\frac 12 < b'\leq b<\frac 12$, it holds that 
\begin{align}
\| u\|_{X^{s,b'}_{T}} \les T^{b-b'}\|u\|_{X^{s,b}_{T}} \quad \text{and} \quad \| \ind_{[0,T]}u\|_{X^{s,b'}} \les T^{b-b'}\|u\|_{X^{s,b}} 
.
\label{timeloc}
\end{align}
\textup{(iv)} Given $0\le \dl <\frac18$, the following estimate holds
\begin{align}
\|u\|_{L^{4}_{T,x}}& \les \|u\|_{\wt{L^{4}_{T,x}}} \les  T^{\frac 14-2\dl - } \|u\|_{X_{T}^{0,\frac 12-2\dl}}. 
 \label{L4}
\end{align}
\end{lemma}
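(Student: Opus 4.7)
All four parts are classical ingredients of the Fourier restriction norm machinery, and I would follow the standard presentation of~\cite{BO93,MP} for each.

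For~(i), the plan is to compute $\F_{t,x}\{\eta(t)S(t)f\}(\tau,\xi) = \ft\eta(\tau+\xi^2)\ft f(\xi)$ directly from the definitions, then substitute $\sigma = \tau+\xi^2$ in the $X^{s,b}$-integral to factor the norm as $\|\jb{\sigma}^{b}\ft\eta(\sigma)\|_{L^2_\sigma}\|f\|_{H^s}$; the first factor is finite since $\eta\in\S(\R)$.  For~(ii), I would use the Bourgain representation of Duhamel: writing $g$ via its space-time Fourier transform and carrying out the $t'$-integration explicitly produces (after multiplication by the time cutoff) a ``homogeneous'' piece controlled by~(i) plus an integral weighted by $(\tau+\xi^2)^{-1}$.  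Decomposing the integration domain according to whether $|\tau+\xi^2|\lesssim 1$ or $|\tau+\xi^2|>1$, and bounding each region separately in the $X^{s,\frac12+\dl}$ norm, then yields~\eqref{lin2}.

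For~(iii), the key observation is that the unitary transformation $u\mapsto S(-t)u$ identifies $\|u\|_{X^{s,b}}$ with $\|S(-t)u\|_{H^{b}_{t}H^{s}_{x}}$.  Both inequalities in~\eqref{timeloc} then reduce to the classical sharp-cutoff estimate $\|\ind_{[0,T]}\phi\|_{H^{b'}_t}\lesssim T^{b-b'}\|\phi\|_{H^b_t}$, valid for $-\tfrac12<b'\le b<\tfrac12$; this is proved by analyzing the Fourier symbol of $\ind_{[0,T]}$ against $\jb{\tau}^{b'}$ and using the decay of the difference $\jb{\tau-\tau'}^{-1}$.  The constraint $|b|,|b'|<\tfrac12$ is precisely what prevents a logarithmic divergence at the endpoints.

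For~(iv), the plan has three ingredients.  First, the refined $L^6_{t,x}$ Strichartz estimate on $\R$, combined with the transfer principle and a dyadic Littlewood--Paley decomposition, gives $\|u\|_{\wt{L^6_{t,x}}}\lesssim\|u\|_{X^{0,1/2+}}$; the square-function packaging is available because the $X^{0,b}$-norm already has an $\ell^2$-structure across spatial Littlewood--Paley pieces.  Second, H\"older interpolation against the trivial identity $\|u\|_{L^2_{t,x}}=\|u\|_{X^{0,0}}$ (respecting the dyadic $\ell^2$ on each side) upgrades this to an $\wt{L^4_{t,x}}$-bound at some $b<\tfrac12$, with no $T$-loss since extensions to $\R\times\R$ have comparable norms.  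Third, applying the time-restriction estimate from part~(iii) trades the excess temporal regularity for a positive power of $T$; the hypothesis $\dl<\tfrac18$ keeps the resulting $T$-exponent positive.  The main technical obstacle here is tracking the $\wt{L^4}$ square-function through Strichartz, H\"older, and time-restriction simultaneously, but since each step respects the dyadic $\ell^2$-structure inherent in the $X^{s,b}$-norm, no new estimate beyond the scalar refined Strichartz is required.
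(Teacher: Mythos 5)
Your outline for (i)--(iii) reproduces the standard textbook argument that the paper delegates to a reference: (i) via the explicit space--time Fourier transform of $\eta(t)S(t)f$; (ii) via the Bourgain representation of Duhamel and the split of the integration into $|\tau+\xi^2|\les 1$ and $|\tau+\xi^2|\gg 1$; (iii) via the unitary change of variable $u\mapsto S(-t)u$, reducing both inequalities to the scalar sharp-cutoff estimate $\|\ind_{[0,T]}\phi\|_{H^{b'}_t}\les T^{b-b'}\|\phi\|_{H^b_t}$ for $-\tfrac12<b'\le b<\tfrac12$. For (iv) you likewise follow the paper: $L^6_{t,x}$ Strichartz plus transference, interpolation with $L^2_{t,x}=X^{0,0}$, then the time-restriction inequality from (iii), and your remark that the dyadic $\ell^2$-structure of $\wt{L^4_{t,x}}$ is preserved at each step is correct.

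There is one exponent that deserves rechecking, and you repeat a slip that appears in the paper's own proof. Interpolating $X^{0,\frac12+}\hookrightarrow L^6_{t,x}$ with $X^{0,0}=L^2_{t,x}$ to land on $L^4_{t,x}$ places weight $3/4$ on the $L^6$ endpoint, hence yields $X^{0,\frac38+}\hookrightarrow\wt{L^4_{t,x}}$, not $X^{0,\frac14+}$ (indeed $b=\tfrac38$ is the scale-invariant exponent for this embedding). Applying \eqref{timeloc} with $b'=\tfrac38+$ and $b=\tfrac12-2\dl$ then requires $\dl<\tfrac1{16}$ and gives $T^{\frac18-2\dl-}$ rather than $T^{\frac14-2\dl-}$; so your claim that ``$\dl<\tfrac18$ keeps the $T$-exponent positive'' does not follow from your second step as written. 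The discrepancy is harmless for the paper, which only ever uses a fixed small positive power of $T$ from this lemma with $\dl$ tiny, but the corrected exponents ($\tfrac38+$, $T^{\frac18-2\dl-}$, $\dl<\tfrac1{16}$) should be recorded.
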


\begin{proof}
The properties in (i)-(iii) are standard. See for example \cite{TAObook}. 

\noi To show \eqref{L4}, we  first recall the $L^6_{T,x}$-Strichartz estimate
\begin{align*}
\| S(t) f\|_{\wt{L^{6}_{T,x}}} \les \|f\|_{L^2_x},
\end{align*}
which by transference principle (see \cite[Lemma~2.9]{TAObook}) and \eqref{localspace} implies
\begin{align*}
\| u\|_{\wt{L^{6}_{T,x}}} \les \|u\|_{X^{0,b}_{T}}
\end{align*}
for any $b>\frac 12$. Interpolating with the trivial $L^{2}_{T,x}$-estimate gives 
\begin{align*}
\| u\|_{\wt{L^{4}_{T,x}}} \les \|u\|_{X^{0,\frac 14+}_{T}},
\end{align*}
at which point \eqref{L4} follows from applying \eqref{Lpwt} and \eqref{timeloc}.
\end{proof}

We now state the bilinear Strochartz estimate.

\begin{lemma}[Bilinear Strichartz estimate \cite{BO98, OT}] \label{LEM:bilin}
Let $N\in 2^{\N}$, $f,g\in L^2(\R)$, and $\eta$ be a real-valued smooth cutoff function. Then, 
\begin{align}
\| \P_{N}[ \eta(t) S(t) f \cdot \eta(t) \cj{S(t)g}]\|_{L^{2}_{t,x}} \les N^{-\frac 12} \|f\|_{L^2_x}\|g\|_{L^{2}_x}. \label{bilin}
\end{align}
Moreover, for any $0<\dl \ll 1$ sufficiently small, it holds that
\begin{align}
\| \P_{N}[ u \cdot \cj{v}]\|_{L^{2}_{t,x}} \les N^{-\frac 12+10\dl} \|u\|_{X^{0,\frac 12-2\dl}}\|v\|_{X^{0,\frac 12-2\dl}}.
 \label{bilin1}
\end{align}
\end{lemma}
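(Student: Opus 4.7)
The plan is to first establish \eqref{bilin} by explicit computation of the space-time Fourier transform of the product of two free solutions, and then upgrade to \eqref{bilin1} via the standard modulation representation of $X^{0,b}$ functions together with bilinear interpolation between a super-critical and a sub-critical estimate.

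For \eqref{bilin}, I would write the free evolutions as Fourier integrals in the spatial frequency and observe that the (uncut) product $S(t)f \cdot \cj{S(t)g}$ has space-time Fourier transform supported on the paraboloid-difference surface $\xi = \xi_1 - \xi_2$, $\tau = \xi_2^2 - \xi_1^2$. Solving these two constraints yields the unique representative $\xi_1 = \xi_* := \frac{\xi^2 - \tau}{2\xi}$, $\xi_2 = \xi_* - \xi$, with Jacobian $|d(\tau, \xi)/d(\xi_1, \xi_2)| = 2|\xi|$, so that
\begin{align*}
\F_{t,x}[S(t)f \cdot \cj{S(t)g}](\tau, \xi) = \tfrac{c}{|\xi|}\,\ft f(\xi_*)\,\cj{\ft g(\xi_* - \xi)}.
\end{align*}
The cutoffs $\eta(t)$ introduce a convolution by $\F\eta$ in $\tau$, which by the rapid decay of $\F\eta$ costs only an $O(1)$ factor. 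Applying Plancherel, restricting to $|\xi|\sim N$, and changing variables $\tau \mapsto \xi_*$ (with Jacobian $2|\xi| \sim N$) gives
\begin{align*}
\|\P_N[\eta S(t)f \cdot \eta \cj{S(t)g}]\|_{L^2_{t,x}}^2 \les N^{-1}\iint |\ft f(\xi_*)|^2 |\ft g(\xi_* - \xi)|^2\,d\xi_* d\xi \les N^{-1}\|f\|_{L^2}^2\|g\|_{L^2}^2.
\end{align*}

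For \eqref{bilin1}, I would first establish the stronger (super-critical) version using the standard modulation representation of $X^{0,b}$ functions. Setting $\ft{u_\lambda}(\xi) := \ft u(-\xi^2 + \lambda, \xi)$, we have
\begin{align*}
u(t, x) = \int_\R e^{it\lambda}[S(t) u_\lambda](x)\,d\lambda, \qquad \|u\|_{X^{0,b}}^2 = \int_\R \jb{\lambda}^{2b}\|u_\lambda\|_{L^2_x}^2\,d\lambda,
\end{align*}
and analogously for $v$. The modulating phases $e^{it\lambda}$ and $e^{-it\mu}$ are unit modulus and hence preserve $L^2_{t,x}$, so \eqref{bilin} applies pointwise in $(\lambda, \mu)$. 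When $b > \frac12$, the weight $\jb{\lambda}^{-b}$ is square-integrable, and a Cauchy--Schwarz in $\lambda$ gives the absolutely convergent bound $\int_\R \|u_\lambda\|_{L^2_x}\,d\lambda \les \|u\|_{X^{0,b}}$, which after integration in $(\lambda, \mu)$ yields
\begin{align*}
\|\P_N(u \cdot \cj v)\|_{L^2_{t,x}} \les N^{-\frac12}\|u\|_{X^{0,b}}\|v\|_{X^{0,b}}, \qquad b > \tfrac 12.
\end{align*}

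The main difficulty is that at the sub-critical threshold $b = \frac12 - 2\dl$, the $\lambda$-integral ceases to be absolutely convergent and this argument breaks down. To circumvent this, I would interpolate bilinearly between the super-critical bound just obtained and the H\"older estimate
\begin{align*}
\|\P_N(u\cdot \cj v)\|_{L^2_{t,x}} \leq \|u\|_{L^4_{t,x}}\|v\|_{L^4_{t,x}} \les \|u\|_{X^{0, 3/8}}\|v\|_{X^{0, 3/8}}
\end{align*}
coming from the $L^4$-Strichartz estimate \eqref{L4} (in its global-in-time form), which carries no $N$-gain. Since $\{X^{0,b}\}_b$ forms a Hilbert-space interpolation scale with explicit weights $\jb{\tau+\xi^2}^b$, bilinear complex interpolation on $(X^{0, 1/2+\eps}, X^{0, 3/8})$ yields
\begin{align*}
\|\P_N(u\cdot \cj v)\|_{L^2_{t,x}} \les N^{-\theta/2}\|u\|_{X^{0, b_\theta}}\|v\|_{X^{0, b_\theta}}, \qquad b_\theta = \theta(\tfrac12+\eps) + (1-\theta)\tfrac 38.
\end{align*}
Setting $b_\theta = \frac12 - 2\dl$ and solving gives $\theta = \frac{1/8 - 2\dl}{1/8 + \eps} = 1 - 16\dl - 8\eps + O(\dl\eps)$, producing an $N$-gain of $N^{-\theta/2} = N^{-\frac12 + 8\dl + 4\eps + O(\dl\eps)}$. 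Taking $\eps = \frac{\dl}{2}$ yields exactly the desired decay $N^{-\frac12 + 10\dl}$, completing \eqref{bilin1}.
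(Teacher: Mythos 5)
Your proof of \eqref{bilin} is essentially the same as the paper's: both compute the space-time Fourier transform of the product of two free evolutions and exploit the Jacobian $2|\xi|\sim N$ coming from the $\delta$-constraint. The only cosmetic difference is that you compute the exact $\delta$-supported transform of the unsmoothed product and then argue the cutoffs cost $O(1)$, whereas the paper works directly with the cutoff product, parametrizes it via $\mathcal{F}_t\{\eta^2\}$, and applies Cauchy--Schwarz in the internal frequency; these are the same idea. Your derivation of the super-critical bound via the modulation representation is likewise equivalent to the paper's one-line invocation of the (multilinear) transference principle.

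For \eqref{bilin1}, both proofs interpolate the super-critical bound $N^{-1/2}\, X^{0,\frac12+}\times X^{0,\frac12+}\to L^2_{t,x}$ against a sub-critical endpoint, but the sub-critical endpoints are genuinely different. The paper uses Bernstein in $x$ and H\"older together with the Sobolev-in-time embedding $\|u\|_{L^4_t L^2_x}\lesssim\|u\|_{X^{0,\frac14+}}$, giving the endpoint $N^{+\frac12}\,X^{0,\frac14+}\times X^{0,\frac14+}\to L^2_{t,x}$ -- a worse $N$-loss but a lower modulation weight. You instead use the product $L^4_{t,x}$-Strichartz estimate, which gives $N^{0}\,X^{0,\frac38+}\times X^{0,\frac38+}\to L^2_{t,x}$ -- no $N$-loss but a higher modulation weight. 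Both endpoints, when interpolated to modulation weight $\frac12-2\dl$, land safely inside the required gain $N^{-\frac12+10\dl}$, so either choice closes. Your route is perhaps slightly more economical in $N$ at the endpoint, while the paper's requires less time-regularity there; they trade off in exactly the way the interpolation numerology shows. One small caveat in your writeup: the global-in-time $L^4_{t,x}$ estimate needs $b>\frac38$ strictly (interpolation of $L^6\lesssim X^{0,\frac12+}$ with the trivial $L^2$ only reaches $X^{0,\frac38+}$); you should carry an $\eps'$ there just as you do with the $\eps$ in the super-critical endpoint. This doesn't affect the conclusion, since the available slack is comfortably larger than $10\dl$.
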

\begin{proof}
We include a proof of \eqref{bilin} for the reader's convenience.  Taking the space-time Fourier transform, we have
\begin{align*}
\mathcal{F}_{t,x}\{  \eta(t) S(t) f \cdot \cj{\eta(t) S(t)g}\}(\tau,\xi) & = \int \ft f(\mu) \cj{\ft g(\mu-\xi)}  \,\mathcal{F}_t\{\eta^2\}(\tau -\mu^2+(\mu-\xi)^2) d\mu.
\end{align*}
Note that since $\eta\in C_{c}^{\infty}(\R)$, we have $\mathcal{F}_t\{ \eta^2\}\in \mathcal{S}(\R)$.
Then, by Plancherel's theorem and Cauchy-Schwarz inequality in $\mu$, we have 
\begin{align*}
[\text{LHS} \eqref{bilin}]^2 & \leq M_{N} \int |\ft f(\mu)|^2 |\ft g(\mu-\xi)|^2 \int |\mathcal{F}_t\{\eta^2\}(\tau -\mu^2+(\mu-\xi)^2)|d\tau  d\xi d\mu \\
& \les M_{N} \|f\|_{L^2_x}^{2} \|g\|^2_{L^2_x},
\end{align*}
where 
\begin{align*}
M_{N} &= \sup_{|\xi|\sim N, \tau\in \R} \int |\mathcal{F}_t\{\eta^2\}(\tau -\mu^2+(\mu-\xi)^2)| d\mu \\
&  =\sup_{|\xi|\sim N, \tau\in \R} \int |\mathcal{F}_t\{\eta^2\}(\tau +\xi^2-2\xi \mu)| d\mu\\
& \sim  N^{-1} \sup_{|\xi|\sim N, \tau\in \R} \int |\mathcal{F}_t\{\eta^2\}(\tau +\xi^2-\mu)| d\mu \les N^{-1}.
\end{align*}
This proves \eqref{bilin}. 
To obtain \eqref{bilin1}, on the one hand,  \eqref{bilin} and the transference principle (it is well-known that the linear version of this in \cite[Lemma~2.9]{TAObook} generalises with the same proof ideas to multilinear operators) imply
\begin{align}
\| \P_{N}[ u \cdot \cj{v}]\|_{L^{2}_{t,x}} \les N^{-\frac 12} \|u\|_{X^{0,\frac 12+}}\|v\|_{X^{0,\frac 12+}}. \label{bilin2}
\end{align}
On the other hand, by Bernstein's, H\"{o}lder's, and Sobolev inequalities,  we have 
\begin{align}
\| \P_{N}[ u \cdot \cj{v}]\|_{L^{2}_{t,x}}\les N^{\frac 12} \| u \cdot \cj{v}\|_{L^2_{t} L^{1}_x} \les N^{\frac 12} \|u\|_{L^4_{t} L^{2}_x} \|v\|_{L^{4}_{t}L^{2}_x} \les N^{\frac 12} \|u\|_{X^{0, \frac 14+}} \| v\|_{X^{0,\frac 14+}}. \label{bilin3}
\end{align}
Interpolating \eqref{bilin2} and \eqref{bilin3} yields \eqref{bilin1}.
\end{proof}

\begin{remark}\rm 
The conjugation and the projection $\P_{N}$ are important in \eqref{bilin}, and ensure that the constant on the right-hand side of \eqref{bilin} is essentially independent of the frequency supports of the functions $f$ and $g$. If we remove the projection $\P_{N}$, the estimate is now insensitive to any conjugations, and we need to impose a condition on the distance between the Fourier supports of the functions $f$ and $g$. Namely, suppose that $f$ and $g$ are compactly supported on the Fourier side taking values in the sets $S_1$ and $S_2$, respectively. Let $d(S_1,S_2)$ denote the distance between the sets $S_1$ and $S_2$: 
\begin{align*}
d(S_1,S_2) =\inf_{s_j\in S_j, j=1,2} |s_1-s_2|.
\end{align*}
Then, it holds that 
\begin{align*}
\|\eta(t) S(t)f \cdot \eta(t) S(t)g\|_{L^{2}_{t,x}} \les d(S_1,S_2)^{-\frac 12} \|f\|_{L^2}\|g\|_{L^2}.
\end{align*}
\end{remark}
Finally, we have the following useful $L^p$-boundedness for the modulation operators $\Q_{\ll K}$ in \eqref{Qpro}. A similar version of this result was a key ingredient in \cite{GLM}. See \cite[Lemma 4.6]{GLM}. For the reader's convenience, we detail a proof.

\begin{lemma}
Let $1<p<\infty$, and  $N,K\in 2^{\N}$ satisfying $K\gg N^2$. Then, there exists $C_p>0$, depending only on $p$,  such that
\begin{align}
\| \P_{N} \Q_{\ll K}f\|_{L^{p}_{t,x}(\R^2)} \leq C_{p} \| \P_{N}f\|_{L^p_{t,x}(\R^2)}. \label{QKLp}
\end{align}
\end{lemma}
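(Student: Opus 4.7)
My plan is to view $\P_N \Q_{\ll K}$ as a Fourier multiplier operator on $\R^2_{t,x}$, bound the $L^1(\R^2)$-norm of its convolution kernel uniformly in $N,K$, and then conclude via Young's inequality. Since both $\P_N$ and $\Q_{\ll K}$ are Fourier multipliers in $(t,x)$, they commute, so $\P_N \Q_{\ll K}f = \Q_{\ll K}(\P_N f)$. Setting $g:=\P_N f$, the Fourier transform $\ft g$ is supported where $|\xi|\sim N$, so inserting a wider Littlewood-Paley projector $\wt\P_N$ with $\wt\psi_N \equiv 1$ on $\supp \psi_N$ leaves $\Q_{\ll K}g$ unchanged. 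It therefore suffices to show that the convolution kernel $K_T$ of $T := \wt\P_N \Q_{\ll K}$ satisfies $\|K_T\|_{L^1(\R^2)}\les 1$, for then Young's inequality will yield $\|Tg\|_{L^p}\les \|g\|_{L^p} = \|\P_N f\|_{L^p}$.

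Executing the $\tau$-integral via $\sigma=\tau+\xi^2$ and rescaling $\xi = N\zeta$, a direct computation gives
\begin{align*}
K_T(t,x) = c\,M\,\ft\eta(-Mt)\cdot N\,I(Nx,N^2 t),
\qquad
I(y,s):=\int \wt\psi_1(\zeta)\,e^{i(y\zeta - s\zeta^2)}\,d\zeta,
\end{align*}
where $M := 10^{-10}K$. The key observation is that the Schwartz factor $\ft\eta(-Mt)$ concentrates the $t$-support near $|t|\les M^{-1}$, on which $|N^2 t|\les N^2/M\ll 1$ by the gap condition $K\gg N^2$. In this near-zero-$s$ regime, integration by parts in $\zeta$ (using that the would-be stationary point $\zeta = y/(2s)$ lies outside $\supp\wt\psi_1$ when $|y|\gg 1$) gives $|I(y,s)|\les_A \jb{y}^{-A}$ for any $A > 0$. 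Combined with the Schwartz decay of $\ft\eta$, this yields $|K_T(t,x)|\les MN\,\jb{Mt}^{-A}\jb{Nx}^{-A}$, whence $\|K_T\|_{L^1_{t,x}}\les 1$ after rescaling in $t$ and $x$.

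The main technical point requiring care is the tail region $|N^2 t|\ges 1$, where $I(y,s)$ satisfies only the weaker stationary-phase bound $\|I(\cdot,s)\|_{L^1_y}\les \jb{s}^{1/2}$. This region however forces $|Mt|\ges M/N^2\gg 1$, where the Schwartz decay of $\ft\eta$ more than compensates: after rescaling $t' = Mt$, the corresponding contribution to $\|K_T\|_{L^1}$ is controlled by
\begin{align*}
\int |\ft\eta(-t')|\,\jbb{t' N^2/M}^{1/2}\,dt' \les 1 + (N^2/M)^{1/2}\int |\ft\eta(-t')||t'|^{1/2}\,dt' \les 1,
\end{align*}
since $N^2/M\ll 1$ and $\ft\eta\in\S(\R)$. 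This is precisely where the hypothesis $K\gg N^2$ is essential: it is what decouples the modulation cutoff $\eta\bigl((\tau+\xi^2)/M\bigr)$ from the Schr\"{o}dinger phase on the spatial slab $\{|\xi|\sim N\}$ and causes $\Q_{\ll K}$ to behave like a time Littlewood-Paley projector when composed with $\P_N$.
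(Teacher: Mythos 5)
Your argument is correct, but it takes a genuinely different route from the paper.  The paper treats $m(\tau,\xi)=\wt\psi_N(\xi)\eta_{10^{-10}K}(\tau+\xi^2)$ as a two-parameter multiplier and invokes the Marcinkiewicz multiplier theorem: the hypothesis $K\gg N^2$ enters by forcing $|\tau|\sim K$ on the support of $\partial_\tau m$ (so $K^{-1}\lesssim|\tau|^{-1}$) and by making the $\frac{|\xi|}{K}\eta'$ contribution to $\partial_\xi m$ subordinate to $N^{-1}\sim|\xi|^{-1}$.  You instead integrate out the $\tau$ variable, factor the kernel as $cM\ft\eta(-Mt)\cdot NI(Nx,N^2t)$, and bound its $L^1_{t,x}$-norm directly via the dispersive estimate $\|I(\cdot,s)\|_{L^1_y}\lesssim\jb{s}^{1/2}$, concluding by Young's inequality.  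Here $K\gg N^2$ appears as $N^2/M\ll 1$, which makes the temporal weight $\jb{N^2t'/M}^{1/2}$ integrable against $\ft\eta$.  Your approach is more hands-on but self-contained (no appeal to a named multiplier theorem), and it actually yields a stronger conclusion — the $L^1$-kernel bound gives $L^p\to L^p$ boundedness for the full range $1\le p\le\infty$, whereas Marcinkiewicz only covers $1<p<\infty$.  The paper's route is shorter to write down, since it reduces to verifying two derivative estimates.  Both proofs isolate the same mechanism: on the slab $|\xi|\sim N$, the curvature of $\tau=-\xi^2$ is negligible compared to the width $K$ of the modulation cutoff, so $\Q_{\ll K}$ degenerates into a benign time-frequency projection.

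One small expository remark: the phrase ``concentrates the $t$-support near $|t|\lesssim M^{-1}$'' is imprecise since $\ft\eta$ is Schwartz but not compactly supported; however, you go on to treat the tail $|N^2t|\gtrsim1$ rigorously, so the argument is sound.  In fact, since the bound $\|I(\cdot,s)\|_{L^1_y}\lesssim\jb{s}^{1/2}$ holds for all $s$, you could dispense with the case split altogether and run the final displayed computation globally.
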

 \begin{proof}
 Let $m(\tau, \xi) : = \wt{\psi}_{N}(\xi) \eta_{10^{-10}K}(\tau+\xi^2)$ be the Fourier multiplier associated to the Fourier multiplier operator $\wt{\P}_{N}\Q_{\ll K}$, where $\wt\P_N$ denotes the wider projector and $\Q_{\ll K}$ is as in \eqref{Qpro}.  The $L^p\to L^p$ boundedness then follows from the Marcinkiewicz multiplier theorem \cite[Corollary 6.2.5]{Grafakos} once we show that 
 \begin{align}
|\partial_{\tau}^{\al_1} \partial_{\xi}^{\al_2} m(\tau,\xi)| \les |\tau|^{-|\al_1|}|\xi|^{-|\al_2|} \label{Marc}
\end{align}
for any $\tau, \xi\neq 0$ and multiindex $(\al_1,\al_2)$ with $|\al_1|+|\al_2|\leq 2$. 

Note that since $|\xi|\sim N$ and $K\gg N^2$, it holds that $|\tau|\les K$. Then, we compute:
\begin{align*}
|\partial_{\xi} m(\tau,\xi)| &\sim N^{-1} |\wt \psi'(\tfrac{\xi}{N}) \eta_{10^{-10}K}(\tau+\xi^2)| +\frac{|\xi|}{K}  |\wt{\psi}_{N}(\xi) \eta'(\tfrac{\tau+\xi^2}{10^{-10}K})| \les |\xi|^{-1}, \\
|\partial_{\tau} m(\tau,\xi)| &\sim K^{-1} |\wt{\psi}_{N}(\xi) \eta'(\tfrac{\tau+\xi^2}{10^{-10} K} ) |  \les |\tau|^{-1} ,
\end{align*}
where $\wt\psi := \wt\psi_1$. 
Proceeding in the same way, we obtain the estimates for the second derivatives and conclude that \eqref{Marc} holds for $m$.
 \end{proof}

\subsection{Gauge transformation} \label{SEC:Gauge}

Following \cite{PMP}, 
we define the gauged variables 
\begin{align}
v := \P_{+,\text{hi}}[ e^{i\be F[u]} u] \qquad \text{and} \qquad w:= \P_{-,\text{hi}}u, 
\label{gauge}
\end{align}
where $F[u]$ denotes the primitive of $|u|^2$ as in \eqref{F}.
For CCM \eqref{CCM} in the Hardy space, we simply have $w\equiv 0$.
We establish the equations for the gauged variables $v$ and $w$ in \eqref{gauge}.

\begin{lemma}\label{LEM:gaugeeqns}
Let $T>0$, $0 < h <\infty$, and $u \in C([0,T];H^{\infty}(\R))$ be a smooth solution of~\eqref{INLS2}.
Then, the variables $v$ and $w$ defined in \eqref{gauge} satisfy the following equations:
\begin{align}
\dt v + i\dx^2 v & =\mathcal{N}_{v}(u)
 =-2\be\P_{+, \hi}[ v \P_{-}\dx(|u|^2)]
+ \P_{+, \hi}[ e^{i\be F} u\QQ_{h}(|u|^2)] ,\label{veq} \\
\dt w+i\dx^2 w&  = \mathcal{N}_{w}(u)  =  2\be \P_{-,\textup{hi}}[ w \P_{+}\dx(|u|^2)]+  \P_{-, \hi}[u \QQ_{h}(|u|^2) ],     \label{weq}
\end{align}
where $\QQ_h$ is as in \eqref{Qh}. 
\end{lemma}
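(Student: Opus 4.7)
The plan is to exploit the fact that $\P_{\pm,\hi}$ are Fourier multipliers and therefore commute with $\dt$ and $\dx^2$, reducing the problem to algebraic manipulation of $\dt(Gu) + i\dx^2(Gu)$ (for $v$, with $G = e^{i\be F}$) and of the right-hand side of \eqref{INLS2} (for $w$). The $w$-equation is the easier one: applying $\P_{-,\hi}$ to \eqref{INLS2} and decomposing $u = \P_+u + \P_{-,\lo}u + w$ in the cubic term $u\,\P_+\dx(|u|^2)$, the $\P_+u$ contribution is annihilated because $\P_+u \cdot \P_+\dx(|u|^2)$ is Fourier-supported in $\{\xi>0\}$, while a Fourier-support analysis identifies the remainder with $\P_{-,\hi}[w\,\P_+\dx(|u|^2)]$ (up to an explicit low-frequency correction confined to $\xi\in[-2,-1]$, which is harmless).

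For the $v$-equation, the plan is to compute $\dt(Gu) + i\dx^2(Gu)$ directly. The chain rule $\dx G = i\be|u|^2 G$ yields
\begin{equation*}
  i\dx^2(Gu) = iG\dx^2 u - 2\be|u|^2 G\dx u - \be\dx(|u|^2)Gu - i\be^2|u|^4 Gu,
\end{equation*}
and $\dt(Gu) = i\be(\dt F)Gu + G\,\dt u$. Substituting \eqref{INLS2} makes the Schr\"odinger combination $G(\dt u + i\dx^2 u)$ equal to $2\be Gu\,\P_+\dx(|u|^2) + Gu\,\QQ_h(|u|^2)$.

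The algebraic core of the proof is the identity
\begin{equation*}
  \dt F = i(u\dx\cj{u} - \cj{u}\dx u) + \be|u|^4,
\end{equation*}
derived by expanding $\dt|u|^2 = (\dt u)\cj{u} + u(\dt\cj{u})$ and substituting \eqref{INLS2}. The crucial observation here is that $\QQ_h(|u|^2) = i(-\be\GG_h + \g\,\text{Id})(|u|^2)$ is purely imaginary because $\GG_h$ preserves real-valuedness; consequently the two $|u|^2\QQ_h(|u|^2)$ contributions from the two time-derivatives cancel after complex conjugation, and the remaining terms integrate cleanly in $x$. Substituting this into $i\be(\dt F)Gu$ annihilates the $\pm i\be^2|u|^4 Gu$ terms, and the first-derivative pieces combine with $-2\be|u|^2 G\dx u$ via $\dx(|u|^2) = u\dx\cj{u} + \cj{u}\dx u$ to produce $-\be Gu\dx(|u|^2)$. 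Collecting the remaining contributions and using $2\P_+ - \text{Id} = -2\P_-$ on $\dx(|u|^2)$ yields
\begin{equation*}
  \dt(Gu) + i\dx^2(Gu) = -2\be Gu\,\P_-\dx(|u|^2) + Gu\,\QQ_h(|u|^2).
\end{equation*}

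Finally, applying $\P_{+,\hi}$ to this identity gives the equation for $v$; the replacement $Gu \to v$ in the first term on the right is justified by a Fourier-support argument, since an output frequency $\xi \ges 1$ together with $\P_-\dx(|u|^2)$ being Fourier-supported in $\{\xi \le 0\}$ forces the $Gu$-input frequency to be $\ges 1$, so only $\P_{+,\hi}(Gu) = v$ contributes (again up to negligible corrections in the cutoff transition region). The main obstacle will be the algebraic bookkeeping of these cancellations, most critically the recognition that $\QQ_h(|u|^2)$ is purely imaginary, which is precisely what makes $\dt F$ simplify to the clean expression above; without this observation one would inherit a spurious nonlocal term $2i\be Gu\,\dx^{-1}(|u|^2\QQ_h(|u|^2))$ that is absent from the claimed equations.
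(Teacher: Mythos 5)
Your proposal follows essentially the same route as the paper's own proof: compute $\dt(e^{i\be F}u) + i\dx^2(e^{i\be F}u)$ via the chain rule and the identity $\dt F = i(u\dx\cj u - \cj u\dx u) + \be|u|^4$ (whose derivation, as you correctly emphasize, hinges on $\QQ_h(|u|^2)$ being purely imaginary because $\GG_h$ preserves real-valuedness), arrive at the exact pointwise identity
\begin{align*}
\dt(e^{i\be F}u) + i\dx^2(e^{i\be F}u) = -2\be\,e^{i\be F}u\,\P_-\dx(|u|^2) + e^{i\be F}u\,\QQ_h(|u|^2),
\end{align*}
then project with $\Pbhip$ and argue by Fourier supports; the $w$-equation is handled by projecting \eqref{INLS2} directly, exactly as in the paper. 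The paper organizes the computation slightly differently, factoring through $\dt F + i\dx^2 F - \be(\dx F)^2 = 2iu\dx\cj u$ before substituting the equation, but this is a cosmetic reorganization of the same bookkeeping. One small slip: you write ``$2\P_+ - \Id = -2\P_-$,'' which should read $\P_+ - \Id = -\P_-$; the step you actually carry out, combining $2\be Gu\,\P_+\dx(|u|^2)$ with $-2\be Gu\,\dx(|u|^2)$ into $2\be Gu(\P_+ - \Id)\dx(|u|^2)$, is correct, so this is a typo rather than an error.

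More substantively, your caveat ``up to negligible corrections in the cutoff transition region'' points at a genuine, if minor, imprecision. With the paper's Littlewood--Paley conventions --- $\Pbhi$ with symbol $1-\eta(\xi)$ and $\Pblo$ with symbol $\eta(\xi)$, both nonvanishing on $1<|\xi|<2$ --- the identities $\Pbhip[\Pblo f\cdot\P_- g]=0$ and $\P_{-,\textup{hi}}[\P_{-,\textup{lo}}f\cdot\P_+ g]=0$ invoked in the proof are not literally true: the products can carry Fourier support in $(1,2)$ and $(-2,-1)$, respectively, where the $\Pbhip$ (resp.\ $\P_{-,\textup{hi}}$) symbol is nonzero. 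They are exact only if the high/low split uses a sharp frequency cutoff, as in the precursor works on Benjamin--Ono. Since the lemma asserts an exact identity, calling the discrepancy ``harmless'' leaves the proof short of the stated claim; to make it airtight you should either adopt a sharp $\Pbhi, \Pblo$ cutoff for this step, or add the (smooth, compactly Fourier-supported) transition-region term to the right-hand sides of the $v$- and $w$-equations and verify that it is controlled by the subsequent estimates. The paper's own proof glosses over the same point, so your instinct to flag it is well taken, but flagging a nonzero term as negligible is not the same as proving the identity as stated.
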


\begin{proof}
We follow the proof in \cite[Lemma 3.4]{PMP}. Using \eqref{gauge} and writing $F=F[u]$ for simplicity, we compute 
\begin{align}
\begin{split}
\dt v +i\dx^2 v &  = \dt \Pbhip[e^{i\be F}u] + i \dx^2 \Pbhip[e^{i\be F}u] \\
& = \Pbhip[ e^{i\be F} i\be (\dt F) u] +\Pbhip[ e^{i\be F}( \dt u + i\dx^2 u)] 
-\Pbhip[ e^{i\be F} \be (\dx^2 F)u]\\
& \hphantom{X} - i \Pbhip[ e^{i\be F} \be^2 (\dx F)^2  u] + 2i\Pbhip[ \dx(e^{i\be F})\dx u] \\
&  =\Pbhip[ e^{i\be F} i \be  (\dt F +i\dx^2 F - \be (\dx F)^2)u] + \Pbhip[ e^{i \be F}(\dt u +i\dx^2 u)] \\
& \hphantom{X} -2\be \Pbhip[ e^{i\be F} (\dx F)\dx u ].
\end{split} \label{veq2}
\end{align}
Meanwhile, using \eqref{F}, \eqref{INLS2}, the fact that $\be, \g\in\R$, and that the operator $\GG_h$ in \eqref{Lh} preserves real-valuedness, we have
\begin{align*}
\dt F & =  i (u\dx \cj{u}-\cj{u}\dx u )+\be |u|^4
\end{align*}
and hence
\begin{align}
 \dt F+ i\dx^2 F - \be (\dx F)^2 &  =2i u \dx\cj{u} .  
\notag
\end{align}
Inserting the above and \eqref{INLS2} into \eqref{veq2}, we find
\begin{align*}
\dt v +i\dx^2 v &  = 2\be \Pbhip[e^{i\be F} \{u \P_{+}\dx(|u|^2)-|u|^2 \dx u-u^2 \dx \cj{u}\}] \\
&  \hphantom{X}
+i\g \Pbhip[ e^{i\be F} |u|^2 u] -i\be \Pbhip[ e^{i\be F} u \L_{h}(|u|^2)].
\end{align*}
Noting that $u\dx(|u|^2) =u^2 \dx\cj{u}+|u|^2 \dx u,$
and using \eqref{pm1},
we further obtain
\begin{align*}
2\be \Pbhip[e^{i\be F} \{u \P_{+}\dx(|u|^2)-|u|^2 \dx u-u^2 \dx \cj{u}\}]
&
=
-2\be \Pbhip[e^{i\be F} u \, \P_{-}\dx(|u|^2)]
.
\end{align*}
Moreover, using that $\P_{+}\P_{-}=0$ and $\P_{+}[ \P_{-}f \cdot \P_{-}g]=0$, we have	
\begin{align*}
- 2 \be \Pbhip[e^{i\be F} u\, \P_{-}\dx(|u|^2)]  
&= -2\be \Pbhip[ v\, \P_{-}\dx(|u|^2)]
- 2 \be \Pbhip[ \Pblo(e^{i\be F}u) \P_{-}\dx(|u|^2)] \\
&= 
-2 \be \Pbhip[ v\, \P_{-}\dx(|u|^2)].
\end{align*}	
Here, the second term vanishes since $\Pbhip[ \Pblo f \cdot \P_{-}g] = 0.$
Then, we rewrite as		
\begin{align*}
\dt v +i\dx^2 v &  = -2\be\P_{+, \hi}[ v \P_{-}\dx(|u|^2)]
+i\g \P_{+, \hi}[ e^{i\be F} |u|^2 u] -i\be \P_{+, \hi}[ e^{i\be F} u \L_{h}(|u|^2)].
\end{align*}									
This establishes \eqref{veq}.

To obtain \eqref{weq}, we simply apply $\P_{-, \text{hi}}$ to both sides of \eqref{INLS2} and noting that 
\begin{align*}
\P_{-,\text{hi}}[ u \P_+ \dx(|u|^2)] &= \P_{-,\text{hi}}[ w \P_{+}\dx(|u|^2)]+ \P_{-,\text{hi}}[ \P_{-,\text{lo}}u \cdot \P_{+}\dx(|u|^2)] \\
& =  \P_{-,\text{hi}}[ w \P_{+}\dx(|u|^2)],
\end{align*}
since $\P_{-,\text{hi}}[ \P_{-,\text{lo}}f \cdot \P_{+}g ]  =0
,$
completing the proof.
\end{proof}

Lastly, from the definition of the gauged variables in \eqref{gauge}, we have the following recovery formula for the solution $u$ to \eqref{INLS2}:
\begin{align}
u&= \Pbhi u + \Pblo u \notag   \\
& = \Pbhip u + w+\Pblo u  \notag \\
&  =  \Pbhip [ e^{i\be F} u e^{-i\be F}] +w+\Pblo u 
\notag  \\
&  =\Pbhip [ e^{-i\be F} v] +  \Pbhip [ e^{-i\be F} \P_{\text{lo}}( e^{i\be F} u)]  + \Pbhip[e^{-i\be F} \P_{-,\text{hi}}(e^{i\be F}u)] + w+\Pblo u. 
\notag
\end{align}
We now apply $\P_{\text{HI}}$ to both sides, recalling that $\P_{\HI} \P_{\hi} = \P_{\HI}$ and $\P_{\HI} \P_{\lo} = 0$,  to obtain:
\begin{align}
\begin{split}
\PbHI u & = \PbHIp[ e^{-i\be F} v] +\PbHIp[ \Pbhi(e^{-i\be F}) \P_{\text{lo}}( e^{i\be F}u)]\\
& \hphantom{X} + \PbHIp[ e^{-i\be F} \P_{-,\text{hi}}(e^{i\be F}u)] + \PbHI w. 
\end{split} \label{PbHIu}
\end{align}
Note that in the second term, the projectors $\PbHI$ and $\Pblo$ allow us to place for free an extra projector $\Pbhi$ onto the first factor $e^{-i\be F}$.\footnote{This can be justified pointwise since the function $u$ and hence $F$ are smooth, from which we have
\begin{align*}
\int \PbHIp [ e^{-i\be F} \P_{\text{lo}}( e^{i\be F} u)] \phi dx 
&= \int e^{-i\be F}\dx \dx^{-1}[ \P_{\text{lo}}( e^{i\be F} u) \P_{-, \HI} \phi]dx 
\\
&=- \int \dx e^{-i\be F} \cdot \dx^{-1}[ \P_{\text{lo}}( e^{i\be F} u) \P_{-, \HI} \phi]dx \\
& =  -\int \dx \Pbhi e^{-i\be F} \cdot  \dx^{-1}[ \P_{\text{lo}}( e^{i\be F} u) \P_{-, \HI} \phi]dx \\
& = \int \PbHIp [\Pbhi( e^{-i\be F} )\P_{\text{lo}}( e^{i\be F} u)] \phi dx 
\end{align*}
for any $\phi \in C^{\infty}_{c}(\R)$.
}

\subsection{CCM regularity properties: using the Hardy space assumption}

First, we establish the $X^{s,b}$-regularity of solutions $u$ to CCM \eqref{CCM}. The relatively simple form of the estimate is entirely thanks to the Hardy space assumption. 

\begin{lemma}\label{LEM:CCMXsb}
Let  $s\geq s_0>\frac 14$, $0<T\leq 1$, and $u$ be a $H^{\infty}_{+}(\R)$-solution to \textup{CCM} \eqref{CCM} on $[0,T]$. Then, 
\begin{align}\label{CCMXsb00}
\begin{split}
\sup_{0\leq \ta \leq 1} \|u\|_{X^{s-\ta,\ta}_{T}} 
&
\les 
\| u\|_{L^\infty_TH^s_x}+ \| J^{s_0}\PbHI u\|_{\wt{L^4_{T,x}}}  \| u\|_{L^\infty_T H^{s_0}_x}\| J^{s}\PbHI u\|_{\wt{L^4_{T,x}}}
\\
&
\qquad 
+ T^\frac12 \| u \|_{L^\infty_T H^s_x} \|u\|^2_{L^\infty_T L^2_x}
.
\end{split}
\end{align}
\end{lemma}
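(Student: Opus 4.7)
The plan is to interpolate between the two endpoints $\ta=0$ and $\ta=1$ of the scale $\{X^{s-\ta,\ta}_T\}_{\ta\in[0,1]}$. Since each $X^{s-\ta,\ta}$ is a weighted $L^2$-space on the space-time Fourier side with weight $\jb{\tau+\xi^2}^{\ta}\jb{\xi}^{s-\ta}$, complex interpolation gives $X^{s-\ta,\ta}=[X^{s,0},X^{s-1,1}]_\ta$, and the local-in-time norms \eqref{localspace} inherit this via a standard extension. The $\ta=0$ endpoint is trivial: $\|u\|_{X^{s,0}_T}=\|u\|_{L^2_T H^s_x}\le T^{1/2}\|u\|_{L^\infty_T H^s_x}\le \|u\|_{L^\infty_T H^s_x}$ using $T\le 1$.

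For $\ta=1$, I would use the direct identity that follows from the Fourier definition of the norm (combined with a standard extension) to reduce to
\begin{align*}
\|u\|_{X^{s-1,1}_T}\les \|u\|_{L^\infty_T H^{s-1}_x}+\|(\dt+i\dx^2)u\|_{L^2_T H^{s-1}_x}.
\end{align*}
The first term is dominated by $\|u\|_{L^\infty_T H^s_x}$. Since $u$ solves \eqref{CCM}, the second reduces to the trilinear bound $\|u\,\P_+\dx(|u|^2)\|_{L^2_T H^{s-1}_x}\les (\text{cubic RHS})$.

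The trilinear estimate is where the Hardy space assumption is crucial. From \eqref{CCMnonlin}, the output frequency $\xi$ satisfies $\xi=\xi_1+(\xi_3-\xi_2)$ with $\xi_1,\xi_2,\xi_3\ge 0$ (Hardy) and $\xi_3\ge\xi_2$ (from the $\P_+$ projection on the derivative), hence $|\xi_3-\xi_2|\le\xi$. Consequently, the symbol $\jb{\xi}^{s-1}|\xi_3-\xi_2|$ is dominated by $\jb{\xi}^s$, and the task reduces (modulo harmless projections) to bounding $\|J^s(u\,\cj u\, u)\|_{L^2_{T,x}}$. I would decompose each factor as $\PbLO u+\PbHI u$. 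Contributions with at least two $\PbLO$ factors are handled by Bernstein ($\|\PbLO v\|_{L^\infty_x}\les\|v\|_{L^2_x}$) together with $\|f\|_{L^2_T}\le T^{1/2}\|f\|_{L^\infty_T}$, yielding the $T^{1/2}\|u\|_{L^\infty_T H^s_x}\|u\|^2_{L^\infty_T L^2_x}$ piece of the RHS. For configurations with two or three $\PbHI$ factors, I would distribute $J^s$ via the fractional Leibniz rule (Lemma~\ref{LEM:leib}) and apply H\"older in space-time with two $\wt{L^4_{T,x}}$ norms (balanced through the Littlewood-Paley square function) and one $L^\infty_T L^2_x$ norm, assigning the weights $J^s$ and $J^{s_0}$ to the factors so as to match the cubic term of the RHS. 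Dyadic summability is powered by the strict inequality $s_0>\frac14$ (via the Sobolev embedding $H^{s_0}(\R)\embeds L^p(\R)$ for some $p>4$).

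The main obstacle is the trilinear estimate itself, and specifically the way it leverages the Hardy structure. Without the Hardy bound $|\xi_3-\xi_2|\le\xi$, the derivative would fall on a high-frequency factor of size $\max(N_2,N_3)$, as in the High-High-Low configuration described around \eqref{signs}, and this would force $s\ge\frac12$ regardless of how cleverly one distributes H\"older. The Hardy constraint effectively transfers this derivative loss to the output frequency, which is precisely what allows a direct trilinear estimate to close for every $s>\frac14$; interpolation with the trivial $\ta=0$ endpoint then completes the proof.
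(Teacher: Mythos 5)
Your interpolation framework and the reduction to the trilinear estimate are sound, and essentially replicate the role of \cite[Proposition~3.2]{MP} in the paper (a Duhamel-type extension of $u$ off $[0,T]$ works for both endpoints $\ta=0,1$, and interpolating the unrestricted $X^{s,b}$ scale on that single extension gives the supremum in $\ta$); the use of the Hardy ordering $\xi\ge\xi_1$, $\xi\ge\xi_3-\xi_2$ to tame the derivative is also the right idea. The genuine gap is in the final H\"older step. After reducing to $\|J^s(u\cj u u)\|_{L^2_{T,x}}$ and splitting each factor into $\PbLO u+\PbHI u$, the configurations with at least two $\PbHI$ factors cannot be closed by the balance you propose: if two factors carry $J^s$ and $J^{s_0}$ in $\wt{L^4_{T,x}}$, the third must land in $L^\infty_{T,x}$ for H\"older to produce $L^2_{T,x}$ (the balance $L^4_x\times L^4_x\times L^2_x$ only gives $L^1_x$). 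But $\|\PbHI u\|_{L^\infty_{T,x}}\les\|u\|_{L^\infty_T H^{1/2+}_x}$ requires $s_0>\tfrac12$, and the fallback $L^4\times L^8\times L^8$ needs $s_0\ge\tfrac38$; the Sobolev embedding $H^{s_0}\embeds L^p$ with $4<p<\infty$ does not supply the missing spatial integrability. None of these reaches the claimed range $s_0>\tfrac14$.

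What the paper does, and what a binary $\PbLO/\PbHI$ split cannot reproduce, is a full Littlewood--Paley decomposition in every factor and the output, so that Bernstein is applied to the \emph{single lowest-frequency block} $\P_M u$ (with $M=N_2\wedge N_3$, say), yielding $\|\P_M u\|_{L^\infty_{T,x}}\les M^{\frac12-s_0}\|J^{s_0}\P_M u\|_{L^\infty_TL^2_x}$. The numerical gain $M^{\frac12-s_0}$ is exactly what makes the resulting dyadic factor $\tfrac{N_{23}(N_2\wedge N_3)^{1/2-s_0}}{N(N_2\vee N_3)^{s_0}}$ summable for $s_0>\tfrac14$; this is where the threshold of the lemma originates. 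Since $\PbHI$ does not isolate a dyadic scale, this factor is unavailable in your scheme and the lost integrability cannot be recovered. Once one goes fully dyadic, it is moreover preferable to retain the multilinear weight $N^{s-1}N_{23}$ rather than pre-estimate $\jb{\xi}^{s-1}|\xi_3-\xi_2|\les\jb{\xi}^s$: the Hardy ordering $N\ges N_1\vee N_{23}$ is then manifest in each dyadic case, and the factor $N_{23}/N$ directly assists the $N_{23}$-summation.
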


\begin{proof}
We argue as in \cite[Proposition 3.2]{MP}. 
The main point is that for a suitable extension $\wt{u}$ on $[0,T]$ of $u$, it holds that 
\begin{align*}
\sup_{0\le \ta \le 1} 
\| \wt{u}\|_{X^{ s-\ta, \ta}} \les \| \dt u +i\dx^2 u\|_{L^{2}_{T}H^{ s-1}_x} + \|u\|_{L^{\infty}_{T}H^{ s}_x}.
\end{align*}
Thus, from \eqref{CCM}, we need to estimate 
\begin{align}
\|  u\P_{+}\dx(|u|^2)\|_{L^{2}_{T}H^{s-1}_x}.
\notag
\end{align}
By dyadic decomposition, we focus on controling
\begin{align}
N^{s-1} \big\| \P_{N}[ \P_{N_1}u \cdot \P_{+}\dx\P_{N_{23}}[ \cj{\P_{N_2}u} \P_{N_3}u]] \big\|_{L^{2}_{T,x}}
.
 \label{CCMXsb}
\end{align}
Since $\P_{+}u=u$, the frequencies, which satisfy $\xi=\xi_1-\xi_2+\xi_3$, additionally imply
\begin{align}
|\xi| = |\xi_1|+|\xi_3-\xi_2|. \label{CCMfreq}
\end{align}
Therefore, $N\ges N_1 \vee N_{23}$ and it is clear that $N_{23} \les N_2 \vee N_3$. 

\medskip
\noi
$\bullet$ \underline{\textbf{Case 1:} $N\sim N_{1}$.}

\smallskip
\noi
By H\"{o}lder and Bernstein inequalities, we have 
\begin{align}
\eqref{CCMXsb} &\les N^{s-1}N_{23} N_{1}^{-s} \|J^{s}\P_{N_1}u\|_{L^{4}_{T,x}} \|\P_{N_2}u \cdot \P_{N_3}u\|_{L^{4}_{T,x}} \notag \\
&\les N^{-1}N_{23} \|J^{s}\P_{N_1}u\|_{L^{4}_{T,x}} \|\P_{N_2 \vee N_3}u\|_{L^{4}_{T,x}} \|\P_{N_2\wedge N_3}u\|_{L^{\infty}_{T,x}} \notag \\
&\les \frac{N_{23}  (N_2 \wedge N_3)^{\frac 12-s_0}}{N (N_2\vee N_3)^{s_0}}   \|J^{s}\P_{N_1}u\|_{L^{4}_{T,x}}  \|J^{s_0}\P_{N_2\vee N_3}u\|_{L^{4}_{T,x}}  \|J^{s_0}\P_{N_2 \wedge N_3}u\|_{L^{\infty}_{T}L^2_x}.
\label{CCMXsb2}
\end{align}
Now we consider the dyadic prefactor in \eqref{CCMXsb2}. As $s_0>\frac 14$, we may write 
\begin{align}
\frac{N_{23}  (N_2 \wedge N_3)^{\frac 12-s_0}}{N (N_2\vee N_3)^{s_0}}  \sim \frac{N_{23} }{N (N_2\vee N_3)^{2s_0-\frac 12}}. \label{CCMXsbd1}
\end{align}
If $N_2 \vee N_3\ges N$, then we use that $N\ges N_{23}$ to control and $N_{23}$ and we have a negative power of the largest dyadic which allows us to sum over all of the dyadics. If instead $N_2 \vee N_3 \ll N$, then we further bound by
\begin{align*}
 \eqref{CCMXsbd1} \les N^{-1} (N_2 \vee N_3)^{\max(\frac 54-2s_0, 0)} \les N^{-1}\ind_{\{s_0\geq \frac 58\}} + N^{-(2s_0-\frac 14)} \ind_{\{ \frac 14<s_0<\frac 58\}}, 
\end{align*}
where we have a negative power of the largest dyadic frequency, in either case, allowing us to sum in all dyadics.

\medskip
\noi
$\bullet$ \underline{\textbf{Case 2:} $N\gg N_{1}$.}

\smallskip
\noi
By \eqref{CCMfreq}, we must have $N\sim N_{23}\gg N_1$. In particular, $N_{2}\vee N_3 \ges N$.
We then have two further cases depending on the size of $N_{2}\wedge N_3$. 

\medskip
\noi
$\bullet$ \underline{\textbf{Case 2.1:} $N_2 \wedge N_3 \ges N$.}

\smallskip
\noi
In this case, we then have $N_2 \sim N_3 \ges N$. We follow the argument in Case 1 but placing $\P_{N_1}u$ into the space $L^{\infty}_{T,x}$ while $(N_2,N_3)$ both go into $L^4_{T,x}$, with $\P_{N_2 \vee N_3}u$ taking the higher $s$-derivatives. This leads to the dyadic factor
\begin{align*}
\frac{N^{s-1}N_{23} N_{1}^{\frac 12-s_0}}{ N_2^s N_3^{s_0}}  \les 
\frac{N^{s} N_{1}^{\frac 12-s_0}}{N_2^s N_3^{s_0}}  \les N_2^{\max(\frac12-s_0,0)-s_0}  
\sim N_{\max}^{-s_0}
\ind_{\{s_0 \ge \frac12\}}
+
N_{\max}^{-(2s_0 - \frac12)}
\ind_{\{\frac14<s<\frac12\}}, 
\end{align*}
which allows us to sum in the dyadics. 

\medskip
\noi
$\bullet$ \underline{\textbf{Case 2.2:} $N_2 \wedge N_3 \ll N$.}

\smallskip
\noi
This case is finer since we only have one large input frequency. By symmetry, we will assume that $N_3 \sim N\sim N_{23} \gg N_1\vee N_2$. We place $\P_{N_3}u$ into $L^{4}_{T,x}$ with $s$-derivatives and we want to place $\P_{N_1 \wedge N_2}u$ into $L^{\infty}_{T,x}$.
Then by H\"{o}lder and Bernstein as before, we have 
\begin{align*}
\eqref{CCMXsb} &\les \Big(\frac{N}{N_3}\Big)^{s} \|J^s \P_{N_3}u\|_{L^{4}_{T,x}} \cdot \frac{N_{23}}{N} \|\P_{N_1 \vee N_2}u\|_{L^{4}_{T,x}} \|\P_{N_1 \wedge N_2}u\|_{L^{\infty}_{T,x}} \\
& \les  \Big(\frac{N}{N_3}\Big)^{s} \|J^s \P_{N_3}u\|_{L^{4}_{T,x}} \cdot \frac{N_{23}}{N} \cdot \frac{(N_1\wedge N_2)^{\max(\frac 12-s_0,0)}}{(N_1 \vee N_2)^{s_0}} \|J^{s_0}\P_{N_1 \vee N_2}u\|_{L^{4}_{T,x}} \|\P_{N_1 \wedge N_2}u\|_{L^{\infty}_{T}H^{s_0}_x}.
\end{align*}
Now since $s_0>\frac 14$, we have
\begin{align*}
\frac{(N_1\wedge N_2)^{\max(\frac 12-s_0,0)}}{(N_1 \vee N_2)^{s_0}}  \les (N_1 \vee N_2)^{-(2s_0-\frac 12)} \ind_{\{s_0<\frac 12\}} +(N_2\vee N_3)^{-s_0}\ind_{\{s_0\geq \frac 12\}},
\end{align*}
which is a negative power and allows us to perform the dyadic sums over $(N_1,N_2)$.
 For the sum over $N_{23}$, we have
\begin{align} 
\sum_{N_{23} \les N} \frac{N_{23}}{N} \les \frac{N}{N} \sim 1.
\notag
\end{align}
It only remains to sum in $N\sim N_3$, for which by Cauchy-Schwarz, we have
\begin{align}
\begin{split}
\sum_{N\gg 1} \bigg( \sum_{N_3 \sim N}  \bigg(\frac{N}{N_3}\bigg)^{s} \| J^s \P_{N_3}u\|_{L^{4}_{T,x}} \bigg)^2 & \sim \sum_{N\gg 1} \bigg( \sum_{|j| \leq 2}  2^{-sj } \| J^s \P_{2^{j}N}u\|_{L^{4}_{T,x}} \bigg)^2 \\
& \les \sum_{|j|\leq 2} \sum_{N\gg 1}  \| J^s \P_{2^{j}N}u\|_{L^{4}_{T,x}}^{2} \les \| J^s \Pbhi u\|^2_{ \wt{L^4_{T,x}}}, 
\end{split} \label{sum}
\end{align}
which completes the estimate when $N\sim N_3$.

Note that in the above arguments, we do not necessarily always close the estimates with $J^{s_0}\PbHI u$ in $\wt{L^4_{T,x}}$. Up to adding more factors of $L^{\infty}_{T}H^{s}_{x}$, this causes no issue since by writing $u=\PbHI u + \PbLO u$ and using Bernstein's inequality, we have
\begin{align}
\| J^{s_0}u\|_{\wt{L^4_{T,x}}} \leq \| J^{s_0}\PbHI u\|_{\wt{L^4_{T,x}}}  + CT^{\frac 14}\| u\|_{L^{\infty}_{T}L^2_{x}}. 
\notag
\end{align}
This accounts for the presence of terms such as the third one on the right-hand side of \eqref{CCMXsb00}.  We apply this comment throughout the rest of the article without further explicit mention.
\end{proof}

We make a few remarks about Lemma~\ref{LEM:CCMXsb}  and the proof above. First, we only used the Hardy space assumption for the first factor in $u\P_{+}\dx(|u|^2)$ not for the factors $\cj{u} \cdot u$. This ensured that \eqref{CCMfreq} held true so that the output frequency could always be used to control the derivative. This will no longer be the case for INLS \eqref{INLS2} as we discuss in the next section.
Second, the regularity restriction $s>\frac 14$ in Lemma~\ref{LEM:CCMXsb} can be improved down to, at least, $s>0$ by additionally using $L^6_{T,x}$. However, including this extra space becomes a nuisance later as we can no longer gain any factor of $T$ in the equivalent version of \eqref{L4}. As the latter arguments rely on $s>\frac 14$, notably Proposition~\ref{PROP:tri}, we opted for a unified presentation.

\begin{lemma}[Estimates for CCM]
\label{LEM:uinfo}
Let $\frac 14 <s< \frac{3}{4}$, $0\leq T\leq 1$, and $u$ be a $H^{\infty}_{+}(\R)$-solution to \eqref{CCM} on $[0,T]$, and $v$ the gauge variable in \eqref{gauge}. Then, it holds that:
\begin{align}
\begin{split}
\| J^{s}\PbHI u\|_{\wt{L^{4}_{T,x}}} 
\les T^{\frac 14-}(1+\|u\|_{L^{\infty}_{T}H^{\frac 14+}_x}^{2})  \|v\|_{X^{s,\frac 12+\dl}_{T}}   +T^{\frac 14}
 \|u\|_{L^{\infty}_{T}H^{\frac 14+}_x}^3 (1+\|u\|_{L^{\infty}_{T}H^{\frac 14+}_x}^2)
. 
\end{split}
\label{Jsu}
\end{align}
Moreover, for any $M\in \N$ sufficiently large, there exists $\ta>0$ such that
\begin{align}
\begin{split}
\| u\|_{L^{\infty}_{T}H^{s}_{x}} 
\les 
\| u_0 \|_{H^{s}_x} 
+\|v\|_{X^{s,\frac 12+\dl} _{T}}   
&+\|u\|_{L^{\infty}_{T}H^{\frac 14+}_x}^2 \big\{ TM^{3} \|u\|_{L^{\infty}_{T}H^{\frac 14+}_x}  \\
&+ M^{-\ta}(1+\|u\|_{L^{\infty}_{T}H^{\frac 14+}_x}^3+\|v\|_{X^{s,\frac 12+\dl} _{T}})\big\}
.
 \end{split}
\label{uHsbd}
\end{align}
\end{lemma}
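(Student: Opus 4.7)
The strategy for both estimates is to invoke the recovery formula \eqref{PbHIu}. Since $u \in H^{\infty}_+(\R)$, the Hardy-space assumption gives $w = \P_{-,\hi} u \equiv 0$, and so
\begin{align*}
\PbHI u = \PbHIp[e^{-i\be F} v] + \PbHIp\bigl[\Pbhi(e^{-i\be F})\, \Pblo(e^{i\be F} u)\bigr] + \PbHIp\bigl[e^{-i\be F}\, \P_{-,\hi}(e^{i\be F} u)\bigr].
\end{align*}
The first summand is the main term carrying $v$; the remaining two are multilinear expressions in $u$ alone. The high-frequency projector in the second term can be rewritten via $\Pbhi(e^{-i\be F}) = \dx^{-1}\Pbhi[-i\be |u|^2 e^{-i\be F}]$ to gain a derivative, and analogously for the third term, where $\P_{-,\hi}(e^{i\be F} u)$ is nonzero only when $e^{i\be F}$ supplies high negative frequencies and a derivative can be moved onto $|u|^2$.

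For \eqref{Jsu}, I apply $J^s$ to the recovery formula and measure in $\wt{L^4_{T,x}}$, following the same fractional-Leibniz scheme as in the proof of \eqref{L2F1}. The key input is the $L^4$-Strichartz estimate \eqref{L4}, which turns $\|J^s \P_{\ges 1} v\|_{\wt{L^4_{T,x}}}$ into $T^{\frac14-}\|v\|_{X^{s,1/2+\dl}_T}$, producing the first term on the right-hand side of \eqref{Jsu}. For the remaining summands of the recovery formula, which contain only $u$, after the derivative gain described above the resulting factors are controlled by $\|u\|_{L^\infty_T L^4_x} \les \|u\|_{L^\infty_T H^{1/4+}}$ (via Sobolev) and by a crude H\"older in time contributing $T^{1/4}$, giving the second term of \eqref{Jsu}.

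For \eqref{uHsbd}, the low-frequency part is handled by Bernstein and mass conservation: $\|\PbLO u\|_{L^\infty_T H^s_x} \les \|u_0\|_{L^2} \leq \|u_0\|_{H^s}$. For $\PbHI u$, I apply $J^s$ to the recovery formula and take the $L^\infty_T L^2_x$ norm. The leading term is bounded pointwise in $t$ by the product estimate \eqref{L2F1}: the $\|D^s \P_{\ges 1} v\|_{L^2_x}$ contribution is controlled by $\|v\|_{X^{s,1/2+\dl}_T}$ via the embedding \eqref{YsCTHs}, while the lower-order $L^\infty_x$ and $L^3_x$ factors on $v$ are handled by Sobolev. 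The parameter $M$ enters in the two nonlinear-in-$u$ summands, where I split the outer copy of $u$ (or of $e^{i\be F} u$) as $\P_{\leq M}(\cdot) + \P_{>M}(\cdot)$. The low-frequency block trades a factor $M^3$ from Bernstein (upgrading $L^2$ to $L^\infty_x$ after absorbing the $\dx^{-1}$ gain) for a factor $T$ from a trivial H\"older in time, producing $TM^3\|u\|^3_{L^\infty_T H^{1/4+}}$; the high-frequency block gains $M^{-\theta}$ for some $\theta > 0$ by transferring regularity from the $H^s$-control to the weaker $H^{1/4+}$-control (the threshold $s > \tfrac14$ ensures $\theta > 0$), producing the $M^{-\theta}$ remainder in \eqref{uHsbd}.

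The principal obstacle is the self-referential structure $F = \dx^{-1}|u|^2$, which forces $u$ to appear on both sides of \eqref{uHsbd}, precluding a direct multilinear estimate. The $M$-splitting is the crucial device that decouples this dependence, expressing the error entirely in terms of the weaker $H^{1/4+}$ norm of $u$; this is exactly what makes the eventual bootstrap closeable in the well-posedness proof of Theorem~\ref{THM:main}.
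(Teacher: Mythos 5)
Your treatment of \eqref{Jsu} is broadly on the right track (recovery formula, derivative gain through $\Pbhi(e^{-i\be F}) = \dx^{-1}\Pbhi[-i\be|u|^2 e^{-i\be F}]$, and the $L^4_T\to L^\infty_T$ H\"older step producing $T^{1/4}$), though you gloss over the hardest ingredient: the third term of the recovery formula requires a quantitative gain of $N_2^{-1/4+}$ on $\|\P_{N_2}\P_{-,\hi}(e^{i\be F}u)\|_{L^\infty_x}$, obtained by duality, integration by parts, and the Hardy-space assumption $\P_+u=u$. Without that explicit decay in $N_2$, the dyadic sum does not close, and a generic fractional-Leibniz argument \`a la \eqref{L2F1} does not supply it.

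The genuine gap is in \eqref{uHsbd}. You propose to cover $\P_{\LO}u$ by mass conservation and $\P_{\HI}u$ entirely by the recovery formula, introducing $M$ only by splitting the inner copy of $u$ inside the recovery-formula summands, and you claim a $TM^3$ contribution from ``a trivial H\"older in time.'' But the recovery formula \eqref{PbHIu} is a pointwise-in-$t$ algebraic identity, and the left side of \eqref{uHsbd} is an $L^\infty_T$ norm: no time integration occurs anywhere in your decomposition, so no positive power of $T$ can be extracted. In the actual proof, the factor $T$ arises because the decomposition is $u = \P_{\le M}u + \P_{>M}u$ with the medium- and low-frequency block $\P_{\le M}u$ estimated via the Duhamel formula for \eqref{CCM}; the time integral $\int_0^t$ supplies the $T$, and the frequency restriction $|\xi|\les M$ in the nonlinearity supplies the $M^{s+3/2}$. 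The recovery formula is then only applied to $\P_{>M}u$, where the outer projection forces the smoother factor $\P_{\ges M}(e^{-i\be F})$ (which absorbs a $\dx^{-1}$) to carry frequency $\ges M$, producing the clean $M^{-\ta}$ gain with only $\|u\|_{H^{1/4+}}$ on the right. Your proposed $M^{-\ta}$ gain instead comes from ``transferring $H^s$-control to $H^{1/4+}$-control,'' i.e.\ $\|\P_{>M}u\|_{H^{1/4+}}\les M^{-(s-1/4-)}\|u\|_{H^s}$, which puts $\|u\|_{L^\infty_T H^s}$ back on the right-hand side and yields a weaker statement than \eqref{uHsbd}. Both the missing Duhamel step and the self-referential $H^s$ dependence need to be repaired.
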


\begin{proof}
We first prove \eqref{Jsu}. 
By the recovery formula in \eqref{PbHIu}, recalling that $w\equiv 0$ for \eqref{CCM}, and  triangle inequality, we have
\begin{align}
\begin{split}
\| J^{s}\PbHI u\|_{\wt{L^{4}_{T,x} }}
& 
\les 
\| J^{s}\PbHIp[ e^{-i\be F} v]\|_{\wt{L^{4}_{T,x} }}
+\|J^s \PbHIp[ \Pbhi(e^{-i\be F}) \P_{+,\text{lo}}( e^{i\be F} u)]\|_{\wt{L^{4}_{T,x} }}
 \\
&\hphantom{XX} 
+\|J^{s}\PbHIp [ e^{-i\be F}\P_{-,\text{hi}}(e^{i\be F}u)] \|_{\wt{L^{4}_{T,x} }} 
\\
&
=: \I+\II+\III.
\end{split} \label{Jsu1}
\end{align}

We begin by estimating $\I$, which we write as
\begin{align*}
\| J^{s}\PbHIp[ e^{-i \be F} v]\|_{\wt{L^{4}_{T,x} }} 
 \sim \bigg( \sum_{N\gg 1} N^{2s} \| \P_{N}\P_+[ e^{-i\be F}v]\|_{L^4_{T,x}}^2\bigg)^{\frac{1}{2}}.
\end{align*} 
We split $e^{-i\be F}= \Pblo e^{-i \be F} + \Pbhi e^{-i \be F}$ and consider each contribution, beginning with the contribution from $\Pblo e^{-i \be F}$.  In the following, after we make use of the outer projection factors such as $\PbHIp$ to enforce a large output frequency and some frequency sign behaviour we then remove them using their $L^p\to L^p$ boundedness for any $1<p<\infty$. We will do this without further explicit mention.

By duality,
\begin{align*}
\|\P_{N}[ \Pblo (e^{-i \be F}) v]\|_{L^{4}_{T,x}} = \sup_{\| g\|_{L^{4/3}_{T,x}}=1}  \bigg| \int_{0}^{T}\int_{\R} \cj{\P_{N}g}\cdot  \Pblo (e^{-i \be F}) v dx dt\bigg|.
\end{align*}
Then, for fixed $g\in L^{4/3}_{T,x}$ of unit norm, we have
\begin{align*}
 & \bigg| \int_{0}^{T}\int_{\R} \cj{\P_{N}g}\cdot  \Pblo (e^{i\be F}) v dx dt\bigg| 
  \leq |\jb{\P_{N}g, \Pblo(e^{-i \be F}) (\P_{\ll N}v)}_{L^2_{T,x}}|
  \\
  &
  \hspace{3.5cm}
  +|\jb{\P_{N}g, \Pblo(e^{-i \be F}) (\P_{\gg N}v)}_{L^2_{T,x}}| 
  + |\jb{\P_{N}g, \Pblo(e^{-i \be F}) (\wt{\P}_{N}v)}_{L^2_{T,x}}|
\end{align*}
Then, by an integration by parts and \eqref{PloeiF}, we have
\begin{align*}
\jb{ \Pblo (e^{-i \be F}) , (\P_{N}g) \cj{(\P_{\ll N}v)}}_{L^2_{T,x}} 
&= \jb{ \Pblo (e^{-i \be F}) , \wt{\P}_{N}[ {(\P_{N}g)} \cj{(\P_{\ll N}v)}]}_{L^2_{T,x}} \\
&= -\jb{\dx \Pblo (e^{-i \be F}) ,\dx^{-1} \wt{\P}_{N}[{(\P_{N}g)} \cj{(\P_{\ll N}v)}]}_{L^2_{T,x}} \\
& =  \jb{ \wt{\P}_{N}\Pblo \dx (e^{-i \be F}) , \wt{\P}_{N}[{(\P_{N}g)} \cj{(\P_{\ll N}v)}]}_{L^2_{T,x}}\\
&= 0,
\end{align*}
since $\wt\P_N\Pblo =0$ for $N\gg1$. For similar reasons, we see that 
\begin{align*}
\jb{\P_{N}g, \Pblo(e^{-i \be F}) (\P_{\gg N}v)}_{L^2_{T,x}} = 0.
\end{align*}
Therefore, from H\"older's inequality and \eqref{DeLinfty}, we have
\begin{align}
 \|\P_{N}[ \Pblo (e^{-i \be F}) v]\|_{L^{4}_{T,x}} \leq \|\P_{N}[ \Pblo (e^{- i \be F}) (\wt{\P}_{N}v)]\|_{L^{4}_{T,x}} \les \|\wt{\P}_{N}v\|_{L^{4}_{T,x}},
 \notag
\end{align}
and hence by \eqref{L4}
\begin{align*}
\bigg(\sum_{N\gg 1} N^{2s} \| \P_{N}\P_+[ \Pblo (e^{-i \be F})v]\|_{L^4_{T,x}}^2  \bigg)^{\frac 12}
&\les  \| J^{s}v\|_{\wt{L^{4}_{T,x}}}\les T^{\frac 14-} \|v\|_{X^{s,\frac 12}_{T}}.
\end{align*}

We consider the contribution from $\Pbhi(e^{-i \be F})$. We decompose
\begin{align*}
 \| \P_{N}\PbHIp[ \Pbhi(e^{-i \be F})v]\|_{L^4_{T,x}} \leq \sum_{N_1, N_2 \in 2^{\N}}  \| \P_{N}\PbHIp[ \P_{N_1}(e^{-i \be F})\P_{N_2}v]\|_{L^4_{T,x}}.
\end{align*}
When $N_1 \ges  N_2$, by Bernstein's inequality, Sobolev inequality, and \eqref{YsCTHs}, we have 
\begin{align*}
 \| \P_{N}\PbHIp[ \P_{N_1}(e^{-i \be F})\P_{N_2}v]\|_{L^4_{T,x}} & \les  \|  \P_{N_1}e^{-i \be F}\|_{L^4_{T,x}}  \|\P_{N_2}v\|_{L^{\infty}_{T,x}} \\
 & \les N_1^{-1} \|  \P_{N_1}(|u|^2 e^{-i \be F})\|_{L^4_{T,x}}  N_2^{\frac 12-s} \|v\|_{L^{\infty}_{T}H^{s}_x} \\
 & \les N_{1}^{-\frac 34}  \|  \P_{N_1}(|u|^2 e^{-i \be F})\|_{L^4_{T}L^{2}_x} N_2^{\frac 12-s} \|v\|_{L^{\infty}_{T}H^{s}_x} \\
 &\les N_{1}^{-\frac{3}{4}} N_2^{\frac 12-s} T^{\frac{1}{4}}\| u\|^2_{L^{\infty}_{T}H^{\frac 14+}_x}\|v\|_{X^{s,\frac 12+\dl}_{T}}.
\end{align*}
Then, since $s<\frac 34$, we have
\begin{align*}
N^s N_{1}^{-\frac34} N_2^{\frac 12-s} 
\les N_1^{s- \frac34 + (\frac12-s) \lor 0}
\les N_{\max}^{0-}
,
\end{align*}
which we can use to perform the dyadic summations.
When $N_2 \gg N_1$, then $N_2\sim N$ and we take $\P_{N_1}(e^{i \be F})$ into $L^{\infty}_{T,x}$ and perform the summation over $(N,N_2)$ as in \eqref{sum}, together with \eqref{L4}.
Then,  we have 
\begin{align*}
\| \P_{N_1}(e^{-i \be F})\|_{L^{\infty}_{T,x}} \les N_{1}^{-1+\frac 12} \| \P_{N_1}(|u|^2 e^{-i \be F})\|_{L^{\infty}_{T}L^2_x} \les N_{1}^{-\frac 12}\|u\|_{L^{\infty}_{T}H^{\frac 14+}_x}^2, 
\end{align*}
which allows us to sum in $N_1$. Combining all the estimates above, we have
\begin{align}
\label{CCMI}
\I \les T^{\frac14-} \big(1 + \|u\|^2_{L^\infty_T H^{\frac14+}_x} \big) \| v\|_{X^{s, \frac12 + \dl}_T} .
\end{align}

We move onto $\II$.  
By frequency considerations, we have
\begin{align}
\| \P_{N}\PbHIp[ \Pbhi(e^{-i\be F}) \P_{+,\text{lo}}( e^{i\be F} u)]\|_{L^{4}_{T,x} }  &= \| \P_{N}\PbHIp[ \wt{\P}_{N}\Pbhi(e^{-i\be F}) \P_{+,\text{lo}}( e^{i\be F} u)]\|_{L^{4}_{T,x} } \notag \\
& \les  \| \wt{\P}_{N}\Pbhi(e^{-i\be F}) \P_{+,\text{lo}}( e^{i\be F} u)\|_{L^{4}_{T,x} } \notag \\
& \les \|\P_{+,\text{lo}}( e^{i\be F} u)\|_{L^{\infty}_{T,x} }  \| \wt{\P}_{N}\Pbhi(e^{-i\be F}) \|_{L^{4}_{T,x} }. \label{LpII1}
\end{align} 
We note that by Bernstein's inequality, we have
\begin{align}
\begin{split}
\|\P_{+,\text{lo}}( e^{i\be F} u)\|_{L^{\infty}_{T,x} }  
 \les \|u\|_{L^{\infty}_{T}L^2_x}.
\end{split} \label{Linftyu}
\end{align}
Therefore, with \eqref{Linftyu} used in \eqref{LpII1}, we have 
\begin{align}
(\II)^2 & \les \|u\|_{L^{\infty}_{T}L^2_x}^2 \sum_{N\ges 1} N^{2s}\| \wt{\P}_{N}\Pbhi(e^{-i\be F}) \|_{L^{4}_{T,x} }^2  \notag \\
& \les  \|u\|_{L^{\infty}_{T}L^2_x}^2 \sum_{N\ges 1} N^{-2\eps} N^{2(s-1+\eps)}\|\wt{\P}_{N}\Pbhi( |u|^2 e^{-i\be F}) \|_{L^{4}_{T,x} }^2 \notag  \\
& \les   \|u\|_{L^{\infty}_{T}L^2_x}^2  \sup_{N\ges 1} N^{2(s-1+\eps)}\|\wt{\P}_{N}\Pbhi( |u|^2 e^{-i\be F}) \|_{L^{4}_{T,x} }^2, \label{LpII2}
\end{align}
for $0<\eps \ll 1-s$. It remains to bound the second factor in \eqref{LpII2}, for which we write $|u|^2 = |\P_\lo u|^2 + |\P_\hi u|^2 + 2 \Re(\P_\lo u \cj{\P_\hi u})$. 
By the boundedness of $\Pbhi$ and $\wt{\P}_{N}$ on $L^4_x$, and that $s<1$, we have
\begin{align*}
 \sup_{N\ges 1} N^{2(s-1+\eps)}\|\wt{\P}_{N}\Pbhi( |\Pblo u|^2 e^{-i\be F}) \|_{L^{4}_{T,x} }^2 \les T^{\frac{1}{2}} \|u\|_{L^{\infty}_{T}L^2_x}^{4}.
\end{align*}
By Bernstein's inequality with $1<q<4$ given by $\frac 1q= 1+\frac 14-s-\eps$ and Sobolev embedding, we have 
\begin{align*}
N^{s-1+\eps}\|\wt{\P}_{N}\Pbhi( |\Pbhi u|^2 e^{-i\be F}) \|_{L^{4}_{T,x} } 
&
\les \|\wt{\P}_{N}\Pbhi( |\Pbhi u|^2 e^{-i \be F})\|_{L^4_{T}L^{q}_{x}} \les \| \Pbhi u\|_{L^{8}_{T}L^{2q}_x}^2 
\\
&
\les T^{\frac{1}{4}}\|u\|_{L^{\infty}_{T}H^{\frac 14+}_x}^{2}.
\end{align*}
Finally, by Sobolev embedding, we have
\begin{align*}
N^{s-1+\eps}\|\wt{\P}_{N}\Pbhi( (\Pblo u)(\Pbhi u) e^{-i\be F}) \|_{L^{4}_{T,x} }& \les \|\wt{\P}_{N}\Pbhi( (\Pblo u)(\Pbhi u) e^{-i\be F}) \|_{L^{4}_{T}L^{2}_x }  \\
&\les T^{\frac 14} \|\Pblo u\|_{L^{\infty}_{T,x}} \|\Pbhi u\|_{L^{\infty}_{T}L^{2}_x}  \\
&\les T^{\frac 14} \|u\|_{L^{\infty}_{T}H^{\frac 14+}_x}^2.
\end{align*}
Combining these estimates, we see that 
\begin{align}\label{CCMII}
\II \les T^{\frac 14}\|u\|^3_{L^{\infty}_{T}H^{\frac 14+}_x}.
\end{align}

We move onto the term $\III$. By a similar argument as used for $\I$, we have
\begin{align}
\| \P_{N}\PbHIp[ e^{-i \be F} \P_{-,\text{hi}}(e^{i \be F}u)]\|_{L^{4}_{x}}
& = \| \P_{N}\PbHIp[ \wt{\P}_{N}(e^{-i \be F}) \P_{-,\text{hi}}(e^{i \be F}u)]\|_{L^{4}_{x}} \notag  \\
& \les \sum_{N_2 \les N} \| \P_{N}\PbHIp[ \wt{\P}_{N}(e^{-i \be F}) \P_{-,\text{hi}}\P_{N_2}(e^{i \be F}u)]\|_{L^{4}_{x}} \notag \\
& \les \sum_{N_2 \les N} \| \wt{\P}_{N}(e^{-i \be F})\|_{L^4_x} \| \P_{N_2}\P_{-,\text{hi}}(e^{i \be F}u)]\|_{L^{\infty}_{x}}. \label{III1}
\end{align}
We first control each factor in \eqref{III1}. Fix $\eps>0$ sufficiently small. By Bernstein's inequality, we have
\begin{align}
 \| \P_{N_2}\P_{-,\text{hi}}(e^{i \be F}u)]\|_{L^{\infty}_{x}} 
 &\les   \| \P_{N_2}\P_{-,\text{hi}}(e^{i \be F}\PbLO u)]\|_{L^{\infty}_{x}}  +N_{2}^{\eps}  \| \P_{N_2}\P_{-,\text{hi}}(e^{i \be F}\PbHI u)]\|_{L^{\frac{1}{\eps}}_{x}}
\notag
 \\
 &\les  \|u\|_{L^2_x} +N_{2}^{\eps}  \| \P_{N_2}\P_{-,\text{hi}}(e^{i \be F}\PbHI u)]\|_{L^{\frac{1}{\eps}}_{x}}.
\label{III1a}
\end{align}
Note that in the second inequality we used that the Fourier multiplier associated to $\P_{N_2}\Pbhip$ is supported away from the zero frequency so that $\P_{N_2}\P_{-,\text{hi}}$ is bounded from $L^{\infty}_x$ to $L^{\infty}_x$ uniformly in $N_2$. 
Let $r=\frac{1}{1-\eps}$. By duality, using that $\P_{+}u=u$, and integration by parts, we have
\begin{align*}
 \| \P_{N_2}\P_{-,\text{hi}}(e^{i \be F}\PbHI u)]\|_{L^{\frac{1}{\eps}}_{x}}
& = \sup_{ \|g\|_{L^{r}_x =1}} \bigg|  \int_{\R} e^{i \be F} \cj{\P_{-,\text{hi}}\P_{N_2} g}\PbHI u dx    \bigg| \\
 & = \sup_{ \|g\|_{L^{r}_x =1}} \bigg|  \int_{\R} (\dx e^{i \be F}) \dx^{-1}[\cj{\P_{-,\text{hi}}\P_{N_2} g }\PbHI u] dx    \bigg|.
\end{align*}
Fix $h\in L^r_x$ of unit norm. Then, by H\"{o}lder's inequality and Bernstein,
\begin{align*}
\int_{\R} (\dx e^{i \be F}) \dx^{-1}[\cj{\P_{-,\text{hi}}\P_{N_2}g}\PbHI u] dx & = \sum_{M\ges N_2 \vee K} \int_{\R} \P_{M}(\dx e^{i \be F})\cdot  \dx^{-1}[\cj{\P_{-,\text{hi}}\P_{N_2}g} \cdot \P_{K} u] dx \\
& \les \sum_{M\ges N_2 \vee K} M^{-1} \| \P_{M}(\dx e^{i \be F})\|_{L^{\infty}_{x}} \|\P_{N_2}g\|_{L^r_x} \|\P_{K}u\|_{L^{\frac{1}{\eps}}_x} \\
& \les \sum_{M\ges N_2 } M^{-\frac 34} \| \P_{M}( |u|^2 e^{i \be F})\|_{L^{\infty}_{x}} \|u\|_{H^{ \frac{1}{4}}_x} \\
&\les \sum_{M\ges N_2 } M^{-\frac 34} M^{\frac 12} \| |u|^2 \|_{L^{2}_x}\|u\|_{H^{ \frac{1}{4}}_x}  \\
& \les N_{2}^{-\frac 14}\| u\|_{H^{\frac 14+}_x}^{3}.
\end{align*}
Therefore, 
\begin{align}
\| \P_{N_2}\P_{-,\text{hi}}(e^{i \be F}u)\|_{L^{\infty}_{x}} \les  \|u\|_{L^2_x}  + N_{2}^{-\frac 14+\eps}\| u\|_{H^{\frac 14+}_x}^{3}. \label{III2}
\end{align}
We reiterate that \eqref{III2} was derived under the Hardy space assumption $\P_{+}u=u$.
Now, we consider the first factor in \eqref{III1}. By Bernstein and Sobolev inequality, 
\begin{align}
\| \wt{\P}_{N}(e^{-i \be F})\|_{L^{4}_x} & \les N^{-1} \| \wt{\P}_{N}( |u|^2 e^{-i \be F})\|_{L^{4}_x} \les N^{-\frac 34} \| |u|^2\|_{L^2_x} \les  N^{-\frac 34}  \| u\|_{H^{\frac 14+}_x}^2. \label{III3}
\end{align}
Inserting \eqref{III2} and \eqref{III3} into \eqref{III1}, we find
\begin{align*}
N^{s} \| \P_{N}\PbHIp[ e^{-i \be F} \P_{-,\text{hi}}(e^{i \be F}u)]\|_{L^{4}_{x}} &\les N^{s-\frac 34 + } \|u\|_{H^{\frac 14+}_x}^3 (1+\|u\|_{H^{\frac 14+}_x}^2),
\end{align*}
where we have a negative power of $N$ provided that $s<\frac 34$. We finally obtain
\begin{align}
\III \les T^{\frac 14}  \|u\|_{H^{\frac 14+}_x}^3 (1+\|u\|_{H^{\frac 14+}_x}^2),  \label{III4}
\end{align}
completing the estimate in \eqref{Jsu}.

We now move onto the estimate \eqref{uHsbd}. Fix $M\in \N$ sufficiently large and decompose $u=\P_{\leq M}+ \P_{>M}u$. 
To control $\P_{\leq M} u$, we use the Duhamel formulation of \eqref{CCM} and obtain
\begin{align}
\|\P_{\leq M} u \|_{L^{\infty}_{T}H^{s}_x} 
&
\les \| u_0\|_{H^{s}_{x}} +  T \| \P_{\leq M}( u \P_{+}\dx(|u|^2))\|_{L^{\infty}_{T}H^{s}_{x}}  
.
\label{JsuPLO}
\end{align}

\noi
As $\P_{+}u=u$, in the second term $u \P_{+}\dx(|u|^2)$, all input functions are supported on frequencies $|\xi|\les M$  and thus by H\"{o}lder's inequality and Sobolev embedding, we have
\begin{align}
\| \P_{\leq M}( u \P_{+}\dx(|u|^2))\|_{L^{\infty}_{T}H^{s}_{x}}  
\les  M^{1+s}\|\P_{\les M}u\|_{L^{\infty}_{T}L^{4}_x}  \| \P_{\les M} u\|_{L^{\infty}_{T}L^{8}_x}^2
 \les M^{s+\frac 32} \|u\|_{L^{\infty}_{T}H^{\frac 14+}_x}^{3}.
\label{PLOG1}
\end{align}
Combining \eqref{JsuPLO} and \eqref{PLOG1}, we obtain
\begin{align*}
\|\P_{\leq M} u \|_{L^{\infty}_{T}H^{s}_x}  \les  \| u_0\|_{H^{s}_{x}} + TM^{s+\frac 32} \|u\|_{L^{\infty}_{T}H^{\frac 14+}_x}^{3}.
\end{align*}

We move onto the high-frequency portion $\P_{>M}u$ and use \eqref{PbHIu} with $w\equiv0$, to estimate each part in $L^{\infty}_{T}H^{s}_x$.
We begin with the corresponding term $\I$ in \eqref{Jsu1}. First, by the fractional Leibniz rule, \eqref{YsCTHs} and \eqref{DeLinfty}, we have
\begin{align}
\|\P_{>M}J^s\PbHIp[& \Pblo (e^{-i \be F})v]\|_{L^\infty_T L^{2}_x}   \sim \|\P_{>M}D^s\PbHIp[ \Pblo (e^{-i \be F})v]\|_{L^\infty_T L^{2}_x} \notag \\ 
& \les \| D^s \Pblo e^{-i \be F}\|_{L^{\infty}_{T,x}} \|v\|_{L^\infty_T L^2_x}  + \|e^{-i \be F}\|_{L^{\infty}_{T,x}} \|D^s v\|_{L^\infty_T L^2_x} \les \|v\|_{X^{s,\frac 12+}_{T}}.  \label{I1}
\end{align}
Now, by the triangle inequality, we have
\begin{align*}
\|\P_{>M}&J^s\PbHIp[ \Pbhi (e^{-i \be F})v]\|_{L^{2}_x}  \\
&\leq \|\P_{>M}J^s\PbHIp[ \Pbhi \P_{\ll M} (e^{-i \be F}) \P_{\ges M}v]\|_{L^{2}_x}  +\|\P_{>M}J^s\PbHIp[ \P_{\ges M} (e^{-i \be F})v]\|_{L^{2}_x}
\end{align*}
and we estimate each of these terms. 
By the fractional Leibniz rule, Sobolev embedding, and \eqref{DeLinfty}, we have
\begin{align}
\|&\P_{>M}J^s\PbHIp[ \Pbhi \P_{\ll M} (e^{-i \be F})\P_{\ges M}v]\|_{L^\infty_T L^{2}_x}  \notag  \\
& \les \| J^{s}\Pbhi \P_{\ll M} (e^{-i \be F})\|_{L^{\infty}_{T}L^{4}_{x}}\| \P_{\ges M}v\|_{L^{\infty}_{T}L^{4}_{x}} 
+ \| \Pbhi \P_{\ll M} (e^{-i \be F})\|_{L^{\infty}_{T,x}} \|J^{s} \P_{\ges M}v\|_{L^{\infty}_{T}L^{2}_x}\notag  \\
& \les M^{\frac 14 -s}  \| J^{s-1}\Pbhi \P_{\ll M} (|u|^2 e^{-i \be F})\|_{L^{\infty}_{T}L^{4}_{x}}  \|v\|_{X^{s,\frac 12+}_{T}} +  \|v\|_{X^{s,\frac 12+}_{T}} \notag \\
& \les M^{\frac 14 -s}  \| J^{s-\frac 34}\Pbhi \P_{\ll M} (|u|^2 e^{-i \be F})\|_{L^{\infty}_{T}L^{2}_{x}}  \|v\|_{X^{s,\frac 12+}_{T}} +  \|v\|_{X^{s,\frac 12+}_{T}}\notag  \\
& \les M^{\frac 14 -s}  \|u\|_{L^{\infty}_{T}H^{\frac 14}_x}^{2}  \|v\|_{X^{s,\frac 12+}_{T}}+ \|v\|_{X^{s,\frac 12+}_{T}}. \label{I2}
\end{align}
For the second term, we argue similarly: 
\begin{align}
\|\P_{>M}J^s\PbHIp[ \P_{\ges M} (e^{-i \be F})v]\|_{L^\infty_T L^{2}_x}& \les  \| J^{s}\P_{\ges M} (e^{-i \be F})\|_{L^{\infty}_{T}L^{4}_{x}} \|v\|_{X^{\frac 14,\frac 12+}_{T}}+ \|v\|_{X^{s,\frac 12+}_{T}} \notag \\
& \les M^{s-\frac 34}  \|u\|_{L^{\infty}_{T}H^{\frac 14}_x}^{2}  \|v\|_{X^{s,\frac 12+}_{T}}+ \|v\|_{X^{s,\frac 12+}_{T}}. \label{I3}
\end{align}
This completes the estimate for $\I$.

By the signs and the $\Pblo$ on the second term of $\II$ (analogous to $\II$ in \eqref{Jsu1}),  we have
\begin{align*}
\II: = \|J^s \P_{>M}\PbHIp[ \Pbhi(e^{-i \be F}) \P_{+,\text{lo}}( e^{i \be F} u)]\|_{L^{\infty}_{T}L^2_x }  \les  \|D^s [ \P_{\ges M}(e^{-i \be F}) \P_{+,\text{lo}}( e^{i \be F} u)]\|_{L^{\infty}_{T}L^2_x }
.
\end{align*}
Thus, by the fractional Leibniz rule (Lemma~\ref{LEM:leib}), Bernstein's inequality, and H\"older's inequality, we get
\begin{align}
 & \|D^s [ \P_{\ges M}(e^{-i \be F}) \P_{+,\text{lo}}( e^{i \be F} u)]\|_{L^{\infty}_{T}L^2_x }
\notag
\\
 & \les \|D^{s} \P_{\ges M} (e^{-i \be F})\|_{L^{\infty}_{T}L^2_x} \|\P_{+,\text{lo}}(e^{i \be F}u)\|_{L^{\infty}_{T,x}}
+ \|\P_{\ges M}e^{-i \be F}\|_{L^{\infty}_{T}L^2_x}  \|D^{s}\P_{+,\text{lo}}(e^{i \be F}u)\|_{L^{\infty}_{T,x}}  
\notag
\\
 & \les \|D^{s-1}\P_{\ges M} (|u|^2 e^{-i \be F})\|_{L^{\infty}_{T}L^2_x} \|u\|_{L^{\infty}_{T}L^2_x} 
\notag
\\
& \les M^{-\frac 14} \|u\|_{L^{\infty}_{T}H^{\frac 14+}_x}^2 \|u\|_{L^{\infty}_{T}L^2_x}.
 \label{PU234}
\end{align}

Now we estimate the analog of $\III$ in \eqref{Jsu1}, namely
\begin{align}
\III := \|J^s \P_{>M}\PbHIp [ \Pbhi (e^{-i \be F})\P_{-,\text{hi}}(e^{i \be F}u)] \|_{L^2_x}.   \label{PU5671}
\end{align}
By duality, we reduce to controlling
\begin{align*}
\sum_{N\ges M} \sum_{N_1 \ges N\vee N_2} \int_{\R} \cj{J^s \P_{N}g} \cdot  \P_{N_1}(e^{-i \be F}) \cdot \P_{N_2}\P_{-,\text{hi}}(e^{i \be F}u) dx,
\end{align*}
where $g\in L^2_x$ of unit norm.
We split the summation into two cases: $N_1 \gg N_2$ or $N_1 \sim N_2$.
In the former case, by Cauchy-Schwarz and \eqref{III2}, we bound this contribution by
\begin{align*}
& \|u\|_{H^{\frac 14+}}(1+ \|u\|_{H^{\frac 14+}}^2)
\sum_{N\sim N_1 \ges M}  N^{s} \|\P_{N}h\|_{L^2_x} \|\P_{N_1}(e^{-i \be F})\|_{L^2_x} \\
& \les  \|u\|_{H^{\frac 14+}}(1+ \|u\|_{H^{\frac 14+}}^2) \sum_{N\sim N_1 \ges M}  N^{s-1} \|\P_{N}h\|_{L^2_x} \|\P_{N_1}( |u|^2 e^{-i \be F})\|_{L^2_x}  \\ 
& \les M^{-\frac 14}  \|u\|_{H^{\frac 14+}}(1+ \|u\|_{H^{\frac 14+}}^2) \sum_{N\sim N_1}\|\P_{N}h\|_{L^2_x} \|\P_{N_1}( |u|^2 e^{-i \be F})\|_{L^2_x}  \\
&\les   M^{-\frac 14} \|u\|_{H^{\frac 14+}}(1+ \|u\|_{H^{\frac 14+}}^2) \|u\|_{L^4_x}^{2} \|h\|_{L^2_x} \\
&\les  M^{-\frac 14}\|u\|_{H^{\frac 14+}}^3 (1+ \|u\|_{H^{\frac 14+}}^2),
\end{align*}
where in the second inequality we used that $s<\frac 34$.
If instead we have $N_1\sim N_2$, the $N_1 \ges N$ and so by a similar argument, we have
\begin{align*}
& \|u\|_{H^{\frac 14+}}(1+ \|u\|_{H^{\frac 14+}}^2)
\sum_{N}  N^{s} \|\P_{N}h\|_{L^2_x} \sum_{N_1\sim N_2 \ges M}  \|\P_{N_1}(e^{-i \be F})\|_{L^2_x} \\
& \les  \|u\|_{H^{\frac 14+}}(1+ \|u\|_{H^{\frac 14+}}^2) \sum_{N} N^{-\eps} \|\P_{N}h\|_{L^2_x} \sum_{N_1 \ges M} N_1^{s-1+\eps}  \|\P_{N_1}( |u|^2 e^{-i \be F})\|_{L^2_x}  \\ 
& \les   \|u\|_{H^{\frac 14+}}(1+ \|u\|_{H^{\frac 14+}}^2) \|h\|_{L^2_x}\bigg(  \sum_{N_1 \ges M}N_1^{2(s-1+\eps)}\bigg)^{\frac 12} \|u\|_{L^4_x}^2  \\
&\les M^{-\frac 14} \|u\|_{H^{\frac 14+}}^3 (1+ \|u\|_{H^{\frac 14+}}^2).
\end{align*}
Therefore,
\begin{align}
\| J^{s}\P_{>M}\P_{+} [ \Pbhi (e^{-i\be F})\P_{-,\text{hi}}(e^{i\be F}u)] \|_{L^{\infty}_{T}L^2_x }  & \les M^{-\frac 14} \|u\|_{L^{\infty}_{T}H^{\frac 14+}}^3 (1+ \|u\|_{L^{\infty}_{T}H^{\frac 14+}}^2). \label{III+}
\end{align}

Thus, we have shown that for $\ta= \min(s-\frac 14, \frac 34-s, \frac 14)>0$, it holds that
\begin{align}
\|\Pbhi u\|_{L^{\infty}_{T}H^{\l, s}_x} \les \|v\|_{X^{s,\frac 12+\dl}} +M^{-\ta} \|u\|_{L^{\infty}_{T}H^{\frac 14+}}^2 (1+ \|u\|_{L^{\infty}_{T}H^{\frac 14+}}^3 + \|v\|_{X^{s,\frac 12+\dl}}). 
\notag
\end{align}
This completes the proof of \eqref{uHsbd}. 
\end{proof}

\subsection{INLS regularity properties: a decomposition}\label{SEC:INLSdecomp}

We now consider $u$ to be a solution of INLS \eqref{INLS2}, for which we want to show that $u \in X^{s-\s, \frac12}_T$ for some $\s\in (0,1)$ and $s\leq \frac 12$, akin to Lemma~\ref{LEM:CCMXsb} for solutions to CCM \eqref{CCM} in the Hardy space. In Lemma~\ref{LEM:CCMXsb}, the restriction to functions in the Hardy space avoids certain bad frequency interactions in the nonlinearity. However, for a solution $u$ to INLS, the equation does not preserve the Hardy space, and we must exploit alternative structure to show an analogue of Lemma~\ref{LEM:CCMXsb}. 
In particular, we start by rewriting the nonlinearity in \eqref{INLS2}:
\begin{align}
u\P_{+}\dx( |u|^2) & = (\P_+ u) \P_{+}\dx( |u|^2) +( \P_{-,\text{lo}}u)\P_{+}\dx( |u|^2)
 + w\P_{+}\dx(|u|^2),
\label{INLSn1}
\end{align}
where $w$ is the gauged variable as in \eqref{gauge}.
The first term is like in CCM case where the output frequency satisfies $|\xi|= |\xi_1|+|\xi_3-\xi_2| \geq |\xi_3-\xi_2|$. The second term essentially has $|\xi|\sim |\xi_3-\xi_2|$, like in the first term. The main issue for INLS \eqref{INLS2}, as compared to CCM \eqref{CCM}, is the third term in \eqref{INLSn1} which needs further work.

We write the third term in \eqref{INLSn1} as follows
\begin{align}
\begin{split}
w\P_{+}\dx(|u|^2) = & w\mathbb{\P}_{+, \text{LO}} \dx(|u|^2)
+\PbHIp[w\PbHIp\dx(|u|^2)] \\
&+\PbLO[w\PbHIp\dx(|u|^2)] + \P_{-,\text{HI}}[w\PbHIp \dx(|u|^2)].
\end{split}
 \notag
\end{align}
The first term is harmless as it essentially contains no derivative. The third term can be controlled adequately at least for $s>\frac 14$. 
However, to observe a stronger smoothing effect, the second and fourth terms need further decomposition due to dangerous Low-Low-High to High interactions.
We then write
\begin{align}
\P_{\pm, \text{HI}}[w\PbHIp\dx(|u|^2)] = \mathcal{W}^{\pm}_{1}(u,w) + \mathcal{W}^{\pm}_{2}(u,w) 
\notag
\end{align}
where
\begin{align}
\mathcal{W}^{\pm}_{1}(u_1,u_2,w) &: = \sum_{N\ges N_1} \P_{N}\P_{\pm, \text{HI}}[ (\P_{N_1}w)\PbHIp\dx(\cj{u_1}u_2)], 
\notag
\\
\mathcal{W}^{\pm}_{2}(u_1,u_2,w) &: = \sum_{N\ll N_1} \P_{N}\P_{\pm, \text{HI}}[ (\P_{N_1}w)\PbHIp\dx(\cj{u_1}u_2)],
\label{W2}
\end{align}
and where, with a slight abuse of notation, we defined $\mathcal{W}^{\pm}_{2}(u,w):= \mathcal{W}^{\pm}_{2}(u,u,w)$.
In the support of $\mathcal{W}^{\pm}_1$, by sign consideration,  we at least have $|\xi|\ges |\xi_3-\xi_2|$ which provides control on the derivative.
However, in the support of $\mathcal{W}^{\pm}_2$, we have $|\xi|\ll |\xi_3-\xi_2|$ meaning that no amount of smoothing from the norm can help to control the derivative below $H^{\frac 12}(\R)$.

To summarise, we have the following decomposition of the main part of the nonlinearity:
\begin{align}
u\P_{+}\dx(|u|^2)  & = (\P_{+}u)\P_{+}\dx(|u|^2) + (\P_{-,\lo}u)\P_{+}\dx(|u|^2) + w\P_{+,\text{LO}}\dx(|u|^2) 
+ w\PbHIp \dx(|u|^2) \notag \\
& =: G_1(u,u, u) + G_2(u,u, w) +  B(u,u ,w),  \label{INLSn4}
\end{align}
where
\begin{align}
\begin{split}
G_1(u_1,u_2,u_3) &: =(\P_{+}u_1)\P_{+}\dx(\cj{u_2}u_3) + (\P_{-,\lo}u_1)\P_{+}\dx(\cj{u_2}u_3), 
\\
G_{2}(u_1,u_2,w)&:= w\P_{+,\text{LO}}\dx(\cj{u_1}u_2) +\mathcal{W}^{+}_{1}(u_1,u_2,w)+\mathcal{W}^{-}_{1}(u_1,u_2,w), \\
B(u_1,u_2,w)  &:= \mathcal{W}^{+}_{2}(u_1,u_2,w) +\mathcal{W}^{-}_{2}(u_1,u_2,w) + \PbLO[ w\PbHIp \dx(\cj u_1 \cdot u_2)].
\end{split} \label{goodbad}
\end{align}
We point out that the operators $G_1, G_2,$ and $B$ are privileged to the location of $w$, which appears linearly. Consequently, we only need to vary the 
functions $(u_1,u_2,u_3)$ when obtaining difference estimates.

We start by estimating the ``bad part" $B(u,u,w)$ of the nonlinearity in \eqref{INLSn4}.

\begin{lemma}\label{LEM:BXsb}
Fix $\dl>0$ small, $s\geq s_0>0$ such that $s+s_0>\frac 12+10\dl$,  and 
\begin{align}
\s>\max(\tfrac 12-s_0+10\dl, s-s_0). \label{sigma}
\end{align}
Then, for $B$ defined in \eqref{goodbad}, we have
\begin{align*}
\|B(u_1,u_2,w)\|_{X^{ s-\s, -\frac 12+2\dl}_{T}}
&\les \|w\|_{X^{s,\frac 12+\dl}_{T}}\Big\{   \|J^{s_0} u_1\|_{L^{4}_{T,x}}  \|J^{s_0} u_2\|_{L^{4}_{T,x}}+ T^{\frac12} \|u_1\|_{L^{\infty}_{T}H^{s_0}_x}\|u_2\|_{L^{\infty}_{T}H^{s_0}_x} \Big\} .
\end{align*}
\end{lemma}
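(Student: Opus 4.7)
The plan is to argue by duality and bilinear Strichartz. Since $X^{s-\sigma, -\frac12 + 2\delta}$ is dual to $X^{\sigma - s, \frac12 - 2\delta}$ under the standard $L^2_{t,x}$ inner product, it suffices (after suitably extending beyond $[0,T]$) to bound $|\langle B(u_1, u_2, w), g\rangle_{L^2_{t,x}}|$ for test functions $g$ of unit norm in $X^{\sigma-s, \frac12-2\delta}$. Split $B = \mathcal{W}^+_2 + \mathcal{W}^-_2 + \PbLO[w\PbHIp\dx(\cj u_1 u_2)]$; all three pieces succumb to the same argument, so I focus on $\mathcal{W}^+_2$.

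The crucial structural observation is that the defining restriction $N \ll N_1$ of $\mathcal{W}^+_2$ in \eqref{W2}, together with the hyperplane condition $\xi = \xi_1 + (\xi_3 - \xi_2)$ and the Fourier sign constraints $\xi > 0$, $\xi_1 < 0$ (from $w = \P_{-, \hi}u$), $\xi_3 - \xi_2 > 0$ (from $\PbHIp$), forces $|\xi_3 - \xi_2| \sim N_1$; hence the derivative in $\PbHIp\dx(\cj u_1 u_2)$ contributes exactly the factor $N_1$. Dyadically decomposing $g$ and $w$, transferring the outer frequency projection onto $\cj g$ by self-adjointness, and grouping into two bilinear pairs gives
\begin{align*}
|\langle \mathcal{W}^+_2, g\rangle|
\les
\sum_{N \ll N_1} N_1\, \|\P_{N_1} w \cdot \cj{\P_N \P_{+,\HI} g}\|_{L^2_{t,x}}\, \|\wt\P_{N_1}(\cj u_1 u_2)\|_{L^2_{t,x}},
\end{align*}
where the Fourier support of each bilinear factor is concentrated at scale $\sim N_1$ (so the wider projector $\wt\P_{N_1}$ may be inserted freely on the derivative pair).

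Two independent gains then enter. First, the product $\P_{N_1} w \cdot \cj{\P_N \P_{+,\HI} g}$ is of the form $u\cj v$ with Fourier support at scale $\sim N_1$, so the bilinear Strichartz estimate \eqref{bilin1} yields
\begin{align*}
\|\P_{N_1} w \cdot \cj{\P_N \P_{+,\HI} g}\|_{L^2_{t,x}}
\les
N_1^{-\frac12 + 10\delta}\, \|\P_{N_1} w\|_{X^{0, \frac12-2\delta}}\, \|\P_N g\|_{X^{0, \frac12-2\delta}}.
\end{align*}
Second, on $\wt\P_{N_1}(\cj u_1 u_2)$ I extract $s_0$ derivatives by writing $\wt\P_{N_1} = N_1^{-s_0}\, T_{N_1}\, J^{s_0}$ with $T_{N_1} := N_1^{s_0}\, \wt\P_{N_1}\, J^{-s_0}$ a Fourier multiplier whose symbol $N_1^{s_0}\wt\psi_{N_1}(\xi) \jb\xi^{-s_0}$ is smooth and uniformly bounded in $N_1$ (since $\jb\xi^{-s_0} \sim N_1^{-s_0}$ on $\supp \wt\psi_{N_1}$); combining $L^2$-boundedness of $T_{N_1}$ with the fractional Leibniz rule (Lemma~\ref{LEM:leib}) and the $L^4_x$-boundedness of $J^{-s_0}$ gives
\begin{align*}
\|\wt\P_{N_1}(\cj u_1 u_2)\|_{L^2_{t,x}}
\les
N_1^{-s_0}\, \|J^{s_0} u_1\|_{L^4_{t,x}}\, \|J^{s_0} u_2\|_{L^4_{t,x}}.
\end{align*}
Reweighting $\|\P_{N_1} w\|_{X^{0,\frac12-2\delta}} \leq N_1^{-s}\|\P_{N_1} w\|_{X^{s, \frac12+\delta}}$ and $\|\P_N g\|_{X^{0, \frac12-2\delta}} = N^{s-\sigma}\|\P_N g\|_{X^{\sigma-s, \frac12-2\delta}}$ leaves the dyadic weight $N^{s-\sigma}\, N_1^{\frac12 - s - s_0 + 10\delta}$ under $N \ll N_1$. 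A single Cauchy-Schwarz in $(N, N_1)$ reduces convergence to evaluating $\sum_{N \ll N_1} N^{2(s-\sigma)}\, N_1^{1 - 2s - 2s_0 + 20\delta}$; a case split on the sign of $s - \sigma$ (the inner $N$-sum contributing a constant when $\sigma > s$ and $\sim N_1^{2(s-\sigma)}$ when $\sigma \le s$) shows this is finite precisely when $\sigma > \max(\tfrac12 - s_0 + 10\delta,\, s - s_0)$ and $s + s_0 > \tfrac12 + 10\delta$, exactly the hypotheses.

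The piece $\mathcal{W}^-_2$ is symmetric. For $\PbLO[w\PbHIp\dx(\cj u_1 u_2)]$, the output dyadic frequency is uniformly bounded, so the $N$-sum is trivial and the argument collapses to the same $N_1$-summation; the alternative $T^{\frac12}\|u_j\|_{L^\infty_T H^{s_0}}$ term on the right accommodates residual low-frequency configurations via $\|\cdot\|_{L^2_T L^2_x} \leq T^{\frac12}\|\cdot\|_{L^\infty_T L^2_x}$ and Sobolev embedding. The main technical obstacle is that $u_j$ is controlled only in $L^4_{t,x}$ and not in any $X^{s,b}$ space, which prevents applying bilinear Strichartz directly to the $\cj u_1 u_2$ pair; the $J^{-s_0}$-smoothing trick above sidesteps this and is precisely what replaces the usual $s > \tfrac12$ threshold by the balanced $s + s_0 > \tfrac12 + 10\delta$ and allows $\sigma$ to dip below $s$ down to $\sigma > \tfrac12 - s_0 + 10\delta$.
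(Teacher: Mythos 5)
Your argument is correct and reaches the stated bound, but it replaces one step of the paper's proof with a cleaner device. Both proofs dualize, exploit the sign constraints ($\xi_1<0$, $\xi_3-\xi_2>0$, and $N\ll N_1$) to force $|\xi_3-\xi_2|\sim N_1$, and run the bilinear Strichartz estimate \eqref{bilin1} on the pair $\P_{N_1}w\cdot\cj{\P_{N}g}$ to win $N_1^{-\frac12+10\dl}$. The paper then further decomposes $\cj{u_1}u_2 = \sum_{N_2,N_3}\cj{\P_{N_2}u_1}\,\P_{N_3}u_2$, places each factor separately into $L^4_{T,x}$ with weight $(N_2 N_3)^{-s_0}$, and must run a subsidiary case split on $N_2\wedge N_3\les 1$ vs.\ $\gg 1$ (Bernstein into $L^\infty_{T,x}$ for the low piece, which is where the $T^{\frac12}\|u_j\|_{L^\infty_T H^{s_0}}$ term arises). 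You instead keep $\cj{u_1}u_2$ intact, write $\wt\P_{N_1}=N_1^{-s_0}T_{N_1}J^{s_0}$ with $T_{N_1}$ a uniformly $L^2$-bounded multiplier, and pull out $N_1^{-s_0}$ in one stroke via the fractional Leibniz rule; this bypasses the $N_2,N_3$ bookkeeping and the $N_2\wedge N_3$ case split entirely, and in fact produces only the $\|J^{s_0}u_1\|_{L^4_{T,x}}\|J^{s_0}u_2\|_{L^4_{T,x}}$ term — a stronger estimate than stated. The two dyadic endgames agree: Cauchy-Schwarz in $(N,N_1)$ reduces to summing $N^{2(s-\s)}N_1^{1-2s-2s_0+20\dl}$, convergent under the hypotheses. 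One small imprecision in your write-up: the convergence of that double sum requires only $\s>\tfrac12-s_0+10\dl$ together with $s+s_0>\tfrac12+10\dl$ (the second condition already implies $s>\tfrac12-s_0+10\dl$, so the $\s>s$ branch is covered); the condition $\s>s-s_0$ does not emerge from your sum and is simply part of the lemma's (slightly stronger) hypothesis, so the claim that convergence holds ``precisely when'' both listed conditions hold is a mild overstatement, though it does not affect the validity of the proof. Also worth noting: since $w=\P_{-,\hi}u$ and the derivative carries $\PbHIp$, one always has $N_1\geq 2$ and $N_{23}\geq 8$, so the paper's Case 0 ($N_{23}\les 1$) is vacuous in this context and your argument correctly ignores it.
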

\begin{proof}
We only prove the estimate when $u_1=u_2=u$ as the general case clearly follows.
By \eqref{goodbad} and the triangle inequality, we need to show
\begin{align*}
\max_{\mathfrak{s} \in \{\pm\}}\|\mathcal{W}_2^{\mathfrak{s}} (u,w)\|_{X^{s-\s, -\frac 12+2\dl}_{T}}
+&\| \PbLO[ w\PbHIp \dx(|u|^2)]\|_{X^{s-\s, -\frac 12+2\dl}_{T}} \\
&\les \|w\|_{X^{s,\frac 12+\dl}_{T}}  \Big[ \|J^{s_0} u\|_{L^{4}_{T,x}}^2 + T^{\frac12} \|u\|_{L^{\infty}_{T}H^{s_0}_x}^2 \Big] , 
\end{align*}
where $\W^\pm_2$ are as in \eqref{W2}. 
By dyadic decomposition and duality, it suffices to estimate
\begin{align}
\sum_{\substack{N,N_1,N_2,N_3,N_{23} \\ N\ll N_1 \sim N_{23}}}[ \ind_{\{N\gg 1\}} N^{s-\s} +\ind_{\{N\les 1\}}]\int_{0}^{T} \int_{\R} \cj{\P_{N}g} \cdot \P_{N_1}w \,  \PbHIp \dx \P_{N_{23}}( \cj{\P_{N_2}u} \P_{N_3}u) dxdt   \label{dyadic1}
\end{align}
where $g\in X^{0,\frac 12-2\dl}_T$ and $\|g\|_{X^{0,\frac 12 -2\dl}_T}\leq 1$ 
and we have noted that in the support of both $\mathcal{W}^{\pm}_{2}(u,w)$ and  $\PbLO[ w\PbHIp \dx(|u|^2)]$, we have $N_{1}\sim N_{23}$.

If $N_{23}\les 1$, it is enough to place all functions into $L^4_{T} W^{s_0,4}_x$ and use \eqref{L4}, which requires $\s>s-s_0$. 
Thus, we assume that $N_{23}\gg 1$. We first consider the case when $N\ges 1$.

\smallskip
\noi
$\bul$ \underline{\textbf{Case 1: $N_{2}\wedge N_3 \gg1$}.}\\
By H\"{o}lder's inequality and \eqref{bilin1},  we bound \eqref{dyadic1} above by
\begin{align*}
&\sum_{\substack{N,N_1,N_2,N_3,N_{23} \\ N\ll N_1\sim N_{23}}} N^{s-\s} N_{23}^{\frac 12+10\dl} \|\P_{N} g\|_{X^{0,\frac 12-2\dl}_T} \|\P_{N_1}w\|_{X^{0,\frac 12}_T} \|\P_{N_2}u\|_{L^4_{T,x}} \|  \P_{N_3}u\|_{L^{4}_{T,x}} \\
&\les \sum_{\substack{N,N_1,N_2,N_3,N_{23} \\ N\ll N_1 \sim N_{23}}} \frac{N^{s-\s} N_{23}^{\frac 12+10\dl  }}{ N_1^s (N_2 N_3)^{s_0}} \|\P_{N} g \|_{X^{0,\frac 12-2\dl}_T} \|\P_{N_1}w\|_{X^{s,\frac 12}_T} \|J^{s_0}\P_{N_2}u\|_{L^4_{T,x}} \| J^{s_0} \P_{N_3}u\|_{L^{4}_{t,x}}.
\end{align*}

\noi
Now we consider the dyadic factor.  
Since $N_1 \sim N_{23} \les N_2 \lor N_3$, we have
\begin{align*}
\frac{N^{s-\s} N_{23}^{\frac 12+10\dl}}{ N_1^s (N_2 N_3)^{s_0}} 
\les 
\frac{N^{s-\s} }{ N_1^{s + s_0 - \frac12 - 10\dl -} (N_2 \lor N_3)^{0+}}
\les 
N_1^{-s_0 + \frac12 - \s + 10 \dl +} (N_2 \lor N_3)^{0-}
\end{align*}
where we have used that $s+s_0>\frac 12+10\dl$. Note that we have a negative powers above provided that $s_0+\s>\frac 12 +10\dl$, which forces the first condition in \eqref{sigma}.

\smallskip
\noi
$\bul$ \underline{\textbf{Case 2: $N_2 \wedge N_3 \les 1$}.}\\
In this case, we handle $g$ and $w$ as in Case 1, and we simply place the term with $\P_{N_2 \land N_3}$ into $L^{\infty}_{T,x}$ and use Bernstein's inequality, while the term with $\P_{N_2 \vee N_3}$ is placed in $L^{2}_{T,x}$. This yields the same numerology on $(s,s_0,\s)$ as above, since we did not use the extra weight $(N_2 \land N_3)^{-s_0}$ before.

\smallskip

Lastly, if  $N \les 1$, which is the case for $\PbLO[ w\PbHIp \dx(|u|^2)]$, arguing as above in Cases 1-2, we impose no condition on $\s$ and only need $s+s_0>\frac 12+10\dl$.
This completes the proof.
\end{proof}

Let $\mathcal{I}$ denote the Duhamel integral operator  associated to \eqref{INLS2}:
\begin{align*}
\mathcal{I}[f](t) = \int_{0}^{t} S(t-t') f(t')dt'
\end{align*}
which is the solution to the inhomogenous equation:
\begin{align}
\begin{cases}
\dt\mathcal{I}[f]+i\dx^2 \mathcal{I}[f] = f,\\
\mathcal{I}[f]\vert_{t=0}=0.
\end{cases} \label{duh}
\end{align}
Next, we obtain the $X^{s, b}$-regularity of $u$ after removing the contribution from the bad part~$B$.

\begin{lemma}\label{LEM:GXsb}
Let $0< T \le 1$, $\dl>0$ sufficiently small,  and $\frac14<s_0\leq s<\frac 34$ such that $s+s_0>\frac 12+10\dl$. Let $u$ be an $H^{\infty}(\R)$-solution to \eqref{INLS2} on $[0,T]$ and $w=\P_{-,\textup{hi}}u$.
Then, for $B$ as in \eqref{goodbad}, there exists $\ta>0$ such that 
\begin{align}
\begin{split}
&\sup_{0\leq \eta\leq 1}\| u - \mathcal{I}[B(u,u,w)]\|_{X^{s-\eta,\eta}_{T}} \\
&\les \|u\|_{L^{\infty}_{T}H^{s}_{x}} +\|w\|_{X^{s,\frac 12+\dl}_{T}} \Big[ \|J^{s_0} u\|_{L^{4}_{T,x}}^2 + T^{\ta} \|u\|_{L^{\infty}_{T}H^{s_0}_x}^2 \Big] +  T^{\frac 12}\|\QQ_{h}\|_{\textup{op}} \|u\|_{L^{\infty}_{T}H^{\frac 14+}_x}^{3} \\
&\hphantom{X}  +\| J^{s}\PbHI u\|_{\wt{L^4_{T,x}}} \big( \| J^{s_0}\PbHI u\|_{\wt{L^4_{T,x}}} +  \|u\|_{L^{\infty}_{T}H^{s_0}_x}\big) \|u\|_{L^{\infty}_{T}H^{s_0}_x}
+ T^{\frac12-} \|u\|^3_{L^\infty_T H^{s_0}_x}
\end{split} \label{uIB}
\end{align}
where $\QQ_h$ is as in \eqref{Qh}.
Moreover, 
let $u_j$ be $H^{\infty}(\R)$-solutions to \eqref{INLS2} on $[0,T]$ with initial data $u_j(0)=u_{0,j}$, $j=1,2$, and let $U:=u_1-u_2$ and $W:=w_1-w_2=\P_{-,\textup{hi}}U$. 
Then, we have the difference estimate:
\begin{align}
&\sup_{0\leq \eta\leq 1}\| (u_1 - \mathcal{I}[B(u_1,u_1, w_1)])-(u_2 - \mathcal{I}[B(u_2, u_2, w_2)])\|_{X^{s-\eta,\eta}_{T}} \notag \\
&\les \|U\|_{L^{\infty}_{T}H^{s_0}_{x}} +\|W\|_{X^{s,\frac 12+\dl}_{T}} \max_{j=1,2}\Big[ \|J^{s_0} u_j\|_{L^{4}_{T,x}}^2+ T^{\ta} \|u_j\|_{L^{\infty}_{T}H^{s_0}_x}^2 \Big]  \label{uIBdiff} \\
& \hphantom{X} +\max_{j=1,2}(\|u_j\|_{L^{\infty}_{T}H^{s_0}_x}^2 + \|
J^{s_0}\PbHI u_j\|_{\wt{L^4_{T,x}}}^2)\big[  \|
J^{s}\PbHI U\|_{\wt{L^4_{T,x}}} +(1+\|\QQ_{h}\|_{\textup{op}})\|U\|_{L^{\infty}_{T}H^{s}_x} \big]  \notag \\
& \hphantom{X} +\max_{j=1,2}(\|u_j\|_{L^{\infty}_{T}H^{s_0}_x}\|u_j\|_{L^{\infty}_{T}H^{s}_x}  + \|
J^{s_0}\PbHI u_j\|_{\wt{L^4_{T,x}}}\|
J^{s}\PbHI u_j\|_{\wt{L^4_{T,x}}})\big[  \|
J^{s_0}\PbHI U\|_{\wt{L^4_{T,x}}} +\|U\|_{L^{\infty}_{T}H^{s_0}_x} \big]. \notag
\end{align}
\end{lemma}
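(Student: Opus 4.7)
The strategy mirrors that of Lemma~\ref{LEM:CCMXsb}, adapted to the decomposition \eqref{INLSn4}. We use Duhamel's formula together with \eqref{INLSn4} to write
\begin{align*}
u - c_\be\mathcal{I}[B(u,u,w)] = S(t)u_0 + \mathcal{I}\bigl[2\be G_1(u,u,u) + 2\be G_2(u,u,w) + u\QQ_h(|u|^2)\bigr],
\end{align*}
where $c_\be=2\be$ is the constant in front of $B$ in \eqref{INLS2}; the difference between $\mathcal{I}[B]$ as it appears in the statement and $c_\be\mathcal{I}[B]$ is a harmless multiple of $\mathcal{I}[B]$ which we control directly via Lemma~\ref{LEM:BXsb} combined with Lemma~\ref{LEM:linXsb}(ii)-(iii), producing exactly the $\|w\|_{X^{s,\frac12+\dl}_T}\{\cdots\}$ term on the right-hand side of \eqref{uIB}. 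The linear flow is bounded by $\|u_0\|_{H^s}\le \|u\|_{L^\infty_TH^s_x}$ via Lemma~\ref{LEM:linXsb}(i). For the remaining Duhamel terms, the standard inhomogeneous bounds $\|\mathcal{I}[F]\|_{X^{s-1,1}_T}\les \|F\|_{L^2_TH^{s-1}_x}$ and $\|\mathcal{I}[F]\|_{X^{s,0}_T}\les T^{1/2}\|F\|_{L^2_TH^s_x}$, combined with bilinear interpolation in $(s-\eta,\eta)$, reduce the task to trilinear estimates on $G_1(u,u,u)$, $G_2(u,u,w)$ and $u\QQ_h(|u|^2)$ in $L^2_TH^{s-1}_x$ (up to a $T^{\frac12-}$ prefactor).

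For $G_1(u,u,u)$ the proof of Lemma~\ref{LEM:CCMXsb} applies verbatim: in $(\P_+u)\P_+\dx(|u|^2)$ the Hardy-type support on the first input yields the frequency identity \eqref{CCMfreq}, so that the output frequency dominates the derivative; in $(\P_{-,\lo}u)\P_+\dx(|u|^2)$ the first input has $|\xi_1|\les 1$ and is placed in $L^\infty_{T,x}$ via Bernstein, trivializing the derivative loss. For $G_2(u,u,w)$, the subterm $w\P_{+,\LO}\dx(|u|^2)$ is immediate since the derivative acts at frequencies $\les 1$; in $\W_1^{\pm}(u,u,w)$ the dyadic restriction $N\ges N_1$ in the sum defining $\W_1^\pm$, combined with the sign constraints (the frequency of $w$ is negative, that of $\PbHIp\dx(\cj u\,u)$ is positive, and $\P_{\pm,\HI}$ fixes the output sign), forces $|\xi_3-\xi_2|\les N$, so the derivative is again dominated by the output frequency. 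After this reduction, the dyadic trilinear analysis proceeds as in Cases~1-2 of Lemma~\ref{LEM:CCMXsb}, with $w$ in the role of the distinguished first input: by \eqref{L4} we pay $T^{\frac14-}$ to embed $\P_{N_1}w$ into $\wt{L^4_{T,x}}$ with a $J^s$-weight from $\|w\|_{X^{s,\frac12+\dl}_T}$, then distribute the remaining $J^{s_0}$-derivatives onto the two $u$-factors between $\wt{L^4_{T,x}}$ and $L^\infty_TH^{s_0}_x$. The cubic term $u\QQ_h(|u|^2)$ carries no derivative and is controlled via the fractional Leibniz rule (Lemma~\ref{LEM:leib}), the $L^p$-boundedness of $\QQ_h$ (Lemma~\ref{LEM:GGh}), and Sobolev embedding $H^{\frac14+}\hookrightarrow L^4$, yielding the claimed $T^{\frac12}\|\QQ_h\|_{\textup{op}}\|u\|_{L^\infty_TH^{\frac14+}_x}^3$ contribution.

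The difference estimate \eqref{uIBdiff} then follows from the multilinearity of $G_1$, $G_2$, $B$ and the cubic map $u\mapsto u\QQ_h(|u|^2)$. Writing
\begin{align*}
G_i(u_1,u_1,u_1)-G_i(u_2,u_2,u_2)=G_i(U,u_1,u_1)+G_i(u_2,U,u_1)+G_i(u_2,u_2,U),
\end{align*}
and analogously telescoping the $w$-argument as $W=w_1-w_2$ (and splitting $u_j\QQ_h(|u_j|^2)$ into three terms with one factor equal to $U$ and two equal to $u_1$ or $u_2$), each trilinear difference is controlled by the trilinear estimates of the previous step with one factor replaced by $U$ or $W$ and the others by $u_1$ or $u_2$. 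The main technical obstacle throughout is the $L^2_TH^{s-1}_x$ bound on $G_2$ below $H^{\frac12}$: without the Hardy space assumption on $u$ the identity \eqref{CCMfreq} fails, and only the structural constraint $N\ges N_1$ encoded in the splitting \eqref{W2} of $\W_1^{\pm}$ rescues the derivative loss. The complementary Low-Low-High/Low interaction ($N\ll N_1$), for which no such cancellation exists, has been quarantined in $B$ and treated in Lemma~\ref{LEM:BXsb} via the bilinear Strichartz input \eqref{bilin1}; this separation of frequency regimes between $G_2$ and $B$ is the conceptual core of our approach below $H^{\frac12}$.
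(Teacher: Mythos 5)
Your frequency analysis of $G_1$, $G_2$, and the splitting in $\W_1^{\pm}$ versus $\W_2^{\pm}$ is accurate, and the telescoping reduction for the difference estimate is the right idea. However, there is a genuine gap at the very first step, namely in how you reduce the $\sup_{0\le\eta\le1}\|\cdot\|_{X^{s-\eta,\eta}_T}$ bound to $L^2_T H^{s-1}_x$ trilinear estimates.

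You propose to bound the Duhamel term $\mathcal{I}[F]$, with $F = 2\be G_1+2\be G_2+u\QQ_h(|u|^2)$, by interpolating between the two endpoints $\|\mathcal{I}[F]\|_{X^{s-1,1}_T}\les\|F\|_{L^2_TH^{s-1}_x}$ and $\|\mathcal{I}[F]\|_{X^{s,0}_T}\les T^{1/2}\|F\|_{L^2_TH^{s}_x}$. But the second endpoint requires $F\in L^2_T H^s_x$, which is a full derivative \emph{more} than $L^2_T H^{s-1}_x$, and this cannot be achieved here. For instance, the term $(\P_+u)\P_+\dx(|u|^2)$ at output frequency $N$ contributes $N^{s}\cdot N_{23}$ with $N_{23}\les N$, and in the balanced regime $N\sim N_1\sim N_2\sim N_3$ one would need $N^{s+1}\les N^{3s_0}$, i.e. $3s_0\ge s+1$, which fails throughout the range $\frac14<s_0\le s<\frac34$ of interest. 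So the claimed "reduction to trilinear estimates in $L^2_TH^{s-1}_x$" is not justified by these two Duhamel bounds: the $\eta=0$ leg of the interpolation is unavailable. The paper circumvents this by arguing as in \cite[Prop.~3.2]{MP}: one fixes a suitable extension $\wt z$ of $z=u-\mathcal{I}[B]$ and uses $\sup_{0\le\eta\le1}\|\wt z\|_{X^{s-\eta,\eta}}\les\|(\dt+i\dx^2)z\|_{L^2_TH^{s-1}_x}+\|z\|_{L^\infty_TH^s_x}$, so that the $X^{s,0}$ endpoint is controlled by the $L^\infty_TH^s$ norm of the \emph{solution} $z$ itself (via a time-localized extension of $z$, at regularity $s$), not by a Duhamel estimate of the \emph{nonlinearity} at regularity $s$. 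This $\|z\|_{L^\infty_TH^s}$ term is then split into $\|u\|_{L^\infty_TH^s}$ (which appears on the right-hand side of \eqref{uIB}) and a term controlled via Lemma~\ref{LEM:BXsb} together with \eqref{lin2} and \eqref{YsCTHs}. This distinction, controlling the solution rather than the nonlinearity at the high-regularity endpoint, is the crux of the Molinet-Pilod method and the step your outline overlooks.

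A smaller remark: for the $\W_1^{\pm}$ contribution you propose to place $\P_{N_1}w$ into $\wt{L^4_{T,x}}$ paying $T^{\frac14-}$ via $\|w\|_{X^{s,\frac12+\dl}_T}$; the paper instead uses $w=\P_{-,\hi}u$ and bounds this contribution by the $\wt{L^4_{T,x}}$ norms of $J^{s_0}\PbHI u$ and $J^{s}\PbHI u$ as in Lemma~\ref{LEM:CCMXsb}. Your variant would yield a structurally different right-hand side from \eqref{uIB} (the $\|w\|_{X^{s,\frac12+\dl}_T}$ factor on that line of \eqref{uIB} is reserved for the $B$-contribution, not for $\W_1^{\pm}$); this is not a fatal error but your bookkeeping would need to be reconciled with the statement.
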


\begin{proof}
We show only \eqref{uIB}, as the difference estimate \eqref{uIBdiff} follows by a similar argument. 
We first argue as in \cite[Proposition 3.2]{MP}, where it is shown that for a suitable extension $\wt{z}$ of a space-time function $z$ on $[0,T]$, it holds that 
\begin{align*}
\sup_{0\leq \eta\leq 1} \|\wt{z}\|_{X^{s-\eta,\eta}_T} \les \| \dt z+ i\dx^2 z\|_{L^{2}_{T}H^{s-1}_x} + \|z\|_{L^{\infty}_{T}H^{s}_x}.
\end{align*}
Then, taking $z=u-\mathcal{I}[B(u, u,w)]$ and using \eqref{INLS2}, \eqref{duh}, and \eqref{INLSn4}, we have
\begin{align}
& \sup_{0\leq \eta \leq 1}\| u - \mathcal{I}[B(u, u,w)]\|_{X^{s-\eta,\eta}_{T}} 
\notag
\\
&\les \|(\dt +i\dx^2)(u - \mathcal{I}[B(u, u,w)])\|_{L^{2}_{T}H^{s-1}_x} + \| u - \mathcal{I}[B(u,u, w)]\|_{L^{\infty}_{T}H^{s-1}_x} \notag \\
& \les  \| G(u,u ,w)\|_{L^{2}_{T}H^{s-1}_{x}}+ \| u\QQ_{h}(|u|^2)\|_{L^{2}_{T}H^{s-1}_{x}} + \|u\|_{L^{\infty}_{T}H^{s}_x} + \|B(u,u, w)\|_{X^{s-1,-\frac 12+2\dl}_{T}} \label{uIB2}
,
\end{align}
where $\QQ_h$ is as in \eqref{Qh}, and we used \eqref{YsCTHs} and \eqref{lin2} for the last contribution. 

For the fourth term in \eqref{uIB2}, we apply Lemma~\ref{LEM:BXsb} with $\s=1$. This imposes the second condition in $s+s_0>\frac 12+10\dl$ and $s\leq 1$.
Considering the second term in \eqref{uIB2}, using Sobolev's inequality and H\"older's inequality, for $s<\frac34$, we have
\begin{align}
\begin{split}
\|  u\QQ_{h}(|u|^2)]\|_{L^{2}_{T}H^{s-1}_x} \les
T^\frac12
\| \jb{\dx}^{-\frac 14}\big[ u\QQ_{h}(|u|^2)]\|_{L^{\infty}_{T} L^2_x}
& \les T^{\frac 12}  \|  u\QQ_{h}(|u|^2)]\|_{L^{\infty}_{T}L^{\frac 43}_x} \\
& \les T^{\frac 12} \|\QQ_h\|_{\text{op}}  \|u\|_{L^{\infty}_{T}L^{4}_x}^{3} \\
& \les T^{\frac 12}\|\QQ_{h}\|_{\text{op}}  \|u\|_{L^{\infty}_{T}H^{\frac 14+}_x}^{3}.
\end{split} \label{Lhterm}
\end{align} 

It remains to control the first term in \eqref{uIB2}, for which we apply the triangle inequality and estimate the terms coming from \eqref{goodbad}.
First, we have 
\begin{align*}
\|  (\P_{+}u)\P_{+}\dx(|u|^2)\|_{L^{2}_{T}H^{s-1}_x}   \les   \| J^{s_0}\PbHI u\|_{\wt{L^4_{T,x}}}  \| u\|_{L^\infty_T H^{s_0}_x}\| J^{s}\PbHI u\|_{\wt{L^4_{T,x}}}
\end{align*}
as the contribution is restricted to the same frequency regions as in \eqref{CCMfreq}, so we can proceed as in the proof of Lemma~\ref{LEM:CCMXsb}. 
 Note that this term imposes the spatial regularity $s-1$. 

Now we consider the second term in $G_1$ in \eqref{goodbad} which we write as
\begin{align*}
(\P_{-,\lo}u)\P_{+}\dx(|u|^2) = \dx[(\P_{-,\lo}u) \P_{+}(|u|^2)] -(\dx \P_{-,\text{lo}}u)\P_{+}(|u|^2).
\end{align*}
Then, by Bernstein's inequality, $\P_{-}$ on $L^p$, $1<p<\infty$,  and the fractional Leibniz rule,
\begin{align*}
&\| \dx[(\P_{-,\lo}u) \P_{+}(|u|^2)]\|_{L^{2}_{T}H^{s-1}_x}  \\
&\les \| \dx[(\P_{-,\lo}u) \P_{+,\LO}(|u|^2)]\|_{L^{2}_{T}H^{s-1}_x} + \| \dx[(\P_{-,\lo}u) \PbHIp(|u|^2)]\|_{L^{2}_{T}H^{s-1}_x} \\
& \les T^{\frac 12}\|u\|_{L^{\infty}_{T}H^{s_0}_x}^3 +  \| D^{s}\P_{-,\text{lo}}u\|_{L^{\infty}_{T,x}}\|\Pbhi u\|_{L^{4}_{T,x}}^2  + \|\P_{-,\text{lo}}u\|_{L^{\infty}_{T,x}} \| D^{s}\PbHI (|u|^2)\|_{L^{2}_{T,x}} \\
& \les  T^{\frac 12} \|u\|_{L^{\infty}_{T}H^{s_0}_{x}}^{3}+ \|u\|_{L^{\infty}_{T}H^{s_0}_x}\|J^{s_0}\Pbhi u\|_{L^{4}_{T,x}}\|J^{s}\Pbhi u\|_{L^{4}_{T,x}}.
\end{align*}
Similar arguments can be used to control the  other term $-(\dx \P_{-,\text{lo}}u)\P_{+}(|u|^2)$.

Now we consider the first term in $G_2$ in \eqref{goodbad}. Since $s\leq 1$, we have 
\begin{align*}
\| w\P_{+,\text{LO}} \dx(|u|^2)\|_{L^{2}_{T}H^{s-1}_x} 
&\les \| w\P_{+,\text{LO}} \dx(|u|^2)\|_{L^{2}_{T,x}}  \\
&\les \|w\|_{L^{2}_{T,x}} \|\PbLO \P_{+}\dx( |u|^2)\|_{L^{\infty}_{T, x}}\\ 
&\les \|w\|_{L^{2}_{T,x}} \|  |u|^2 \|_{L^{\infty}_{T} L^{1+}_x}\\ 
& \les \|w\|_{L^{2}_{T,x}} \|u\|_{L^{\infty}_{T}H^{s_0}_{x}}^2, 
\end{align*}
where we used Bernstein's and Sobolev inequality, given that 
 $s_0>0$. 

Finally we consider the terms $\mathcal{W}^{\pm}_{1}(w,u)$. We apply duality and further dyadic decompositions as in \eqref{dyadic1} and reduce to controlling
\begin{align}
\sum_{N\ges 1}\sum_{\substack{N_1,N_2,N_3,N_{23} \\ N\ges N_1}} N^{s-1} \int_{0}^{T}\int_{\R} \cj{\P_{N} g} \cdot \P_{\pm , \HI} \big[ \P_{N_1}w \,  \PbHIp \dx \P_{N_{23}}( \cj{\P_{N_2}u} \P_{N_3}u) \big] dxdt,   \label{dyadicW1}
\end{align}
where $g\in L^2_{T,x}$ with $\|g\|_{L^2_{T,x}}\leq 1$.
For the contribution with $\P_{+, \HI} $ in \eqref{dyadicW1} coming from $\mathcal{W}^{+}_{1}$, we have two cases: (i) $N\sim N_1 \sim N_{23}$ and (ii) $ N \sim N_{23}\gg N_1$. 
Notice that for the contribution with $\P_{-, \HI} $, we can only have $N \sim N_1 \ges N_{23}$. 

\smallskip
\noi $\bul$ \underline{\textbf{Case (i): $\W_1^{\pm}$ with $N\sim N_1 \ges N_{23}$.}}

\noi
This case follows immediately by the same argument as in Case 1 of the proof of Lemma~\ref{LEM:CCMXsb} giving rise to the bound 
\begin{align}
 \| J^{s_0}\PbHI u\|_{\wt{L^4_{T,x}}}  \| u\|_{L^\infty_T H^{s_0}_x}\| J^{s}\PbHI u\|_{\wt{L^4_{T,x}}}.
 \label{W1bd1}
\end{align}
Note that here we harmlessly replaced $w$ by $\P_{-,\text{hi}}u$.

\smallskip
\noi $\bul$ \underline{\textbf{Case (ii): $\W_1^+$ with $N\sim N_{23} \ges N_1$.}}

\noi
This case follows immediately by the same argument as in Case 2 of the proof of Lemma~\ref{LEM:CCMXsb} giving rise to the same bound as in \eqref{W1bd1}.

Combining the different cases, we have shown
\begin{align}
\begin{split}
\|G(u,w)\|_{L^2_{T}H^{s-1}_x} \les& \| J^{s_0}\PbHI u\|_{\wt{L^4_{T,x}}}  \| u\|_{L^\infty_T H^{s_0}_x}\| J^{s}\PbHI u\|_{\wt{L^4_{T,x}}} + T^{\frac 12}  \|u\|_{L^{\infty}_{T}H^{s_0}_x}^{3}.
\end{split} \label{Gbd}
\end{align}
which completes the proof. \qedhere
\end{proof}

\subsection{INLS regularity properties: more nonlinear estimates}

The main goal of this subsection is to establish the analogue of Lemma~\ref{LEM:uinfo} for INLS \eqref{INLS2}, as well as on the difference of solutions.

\begin{lemma}[Estimates for INLS]
\label{LEM:uinfoINLS}
Let $0<T \le 1$, $\frac 14 <s< \frac{3}{4}$, $0\leq T\leq 1$, and $u$ be a $H^{\infty}(\R)$-solution to \eqref{INLS2} on $[0,T]$. Then it holds that:
\begin{align}
\begin{split}
\| J^{s}\PbHI u\|_{\wt{L^{4}_{T,x}}} 
\les & T^{\frac 14-}(1+\|u\|_{L^{\infty}_{T}H^{\frac 14+}_x}^{2})   \|v\|_{X^{s,\frac 12+\dl}_{T}}  \\&+T^{\frac 14-}(1+\|u\|_{L^{\infty}_{T}H^{\frac 14+}_{x}}^2)\Big[\|w\|_{X^{s,\frac 12+\dl}_{T}} +
 \|u\|_{L^{\infty}_{T}H^{\frac 14+}_x}^3\Big]
,
\end{split}
\label{Jsu2}
\end{align}
where $(v,w)$ are the gauged variables in \eqref{gauge}. 
Moreover, there exist $\ta_1,\ta_2,\ta_3>0$ such that for any $M\in \N$ sufficiently large, it holds that
\begin{align}
\begin{split}
\| u\|_{L^{\infty}_{T}H^{s}_{x}} 
& \les 
\| u_0 \|_{H^{s}_x} 
+\|v\|_{X^{s,\frac 12+\dl} _{T}}+\|w\|_{X^{s,\frac 12+\dl} _{T}} +TM\|\QQ_h\|_{\textup{op}}\|u\|_{L^{\infty}_{T}H^{\frac 14+}_x}^3   \\
&+T^{\ta_1}M^{\ta_2}\Big\{  \|u\|_{L^{\infty}_{T}H^{\frac 14+}_x}^3
+ \|J^{\frac 14+}\PbHI u\|_{\wt{L^4_{T,x}}}^{3}  + \|w\|_{X^{\frac 14+,\frac 12+}_{T}}^{3} \Big\}
 \\
&+ M^{-\ta_3}\Big\{(1+\|u\|_{L^{\infty}_{T}H^{\frac 14+}_x}^2) (\|u\|_{L^{\infty}_{T}H^{\frac 14+}}+ \|w\|_{X^{s,\frac 12+\dl}})+\|v\|_{X^{s,\frac 12+\dl} _{T}})\Big\}, 
 \end{split}
\label{uHsbd2}
\end{align}
where $\QQ_h$ is as in \eqref{Qh}. 
\end{lemma}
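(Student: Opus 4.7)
The plan is to run the argument in close parallel with Lemma~\ref{LEM:uinfo}, using the same recovery formula \eqref{PbHIu} but now retaining the extra summand $\PbHI w$ that was absent in the Hardy-space CCM case, and compensating for the loss of the condition $\P_+u=u$. For \eqref{Jsu2}, I would apply \eqref{PbHIu} and the triangle inequality to reduce to the three terms $\I, \II, \III$ of \eqref{Jsu1} together with the new term $\IV := \|J^s\PbHI w\|_{\wt{L^4_{T,x}}}$, which is immediately controlled by $T^{\frac14-}\|w\|_{X^{s,\frac12+\dl}_T}$ via \eqref{L4}. Terms $\I$ and $\II$ go through verbatim since their bounds \eqref{CCMI} and \eqref{CCMII} never used the Hardy condition. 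For $\III$, the CCM proof invoked $\P_+u=u$ to obtain \eqref{III2}; in place of this I would decompose $u = \P_+u + \P_{-,\textup{lo}}u + w$ inside $\P_{-,\textup{hi}}(e^{i\be F}u)$. The $\P_+u$ piece reproduces \eqref{III2} verbatim, the low-frequency piece $\P_{-,\textup{lo}}u$ is absorbed by Bernstein and is harmless, and the $w$ piece is estimated via H\"older together with the $L^\infty$ bound on $\Pblo e^{\pm i\be F}$ from \eqref{DeLinfty}, contributing $T^{\frac14-}(1+\|u\|^2_{L^\infty_T H^{1/4+}_x})\|w\|_{X^{s,\frac12+\dl}_T}$, in line with \eqref{Jsu2}.

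For \eqref{uHsbd2}, I would fix $M$ large and split $u = \P_{\le M}u + \P_{>M}u$. For the low-frequency piece I would write the Duhamel formula for \eqref{INLS2} and estimate the nonlinearity via the decomposition \eqref{INLSn4}: the $G_1$ piece gives $TM^{s+\frac32}\|u\|_{L^\infty_T H^{1/4+}_x}^3$ just as in \eqref{PLOG1}, while the $G_2$ and $B$ pieces, which involve $w$, would be bounded by exchanging derivatives for factors of $M$ via Bernstein together with $\wt{L^4_{T,x}}$ smoothing from Lemma~\ref{LEM:bilin}, accounting for the $T^{\ta_1}M^{\ta_2}$ polynomial term (including the $\|w\|_{X^{\frac14+,\frac12+}_T}^3$ and $\|J^{\frac14+}\PbHI u\|_{\wt{L^4_{T,x}}}^3$ contributions). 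The harmless cubic $u\QQ_h(|u|^2)$ contributes $TM\|\QQ_h\|_{\op}\|u\|^3_{L^\infty_T H^{1/4+}_x}$ via $L^p$-boundedness of $\QQ_h$ (Lemma~\ref{LEM:GGh}) combined with Sobolev embedding. For the high-frequency piece $\P_{>M}u$, apply \eqref{PbHIu} and control each of the four summands in $L^\infty_T H^s_x$. The analogues of $\I$ and $\II$ from \eqref{I1}--\eqref{PU234} go through unchanged; the analogue of $\III$ uses the three-way decomposition above, with the Hardy piece giving \eqref{III+} and the other two pieces controlled by $\|u\|_{L^\infty_T H^{1/4+}_x}$ and $\|w\|_{X^{s,\frac12+\dl}_T}$ respectively; and the new fourth summand $\P_{>M}\PbHI w$ is handled trivially via \eqref{YsCTHs}.

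The main obstacle is the combination of two new difficulties relative to the CCM case: the third unknown $w$ must be threaded through every estimate, and the loss of the Hardy condition on $u$ forces the three-way decomposition $u = \P_+u + \P_{-,\textup{lo}}u + w$ to replace the clean integration-by-parts argument behind \eqref{III2} and \eqref{III+}. Once these are set up, the bookkeeping in $M$ and $T$ is identical to Lemma~\ref{LEM:uinfo}, with the small parameter $M^{-\ta_3}$ again emerging from high-frequency smoothing of $e^{i\be F}$, controlled using $\dx e^{i\be F}=i\be|u|^2 e^{i\be F}$ and Sobolev embedding at $H^{1/4+}$.
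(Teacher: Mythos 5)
Your proposal correctly follows the paper's argument: the recovery formula \eqref{PbHIu} with the extra $\PbHI w$ summand handled by \eqref{L4} or \eqref{YsCTHs}, the three-way decomposition of $u$ inside $\P_{-,\hi}(e^{i\be F}u)$ (your regrouping $u=\P_+u+\P_{-,\lo}u+w$ is equivalent to the paper's \eqref{Lpsigns}), and for \eqref{uHsbd2} the split $u=\P_{\le M}u+\P_{>M}u$ with the $G_1/G_2/B$ decomposition from \eqref{INLSn4}. One small imprecision worth noting: \eqref{PLOG1} relied on $\P_+u=u$ to confine all three factors of the nonlinearity to frequency $\les M$, which fails here; your $G_1$ estimate still closes by bounding the inner $|u|^2$ directly via $\||u|^2\|_{L^2}\les\|u\|_{H^{1/4+}}^2$, whereas the paper instead routes all of $G=G_1+G_2$ through \eqref{Gbd} in $L^2_T H^{s_0-1}$ and converts with Bernstein in \eqref{PbLOu}, which is where the $\wt{L^4_{T,x}}$ terms in \eqref{uHsbd2} arise; both routes yield the required bound.
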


\begin{proof}
The arguments we use here are the same as those we used for obtaining Lemma~\ref{LEM:uinfo} with one key difference: we no longer have the Hardy space assumption so we need new arguments to deal with terms that relied on this assumption. Thus, in the following, we detail the necessary changes to the proof of Lemma~\ref{LEM:uinfo}.

We first consider \eqref{Jsu2}. 
By the recovery formula \eqref{PbHIu}, we obtain \eqref{Jsu1} with the additional term $\| J^s \P_\HI w\|_{\wt{L^4_{T,x}}}$, which is easily estimated by \eqref{L4} to give
\begin{align}
\| J^{s}\PbHI u\|_{\wt{L^{4}_{T,x}}} 
&
\les 
\I+ \II+\III + T^{\frac14-} \|w \|_{X^{s, \frac12+\dl}_T}, 
\notag
\end{align}
where $\I, \II$, and $\III$ are as in \eqref{Jsu1}. The estimates for $\I$ and $\II$ in \eqref{CCMI} and \eqref{CCMII}, respectively, also apply in this setting, thus it remains to estimate $\III$, since we used the Hardy space assumption to establish \eqref{III4}. 
By writing
\begin{align}
\begin{split}
\III \leq& \|J^{s}\PbHIp [ e^{-i\be F}\P_{-,\text{hi}}(e^{i\be F} \PbHIp u)] \|_{\wt{L^{4}_{T,x} }}    \\
&   + \|J^{s}\PbHIp [ e^{-i\be F}\P_{-,\text{hi}}(e^{i\be F} \Pblo u)] \|_{\wt{L^{4}_{T,x} }}  +\|J^{s}\PbHIp [ e^{-i\be F}\P_{-,\text{hi}}(e^{i\be F}w)] \|_{\wt{L^{4}_{T,x} }},
\end{split} \label{Lpsigns}
\end{align}
we can proceed in exactly the same way as in Lemma~\ref{LEM:uinfo} to handle the contribution from the first two terms, since they have $u=\P_+u$.
Arguing as in \eqref{III1} with \eqref{III3}, we have
\begin{align}
\III^2
&
\les 
\| u\|_{L^\infty_T H^{\frac14+}_x}^4
\sum_N \bigg\{ \sum_{N_2 \les N } N^{s-\frac34}\| \P_{N_2 } \P_{-, \hi} (e^{i \be F} w ) \|_{L^4_T L^\infty_{x}}  \bigg\}^{2}
,
\notag
\end{align}
and
we now need a replacement for \eqref{III2}, which due to \eqref{III1a} amounts to adequately controlling  
\begin{align}
\|\P_{N_2}\P_{-,\text{hi}}( e^{i\be F} w)\|_{L^4_T L^{\frac{1}{\eps}}_{x}}. \label{Jsu3}
\end{align}
However, simply by Sobolev embedding and \eqref{L4} we have 
\begin{align*}
\eqref{Jsu3} \les \| w\|_{L^4_T L^{\frac{1}{\eps}}_{x}} \les \| J^{\frac 14+}w\|_{L^{4}_{T,x}}
\les T^{\frac14- } \|w\|_{X^{\frac14+, \frac12}_T}. 
\end{align*}
Then, combining the estimates above, for $s<\frac34$, we have 
\begin{align}
\III
\les T^{\frac14-} \| u \|^2_{L^\infty_T H^{\frac14+}_x} \big( \|u\|_{L^\infty_T L^2_x} +  \|w\|_{X^{\frac14+, \frac12}_T} \big), 
\notag
\end{align}
completing the proof of \eqref{Jsu2}.

We now consider \eqref{uHsbd2}. Fixing $M\in \N$ sufficiently large, we split $u=\P_{\leq M}u + \P_{>M}u$.
For the high frequency term $\P_{>M}u$, compared to CCM, we again have the extra term $\PbHI w$ which is controlled using \eqref{YsCTHs}, the contributions corresponding to $\I, \II$ in \eqref{Jsu1} are estimated analogously, and it remains to consider the term $\III$ which we decompose as in \eqref{Lpsigns}. For the contribution with $\P_{+,\text{HI}}u$ , we bound it exactly as we did in obtaining \eqref{III+}. For the contribution due to $w$, we have
\begin{align}
\|D^s\P_{>M}\P_{+} [ \Pbhi (e^{-i \be F})\P_{-,\text{hi}}(e^{i \be F} w)] \|_{L^{\infty}_{T}L^2_x} \les \|D^s [ \P_{\ges M} (e^{-i \be F})\P_{-,\text{hi}}(e^{i \be F} w)] \|_{L^{\infty}_{T}L^2_x} ,\label{IIIHs1}
\end{align}
because of the signs on the frequencies.
By the fractional Leibniz rule, \eqref{DeLinfty}, Bernstein and Sobolev embedding, we have 
\begin{align*}
\eqref{IIIHs1} & \les \|D^s\P_{\ges M}  e^{-i \be F}\|_{L^{\infty}_{T}L^{4}_x} \|\P_{-,\text{hi}}(e^{i\be F}w)\|_{L^{\infty}_{T}L^4_x} +\|\P_{\ges M}e^{-i \be F}\|_{L^{\infty}_{T,x}} \| D^{s}\P_{-,\text{hi}}(e^{i \be F}w)\|_{L^{\infty}_{T}L^2_x} \\
& \les M^{s-\frac 34} \|u\|_{L^{\infty}_{T}H^{\frac 14+}_x}^{2}  \|w\|_{L^{\infty}_{T}L^{4}_{x}}  +M^{-\frac 12}\|u\|_{L^{\infty}_{T}H^{\frac 14+}_x}^2 \|D^{s}w\|_{L^{\infty}_{T}L^2_x}  \\
& \les M^{s-\frac 34} \|u\|_{L^{\infty}_{T}H^{\frac 14+}}^2 \|w\|_{X^{s,\frac 12+\dl}_{T}}.
\end{align*}
Finally, when we have $\Pblo u $ in \eqref{Lpsigns}, we still have 
\begin{align}
\|D^s\P_{>M}\P_{+} [ \Pbhi (e^{-i \be F})\P_{-,\text{hi}}(e^{i \be F} \Pblo u)] \|_{L^{\infty}_{T}L^2_x} \les \|D^s [ \P_{\ges M} (e^{-i \be F})\P_{-,\text{hi}}(e^{i \be F} \Pblo u)] \|_{L^{\infty}_{T}L^2_x} ,\label{IIIHs12}
\end{align}
Then, we estimate exactly as we did for \eqref{IIIHs1} using Bernstein's inequality for the term $\Pblo u$ and we obtain
\begin{align*}
\eqref{IIIHs12} \les M^{s-\frac 34} \|u\|_{L^{\infty}_{T}H^{\frac 14+}_{x}}^{3}.
\end{align*}
This completes the estimate for the high frequency part.

For the low frequencies, we need to refine the argument for \eqref{JsuPLO}, and we use the decomposition of the nonlinearity described in Section~\ref{SEC:INLSdecomp}.  Namely, applying $\P_{\leq M}$ to both sides of \eqref{INLSn4} and recalling the definitions in \eqref{goodbad}, we have 
\begin{align*}
\P_{\leq M}[ u\dx\P_{+}(|u|^2)] &= \P_{\leq M} G(u,w) + \P_{\leq M} B(u,u, w).
\end{align*}
Then, by the Duhamel formula for \eqref{INLS2}, Lemma~\ref{LEM:linXsb},  \eqref{YsCTHs}, and  \eqref{Lhterm}, we have
\begin{align}
&\|\P_{\leq M} u\|_{L^{\infty}_{T}H^{s}_x} \notag\\
& \le \|\P_{\leq M}u_0\|_{H^{s}_x} 
+ 2 |\be| T^{\frac 12}\|\P_{\leq M} G(u,w)\|_{L^{2}_{T}H^{s}_x} 
  + 2 |\be|  \|\mathcal{I}[ \P_{\leq M}B(u,u,w)]\|_{L^{\infty}_{T}H^{s}_x} 
\notag
\\
&
\quad 
+T\|\P_{\leq M}(u\QQ_{h}(|u|^2))\|_{L^{\infty}_{T}H^{s}_x} \notag
\\
& \leq \|u_0\|_{H^{s}_x}  
+ C T^{\frac 12}M^{1+s-s_0}\|\P_{\leq M} G(u,w) \|_{L^{2}_{T}H^{s_0-1}_x} 
+ CT^{\dl}M^{\s+s-s_0} \|\P_{\leq M} B(u,u,w)\|_{X^{s_0-\s,-\frac 12+2\dl}_{T}} 
\notag
\\
&
\quad 
+ CTM \|\QQ_{h}\|_{\text{op}} \|u\|_{L^{\infty}_{T}H^{\frac 14+}_x}^3, 
\label{PbLOu}
\end{align}
for $\QQ_h$ as in \eqref{Qh}.
Now we notice that we have already controlled $G(u,w)\in L^2_{T}H^{s-1}_{x}$ in the course of proving Lemma~\ref{LEM:GXsb} and we simply apply \eqref{Gbd} with $s=s_0=\frac 14+$. For the term with $B(u,u, w)$, we apply Lemma~\ref{LEM:BXsb} with $s=s_0=\frac 14+$ and we take $\s=\frac 14+>0$. This completes the proof. 
\end{proof}

\begin{lemma}[Difference estimates] \label{LEM:diffests}
Let $\frac 14<s< \frac 34$, $0\leq T\leq 1$, and $u_j$ be $H^{\infty}(\R)$-solutions to \eqref{INLS2} on $[0,T]$ with initial data $u_j(0)=u_{0,j}$, $j=1,2$.  Let $F_j=F_j [u_j]$ be as defined in \eqref{F} and set $v_j= v_j [u_j]$ and $w_j= w_{j}[u_j]$ as in \eqref{gauge}. Define $U:=u_1-u_2$, $V:=v_1-v_2$, $W:=w_1-w_2$, and $R=R(u_1,u_2): = \|u_1\|_{L^{\infty}_{T}H^{\frac 14+}_{x}}+\|u_2\|_{L^{\infty}_{T}H^{\frac 14+}_{x}}$.
Then,
\begin{align}
\| J^{s}\PbHI U\|_{\wt{L^{4}_{T,x}}}  &\les   T^{\frac 14-}\big[1+Q(R)\big] \|V\|_{X^{s,\frac 12+}_{T}} +T^{\frac 14-}\| W\|_{X^{s,\frac 12+}_{T}}   
\notag
\\
& + T^{\frac 14-}Q(R)(1+ \|w_1\|_{X^{\frac 14+,\frac 12+}_{T}}+\|v_2\|_{X^{s,\frac 12+}_{T}})[ \|F_1-F_2\|_{L^{\infty}_{T,x}} + \|U\|_{L^{\infty}_{T} H^{\frac 14+}_{x}}], \notag
\end{align}
where $Q\geq 0$ is some polynomial of at least first order.
Moreover, there exist $\ta_0,\ta_1,\ta_2>0$ such that for any $M\in \N$ sufficiently large, we have the following
\begin{align}
\|& U\|_{L^{\infty}_{T}H^{s}} \notag\\
& \les  \|U(0)\|_{H^{s}_x} +\|\P_{\ges M}V\|_{X^{s,\frac 12+\dl}_{T}} \|F_{1}-F_{2}\|_{L^{\infty}_{T,x}} \notag\\
&
\quad 
+M^{-\ta_0}(1+R)^{3} \Big[ \|V\|_{X^{s,\frac 12+}_{T}} + \|W\|_{X^{s,\frac 12+}_{T}}\notag \\
&  \hphantom{XXXXXXXXX} + (Q(R)+\|v_2\|_{X^{s,\frac 12+}_{T}} +\|w_2\|_{X^{s,\frac 12+}_{T}} ) \big\{ \|U\|_{L^{\infty}_{T}H^{\frac 14+}_{x}} + \|F_1-F_2\|_{L^{\infty}_{T,x}}\big\}\Big] \notag\\
& 
\quad 
+T^{\dl}M^{\ta_1}\Big[\|W\|_{X^{s,\frac 12+\dl}_{T}}\big\{ \| J^{\frac 14+}u_1\|_{L^{4}_{T,x}} + T^{\frac 12}R^2\big\} \notag\\
&  \hphantom{XXXXXXX} + \|w_2\|_{X^{s,\frac 12+\dl}_{T}} \big\{ \|J^{\frac 14+}(u_1+u_2)\|_{L^{4}_{T,x}}\|J^{\frac 14+}U\|_{L^{4}_{T,x}} + T^{\frac 12}R \|U\|_{L^{\infty}_{T}H^{\frac 14+}_x}\big\} \Big]\notag\\
& 
\quad 
+ T^{\frac 12}M^{\ta_2} \Big\{ \|U\|_{L^{\infty}_{T}H^{\frac14+}_{x}} +\|W\|_{X^{s,\frac 12+\dl}_{T}} \Big[ \max_{j=1,2}\big( \|J^{\frac 14 +} u_j\|_{L^{4}_{T,x}}^2)+ R^2 \Big]\notag  \\
&  \hphantom{XXXXXXX} +(\max_{j=1,2} \|
J^{\frac 14 +}\PbHI u_j\|_{\wt{L^4_{T,x}}}^2 + R^2)\big[  \|
J^{s}\PbHI U\|_{\wt{L^4_{T,x}}} + \|\QQ_h\|_{\textup{op}} \|U\|_{L^{\infty}_{T}H^{s}_x} \big]\notag  \\
&  \hphantom{X} +\max_{j=1,2}(R\|u_j\|_{L^{\infty}_{T}H^{s}_x}  + \|
J^{\frac 14 +}\PbHI u_j\|_{\wt{L^4_{T,x}}}\|
J^{s}\PbHI u_j\|_{\wt{L^4_{T,x}}})\big[  \|
J^{\frac 14 +}\PbHI U\|_{\wt{L^4_{T,x}}} +\|U\|_{L^{\infty}_{T}H^{\frac 14 +}_x} \big]  \Big\}  \notag \\
&
\quad 
 +TM R^2\|\QQ_h\|_{\textup{op}} \|U\|_{L^{\infty}_{T}H^{\frac 14+}_{x}}, 
\label{uHsbddiff}
\end{align}
for $\QQ_h$ as in \eqref{Qh}.
\end{lemma}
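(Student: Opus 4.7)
The proof will parallel the arguments used to establish Lemma~\ref{LEM:uinfoINLS}, with the main new ingredient being a systematic ``differentiation'' of each multilinear expression. The key identity is
\[
A(e^{-i\be F_1},X_1) - A(e^{-i\be F_2},X_2) = A(e^{-i\be F_1}-e^{-i\be F_2}, X_1) + A(e^{-i\be F_2}, X_1 - X_2) ,
\]
applied in each slot of every multilinear expression appearing in \eqref{PbHIu} and in the decomposition \eqref{INLSn4}. The difference of exponentials will be controlled pointwise by $\|e^{i\be F_1}-e^{i\be F_2}\|_{L^\infty_{T,x}} \les \|F_1-F_2\|_{L^\infty_{T,x}}$, together with the low-frequency bound \eqref{DeLinfty2}, while the remaining factor will carry either $V$, $W$, $U$, or a difference of the projections $\P_{-,\hi}(e^{i\be F}u)$ and $\P_{+,\lo}(e^{i\be F}u)$, the latter two being estimated by the same arguments used for $\II$ and $\III$ in Lemma~\ref{LEM:uinfoINLS}.

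For the first estimate on $\|J^s \PbHI U\|_{\wt{L^4_{T,x}}}$, I will apply the recovery formula \eqref{PbHIu} to both $u_1$ and $u_2$, subtract, and expand every difference as above. Each of the four contributions (the $\I,\II,\III$ terms and the $\PbHI W$ term) then admits exactly the same $L^4_{T,x}$-type bounds as in the proof of \eqref{Jsu2}: factors of $u_j$ get replaced by $R$, the factor corresponding to the differentiated slot gets replaced by $\|V\|$, $\|W\|$, $\|U\|_{L^\infty_T H^{\frac14+}_x}$, or $\|F_1-F_2\|_{L^\infty_{T,x}}$. This directly yields the first claimed inequality.

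For the second estimate, I will again split $U = \P_{\le M}U + \P_{>M}U$. The high-frequency piece $\P_{>M}U$ is handled precisely as the high-frequency part of \eqref{uHsbd2}, using the recovery formula \eqref{PbHIu} and the same splitting as above, giving rise to the $M^{-\ta_0}$-gaining contribution. For the low-frequency piece $\P_{\le M}U$, I will use the Duhamel formula for the equation satisfied by $U$, whose forcing is the difference of the nonlinearities in \eqref{INLS2}. Using the trilinearity of $G_1, G_2, B$ from \eqref{goodbad} (with $w$ only appearing linearly), each such difference expands as a sum of three terms in which exactly one slot carries $U$ or $W$. I will then apply Lemma~\ref{LEM:BXsb} to the $B$-differences (with $s=s_0=\tfrac14+$ and $\s=\tfrac14+$, losing a factor of $M^{\s+s-s_0}$ from the frequency truncation), apply the $L^2_T H^{s-1}_x$-bounds from the proof of Lemma~\ref{LEM:GXsb} to the $G$-differences (losing $M^{1+s-s_0}$), and use \eqref{Lhterm} for the $\QQ_h$ piece. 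The term $\|\P_{\ges M}V\|_{X^{s,\frac12+\dl}_{T}}\|F_1-F_2\|_{L^\infty_{T,x}}$ on the right-hand side arises specifically from the difference in the first term ($\I$-type) of the high-frequency recovery, where differentiating in the $e^{iF}$-slot produces a full $V$-factor multiplied by $\|F_1-F_2\|_{L^\infty_{T,x}}$.

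The main obstacle will be the bookkeeping: each multilinear expression produces several difference terms, and one must verify that the refined decomposition \eqref{goodbad} — particularly the separation into $\mathcal{W}_1^\pm$ versus $\mathcal{W}_2^\pm$ based on the relative sizes of $N$ and $N_1$ — is respected in each summand after differentiation, so that the bilinear Strichartz gain used in Lemma~\ref{LEM:BXsb} is not lost. A secondary delicate point is that some of the ``difference'' factors must be measured in the weaker norm $L^\infty_T H^{\frac14+}_x$ (rather than in $H^s_x$) to avoid circularity in the eventual bootstrap, which is why $U$ appears at regularity $\tfrac14+$ in many of the terms on the right-hand side of \eqref{uHsbddiff} even though the left-hand side is at regularity $s$.
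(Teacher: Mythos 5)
Your proposal matches the paper's argument in structure and in all the key ingredients: expand the recovery formula \eqref{PbHIu} for $u_1$ and $u_2$, telescope each multilinear slot, bound exponential differences by $\|F_1-F_2\|_{L^\infty_{T,x}}$ via the mean value theorem together with \eqref{DeLinfty2}, split $U=\P_{\le M}U + \P_{>M}U$ for the $H^s$ bound, reuse the high-frequency recovery argument for $\P_{>M}U$, and apply the Duhamel formula with the difference versions of Lemma~\ref{LEM:BXsb} and the $G$-estimate (via \eqref{uIBdiff}) for $\P_{\le M}U$, picking up the $M^{\s+s-s_0}$ and $M^{1+s-s_0}$ losses and the $\QQ_h$ piece via \eqref{Lhterm}. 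This is precisely what the paper does.

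One small bookkeeping correction: you attribute the $\|\P_{\ges M}V\|_{X^{s,\frac12+\dl}_T}\|F_1-F_2\|_{L^\infty_{T,x}}$ term to differentiating the $e^{iF}$-slot producing a $V$-factor. In fact differentiating the $e^{iF}$-slot produces $(e^{i\be F_1}-e^{i\be F_2})v_1$, hence a factor $\|\P_{\ges M}v_1\|\cdot\|F_1-F_2\|$, while the standalone $\|\P_{\ges M}V\|$ (with \emph{no} $\|F_1-F_2\|$ factor and no $M^{-\ta}$ gain) comes from differentiating the $v$-slot, i.e.\ from $\PbHIp[e^{i\be F_2}V]$; the paper's proof tracks these two contributions separately. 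Neither contribution alone is literally $\|\P_{\ges M}V\|\|F_1-F_2\|$. This slip does not affect the structure of your argument, but being precise here matters because the $\|\P_{\ges M}V\|$ term (which is small only by equicontinuity, not by powers of $M$ or $T$) is the one that ultimately forces the compactness/frequency-envelope device in the proof of uniqueness in Section~\ref{SEC:LWP}.
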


\begin{proof}
Using \eqref{PbHIu}, we have 
\begin{align}
\PbHI U  =& \PbHIp[ (e^{i \be F_1}-e^{i \be F_2}) v_1] +  \PbHIp[ e^{i \be F_2}V] \label{PU1} \\
& +\PbHIp[ \Pbhi( e^{- i \be F_1}-e^{- i \be F_2})\Pblo(e^{i \be F_1}u_1)] \label{PU2} \\
& + \PbHIp[ \Pbhi( e^{- i \be F_2})\Pblo((e^{i \be F_1}-e^{i \be F_2})u_1)]  \label{PU3}  \\
& + \PbHIp[ \Pbhi( e^{- i \be F_2})\Pblo(e^{i \be F_2}U)]  \label{PU4}  \\
&  +\PbHIp[ (e^{- i \be F_1}-e^{- i \be F_2})\P_{-,\text{hi}}(e^{i \be F_1}u_1)]  \label{PU5} \\
&  +\PbHIp[ e^{- i \be F_2}\P_{-,\text{hi}}((e^{i \be F_1}-e^{i \be F_2})u_1)]  \label{PU6} \\
&  +\PbHIp[ e^{- i \be F_2}\P_{-,\text{hi}}(e^{i \be F_2} U)] \label{PU7}    \\
& + \PbHI W. \label{PU8}
\end{align}

The second part of \eqref{PU1} is exactly $\I$ in the proof of \eqref{Jsu}. and thus \eqref{CCMI} applies.
As for the first term in \eqref{PU1}, we follow the same argument except that we use 
\begin{align}
\dx( e^{i \be F_1}-e^{i \be F_2}) 
= (|u_1|^2-|u_2|^2)e^{i \be F_1}+|u_2|^2(e^{i \be F_1}-e^{i \be F_2}). 
 \label{expdiffs}
\end{align}
We always place the difference $e^{i \be F_1}-e^{i \be F_2}$ on the right-hand side into $L^{\infty}_{T,x}$. Then, from the mean value theorem, it is controlled by $\|F_1-F_2\|_{L^{\infty}_{T,x}}$. Thus, by slightly modifying the steps in \eqref{I1}, \eqref{I2}, and \eqref{I3}, taking into account \eqref{expdiffs}, and using \eqref{DeLinfty2}, we have
\begin{align}
\begin{split}
\|J^{s}\eqref{PU1}\|_{\wt{L^4_{t,x}}}  \les&T^{\frac 14-} (1+R^2) \|V\|_{X^{s,\frac 12+}_{T}}+T^{\frac 14-} \|u_2\|_{L^{\infty}_{T}H^{\frac 14+}_x}\|V\|_{X^{s,\frac 12+}_{T}}^2 \\
&+T^{\frac 14-}  \big[  R \|U\|_{L^{\infty}_{T}H^{\frac 14+}_x}  + \|u_2\|_{L^{\infty}_{T}H^{\frac 14+}_x}^2 \|F_1-F_2\|_{L^{\infty}_{T,x}} \big] \|v_2\|_{X^{s,\frac 12+}_{T}}.
\end{split} 
\notag
\end{align}

For \eqref{PU2}, we use \eqref{expdiffs} and get
\begin{align*}
\|J^{s}\eqref{PU2}\|_{\wt{L^4_{t,x}}} & \les T^{\frac 14} \big[  R \|U\|_{L^{\infty}_{T}H^{\frac 14+}_x}  + \|u_2\|_{L^{\infty}_{T}H^{\frac 14+}_x}^2 \|F_1-F_2\|_{L^{\infty}_{T,x}} \big] \|u_1\|_{L^{\infty}_{T}L^2_x}.
\end{align*}
Similarly for \eqref{PU3} and \eqref{PU4}, we replace \eqref{Linftyu} by
\begin{align*}
\| \P_{+,\text{lo}}( e^{i \be F_1}u_1-e^{i \be F_2}u_2)\|_{L^{\infty}_{T,x}} \les \|e^{i \be F_1}-e^{i \be F_2}\|_{L^{\infty}_{T,x}} \|u_1\|_{L^{\infty}_{T}L^2_x} + \|U\|_{L^{\infty}_{T}L^2_x},
\end{align*}
and so
\begin{align*}
\|J^{s}\eqref{PU3}\|_{\wt{L^4_{t,x}}} +\|J^{s}\eqref{PU4}\|_{\wt{L^4_{t,x}}}  \les T^{\frac 14}\|u_1\|_{L^{\infty}_{T}H^{\frac 14+}_x}^2 \big[ \|F_1-F_2\|_{L^{\infty}_{T,x}} \|u_1\|_{L^{\infty}_{T}L^2_x} + \|U\|_{L^{\infty}_{T}L^2_x}\big].
\end{align*}

For \eqref{PU5}, \eqref{PU6}, and \eqref{PU7}, we need to split these three contributions according to \eqref{Lpsigns}. For the contributions coming from the good $+$ sign or the low frequency $\Pblo$, we just use the following replacement estimates for \eqref{III2} and \eqref{III3}:
\begin{align*}
\|\P_{N_2}\P_{-,\text{hi}}&( (e^{i \be F_1}-e^{i \be F_2})u\|_{L^{4}_{x}}\les N_{2}^{\eps}\|F_1-F_2\|_{L^{\infty}_x}\|u\|_{L^2_x}  \\
&+ N_{2}^{-\frac 14+\eps} \|u\|_{H^{\frac 14+}}\big[ R \|U\|_{H^{\frac 14+}_x}  + \|u_2\|_{L^{\infty}_{T}H^{\frac 14+}_x}^2 \|F_1-F_2\|_{L^{\infty}_{T,x}} \big], \\
\|\wt{\P}_{N}(e^{- i \be F_1}-e^{- i \be F_2})\|_{L^{4}_x} &\les N^{-\frac 34} \big[ R \|U\|_{H^{\frac 14+}_x}  + \|u_2\|_{H^{\frac 14+}_x}^2 \|F_1-F_2\|_{L^{\infty}_{x}} \big].
\end{align*}
Thus, 
\begin{align*}
&\|J^s \PbHIp[ (e^{- i \be F_1}-e^{- i \be F_2})\P_{-,\text{hi}}(e^{i \be F_1} (\Pbhip +\Pblo)u_1)] \|_{\wt{L^4_{T,x}}} \\ 
&+\|J^s\PbHIp[  e^{- i \be F_2}\P_{-,\text{hi}}((e^{- i \be F_1}-e^{- i \be F_2})(\Pbhip +\Pblo) u_1)] \|_{\wt{L^4_{T,x}}} \\
&\hphantom{XXX}\les T^{\frac 14}\|u_1\|_{L^{\infty}_{T}H^{\frac 14+}_x}(1+\|u_1\|_{L^{\infty}_{T}H^{\frac 14+}_x}^3)
 \\
&\hphantom{XXX}\times  \big[  R \|U\|_{L^{\infty}_{T}H^{\frac 14+}_x}  + \|u_2\|_{L^{\infty}_{T}H^{\frac 14+}_x}^2 \|F_1-F_2\|_{L^{\infty}_{T,x}} \big], 
\end{align*}
Next we follow the argument we used to deal with the contribution from $\P_{-,\text{hi}}u=w$ in \eqref{Lpsigns}. We have 
\begin{align*}
\|J^{s}\PbHIp &[ (e^{-i \be F_1}-e^{- i \be F_2})\P_{-,\text{hi}}(e^{i F_1}w_1)] \|_{\wt{L^{4}_{T,x} }}  \\
&\les T^{\frac 14-}   \|w_1\|_{X^{\frac 14+,\frac 12+}_{T}} \big[  R \|U\|_{L^{\infty}_{T}H^{\frac 14+}_x}  + \|u_2\|_{L^{\infty}_{T}H^{\frac 14+}_x}^2 \|F_1-F_2\|_{L^{\infty}_{T,x}} \big]
\end{align*}
and
\begin{align*}
\|&J^{s}\PbHIp [ e^{- i \be F_2}\P_{-,\text{hi}}((e^{i F_1}-e^{i \be F_2})w_1)] \|_{\wt{L^{4}_{T,x} }} \\
& \les T^{\frac 14}   \|w_1\|_{X^{\frac 14+,\frac 12+}_{T}} \big[  R \|U\|_{L^{\infty}_{T}H^{\frac 14+}_x}  +(1+ \|u_2\|_{L^{\infty}_{T}H^{\frac 14+}_x}^2)\|u_2\|_{L^{\infty}_{T}H^{\frac 14+}_x} \|F_1-F_2\|_{L^{\infty}_{T,x}} \big].
\end{align*}
This completes the estimates for \eqref{PU5} and \eqref{PU6}. For \eqref{PU7}, we just keep track of the dependence on $U$.  Finally, the bound for \eqref{PU8} follows from \eqref{L4}. 

We move onto verifying \eqref{uHsbddiff}, where we once again split $U=\P_{\leq M}U+\P_{>M}U$ for any  fixed $M\in \N$ sufficiently large.
We begin with the high frequency portion $\P_{>M} U$.  For the second term in \eqref{PU1}, we write it as 
\begin{align*}
\P_{>M}[ & e^{i \be F_2}V] = \P_{>M}[ \Pblo (e^{i \be F_2})V]  + \P_{>M}[ \Pbhi(e^{i \be F_2})V]  \\
& = \P_{>M}[ \Pblo (e^{i \be F_2}) \P_{\ges M}V] +\P_{>M}[ \Pbhi \P_{\ll M} (e^{i \be F_2}) \P_{\ges M}V] +\P_{>M}[ \Pbhi \P_{\ges M} (e^{i \be F_2})V]
\end{align*}
where for the first term in the second equality we applied an argument similar to term $\I$ in \eqref{Jsu1} to place $\P_{\ges M}$ onto $V$\footnote{This step is not necessary for the second term in \eqref{PU1} but is needed for the first term in \eqref{PU1}.}. Then, by the fractional Leibniz rule and \eqref{DeLinfty}, we have
\begin{align*}
\|\P_{>M}[ (\Pblo + \Pbhi \P_{\ll M}) (e^{i \be F_2}) \P_{\ges M}V] \|_{L^{\infty}_{T}H^s_x} \les  \|\P_{\ges M} V\|_{L^{\infty}_{T}H^s_x} \les \|\P_{\ges M}V\|_{X^{s,\frac 12+\dl}_{T}}.
\end{align*}
Next, 
\begin{align}
\|&\P_{>M}[ \P_{\ges M} (e^{i \be F_2}) V] \|_{L^{\infty}_{T}H^{s}_x}  \les \|D^{s}\P_{\ges M}e^{i \be F_2}\|_{L^{\infty}_{T}L^4_x} \|V\|_{L^{\infty}_{T}L^4_x} + \|\P_{\ges M}e^{i \be F_2}\|_{L^{\infty}_{T,x}} \|V\|_{L^{\infty}_{T}H^{s}_x}  \notag\\
& \les \|D^{s-1}\P_{\ges M}(|u_2|^2e^{i \be F_2})\|_{L^{\infty}_{T}L^4_x} \|V\|_{X^{\frac 14+,\frac 12+}_{T}} +M^{-\frac 12+}\| |u_2|^2 e^{i \be F_2}\|_{L^{\infty}_{T}L^2_x} \|V\|_{L^{\infty}_{T}H^s_x}  \notag\\
& \les M^{-\frac 14} \||u_2|^2 \|_{L^{\infty}_{T}L^2_x}\|V\|_{X^{\frac 14+,\frac 12+}_{T}}+M^{-\frac 12+}\| u_2\|_{L^{\infty}_{T}L^4_x}^2 \|V\|_{L^{\infty}_{T}H^s_x} \notag \\
& \les M^{-\frac 14}\|u_2\|_{L^{\infty}H^{\frac 14+}_x}^2 \|V\|_{X^{s,\frac 12+}_{T}}.\label{Mneg1}
\end{align}
Therefore, 
\begin{align*}
\|J^s \P_{>M}[ e^{i \be F_2}V]\|_{L^{\infty}_{T}L^2_x} & \les \|\P_{\ges M}V\|_{X^{s,\frac 12+\dl}_{T}} +  M^{-\frac 14}\|u_2\|_{L^{\infty}H^{\frac 14+}_x}^2 \|V\|_{X^{s,\frac 12+}_{T}}.
\end{align*}
Following a similar strategy additionally using \eqref{DeLinfty2} and \eqref{expdiffs}, we also get
\begin{align*}
\|&\P_{>M}[ (e^{i \be F_1}-e^{i \be F_2}) v_1]\|_{L^{\infty}_{T}L^2_x}  \\
& \les 
\big\{\|\P_{\ges M}v_1\|_{X^{s,\frac 12+\dl}_{T}} +  M^{-\frac 14}\|u_2\|_{L^{\infty}H^{\frac 14+}_x}^2 \|V\|_{X^{s,\frac 12+}_{T}} \big\}\big\{ \|F_1-F_2\|_{L^{\infty}_{T,x}} +\|U\|_{L^{\infty}_{T}H^{\frac 14+}_{x}}\big\}.
\end{align*}
 For each of \eqref{PU2}-\eqref{PU4}, we can place a projection $\P_{\ges M}$ onto the first factor and gain negative powers of $M$ as we did in \eqref{Mneg1}. 
Following \eqref{PU234} and using \eqref{expdiffs} we have 
\begin{align*}
\| J^{s}[ \eqref{PU2}+\eqref{PU3}+\eqref{PU4}]\|_{L^{\infty}_{T}L^2_x} & \les M^{-\frac 14+}R^2\big\{ \|U\|_{L^{\infty}_{T}H^{\frac 14+}_x}+R\|F_1-F_2\|_{L^{\infty}_{T,x}}\big\}.
\end{align*}
For \eqref{PU5}, \eqref{PU6}, and \eqref{PU7}, we need to split each of these terms into the three parts as in \eqref{Lpsigns}. Note though that due to the signs, we can always place a projection $\P_{\ges M}$ onto the first factor which is the smoother allowing us to gain negative powers of $M$ in the estimates.
For the first two such parts, we argue as we did in \eqref{PU5671} with \eqref{expdiffs}. Here, we obtain the bound by 
\begin{align*}
M^{-\ta}R^2(1+R^2) \{ \|U\|_{L^{\infty}_{T}H^{\frac 14+}_x}+ R\|F_1-F_2\|_{L^{\infty}_{T,x}}\big\},
\end{align*}
for some $\ta>0$.
We then need to control the corresponding third part in \eqref{Lpsigns}, which we do by modifying the argument we used for \eqref{IIIHs1}. This gives the bound
\begin{align*}
M^{-\ta}(1+R^2)\big[ \big\{ \|U\|_{L^{\infty}_{T}H^{\frac 14+}_x}+\|F_{1}-F_2\|_{L^{\infty}_{T,x}}\} \|w_2\|_{X^{s,\frac 12+}_{T}} + \|W\|_{X^{s,\frac 12+}_{T}}\big].
\end{align*}
Therefore, for $\PbHI U$, we have obtained
\begin{align*}
\|\P_{>M} U \|_{L^{\infty}_{T}H^{\frac 14+}_{x}} 
& \les \|\P_{\ges M}V\|_{X^{s,\frac 12+\dl}_{T}} \|F_{1}-F_{2}\|_{L^{\infty}_{T,x}}+
(1+M^{-\ta}R)^{3} \Big[ \|V\|_{X^{s,\frac 12+}_{T}} + \|W\|_{X^{s,\frac 12+}_{T}} 
\\
& \hphantom{XX}+M^{-\ta} (R+\|v_2\|_{X^{s,\frac 12+}_{T}} +\|w_2\|_{X^{s,\frac 12+}_{T}} ) \big\{ \|U\|_{L^{\infty}_{T}H^{\frac 14+}_{x}} + \|F_1-F_2\|_{L^{\infty}_{T,x}}\big\}\Big]
\end{align*}
Now we consider the low frequencies $\P_{\leq M} U$ for which we take the difference of the Duhamel formulas and have the following analogue of \eqref{PbLOu}:
\begin{align*}
\|\P_{\leq M} U\|_{L^{\infty}_{T}H^{s}_x}  \leq & \|U(0)\|_{H^{s}_x} + T^{\frac12}M^{1+s-s_0}\|G(u_1,w_1)-G(u_2,w_2)\|_{L^2_{T}H^{s_0-1}_x}      \\
& + CT^{\dl}M^{\s+s-s_0}\|B(u_1,w_1)-B(u_2,w_2)\|_{X^{s_0-\s,-\frac 12+2\dl}_{T}}\\
&  + CTM (1+\|\GG_h\|_{\text{op}}) R^{2} \|U\|_{L^{\infty}_{T}H^{\frac 14+}_{x}}.
\end{align*}
By Lemma~\ref{LEM:BXsb}, we have
\begin{align*}
\| & B(u_1,w_1)-B(u_2,w_2)\|_{X^{s-\s, -\frac 12+2\dl}_{T}}  \\
& \les \|W\|_{X^{s,\frac 12+\dl}_{T}}\big\{ \| J^{\frac 14+}u_1\|_{L^{4}_{T,x}} + T^{\frac 12}R^2\big\} \\
&  \hphantom{X} + \|w_2\|_{X^{s,\frac 12+\dl}_{T}} \big\{ \|J^{\frac 14+}(u_1+u_2)\|_{L^{4}_{T,x}}\|J^{\frac 14+}U\|_{L^{4}_{T,x}} + T^{\frac 12}R \|U\|_{L^{\infty}_{T}H^{\frac 14+}_x}\big\}.
\end{align*}
Lastly, from \eqref{uIBdiff}, we get
\begin{align*}
\|& G(u_1,w_1)-G(u_2,w_2)\|_{L^{2}_{T}H^{s_0-1}_x} \\
&\les \|U\|_{L^{\infty}_{T}H^{s_0}_{x}} +\|W\|_{X^{s,\frac 12+\dl}_{T}} \Big[ \max_{j=1,2}\big( \|J^{s_0} u_j\|_{L^{4}_{T,x}}^2)+ R^2 \Big]  \\
& \hphantom{X} +(\max_{j=1,2} \|
J^{s_0}\PbHI u_j\|_{\wt{L^4_{T,x}}}^2 + R^2)\big[  \|
J^{s}\PbHI U\|_{\wt{L^4_{T,x}}} +(1+\|\GG_h\|_{\textup{op}})\|U\|_{L^{\infty}_{T}H^{s}_x} \big]  \\
& \hphantom{X} +\max_{j=1,2}(R\|u_j\|_{L^{\infty}_{T}H^{s}_x}  + \|
J^{s_0}\PbHI u_j\|_{\wt{L^4_{T,x}}}\|
J^{s}\PbHI u_j\|_{\wt{L^4_{T,x}}})\big[  \|
J^{s_0}\PbHI U\|_{\wt{L^4_{T,x}}} +\|U\|_{L^{\infty}_{T}H^{s_0}_x} \big].
\end{align*}
Putting all of these terms together finishes the proof of \eqref{uHsbddiff}.
\end{proof}

\section{Trilinear estimates}\label{SEC:tri}

We now state the crucial trilinear estimates.

\begin{proposition}[Trilinear estimates for CCM \eqref{CCM}]\label{PROP:tri}
Let $s\geq s_0 >\frac 14$ and $0<T\leq 1$. Then, for any $u_j\in L^{\infty}_{T}H^{ s_0}_{+} \cap L^{4}_{T}W^{s_0,4}_x \cap  X_{T}^{ s_0-\frac 12,\frac 12}\cap X_{T}^{ s_0-1,1} $, $j=2,3$, with
\begin{align}
Y(u_2, u_3) : =   &\prod_{j=2}^3 \big(  \|J^{s_0}\PbHI u_j\|_{L^{4}_{T,x}} + \| u_j\|_{L^\infty_T H^{s_0}_x} \big)
\notag
\\
& + \prod_{\substack{j_1,j_2 \in \{2,3\} \\ j_1 \ne j_2}} \|u_{j_1} \|_{X^{s_0-\frac12, \frac12}_T} \big( \| u_{j_2} \|_{X^{s_0-1,1}_T} + \|u_{j_2} \|_{L^\infty_T H^{s_0}_x} \big)
\label{Bu}
\end{align}
and for any $v\in X_{T}^{s,\frac 12+\dl}$, it holds that:
\begin{align}
\| \ind_{[0,T]} \Pbhip[ v \, \P_{-}\dx( \cj{u_2} u_3)] \|_{X^{s,-\frac 12+\dl}} & \les T^{\dl} \|v\|_{X_{T}^{s,\frac12+\dl}} Y(u_2, u_3) .\label{vX0}
\end{align}
\end{proposition}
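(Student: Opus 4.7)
The plan is to reduce \eqref{vX0} by duality to an integral inequality, then perform a dyadic Littlewood--Paley decomposition, and analyze the resulting sum by a case split exploiting the phase function and bilinear Strichartz. Since the dual of $X^{s,-\frac12+\dl}$ is $X^{-s,\frac12-\dl}$, the estimate \eqref{vX0} reduces to bounding
\begin{equation*}
\bigg|\iint \cj{g}\,\Pbhip[v\,\P_-\dx(\cj{u_2}u_3)]\,dx\,dt\bigg|
\les \|v\|_{X^{s,\frac12+\dl}_T}\,Y(u_2,u_3)\,\|g\|_{X^{-s,\frac12-2\dl}}
\end{equation*}
for $g$ supported in $[0,T]$; the factor $T^{\dl}$ in \eqref{vX0} is then recovered via \eqref{timeloc} applied to $g$. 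Decomposing $g, v, u_2, u_3$ dyadically at scales $N, N_1, N_2, N_3$ and the inner product $\cj{u_2}u_3$ at scale $N_{23}$, the sign constraints $\Pbhip$ (on both $v$ and the output) and $\P_-$ (on the inner product), combined with $\xi=\xi_1-\xi_2+\xi_3$, force $\xi_1>0$ together with the null-type relation $N_1\ges N\vee N_{23}$. Thus the gauged factor $v$ always carries the largest frequency and can absorb the derivative $|\xi_3-\xi_2|\les N_{23}$.

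Computing the phase function gives $\Phi=-2(\xi_2-\xi_1)(\xi_3-\xi_2)$ with the modulation identity $\max_j|\tau_j+\xi_j^2|\ges|\Phi|\sim|\xi_1-\xi_2|\,N_{23}$. I would split the analysis by the size of $|\xi_1-\xi_2|$ and the location of the maximal modulation. In the non-resonant regime $|\xi_1-\xi_2|\ges 1$, one of the four modulations is $\ges N_{23}$: if it lies on $v$ or $g$, half of the corresponding $X^{s,b}$-weight converts that factor into an $L^2_{t,x}$ function, gaining $N_{23}^{\frac12}$ to compensate for the derivative, while the two $u$-factors enter $L^4_{t,x}$ or $L^{\infty}_t L^2_x$ and are controlled by the first term of $Y(u_2,u_3)$ in \eqref{Bu}; if instead the maximal modulation lies on some $u_j$, I would place that factor in $X^{s_0-1,1}_T$ and the other in $X^{s_0-\frac12,\frac12}_T$, matching the second term of $Y(u_2,u_3)$, and then use \eqref{L4}, Sobolev embeddings, and Bernstein's inequality to distribute the remaining factors. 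In each subcase, the resulting dyadic multiplier decays by a negative power of $N_{\max}$ as long as $s\ge s_0>\frac14$.

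The hard part will be the near-resonant regime $|\xi_1-\xi_2|\ll 1$, where necessarily $N_1\sim N_2$ and the phase function offers no gain. Here the bilinear Strichartz estimate \eqref{bilin1} is essential: applying it to the pair $\P_{N_{23}}(\cj{\P_{N_2}u_2}\P_{N_3}u_3)$, or alternatively to $\P_{N_1}v\cdot\cj{\P_{N_2}u_2}$, produces a gain of $N_{\max}^{-\frac12+10\dl}$, yielding an effective dyadic multiplier of the form $N^{s}N_1^{-s}(N_2N_3)^{-s_0}N_{23}^{\frac12+10\dl}$ which, together with $N_1\ges N\vee N_{23}$ and $s\ge s_0>\frac14$, sums to a negative power of $N_{\max}$. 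This is exactly the regime that enforces $s>\frac14$ and matches the heuristic in Remark~\ref{RMK:DNLS}: when the phase factor $\jb{(\xi_3-\xi_2)(\xi_1-\xi_2)}^{1/2}$ is useless, the bilinear Strichartz estimate supplies the $\frac12$-derivative of extra smoothing needed to bring the regularity down to $\frac14$.
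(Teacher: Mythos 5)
Your setup is on the right track --- duality, dyadic decomposition, the sign-induced constraint $N_1\gtrsim N\vee N_{23}$, the resonance identity $\Phi\sim(\xi_1-\xi_2)(\xi_3-\xi_2)$, and the two tools (modulation gain and bilinear Strichartz) --- and your near-resonant branch ($|\xi_1-\xi_2|\ll1$) does close by bilinear Strichartz on the $(g,v)$ pair as you indicate (the gain is really $N_{23}^{-\frac12+10\dl}$, not $N_{\max}^{-\frac12+10\dl}$, but since $N_{23}\lesssim N_1\sim N_{\max}$ the numerology survives). The gap lies in your non-resonant branch when the maximal modulation sits on $u_2$ or $u_3$. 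Take the configuration $N\sim N_1\sim N_2\sim N_3\sim N_{23}$ with $|\xi_1-\xi_2|\sim1$; this is admissible (e.g.\ $\xi\sim\xi_3\sim N_1/2$, $\xi_1\sim\xi_2\sim 3N_1/2$, all positive, with $\xi_3-\xi_2<0$) and falls squarely in your non-resonant regime with $|\Phi|\sim N_{23}\sim N_1$. Pairing $(g,v)$ by bilinear Strichartz gives $N_{23}^{-\frac12+10\dl}N_1^{-s}$; placing the high-modulation $u$-factor in $X^{s_0-1,1}_T$ and extracting the modulation gain yields $N_j^{1-s_0}\cdot\big(|\xi_1-\xi_2|N_{23}\big)^{-1}\sim N_1^{-s_0}$, no better than the $L^4$ placement would give; and the remaining $u$-factor, being the last slot in H\"older, must go to $L^\infty_{t,x}$, costing a Bernstein factor $N_3^{\frac12-s_0}$. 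Collecting terms with the derivative $N_{23}$, the dyadic multiplier is $N^sN_{23}\cdot N_{23}^{-\frac12+10\dl}N_1^{-s}\cdot N_1^{-s_0}\cdot N_3^{\frac12-s_0}\sim N_1^{1-2s_0+10\dl}$, a \emph{positive} power of $N_{\max}$ throughout $s_0\in(\tfrac14,\tfrac12)$. Any redistribution that keeps the high-modulation $u$-factor out of $L^4$ suffers the same $N^{1/2}$-type loss, so the claim ``the resulting dyadic multiplier decays by a negative power of $N_{\max}$'' is not established by the argument as written.

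This configuration is precisely what the paper handles in its Case 1 ($N_2\vee N_3\sim N_2\wedge N_3$) by deliberately \emph{ignoring} the phase function: bilinear Strichartz on $(g,v)$ plus both $u$-factors in $L^4_{T,x}$ with $J^{s_0}$ weights gives $N_1^{\frac12+10\dl-2s_0}$, which sums for $s_0>\tfrac14+5\dl$. The paper's split --- by $N_2\wedge N_3$ against $N$, and by $N_2\vee N_3$ against $N_2\wedge N_3$ --- is calibrated so that Cases 1--2 (both $u$-factors of high frequency) never invoke the phase function, while Case 3 ($N_2\wedge N_3\ll N$, a genuine Low-High-High interaction) is exactly where the $X^{s_0-1,1}$ information is irreplaceable. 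In Case 3 the frequency geometry forces $|\xi_1-\xi_2|\gtrsim N\vee N_3$, hence $|\Phi|\gtrsim K:=N_{23}(N\vee N_3)$: the fully-resonant piece is provably empty, and the quantity $K$, much larger than the crude $N_{23}$ coming from $|\xi_1-\xi_2|\gtrsim1$, is what absorbs the Bernstein and $X^{s_0-1,1}$ losses (see the paper's terms \eqref{X2}--\eqref{X3} and the vanishing of \eqref{Xres}). Splitting by the raw size of $|\xi_1-\xi_2|$ loses that structural information and funnels bilinear-Strichartz cases into a phase-function argument that cannot close below $s_0=\tfrac12$.
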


\begin{proof}
By passing from $-\frac 12+\dl$ to $-\frac 12+2\dl$ in the modulation variable, we may gain the factor of $T^{\dl}$ appearing on the right-hand side of \eqref{vX0}. Moreover, we consider arbitrary extensions of the functions $v$ and $u_2,u_3$ on $[0,T]$ and take an infimum over all extensions at the end of the estimates to recover the time localised norms appearing in \eqref{Bu} and \eqref{vX0}. Moreover, we either associate the sharp cutoff function $\ind_{[0,T]}$ with the dual function (when we use duality) or with $v$ (just for the bound for \eqref{X0}). We omit these details to not overburden the notation.

We perform a dyadic decomposition in space:
\begin{align}
\| &\Pbhip[ v \, \P_{-}\dx(\cj{u_2} u_3)] \|_{X^{s,-\frac 12+2\dl}}^{2}  \notag \\
 & \les \sum_{N>1}  N^{2s} \bigg( \sum_{N_1,N_2,N_3, N_{23}}\| \P_{N}\Pbhip[ \P_{N_1}v\cdot  \P_{-}\P_{N_{23}}\dx( \cj{\P_{N_2}u_2} \P_{N_3}u_3)] \|_{X^{0,-\frac 12+2\dl}}\bigg)^{2}
.
 \label{Xdyad}
\end{align}
The projections $\P_{+}$ and $\P_{-}$ here imply that:
\begin{align}
N_{1} \ges N_{23} \vee N. \label{N1cond}
\end{align}
In particular, $N_1 \ges 2$.
Thus, we control \eqref{Xdyad} under the additional restriction \eqref{N1cond}. We let 
\begin{align*}
N_{\max}: = \max(N_1,N_2,N_3,N_{23},N)
\end{align*}
and note that 
\begin{align}
N_{23}\les N_2 \vee N_3. \label{N23}
\end{align}

\smallskip
\noi
\underline{\textbf{Case 0:} $N_{23}\les 1$.}

\smallskip
\noi
By duality, we have
\begin{align}
\begin{split}
\|& \P_{N}\Pbhip[ \P_{N_1}v \cdot \P_{-}\P_{N_{23}}\dx( \cj{\P_{N_2}u_2} \P_{N_3}u_3)] \|_{X^{0,-\frac 12+2\dl}}  \\
&  = \sup_{ \|g\|_{X^{0,\frac 12-2\dl}} \leq 1} \int_{\R} \int_{\R} \cj{\P_{N}g} \cdot \Pbhip[ \P_{N_1}v \cdot \P_{-}\P_{N_{23}}\dx( \cj{\P_{N_2}u_2} \P_{N_3}u_3)]dx dt.
\end{split} \label{X1dual}
\end{align}
For fixed $g\in X^{0,\frac 12-2\dl}$ with $\|g\|_{X^{0,\frac 12-2\dl}}\leq 1$, we use H\"{o}lder's inequality and \eqref{L4} to bound the integrals on the right-hand side of \eqref{X1dual} by
\begin{align*}
&\| \P_{N}g\|_{L^{4}_{t,x}} \|\P_{N_1}v\|_{L^{4}_{t,x}} \|\P_{N_2}u_2\|_{L^{4}_{t,x}} \|\P_{N_3}u_3\|_{L^{4}_{t,x}} \\
&\les N_1^{-s} (N_2 N_3)^{-s_0}   \| \P_{N}g\|_{X^{0,\frac 12-2\dl}} \|\P_{N_1}v\|_{X^{s,\frac 12}} \|J^{s_0}\P_{N_2}u_2\|_{L^{4}_{t,x}} \|J^{s_0}\P_{N_3}u_3\|_{L^4_{t,x}}.
\end{align*}
If $N_2 \vee N_3\ges N_1$, then we use $N_{1}^{-s}$ to control the norm derivative $N^s$ and we have a negative power of the maximum frequency to perform all of the dyadic summations. If instead $N_{2}\vee N_3 \ll N_1$, then $N\sim N_1$. Since $s_0>0$, we can sum over $(N_2,N_3)$. As for the sum over $(N,N_1)$, we have by Cauchy-Schwarz,
\begin{align*}
& 
\sum_{N} N^{2s}  \bigg( \sum_{N_1\sim N}N_1^{-s} \|\P_{N}g\|_{X^{0,\frac 12-2\dl}} \|\P_{N_1}v\|_{X^{s,\frac 12}}\bigg)^2
\\
& \les \sum_{N} N^{2s} \|\P_{N}g\|_{X^{0,\frac 12-2\dl}}^2  \bigg(\sum_{N_1\sim N}N_1^{-2s}\bigg)  \|v\|_{X^{s,\frac 12}}^2 \\
& \les \sum_{N}  \|\P_{N}g\|_{X^{0,\frac 12-2\dl}}^2 \|v\|_{X^{s,\frac 12}}^2 \\
& \les \|g\|_{X^{0,\frac 12-2\dl}}^2 \|v\|_{X^{s,\frac 12}}^2.
\end{align*}

For the rest of the proof we assume that $N_{23}\gg 1$.

\smallskip
\noi
\underline{\textbf{Case 1:} $N_2 \vee N_3 \sim N_2 \wedge N_3$.}

\smallskip
\noi
We use \eqref{X1dual}.
By H\"{o}lder's inequality and \eqref{bilin1}, we have 
\begin{align}
\begin{split}
 &\bigg| 
\int_{\R} \int_{\R} \cj{\P_{N}g} \cdot \Pbhip[ \P_{N_1}v \cdot \P_{-}\P_{N_{23}}\dx( \cj{\P_{N_2}u_2} \P_{N_3}u_3)]dx dt
\bigg| \\
 & \les N_{23} \big\| \P_{N_{23}}[\cj{\P_{N}  g} \cdot \P_{N_1} v]\|_{L^{2}_{t,x}} \| \P_{N_2}u_2\|_{L^{4}_{t,x}} \|\P_{N_3}u_3\|_{L^{4}_{t,x}}  \\
 & \les \frac{N_{23}^{\frac 12+10\dl}}{N_{1}^{s} N_{2}^{s_0}N_{3}^{s_0}}  \|\P_{N} g\|_{X^{0,\frac 12 -2\dl}}\| \P_{N_1}v\|_{X^{s,\frac 12 -2\dl}} \|J^{s_0} \P_{N_2}u_2\|_{L^{4}_{t,x}} \| J^{s_0} \P_{N_3}u_3\|_{L^{4}_{t,x}}.
 \end{split} \label{L4arg}
\end{align}
Now, considering the dyadic factors and using \eqref{N1cond} and \eqref{N23}, we have
\begin{align}
\frac{N^s N_{23}^{\frac 12+10\dl}}{N_1^s (N_2 N_3)^{s_0}}. \label{L4mult}
\end{align}
In this case, $N_2\sim N_3$ and with \eqref{N1cond}, we must have either (i) $N_1\sim N$ or (ii) $N\ll N_1$. In case (i), by \eqref{N23} we have
\begin{align*}
\eqref{L4mult} \les N_{2}^{-2s_0+\frac 12 +10\dl}
\end{align*}
which is a negative power provided that $s_0>\frac 14+5 \dl$,
and we can sum over $(N,N_1)$ using a Cauchy-Schwarz argument as in Case 0.

In case (ii), since $N \ll N_1 \sim N_{23} \les N_2 \sim N_3$, we have
\begin{align*}
\eqref{L4mult}\les \frac{N^{s}}{N_{1}^{s} N_{2}^{2s_0-\frac 12-10\dl}} \les  N_{\max}^{-2s_0+\frac 12 +10\dl}
\end{align*}
which allows us to perform all of the dyadic sums since $s_0>\frac 14+5\dl$.

\smallskip
\noi
\underline{\textbf{Case 2:} $N_2\vee N_3 \gg N_2\wedge N_3$ and $N_2 \wedge N_3 \ges N$.}

\smallskip
\noi
In this case, we can also proceed by using duality, \eqref{bilin1} and \eqref{L4arg}, and we are reduced to adequately controlling the multiplicative factor \eqref{L4mult}. Note that we must have $N_1\sim N_2 \vee N_3\sim N_{23} \sim N_{\max}$.
 By \eqref{N1cond}, we have
\begin{align*}
\eqref{L4mult} \les N^{s-s_0}N_{23}^{\frac 12+10\dl}N_{1}^{-s-s_0} \les N^{s-s_0} N_{1}^{\frac 12+10\dl-s-s_0} \les N_{\max}^{-2s_0+\frac 12+10\dl},
\end{align*}
where in the second (and third) inequality we used that $s\geq s_0$ and $s_0>\frac 14+5\dl$.
We can then perform all of the dyadic summations since $\dl>0$.

\smallskip
\noi
\underline{\textbf{Case 3:} $N_2\vee N_3 \gg N_2\wedge N_3$ and $N_2 \wedge N_3 \ll N$.}

\smallskip
\noi
In the remaining cases, we need to make use of the phase function.
Given $\tau, \tau_j, \xi,\xi_j \in \R$, with $\s := \tau - \xi^2$ and $\s_j := \tau_j - \xi_j^2$, $j=1,2,3$, satisfying $\tau = \tau_1 - \tau_2 + \tau_3$ and $\xi = \xi_1 - \xi_2+\xi_3$, we have the following resonance identity
\begin{align}
\s_1-\s_2 + \s_3 -\s= \xi_{1}^{2}-\xi_{2}^{2}+\xi_{3}^{2} -\xi^{2} = -2(\xi-\xi_1)(\xi-\xi_3) =: \Phi(\cj{\xi}). \label{mods}
\end{align}

\noi 
Note that under $\xi_1-\xi_2+\xi_3=\xi$, we have $\xi-\xi_1=\xi_3-\xi_2$.
Thus, for $|\xi_3-\xi_2| \sim N_{23} \gg 1$, we have
\begin{align}
\s_{\max}:=\max( |\s_1|,|\s_2|,|\s_3|,|\s|) \ges N_{23}|\xi-\xi_3|.  \label{nonres}
\end{align}
Moreover, from the frequency assumptions, it holds that
\begin{align}
N_{23} \sim N_2\vee N_3 \quad \text{and} \quad N_{1} \sim N_{\max}. \label{N1max}
\end{align}

\smallskip
\noi
For notational purposes, we define 
\begin{align}
K : = N_{23} (N\vee N_3). \label{K}
\end{align}
We also write $N_{(2)}= N_2 \vee N_3$ and $N_{(3)}=N_2 \wedge N_3$, and we extend the $(j)$ notation to denote the corresponding index on the functions $\{u_2,u_3\}$ and set $\iota_{(j)}$ to be define the conjugation operation if $(j)=2$, or to be the identity operation if $(j)=3$.

For fixed $(N,N_1,N_2,N_3, N_{23})$, the triangle inequality implies
\begin{align}
&\| \P_{N}\Pbhip[ \P_{N_1}v \, \P_{-}\P_{N_{23}}\dx( \cj{\P_{N_2}u_2}  \cdot \P_{N_3}u_3)] \|_{X^{0,-\frac 12+2\dl}} \notag \\
& \les \| \Q_{\ges K} \P_{N}\Pbhip[ \P_{N_1}v \cdot \P_{-}\P_{N_{23}}\dx( \cj{\P_{N_2}u_2} \,\P_{N_3}u_3)] \|_{X^{0,-\frac 12+2\dl}} \label{X0}  \\
&  + \| \Q_{\ll K} \P_{N}\Pbhip[ (\P_{N_1}\Q_{\ges K}v)  \P_{-}\P_{N_{23}}\dx( \cj{\P_{N_2}u_2}\, \P_{N_3}u_3)] \|_{X^{0,-\frac 12+2\dl}} \label{X1} \\ 
&  + \| \Q_{\ll K} \P_{N}\Pbhip[ (\P_{N_1}\Q_{\ll K}v)  \P_{-}\P_{N_{23}}\dx(  [\Q_{\ges K}  \P_{N_{(3)}}u_{(3)}]^{\iota_{(3)}} \,[\P_{N_{(2)}}u_{(2)}]^{\iota_{(2)}})] \|_{X^{0,-\frac 12+2\dl}} \label{X2} \\
&+ \| \Q_{\ll K} \P_{N}\Pbhip[ (\P_{N_1}\Q_{\ll K}v)  \P_{-}\P_{N_{23}}\dx([\Q_{\ll K}  \P_{N_{(3)}}u_{(3)}]^{\iota_{(3)}}\, [\Q_{\ges K}  \P_{N_{(2)}}u_{(2)}]^{\iota_{(2)}}] \|_{X^{0,-\frac{1}{2}+2\dl}} \label{X3}\\
& + \| \Q_{\ll K} \P_{N}\Pbhip[ (\P_{N_1}\Q_{\ll K}v)  \P_{-}\P_{N_{23}}\dx( \cj{ \Q_{\ll K}  \P_{N_2}u_2}\, \Q_{\ll K}\P_{N_3}u_3)] \|_{X^{0,-\frac 12+2\dl}}, \label{Xres}
\end{align}
where $\Q_{\ges K}$ and $\Q_{\ll K}$ are the projections in \eqref{Qpro}. 

The terms \eqref{X0} through \eqref{X3} are the non-resonant contributions while \eqref{Xres} is the nearly-resonant contribution. As $\P_{+}u=u$ for \eqref{CCM}, we must have that $N_2 \les N_3$. However, we choose to proceed without this extra information since it will not be the case for \eqref{INLS2}.

We first control the non-resonant contributions.

\smallskip
\noi
\underline{Bound for \eqref{X0}:}

\smallskip
\noi
 By \eqref{nonres}, \eqref{L4}, and Bernstein's inequality,
\begin{align*}
\eqref{X0} & \les K^{-\frac 12+2\dl} \| \P_{N_1}v \P_{-}\P_{N_{23}}\dx[ \cj{\P_{N_2}u_2} \P_{N_3}u_3]\|_{L^{2}_{t,x}}  \\
& \les K^{-\frac 12+2\dl} \|\P_{N_1}v\|_{L^{4}_{t,x}} \| \P_{-}\P_{N_{23}}\dx[ \cj{\P_{N_2}u_2} \P_{N_3}u_3]\|_{L^{4}_{t,x}} \\
& \les K^{-\frac 12+2\dl} N_{23}N_{1}^{-s} \|\P_{N_1}v\|_{X^{s,\frac 12-\dl}} \|\P_{N_2}u_2 \cdot \P_{N_3}u_3\|_{L^{4}_{t,x}} \\
& \les \frac{N_{23} (N_2 \wedge N_3)^{\frac 12-s_0}}{K^{\frac 12-2\dl}N_{1}^s (N_2 \vee N_3)^{s_0} }
  \|\P_{N_1}v\|_{X^{s,\frac 12}}\| J^{s_0}\P_{N_{(2)}}u_{(2)}\|_{L^{4}_{t,x}} \| \P_{N_{(3)}} u_{(3)}\|_{L^{\infty}_{t}H^{s_0}_{x}}.
\end{align*}

\noi 
We reduce to bounding the ensuing multiplier:
\begin{align}
\frac{N^s N_{23} (N_2 \wedge N_3)^{\frac 12-s_0}}{K^{\frac 12-2\dl}N_{1}^s (N_2 \vee N_3)^{s_0} } 
 & \sim \frac{N^s  N_{23}^{\frac 12+2\dl}(N_2 \wedge N_3)^{\frac 12-s_0}}{  (N\vee N_3)^{\frac{1}{2} - 2\dl} N_1^s (N_2 \vee N_3)^{s_0}} \label{X0mult}.
\end{align}

\noi 
If $N\sim N_1$, then
\begin{align*}
\eqref{X0mult} \les N_{\max}^{-\frac 12 + 2\dl} (N_2 \vee N_3)^{\frac12 + 2\dl - s_0 + (\frac12-s_0)\lor 0} \les N_{\max}^{-s_0 + 4\dl + (\frac12-s_0)\lor 0}
\end{align*}

\noi 
which is a negative power provided $s_0>\frac 14+ 2\dl$.
Then, if $N\ll N_1$, by \eqref{N1cond}, we must have $N_1\sim (N_2\vee N_3)$.  If  $N_3 \gg N_2$, then,
\begin{align*}
\eqref{X0mult} \les  \frac{N^s N_2^{\frac 12-s_0}}{N_{\max}^{s+s_0-4\dl}} \les N_{\max}^{-s_0+ 4\dl + (\frac 12-s_0)\lor 0}, 
\end{align*}
while in the case $N_2 \gg N_3$, we have
\begin{align*}
\eqref{X0mult} \les \frac{N^s N_3^{\frac 12-s_0}}{(N\vee N_3)^{\frac 12 - 2\dl } N_{\max}^{s+s_0-\frac 12 -2\dl}} 
\les (N\vee N_3)^{s-s_0 + 2\dl}N_{\max}^{-s-s_0+\frac 12 +2\dl} \les N_{\max}^{-2s_0+\frac 12+4\dl}.
\end{align*}
 In both cases, we have a negative power of the largest dyadic, thus completing the estimate for \eqref{X0}.

\smallskip
\noi
\underline{Bound for \eqref{X1}:}

\smallskip
\noi
We move onto the estimate for \eqref{X1}. By removing the outer modulation restriction and then using duality, it suffices to control 
\begin{align*}
N^{s} \int_{\R\times\R}  \cj{\P_{N}\bigg( \frac{\ft g}{\jb{\s}^{\frac 12-2\dl}}\bigg)^{\vee} }\P_{N_1}\Q_{\ges K}v \cdot \P_{-}\P_{N_{23}}\dx[  \cj{\P_{N_2}u_2}\P_{N_3} u_3    ]dt dx
\end{align*}
for $g\in L^{2}_{t,x}$ of unit norm. Using Cauchy-Schwarz, \eqref{L4}, and Bernstein's inequality, we have
\begin{align*}
&N^s N_{23} \| \P_{N_1}\Q_{\ges K}v\|_{L^{2}_{t,x}}  \bigg\|  \P_{N}\bigg( \frac{\ft g}{\jb{\s}^{\frac 12-2\dl}}\bigg)^{\vee}\bigg\|_{L^{4}_{t,x}}  \|\P_{N_2}u_2 \cdot \P_{N_3}u_3\|_{L^{4}_{t,x}} \\
& \les  \frac{N^s N_{23} (N_2 \wedge N_3)^{\frac 12-s_0}}{K^{\frac 12} N_1^s  (N_2\vee N_3)^{s_0}}
\|\P_{N_1}v\|_{X^{s, \frac 12}} \|g\|_{L^{2}_{t,x}} \| J^{s_0}\P_{N_2\vee N_3}u_{j_1}\|_{L^{4}_{t,x}} \| \P_{N_2\wedge N_3} u_{j_2}\|_{L^{\infty}_{t}H^{s_0}_{x}}. 
\end{align*}
These dyadic factors are controlled by \eqref{X0mult}, thus the proof follows from the previous case.

\smallskip
\noi
\underline{Bound for \eqref{X2}:}

\smallskip
\noi
By duality, H\"{o}lder's inequality, \eqref{bilin1}, and Bernstein's inequality, we have
\begin{align*}
\eqref{X2} & \les N_{23}\| \P_{N_{23}} [\cj{\Q_{\ll K}\P_{N}\Pbhip g} \cdot \Q_{\ll K}\P_{N_1}v]\|_{L^{2}_{t,x}}\|\Q_{\ges K}\P_{N_{(3)}}u_{(3)}\|_{L^{2}_{t,x}} \| \P_{N_{(2)}}u_{(2)}\|_{L^{\infty}_{t,x}} \\
& \les \frac{N_{23}^{\frac 12+10\dl} (N_2 N_3)^{\frac 12-s_0} }{N_1^s K^{\frac 12}} \| \P_{N}g\|_{X^{0,\frac 12-2\dl}}\|\P_{N_1}v\|_{X^{s,\frac 12}} \|\P_{N_{(3)}}u_{(3)}\|_{X^{s_0-\frac 12, \frac 12}}\|\P_{N_{(2)}}u_{(2)}\|_{L^{\infty}_{t}H^{s_0}_x}
\end{align*}

\noi
We consider the ensuing multiplier which is
\begin{align}
\frac{N^s N_{23}^{\frac 12+10\dl} (N_2 N_3)^{\frac 12-s_0} }{N_1^s K^{\frac 12}}
\les \frac{(N\vee N_3)^{s-s_0} N_2^{\frac12-s_0}}{N_{\max}^{s-10\dl}} \les N_{\max}^{-s_0 + 10\dl + (\frac12-s_0)\lor 0}
\notag
\end{align}
where we have used that $s+s_0>\frac 12+10\dl$ and $s_0>\frac 14+5\dl$. Under these conditions, we can then sum over all of the dyadics.

\smallskip
\noi
\underline{Bound for \eqref{X3}:}

\smallskip
\noi
Recall that $N_1 \sim N_{\max}$.
Fix $\eps>0$ sufficiently small, to be chosen later.
 By duality, H\"{o}lder's inequality, \eqref{bilin1}, and Bernstein's inequality, we control \eqref{X3} by
\begin{align*}
&N^{s} N_{23}^{\frac 12+10\dl}N_1^{-s} \| \P_{N} g \|_{X^{0,\frac 12 -2\dl}}\|\P_{N_1}v\|_{X^{s,\frac 12}}   \| \P_{N_{(3)}} \Q_{\ll K} u_{(3)}  \|_{L_{t,x}^{\frac{2(1+\eps)}{\eps}}} \| \P_{N_{(2)}} \Q_{\ges K} u_{(2)}\|_{L^{2(1+\eps)}_{t,x}} \\
& \les 
\frac{ N^{s} N_{23}^{\frac 12+10\dl}N_{(3)}^{1-s_0- \frac{\eps}{2(1+\eps)}}  N_{(2)}^{1-s_0+\frac{\eps}{2(1+\eps)}}}{N_1^s K^{1-\frac{\eps}{2(1+\eps)}}}
\|\P_{N} g\|_{X^{0,\frac 12}} \|\P_{N_1}v\|_{X^{s,\frac 12}} \\
&\hphantom{XXXXxXXXxXXXXXXXXXXX} \times    \| \P_{N_{(3)}} \Q_{\ll K} u_{(3)}  \|_{X^{s_0-\frac 12,\frac 12}} \| \P_{N_{(2)}}  u_{(2)}\|_{X^{s_0-1,1}}. 
\end{align*}

\noi 
We now consider the dyadic multiplier. We have
\begin{align*}
\frac{ N^{s} N_{23}^{\frac 12+10\dl}N_{(3)}^{1- \frac{\eps}{2(1+\eps)}-s_0}  N_{(2)}^{1-s_0+\frac{\eps}{2(1+\eps)}}}{N_1^s K^{1-\frac{\eps}{2(1+\eps)}}} 
&\les
\frac{N^s N_{(2)}^{\frac12 - s_0 + 10\dl + \frac{2\eps}{1+\eps} }  N_{(3)}^{1-s_0 + \eps}}{ N_1^s (N \lor N_3)^{1-\eps}}
 \les 
N_{\max}^{\al}, 
\end{align*}
where for $N_1 \sim N$ we have $\al = -1 + \frac{\eps}{2(1+\eps)} + (\frac12 - s_0 + 10\dl + \frac{2\eps}{2(1+\eps)}) \lor 0 + (1-s_0 +\frac{\eps}{2(1+\eps)})\lor 0$, and for $N_1 \sim N_{(2)} \gg N$, we have $\al = \frac12 - s_0 -s + 10\dl + 2\eps+ (-1 + s + \eps)\lor 0 +(1-s_0 +\eps)\lor 0$, which gives $\al<0$ in both cases, given $\eps>0$ sufficiently small so that $2\eps<\dl$ and $s_0>\frac{1}{4}+6\dl$.

\smallskip
\noi
\underline{Bound for \eqref{Xres}:}

\smallskip
\noi
We argue that due to impossible frequency interactions, we must actually have that 
\begin{align}
\eqref{Xres} = 0. \label{Xres0}
\end{align}
If $N_{2}\wedge N_3 =N_3$, then from the Case 3 condition, we have $N\gg N_3$ and hence, from \eqref{mods}, we have $|\Phi(\cj{\xi})|\sim N_{23} N$. If instead $N_{2}\wedge N_3 =N_2$, then from the hyperplane condition $\xi=\xi_1-\xi_2+\xi_3$ and \eqref{N1cond} which implies $N_1 \ges N \gg N_2$, we have $|\xi-\xi_3|=|\xi_1-\xi_2| \sim N_1\ges (N\vee N_3)$. Therefore, in either case, from \eqref{mods}, we have 
\begin{align}
K \les |\Phi(\cj{\xi})| \le |\s| + |\s_1| + |\s_2| + |\s_3|, 
\notag
\end{align}
where $K$ was defined in \eqref{K}. However, due to the projectors $\Q_{\ll K}$ in \eqref{Xres}, we have
\begin{align*}
|\s| + |\s_1| + |\s_2| + |\s_3| \ll K, 
\end{align*}
which is incompatible with the lower bound  and hence \eqref{Xres0} holds true.

This completes the proof of \eqref{vX0}.
\end{proof}

We now give the main trilinear estimates for the case of \eqref{INLS2}; namely, without the Hardy space assumption. The key new ingredients relative to Proposition~\ref{PROP:tri} are the decomposition in Section~\ref{SEC:INLSdecomp} and the $L^p$ boundedness property in \eqref{QKLp}.
We specify a particular case for the parameters in Lemma~\ref{LEM:BXsb} and Lemma~\ref{LEM:GXsb} that we will use. We choose $s=s_0>\frac 14$, let $\dl>0$ be sufficiently small so that 
\begin{align}
\dl\ll \tfrac{1}{100}(s_0-\tfrac 14) \label{delta}
\end{align}
 and take $0<\s<1$ such that
\begin{align}
\s=\s(\dl) =\tfrac 14+11\dl. \label{sigmadef}
\end{align}
Of course, from \eqref{sigma}, we could take any $\s>\frac 12-s_0+10\dl$ but taking the worst possible regularity in \eqref{sigmadef} is sufficient.

\begin{proposition}[Trilinear estimates for INLS \eqref{INLS2}]\label{PROP:tri2}
Let $s\geq s_0>\frac 14$ and $0<T\leq 1$. Let $\dl>0$ and $\s=\s(\dl)$ be as in \eqref{sigmadef}.
Then, for any $u_j\in L^{\infty}_{T}H^{s_0} \cap L^{4}_{T}W^{s_0,4}_x $ and
\begin{align*}
u_j\in  X_{T}^{s_0-\frac 12,\frac12}, \quad u_j-\mathcal{I}[B(u_j,w_j)]\in X^{s_0-1,1}_{T}, \quad \text{and} \quad \mathcal{I}[B(u_j,w_j)]\in X^{s_0-\s,\frac 12}_{T},
\end{align*}
for $j=2,3$,
with
\begin{align}
Y(u_2, u_3) : =   &\prod_{j=2}^3 \big( \|J^{s_0}\PbHI u_j\|_{L^{4}_{T,x}} + \| u_j\|_{L^\infty_T H^{s_0}_x}+ \|u_j\|_{X_{T}^{ s_0-\frac 12,\frac 12}}
\big)
\notag \\
& + \sum_{\substack{j_1,j_2\in\{2,3\}\\ j_1 \ne j_2}}
\Big(
 \| u_{j_1}\|_{X_T^{s_0-\frac 12,\frac 12}}  \| u_{j_2}-\mathcal{I}[B(u_{j_2},w_{j_2})]\|_{X_T^{s_0-1,1}} 
\notag
\\
&
\hspace{2.5cm}
+
 \| u_{j_1} \|_{L^\infty_T H^{s_0}_x} \|\mathcal{I}[B(u_{j_2},w_{j_2})]\|_{X_{T}^{s_0-\s, \frac 12}}
\Big)
\notag
\end{align}
\noi
and for any $v,w\in X_{T}^{s,\frac 12+\dl}$ satisfying $v=\Pbhip v$ and $w=\P_{-, \textup{hi}}w$, it holds that:
\begin{align}
\| \ind_{[0,T]} \Pbhip[ v \, \P_{-}\dx( \cj{u_2} u_3)] \|_{X^{s,-\frac 12+2\dl}} & \les T^{\dl} \|v\|_{X_{T}^{s,\frac12+\dl}} Y(u_2, u_3),
 \label{vX1}\\
\| \ind_{[0,T]} \P_{-,\textup{hi}}[ w \, \P_{+}\dx( \cj{u_2} u_3)] \|_{X^{s,-\frac 12+2\dl}} & \les T^{\dl} \|w\|_{X_{T}^{s,\frac12+\dl}} Y(u_2, u_3) 
.
\label{wX1}
\end{align}
\end{proposition}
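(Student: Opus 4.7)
The plan is to follow the dyadic case analysis of the proof of Proposition~\ref{PROP:tri}, adapting the arguments to handle the fact that $u_2, u_3$ no longer enjoy a Hardy space assumption and hence only satisfy the weaker $X^{s,b}$-decomposition provided by Lemma~\ref{LEM:GXsb}. Since $v=\Pbhip v$ and the inner factor carries $\P_{-}$ (and analogously $w=\P_{-,\hi}w$ paired with $\P_{+}$ for \eqref{wX1}), sign considerations still force the frequency constraint $N_1 \ges N\vee N_{23}$, so the first factor always carries the largest frequency. The bounds \eqref{vX1} and \eqref{wX1} are entirely parallel; we focus on \eqref{vX1}.

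Cases 0, 1, and 2 of the proof of Proposition~\ref{PROP:tri} (covering $N_{23}\les 1$, $N_2\sim N_3$, and $N_{(3)}\ges N$) carry over verbatim: they only invoke the $L^4_{T,x}$ Strichartz estimate \eqref{L4}, the bilinear Strichartz bound \eqref{bilin1}, and $L^\infty_T H^{s_0}_x$ control on $u_j$. None of these steps used the Hardy space assumption on $u_j$---indeed, the earlier proof was written precisely to avoid the simplification $\P_{+}u_j=u_j$---so they extend to the INLS setting with exactly the norms appearing in $Y(u_2,u_3)$.

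The genuinely new work lies in the analogue of Case~3, namely $N_{(2)}\gg N_{(3)}$ and $N_{(3)}\ll N$, where modulation must be exploited. Inserting the splitting $1=\Q_{\ll K}+\Q_{\ges K}$ with $K=N_{23}(N\vee N_3)$ decomposes the contribution as in \eqref{X0}--\eqref{Xres}. The high-modulation-on-$v$-or-dual terms \eqref{X0}--\eqref{X1} carry over unchanged by bilinear Strichartz, and the fully resonant term \eqref{Xres} again vanishes identically by the modulation-phase mismatch. The decisive new analysis is in \eqref{X2}--\eqref{X3}, where the original argument placed $u_{(2)}$ or $u_{(3)}$ in $X^{s_0-1,1}_T$ to convert the full phase gain $K$ into spatial smoothing---but under our hypotheses this regularity holds only for the good part $u_j-\mathcal{I}[B(u_j,w_j)]$, while the bad part $\mathcal{I}[B(u_j,w_j)]$ sits only in $X^{s_0-\s,\frac12}_T$.

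Our strategy is to split each $u_{(j)}$ accordingly. The good part is estimated exactly as in Proposition~\ref{PROP:tri}, producing the first sum in $Y(u_2,u_3)$. For the bad part, we pair the half phase gain $K^{1/2}$ coming from the $\frac12$-temporal regularity with an additional use of the bilinear Strichartz estimate \eqref{bilin1}, invoking the $L^p$-boundedness \eqref{QKLp} of the modulation cutoff $\Q_{\ll K}$ to commute it through the Strichartz and H\"older arguments as in \eqref{X3}. The bilinear Strichartz buys back roughly $N_{23}^{-\frac12+}$, which together with $K^{1/2}$ absorbs the derivative $N_{23}$ in the nonlinearity, while the spatial loss of $\s$ in the $X^{s_0-\s,\frac12}$-norm is compensated provided $\s<\frac12$. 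The precise choice $\s=\frac14+11\dl$ from \eqref{sigmadef}, together with the smallness of $\dl$ in \eqref{delta}, is calibrated so that the resulting dyadic multiplier carries a uniform negative power of $N_{\max}$ permitting summation. The main obstacle is precisely this bookkeeping: the parameters $\s$, $\dl$, and the H\"older exponent $\eps$ appearing in the analogue of \eqref{X3} must be balanced simultaneously against the threshold $s_0>\frac14$. Finally, the bound \eqref{wX1} follows by an identical argument with the roles of $\P_{+}$ and $\P_{-}$ reversed, using $w=\P_{-,\hi}w$ in place of $v=\Pbhip v$.
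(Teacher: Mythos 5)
Your proposal is correct and takes essentially the same approach as the paper: carry over Cases 0--2 and the non-resonant terms \eqref{X0}, \eqref{X1}, \eqref{Xres} verbatim, and in the critical case insert the decomposition $u_{(2)}=\big(u_{(2)}-\mathcal{I}[B(u_{(2)},w_{(2)})]\big)+\mathcal{I}[B(u_{(2)},w_{(2)})]$, treating the good part as in Proposition~\ref{PROP:tri} and the bad part via the $L^p$-boundedness \eqref{QKLp}, the half phase gain, and the balance between $\s$, $\dl$ and $s_0>\frac14$. Two small imprecisions are worth flagging: only \eqref{X3} actually needs modification, since \eqref{X2} places $u_{(3)}$ (not $u_{(2)}$) in $X^{s_0-\frac12,\frac12}_T$, which is a norm already available in $Y(u_2,u_3)$; and the bad-part estimate does not invoke bilinear Strichartz a second time---after the single bilinear Strichartz application produces the residual $L^2_{t,x}$ product $\P_{N_{(3)}}\Q_{\ll K}u_{(3)}\cdot\P_{N_{(2)}}\Q_{\ges K}\mathcal{I}[B]$, the paper closes the argument with H\"older, an $L^\infty$--$L^2$ interpolation controlled via \eqref{QKLp}, and Bernstein's inequality applied to the $X^{s_0-\s,\frac12}$ norm of the bad part.
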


\begin{proof}

Fix parameters $(s,s_0, \dl, \s)$ as in the statement.
We will only show \eqref{vX1}, as the estimate for \eqref{wX1} follows by the same argument. In particular, note that for both quantities, the projections guarantee the following relation between the frequencies $|\xi_1| \ges |\xi| \lor |\xi_2 - \xi_3|$, as used in \eqref{N1cond} in the proof of Proposition~\ref{PROP:tri}. 
By further  examining this proof, 
we see that we only need to modify the argument when one of the functions $u_2,u_3$ is placed in $X^{s_0-1,1}_T$. 
In particular, 
it remains to estimate the term \eqref{X3}, since in the proof of Proposition~\ref{PROP:tri}, we placed the term $\P_{N_{(2)}}\Q_{\ges K}u_{(2)}$ into the space $L^{2(1+\eps)}_{t,x}$ which was controlled in $X^{s_0-1,1}$. We need to refine this step as we no longer can assume that $u_{(2)}$ lives  in $X^{s_0-1,1}$.

Going one step back in the computation for \eqref{X3}, by duality, Cauchy-Schwarz and \eqref{bilin1}, we have to control
\begin{align}
N^{s}N^{\frac 12+10\dl}_{23}N_1^{-s}\|\P_{N}g\|_{X^{0,\frac 12-2\dl}}\|\P_{N_1}v\|_{X^{s,\frac 12}} \| \P_{N_{(3)}}\Q_{\ll K} u_{(3)} \cdot \P_{N_{(2)}}\Q_{\ges K} u_{(2)}\|_{L^{2}_{t,x}}. \label{N30}
\end{align}
We focus on the last factor,  using the triangle inequality to bound it as follows
\begin{align}
\| &\P_{N_{(3)}}\Q_{\ll K} u_{(3)} \cdot \P_{N_{(2)}}\Q_{\ges K} u_{(2)}\|_{L^{2}_{t,x}} \notag \\
& \leq \| \P_{N_{(3)}}\Q_{\ll K} u_{(3)} \cdot \P_{N_{(2)}}\Q_{\ges K} \{u_{(2)}-\mathcal{I}[B(u_{(2)},w_{(2)})]\} \|_{L^{2}_{t,x}}  \label{N31}  \\
&\hphantom{XX} + \| \P_{N_{(3)}}\Q_{\ll K} u_{(3)} \cdot \P_{N_{(2)}}\Q_{\ges K} \mathcal{I}[B(u_{(2)},w_{(2)})] \|_{L^{2}_{t,x}}. \label{N32}
\end{align}
For the contribution to \eqref{N30} coming from \eqref{N31}, we can argue exactly as we did for \eqref{X3} and we have that this contribution is bounded by 
\begin{align*}
N_{\max}^{0-} \| \P_{N_1}v\|_{X^{s,\frac 12}} \|\P_{N_{(3)}} u_{(3)}\|_{X^{s_0-\frac 12,\frac 12}} \| \P_{N_{(2)}} \{u_{(2)}-\mathcal{I}[B(u_{(2)},w_{(2)})]\}\|_{X^{s_0-1,1}}, 
\end{align*} 
thus allowing us to sum in all the dyadics.

We now consider the contribution to \eqref{N30} coming from \eqref{N32}.
Let $\eps>0$ sufficiently small, to be chosen later.
In view of the definition of $K$ in \eqref{K}, \eqref{N1max}, and the Case 3 assumption ($N_{(3)} \ll N\wedge N_{(2)}$) we have $K\gg N_{(3)}^{2}$. 
Thus, by \eqref{QKLp}, interpolation, and Sobolev embedding, we have
\begin{align}
 \| \P_{N_{(3)}}\Q_{\ll K} u_{(3)}\|_{L^{\frac{2(1+\eps)}{\eps}}_{t,x}} & \les  \| \P_{N_{(3)}} u_{(3)}\|_{L^{\frac{2(1+\eps)}{\eps}}_{t,x}} \notag \\
 & \les \|\P_{N_{(3)}} u_{(3)}\|_{L^{\infty}_{t,x}}^{1-\frac{\eps}{1+\eps}} \|\P_{N_{(3)}} u_{(3)}\|_{L^2_{t,x}}^{\frac{\eps}{1+\eps}} \notag \\
 &\les N_{(3)}^{\frac 12-s_0-\frac{\eps (1/2-s_0)}{(1+\eps)}}\|J^{s_0}\P_{N_{(3)}} u_{(3)}\|_{L^{\infty}_{t}L^2_x}^{1-\frac{\eps}{1+\eps}}\|\P_{N_{(3)}} u_{(3)}\|_{X^{0,0}}^{\frac{\eps}{1+\eps}} . \label{u3bd}
\end{align}
Thus, by H\"{o}lder's inequality and \eqref{u3bd}, we have 
\begin{align*}
\eqref{N32} 
&
\leq \| \P_{N_{(3)}}\Q_{\ll K} u_{(3)}\|_{L^{\frac{2(1+\eps)}{\eps}}_{t,x}}  \| \P_{N_{(2)}}\Q_{\ges  K}\mathcal{I}[B(u_{(2)},w_{(2)})] \|_{L^{2(1+\eps)}_{t,x}} \\
& \les \frac{N_{(3)}^{\frac 12-s_0 - \frac{\eps (1/2-s_0)}{(1+\eps)}}}{  K^{\frac 12 -\frac{\eps}{2(1+\eps)}}  N_{(2)}^{s_0-\s(s_0,\dl) - \frac{\eps }{2(1+\eps)}}}\|J^{s_0}\P_{N_{(3)}} u_{(3)}\|_{L^{\infty}_{t}L^2_x}^{1-\frac{\eps}{1+\eps}}\|\P_{N_{(3)}} u_{(3)}\|_{X^{0,0}}^{\frac{\eps }{1+\eps}} \\
& \hphantom{XXXXXXxXXXXXXXXXXX} \times   \|\P_{N_{(2)}} \mathcal{I}[B(u_{(2)},w_{(2)})]\|_{X^{s_0-\s,\frac 12}} .
\end{align*}
Inserting this bound into \eqref{N30}, we obtain the dyadic factor
\begin{align*}
\frac{N^s N_{23}^{\frac 12+10\dl} (N_2\wedge N_3)^{\frac 12-s_0 - \frac{\eps (1/2-s_0)}{(1+\eps)}} }{N_1^s K^{\frac 12 -\frac{\eps}{2(1+\eps)}}  (N_2 \vee N_3)^{s_0-\s(s_0,\dl) - \frac{\eps}{2(1+\eps)}} } 
&\les \frac{ (N\vee N_{(3)})^{s-2s_0+\s+\frac{\eps (s_0+1/2)}{1+\eps}}   }{ N_1^{ s-10\dl-\frac{\eps}{1+\eps}}}  \\
& \les N_{\max}^{ -2s_0+\s+10\dl +\frac{\eps(s_0+3/2)}{1+\eps}}.
\end{align*}
\noi 
This is a negative power of $N_{\max}$ provided that 
\begin{align}
s_0+(s_0-\s)> 10\dl +\tfrac{\eps(s_0+3/2)}{1+\eps}. 
\notag
\end{align}
This is clearly satisfied for since $s_0>\frac 14$ and by taking $\dl,\eps>0$ sufficiently small.
This completes the proof of \eqref{vX1}. \qedhere
\end{proof}

We now control the remaining contributions from the right-hand sides of \eqref{veq} and \eqref{weq}

\begin{lemma}\label{LEM:easyterms}
Let $0< T \le 1$, $\frac 14<s<\frac 23$, $0<\dl_0<\frac 18$, and $u\in L^{\infty}_{T}H^{s}_x$. Then, it holds that 
\begin{align}
\|  \ind_{[0,T]} \P_{\pm,\textup{hi}}[ & e^{i \be F}u\L_{h}(|u|^2)]\|_{ X^{s,-\frac 12+\dl_0}}  \notag\\
 & \les T^{\dl_0} \max_{p\in \{2,3,4\}} \big(\|\QQ_{h}\|_{L^ p\to L^p}\big)   \big( 1+ \|u\|_{L^{\infty}_{T}H^{\frac 14+}_{x}}^{3} \big) 
\| J^{s} u\|_{L^{4}_{T,x}}\| J^{\frac 14+}  u\|_{L^{4}_{T,x}}^2, 
 \label{vbd2}
\end{align}
for $\QQ_h$ as in \eqref{Qh}.
\end{lemma}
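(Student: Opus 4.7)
The plan is to reduce the $X^{s,-\frac12+\dl_0}$-norm to an $L^{4/3}_{T,x}$ estimate via duality and a dual version of \eqref{L4}, and then bound the $L^{4/3}$ norm through fractional Leibniz, with the key technical difficulty being how to handle the factor $e^{i\be F}$, which lies in $L^\infty$ but in no $L^p$ for $p<\infty$.

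First, I would carry out the duality reduction. By the definition of the $X^{s,b}$-norm,
\begin{equation*}
\|\ind_{[0,T]}\P_{\pm,\hi}[e^{i\be F}u\QQ_h(|u|^2)]\|_{X^{s,-\frac12+\dl_0}}=\sup_{\|g\|_{X^{-s,\frac12-\dl_0}}=1}|\langle\ind_{[0,T]}\P_{\pm,\hi}[e^{i\be F}u\QQ_h(|u|^2)],g\rangle|.
\end{equation*}
Writing $g=J^s\tilde g$ with $\|\tilde g\|_{X^{0,\frac12-\dl_0}}=1$, transferring $J^s$ and $\P_{\pm,\hi}$ (both Fourier multipliers, with $\P_{\pm,\hi}$ bounded on $L^{4/3}$) onto the other factor, and using H\"older's inequality, the pairing is bounded by $\|J^s[e^{i\be F}u\QQ_h(|u|^2)]\|_{L^{4/3}_{T,x}}\cdot\|\tilde g\|_{L^{4}_{T,x}}$. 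The estimate \eqref{L4} with $\dl=\dl_0/2<\tfrac18$ gives $\|\tilde g\|_{L^{4}_{T,x}}\les T^{\frac14-\dl_0-}$, which is absorbed into $T^{\dl_0}$ since $T\le1$ and $\dl_0<\tfrac18$. The task then reduces to proving
\begin{equation*}
\|J^s[e^{i\be F}u\QQ_h(|u|^2)]\|_{L^{4/3}_{T,x}}\les \max_{p\in\{2,3,4\}}\|\QQ_h\|_{L^p\to L^p}(1+\|u\|_{L^\infty_TH^{\frac14+}_x}^3)\|J^s u\|_{L^{4}_{T,x}}\|J^{\frac14+}u\|_{L^{4}_{T,x}}^2.
\end{equation*}

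Next, I would apply fractional Leibniz to the triple product, splitting $e^{i\be F}=\Pblo e^{i\be F}+\Pbhi e^{i\be F}$. When $J^s$ effectively falls on $u\QQ_h(|u|^2)$, using $|e^{i\be F}|=1$, Lemma~\ref{LEM:leib}, and the commutation of $J^s$ with the Fourier multiplier $\QQ_h$ gives $\|J^s(u\QQ_h(|u|^2))\|_{L^{4/3}}\les\|\QQ_h\|_{L^2\to L^2}\|J^s u\|_{L^4}\|u\|_{L^4}^2$, which fits the target after using $\|u\|_{L^{4}_{T,x}}\les\|J^{\frac14+}u\|_{L^{4}_{T,x}}$. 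When $J^s$ falls on $\Pblo e^{i\be F}$, the bound $\|D^s\Pblo e^{i\be F}\|_{L^\infty}\les 1$ from \eqref{DeLinfty} allows absorbing this factor in $L^\infty$, reducing to the previous case. The delicate case is $J^s$ on $\Pbhi e^{i\be F}$: I would exploit $\Pbhi e^{i\be F}=i\be\,\dx^{-1}\Pbhi(|u|^2 e^{i\be F})$ so that $J^s\Pbhi e^{i\be F}$ has the regularity of $D^{s-1}\Pbhi(|u|^2 e^{i\be F})$, which (since $s<1$) corresponds to a Mikhlin multiplier on the high-frequency support. Expanding $|u|^2=\bar u\cdot u$ by fractional Leibniz and distributing the remaining derivative yields the $\|J^s u\|_{L^{4}_{T,x}}$ factor together with extra $\|u\|_{L^\infty_TH^{\frac14+}}$ factors absorbed into the $(1+\|u\|^3_{L^\infty_TH^{\frac14+}})$ prefactor; the hypothesis $s<\frac23$ ensures admissible Sobolev exponents for these embeddings.

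The main obstacle is precisely that $e^{i\be F}$ lies in $L^\infty$ but in no $L^p$ for $p<\infty$, which blocks a naive application of fractional Leibniz on the whole exponential. The low/high decomposition circumvents this: the low-frequency part carries bounded fractional derivatives in $L^\infty$ by \eqref{DeLinfty}, while the high-frequency part inherits Sobolev regularity from $u$ via the identity $\dx e^{i\be F}=i\be|u|^2 e^{i\be F}$, which also generates the additional $\|u\|_{H^{\frac14+}}$ factors in the final bound.
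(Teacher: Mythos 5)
Your proposal is correct in spirit but takes a genuinely different reduction path. The paper's proof first uses the purely temporal chain $\|\ind_{[0,T]}f\|_{X^{s,-\frac12+\dl_0}}\les T^{\dl_0}\|f\|_{L^{4/3}_T H^s_x}$ (via \eqref{timeloc} and a dual Sobolev embedding in the modulation variable, which keeps the spatial norm at $L^2_x$), and then feeds $g=u\QQ_h(|u|^2)$ directly into the pre-established $L^2_x$-based product estimate \eqref{L2F1}. You instead dualize in space-time, write $g=J^s\tilde g$, spend the $L^4_{T,x}$-Strichartz estimate \eqref{L4} on $\tilde g$, and thereby land on $\|J^s[\cdot]\|_{L^{4/3}_{T,x}}$, an $L^{4/3}$ norm in both variables. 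The core idea for handling $e^{i\be F}$ (low/high split plus $\dx e^{i\be F}=i\be|u|^2 e^{i\be F}$, giving $\Pbhi e^{i\be F}=i\be\,\dx^{-1}\Pbhi(|u|^2e^{i\be F})$) is the same; the paper just packages it once and for all into \eqref{L2F1}, while you rederive an $L^{4/3}_x$ variant by hand. The paper's route is leaner — no Strichartz is needed, and \eqref{L2F1} is reused elsewhere — whereas yours requires choosing new Lebesgue exponents for the fractional Leibniz step. Two small inaccuracies in your sketch: when the derivative lands on $\Pbhi e^{i\be F}$, the operator $D^{s-1}\Pbhi$ already has a non-positive exponent (since $s<1$), so there is no derivative left to ``distribute onto $|u|^2$''; the $\|J^s u\|_{L^4_{T,x}}$ factor actually arises from the complementary term (derivative on $u\QQ_h(|u|^2)$) or is supplied via the trivial inequality $\|J^{\frac14+}u\|_{L^4_{T,x}}\les\|J^s u\|_{L^4_{T,x}}$. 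And the $s<\frac23$ constraint is specific to the exponent pair $(L^6_x,L^3_x)$ used in the proof of \eqref{L2F1}; with your $L^{4/3}_x$ target the natural choice $(L^2_x,L^4_x)$ for $\bigl(J^s\Pbhi e^{i\be F},\,g\bigr)$ avoids that restriction altogether, so invoking $s<\frac23$ is not actually what saves your embeddings.
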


\begin{proof}
From Lemma~\ref{LEM:linXsb}, Cauchy-Schwarz,  Sobolev embedding, and the assumption $\dl_0<\frac 18$, it holds that
\begin{align}
\|\ind_{[0,T]} f\|_{X^{s, -\frac{1}{2}+\dl_0}} & \les T^{\frac 14-\dl_0} \|\ind_{[0,T]} f\|_{X^{s, -\frac{1}{2}+2\dl_0}} \les T^{\dl_0} \|f\|_{L^{\frac{1}{1-2\dl_0}}_{T}H^{s}_x}  \les T^{\frac 14-\dl_0} \|f\|_{L^{\frac{4}{3}}_{T}H^{s}_{x}}  
\notag
\end{align}
We then use \eqref{L2F1}, Bernstein's inequality, and the fractional Leibniz rule. We give the bounds for \eqref{vbd2}:
\begin{align*}
\| \P_{\ll 1}( u\mathcal{Q}_{h}(|u|^2))\|_{L^{\infty}_x} 
& \les \|  u\mathcal{Q}_{h}(|u|^2)\|_{L^{\frac 43}_x} 
 \les \|\mathcal{Q}_{h}\|_{\text{op}}\|u\|_{L^4_x}^3 , \\
 \|  u\mathcal{Q}_{h}(|u|^2)\|_{L^{3}_x} & \les  \|u\|_{L^{\infty}_x} \|\mathcal{Q}_{h}\|_{L^3 \to L^3} \|u\|_{L^6_{x}}^{2} \les \|\mathcal{Q}_{h}\|_{L^3 \to L^3} \| J^{\frac 14+}u\|_{L^4_x}^3, \\
 \|D^{s}\P_{\ges 1}( u\mathcal{Q}_{h}(|u|^2))\|_{L^2_x}& \les\|\mathcal{Q}_{h}\|_{L^4 \to L^4} \|D^s u\|_{L_x^4} \| J^{\frac 14+}u\|_{L^4_x}^2. 
\end{align*} 
This completes the proof. \qedhere
\end{proof}
It is clear that by using \eqref{L2F2} we also have corresponding difference estimates for \eqref{vbd2}.

\section{Well-posedness and the infinite depth limit}
\label{SEC:LWP}

In this section, we prove Theorem~\ref{THM:main}. As the general argument here is quite standard we will be brief. 
For further details, we refer to \cite[Sections 5-6]{CFL1} where similar arguments were made for \eqref{INLS2} with periodic boundary conditions; the argument itself is based on that in \cite{MP, MP2, GLM}. We will focus on the case of \eqref{INLS2} with the same result for \eqref{CCM} with the Hardy space assumption following in a similar but simpler fashion.

By the result of \cite{PMP}, we have local-in-time well-posedness for \eqref{INLS2} in $H^{s}(\R)$ for any $s>\frac 12$. We fix $s_1>\frac 12$ but close to $\frac 12$ and fix $\frac 14<s_0\leq s\leq \frac 12 < s_1$. As we do not know if the solutions in $H^{s_1}(\R)$ are global-in-time, we first obtain apriori estimates on these solutions. These will ensure that their maximal time of existence  is lower bounded by a time $T_{\ast}>0$ only depending on $\|u_0\|_{H^{s_0}}$ and not on $\|u_0\|_{H^{s_1}}$.

Given $u_0\in H^{\infty}(\R)$, let $u\in C([0,T_{\max}];H^{\infty}(\R))$ be the solution to \eqref{INLS2}. We then define
\begin{align}
\begin{split}
N_{T}^{s}(u) = \max\bigg(  \|u\|_{L^{\infty}_{T}H^{s}_x}, &\|J^{s}u\|_{\wt{L^4_{T,x}}},  \|v(0)\|_{H^{s}}, \|w(0)\|_{H^{s}},  \\
&\|\ind_{[0,T]}\NN_{v}(u)\|_{X^{s, -\frac 12+2\dl_0}}, \| \ind_{[0,T]}\NN_{w}(u)\|_{X^{s, -\frac 12+2\dl_0}}  \bigg),
\end{split} \label{NTu}
\end{align}
where $v,w$ are the gauged variables \eqref{gauge}, 
$\NN_{{v}}$ and $\NN_{w}$ are the nonlinear terms defined in \eqref{veq} and \eqref{weq}, and $0<\dl_0\ll 1$ is fixed as in \eqref{delta}. Given $0<h\leq \infty$, we also set 
\begin{align}
 L_{h} : = \max_{p\in \{2, 4\}} \|\QQ_{h} \|_{L^{p}\to L^p}
 \notag
\end{align}
with the understanding that $L_{\infty}=|\g|$.

First, from the Duhamel formula for the $v$ and $w$ equations in \eqref{veq}-\eqref{weq}, together with Lemma~\ref{LEM:linXsb}, we have
\begin{align*}
    &
    \| v\|_{X^{s, \frac12+\dl_0}_T} + \|w\|_{X^{s, \frac12 + \dl_0}_T}
    \\
    &
    \les T^{\dl_0} \big( \| v(0)\|_{H^s} + \|w(0)\|_{H^s} + \| \ind_{[0,T]} \NN_v(u) \|_{X^{s, -\frac12+2\dl_0}} + \| \ind_{[0,T]} \NN_w(u) \|_{X^{s, -\frac12+2\dl_0}} 
        \big)
    \\
    &
    \les T^{\dl_0} N^s_T(u). 
\end{align*}

\noi
By combining Lemma~\ref{LEM:BXsb}, Lemma~\ref{LEM:GXsb}, Lemma~\ref{LEM:uinfoINLS}, \eqref{veq}-\eqref{weq}, Proposition~\ref{PROP:tri2} and Lemma~\ref{LEM:easyterms}, we obtain
\begin{align}
\begin{split}
N^{\s}_{T}(u) & \leq  C_1 (1+\|u_0\|_{H^{s_0}})^{a_1} \|u_0\|_{H^{\s}}  \\
&+C_{2}\big\{ 
( T^{\ta_1} (1+L_{h})+M^{-\ta_2}) Q_{2}(N_{T}^{s_0}(u)) 
+ T^{\ta_3} M (1+L_h) Q_{3}(N_{T}^{s_0}(u)) 
\big\}
N_{T}^{\s}(u)
, 
\end{split} \label{apriori1}
\end{align}
where $\s\in \{s_0,s,s_1\}$, and
for some constants $a_1, \ta_1, \ta_2, \ta_3>0$ and $C_1, C_2>0$ and where $Q_{2}, Q_3$ are some non-negative polynomials, where $Q_3$ has no constant term.

 Whilst the constants depend on $\{s_0,s,s_1\}$, we only use these three regularities and can thus take the maximum of the given constants over $\{s_0,s,s_1\}$.
We first put $\s=s_0$ in \eqref{apriori1}.
We then choose $M\gg 1$ depending on $\|u_0\|_{H^{s_0}}$ 
\begin{align}
C_{2}M^{-\ta_2}Q_{2}(4R) <\tfrac{1}{4} \quad \text{where} \quad R: = C_1 (1+\|u_0\|_{H^{s_0}})^{a_1} \|u_0\|_{H^{s_0}}. \label{M}
\end{align}
Then, given this choice of $M$, we choose $T_{\ast}=T_{\ast}(M, L_h)>0$
so that 
\begin{align}
C_2 (1+L_h) \big( T_*^{\ta_1} Q_2(4R) + T_*^{\ta_3} M  Q_3(4R) \big) < \tfrac14.
 \label{Tast}
\end{align}

\noi
This verifies that 
$N_{T}^{s_0}(u) \leq 2R$, which is the apriori bound at regularity $\s=s_0$. By using this information in \eqref{apriori} at regularity $\s=s_1$ and reducing $T=T(\|u_0\|_{H^{s_0}})>0$ if necessary, we obtain
\begin{align}\label{apriori2}
\| u\|_{L^{\infty}_{T'}H^{s_1}} \leq N_{T'}^{s_1}(u) \leq 2C_1 (1+\|u_0\|_{H^{s_0}})^{a_1} \|u_0\|_{H^{s_1}},
\end{align}
for any $0<T'\leq T$
which implies that the maximal time of existence for these solutions is bounded from below by $T_{\ast}=T_\ast(\|u_0\|_{H^{s_0}})$.
Note that $T_{\ast}$ also depends on $L_{h}$ but can be chosen uniformly in $h$ only depending on $\sup_{1\leq h \leq \infty}L_{h}$ in view of \eqref{GGconv}.

As for the uniqueness and continuity of the flow map, we consider differences of $H^{\infty}(\R)$ solutions and derive a difference estimate. Given two such solutions $u_1,u_2$ to \eqref{INLS2} with initial data $u_j(0)\in H^{\infty}(\R)$, we consider the difference $U:=u_1-u_2$ and $W:=w_1-w_2$, where $w_j : = \P_{-,\text{hi}}u_j$. 
We also define the primitives $F_{j}= F_{j}[u_j]$ as in \eqref{F} and the attendant gauged variables $v_{j}=v_{j}[u_j]$ as in \eqref{gauge} with corresponding difference $V:=v_1-v_2$.
By the previous analysis, we have control on $N_{T}^{\s}(u_j)$, $j=1,2$, for $\s\in \{s_0,s\}$ and for time $0<T\leq T_{\ast}$. In particular, by the Duhamel formulas for $v_j, w_j$ in \eqref{veq}-\eqref{weq}, this provides control on the $X^{\s,\frac 12+\dl_0}_{T}$ norms of $v_j,w_j$.
We then obtain difference estimates for the norms appearing in \eqref{NTu}, where we additionally use Lemma~\ref{LEM:diffests}. 
We note that unlike the situation for the Benjamin-Ono equation in \cite{MP}, the difference estimate does not assume that the data $u_j(0)$, $j=1,2$, agree on low frequencies.  Indeed, by Cauchy-Schwarz and \eqref{F} we see that 
\begin{align*}
\|F_{1}-F_{2}\|_{L^{\infty}_{T,x}} \les \| |u_1|^2 - |u_2|^2\|_{L^{\infty}_{T}L^1_{x}} \les (\|u_1\|_{L^{\infty}_{T}L^2_x} +\|u_2\|_{L^{\infty}_{T}L^2_x})\|U\|_{L^{\infty}_{T}L^2_x}.
\end{align*}
At the end of this procedure, we obtain 
\begin{align}
\| J^{\s} U\|_{L^{\infty}_{T}L^2_x \cap \wt{L^{4}_{T,x}}}  + \|V\|_{X^{\s,\frac12+\dl_0}_{T}} + \|W\|_{X^{\s,\frac12+\dl_0}_{T}}  \leq C_5(\|u_1(0)\|_{H^{s_0}}, \|u_2(0)\|_{H^{s_0}})  \| U(0)\|_{H^{\s}}, \label{LipbdT}
\end{align}
for any $0<T\leq T_{0}$ and $\s\in \{s_0,s\}$.

For the existence of solutions in $H^{s}(\R)$, we fix $u_0\in H^{s}(\R)$ and consider the sequence of approximations $u_{0,j} = \mathcal{F}^{-1}\{ \ind_{[-j,j]} \ft u_0\}$ with corresponding $H^{\infty}(\R)$ solutions $\{u_j\}_{j\in \N}$. By the previous results, these all belong to $C([0,T_{\ast}];H^{\infty}(\R))$, where $T_\ast = T_\ast(\|u_0\|_{H^{s_0}})>0$. 
Since $u_{0,j}$ converges to $u_0$ in $H^s(\R)$,  we may choose $M$ in \eqref{M} uniformly in $j \in \N$ and moreover, the sequence is equicontinous in $H^{s}(\R)$ and thus uniformly tight on the Fourier side. This property guarantees that we may choose $M$ uniformly in $j\in \N$ in order to obtain smallness for the second term on the right-hand side of \eqref{uHsbddiff}. By \eqref{LipbdT}, we see that the sequence $\{u_j\}_{j\in \N}$ is then Cauchy in the norm appearing in $N_{T}^{s}$ and hence converges to a limit $u$ there, which satisfies \eqref{INLS2} in the distributional sense and has $u\vert_{t=0}=u_0$. This completes the proof of the local well-posedness in Theorem~\ref{THM:main}.

Finally, we consider the infinite depth limit as $h\to \infty$.

\begin{proof}[Proof of Theorem~\ref{THM:limit}]
We only give sketch of the argument, and refer to similar full details in \cite{CFL1}. See also \cite{CLOP}.  
Given $u_0 \in H^s(\R)$ and a net $\{u_{0,h}\}_{1 \le h <\infty}$ in $H^s(\R)$ with $u_{0,h} \to u_0$ in $H^s(\R)$, we denote by $u_h$ and $u_\infty$ the global solutions to \eqref{INLS2} and \eqref{CCM}, respectively, with $u_h \vert_{t=0} = u_{0,h}$ and $u_\infty \vert_{t=0} = u_0$, constructed in Theorem~\ref{THM:GWP}. Moreover, for $1 \le h \le \infty$, we write $F_h = F_h[u_h]$ for the primitives as in \eqref{F}, $w_h$ and $v_h$ for the gauged variables in \eqref{gauge}, and $U_h = u_h - u_\infty$, $V_h = v_h - v_\infty$, and $W_h = w_h - w_\infty$. 
Repeating the process for the apriori bounds in \eqref{apriori2}, we get
\begin{align}
\sup_{1 \le h \le \infty} N^s_{T_*}(u_h)
\le  2 C_1 (1 + \|u_0\|_{H^{s_0}})^{a_1} \|u_0\|_{H^s}, 
\notag
\end{align}
where $T_* = T_*(\|u_0\|_{H^{s_0}})>0$ as in \eqref{Tast}, and where this choice can be made uniformly in $h$.
Then, by repeating the process to obtain the difference estimates in \eqref{LipbdT}, we get
\begin{align}\label{conv}
\begin{split}
    & \| J^s U_h \|_{L^\infty_{T_*} L^2_x \cap \wt{L^4_{T_*,x}} } + \| V_h \|_{X^{s, \frac12+\dl_0}_{T_*}} + \|W_h\|_{X^{s, \frac12+\dl_0}_{T_*}} 
    \\
    &
    \le C_2 (\|u_0\|_{H^{s_0}}) \big( \| u_0 - u_{0,h} \|_{H^s} 
 + \max_{p\in\{2,4\}}\|\QQ_h - \QQ_\infty\|_{L^p \to L^p} \big), 
    \end{split}
\end{align}
for some constant $C_2 = C_2 (\|u_0\|_{H^{s_0}})>0$ depending only on $\|u_0\|_{H^s}$, uniform in $1 \le h <\infty$, and
where $\QQ_h$ is as in \eqref{Qh}, and we recall that $\QQ_\infty= - i \be \H\dx  + i \g \Id$.
 Thus, from the convergence of the initial data, the fact that $\QQ_h - \QQ_\infty = -i \be \GG_h$, where $\GG_h$ is as in \eqref{Lh}, and \eqref{GGconv}, 
 we conclude that $u_h$ solving \eqref{INLS2} converges to the solution $u_\infty$ to \eqref{CCM} in $C([0,T_*]; H^s(\R))$ as $h\to\infty$. Given a target time $T \gg1$, since \eqref{conv} for $h\gg1$ guarantees that $\|U_h (T_*) \|_{H^s} \le \|u_0\|_{H^s}$, we can iterate the convergence argument to obtain convergence over the full interval $[0,T]$. 
\end{proof}

\section{Conservation laws}\label{SEC:GWP}

In this section, we state our new Lax pair and use it to obtain low-regularity apriori bounds for \eqref{INLS}.
Given $0<h\leq \infty$, we define 
\begin{align}
\Pih = \tfrac{1}{2}(1+i\mathcal{T}_{h})
\label{Pi+h}
\end{align}
with $\TT_{\infty}= \H$. Indeed, when $h=\infty$, \eqref{Hilbert} implies that $\Pi_{+,\infty}=\P_{+}$.
For finite $h>0$, $\Pih$ has a singularity at the zero frequency that is even more severe. 
With \eqref{Pi+h}, we can also write \eqref{INLS} (with $\g=0$) as
\begin{align}
\dt u + i\dx^2 u =  2\be u \Pih\dx(|u|^2), 
\label{INLSG}
\end{align}
where $\be\in \{\pm 1\}$. We recall that $\be=1$ corresponds to the defocusing case and $\be=-1$ to the focusing, and again encompassing CCM~\eqref{CCM} when $h=\infty$.
We now state the new Lax pair for \eqref{INLSG} for all $0<h\leq \infty$.

\begin{proposition}[Lax pair]
For any $0<h\leq \infty$, $u(t)$ solves~\eqref{INLSG} on the line if and only if the operators
\begin{equation}
\lax_{u;h} = -i\partial_x +\be u\Pih\cj{u} 
\quad\text{and}\quad
\peter_{u;h} = -i\partial_x^2  +2\be u\partial_x\Pih \cj{u} 
\label{Lax}
\end{equation}
on $L^2(\R)$ satisfy
\begin{equation}
\tfrac{d}{dt}\lax_{u;h} = [\peter_{u;h},\lax_{u;h}] .
\label{lax}
\end{equation}
\end{proposition}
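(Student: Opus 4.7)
The plan is to verify the Lax identity \eqref{lax} as an operator equation on $L^2(\R)$ by direct computation, applied to a smooth test function $f$. Throughout, write $\rho := \partial_x(|u|^2)$, $P := \Pi_{+,h}$, $Q := \Pi_{-,h} = I - P$, and note that since $\TT_h$ has a real kernel, $\overline{Pg} = Q\bar g$ whenever $g$ is a real function; in particular $\dot{\bar u} = i\bar u_{xx} + 2\be \bar u\, Q\rho$ whenever $\dot u = -iu_{xx}+2\be u\,P\rho$.

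First, I compute the time derivative:
\begin{align*}
\tfrac{d}{dt}\lax_{u;h}\,f
= \be\dot u\, P(\bar u f) + \be u\, P(\dot{\bar u}\,f).
\end{align*}
Second, I expand
$[\peter_{u;h},\lax_{u;h}] = [-i\dx^2, \be u P\bar u] + [2\be u\dx P\bar u, -i\dx] + [2\be u\dx P\bar u, \be u P\bar u]$,
using that $\dx$ commutes with the Fourier multiplier $P$. The first two pieces are handled by routine Leibniz-rule manipulations: rewriting $\bar u\dx f = \dx(\bar u f) - \bar u_x f$ and using $P\dx = \dx P$ throughout, one finds that every term containing a bare $\dx f$ cancels (in particular, the $u_x\dx P(\bar u f)$ and $u P(\bar u_x \dx f)$ contributions drop out), leaving only $-i\be u_{xx} P(\bar u f) + i\be u\,P(\bar u_{xx} f)$. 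The quartic piece simplifies after using $\dx P(|u|^2 P(\bar u f)) = P(\rho\, P(\bar u f)) + P(|u|^2 \dx P(\bar u f))$ to
\begin{align*}
[2\be u\dx P\bar u, \be u P\bar u]f \;=\; 2\be^2 u\, P\bigl(\rho\, P(\bar u f)\bigr).
\end{align*}
Combining these,
\begin{align*}
[\peter_{u;h},\lax_{u;h}]f = -i\be u_{xx}\,P(\bar u f) + i\be u\, P(\bar u_{xx} f) + 2\be^2 u\, P\bigl(\rho\,P(\bar u f)\bigr).
\end{align*}

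Third, matching $\frac{d}{dt}\lax_{u;h}f$ with $[\peter_{u;h},\lax_{u;h}]f$ modulo the linear dispersive pieces reduces the whole identity to the bilinear relation
\begin{align*}
(P\rho)(Pg) + P\bigl(g\,Q\rho\bigr) - P\bigl(\rho\,Pg\bigr) \;=\; -\tfrac14\, M_\rho\, M_g,
\end{align*}
evaluated with $g = \bar u f$. Writing $P = \tfrac12(I+i\TT_h)$, $Q = \tfrac12(I-i\TT_h)$ and expanding the left-hand side, every term cancels except the combination $\tfrac14\bigl[\rho g - \TT_h\rho\cdot\TT_h g + \TT_h(\rho\TT_h g + g\TT_h\rho)\bigr]$, which by the Cotlar-type identity \eqref{Tilb1} equals exactly $-\tfrac14 M_\rho M_g$. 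Since $\rho = \dx(|u|^2)$ is a spatial derivative of an $L^1$ function, $M_\rho = \tfrac{1}{2h}\int_\R \rho\,dx = 0$, so the constant defect disappears. Substituting back with $g = \bar u f$ completes the forward implication.

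The converse is immediate from the same chain of identities: if $\frac{d}{dt}\lax_{u;h} = [\peter_{u;h},\lax_{u;h}]$, then setting $E := \dt u + i\dx^2 u - 2\be u\, P\rho$ and using $\overline{Pg}=Q\bar g$, the computation above produces
\begin{align*}
\be E\,P(\bar u f) + \be u\,P(\overline{E}\,f) = 0 \quad \text{for every test } f,
\end{align*}
from which a standard duality argument (pair against $f$ and localize) forces $E\equiv 0$, i.e., \eqref{INLSG}. The only substantive step is step two, and the only genuine difficulty is bookkeeping during the commutator expansion; the algebraic miracle is that the quartic commutator collapses to a single $P(\rho Pg)$, which is precisely the expression the Cotlar identity is designed to rearrange.
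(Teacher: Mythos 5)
Your proof is correct and follows essentially the same route as the paper's: direct commutator expansion, collapse of the quartic piece to $2\be^2 u\,\Pih(\rho\,\Pih(\cj u f))$, and an appeal to the Cotlar-type identity \eqref{Tilb1} together with the observation that $M_{\dx(|u|^2)}=0$. Your bilinear reformulation $(P\rho)(Pg)+P(gQ\rho)-P(\rho Pg)=-\tfrac14 M_\rho M_g$ is exactly the content of the paper's identity \eqref{cubic} after writing $\overline{\Pih\rho}=\Pi_{-,h}\rho$ for real $\rho$; the organizational differences are cosmetic.
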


When $h=\infty$, this resembles the well-known Lax pair for CCM in the literature for the special case $u\in L^2_+(\R)$, while for $h<\infty$ this appears to be new. 
Indeed, the Lax operator in~\cite{Pel3} is a $2\times 2$ operator-valued matrix, similar to that of NLS.

\begin{proof}
Let $f\in H^{\infty}(\R)$ be a test function. Let $\be = \mp 1$ denote the sign of the nonlinearity in \eqref{INLSG}.
We compute 
\begin{align*}
[\mathcal{P}_{u;h}, \L_{u;h}] f& = -i\be \dx^2[ u \Pih(\cj{u}f)]-2i\be u\dx \Pih(\cj{u}f') +2u\dx \Pih[ |u|^2 \Pih(\cj{u}f)] \\
& \hphantom{XX} + 2\be i \dx [ u\dx\Pih(\cj{u}f)] +i\be u \Pih( \cj{u}f'') -2u \Pih(|u|^2 \dx \Pih(\cj{u}f)).
\end{align*}
Consider first the terms which we are quadratic in $u$. By developing them further, we have
\begin{align}
 & {-i}\be \dx^2[ u \Pih (\cj{u}f)]-2i\be u\dx \Pih(\cj{u}f')+ 2\be i \dx [ u\dx \Pih(\cj{u}f)] +i\be u \Pih( \cj{u} f'')\notag  \\
 & = -i\be\big\{  \dx^2[ u \Pih(\cj{u}f)] +2 u\dx \Pih(\cj{u} f') -2\dx [ u\dx \Pih(\cj{u}f)] -u\Pih(\cj{u} f'')\big\} \notag \\
 & = -i\be\big\{    (\dx^2 u) \Pih(\cj{u}f)  + 2(\dx u ) \dx\Pih(\cj{u}f)  + u \Pih\dx^2( \cj{u}f)    +2 u\dx \Pih(\cj{u} f')\notag  \\
 &\hphantom{XXXXX} -2(\dx u ) \dx \Pih (\cj{u}f) -2u \dx^2 \Pih(\cj{u}f) - u\Pih(\cj{u}f'')
 \big\}  \notag \\
 & =  -i\be\big\{    (\dx^2 u) \Pih(\cj{u}f) -u\dx^2 \Pih(\cj{u}f) +2 u\dx \Pih(\cj{u} f') -u\Pih(\cj{u}f'') \big\} \notag \\
 & = -i\be\big\{    (\dx^2 u) \Pih(\cj{u}f) - u \Pih( (\dx^2 \cj{u})f)\big\}. \label{quad}
\end{align}
Now focus on the quartic in $u$ terms in the commutator:
\begin{align*}
2u \dx \Pih[ |u|^2  \Pih(\cj{u}f)]  -2u \Pih[ |u|^2 \dx \Pih(\cj{u}f)] = 2u \Pih[\dx( |u|^2)  \Pih(\cj{u}f)] .
\end{align*}
We claim that 
\begin{align}
\begin{split}
2u \Pih[\dx( |u|^2)  \Pih(\cj{u}f)]  =  2u \big[ \Pih\dx(|u|^2)  \cdot \Pih(\cj{u}f) +\Pih( \cj{\Pih\dx(|u|^2)} \cdot \cj{u}f) \big],
\end{split} \label{cubic}
\end{align}
whereupon, combining with \eqref{quad}, we will have shown that 
\begin{align*}
[\mathcal{P}_{u;h}, \L_{u;h}]  
= \be ( -i\dx^2 u +2\be u \Pih\dx(|u|^2)) \Pih \cj{u} + \be u \Pih \cj{ ( -i\dx^2 u +2\be u \Pih\dx(|u|^2))} 
\end{align*}
which completes the proof.

We expand the left-hand side of \eqref{cubic} using \eqref{Pi+h}: 
\begin{align*}
\text{LHS} \eqref{cubic}&=  \tfrac{1}{2}  u( 1+i\TT_{h}) (\dx(|u|^2)) (1+i\TT_{h})(\cj{u}f) \\
& =  \tfrac{1}{2}  u\Big\{ (\dx (|u|^2)) \cj{u}f + i \big[ \TT_{h}(  (\dx |u|^2) \cj{u}f) + \dx(|u|^2) \TT_{h}(\cj{u}f)\big] -  \TT_{h}[ \dx( |u|^2) \TT_{h}(\cj{u}f)] \Big\}.
\end{align*}
Similarly, for the right-hand side of \eqref{cubic}, using \eqref{Pi+h} we have: 
\begin{align*}
\text{RHS} \eqref{cubic} & = \tfrac{1}{2}u \Big\{ 2\dx(|u|^2) \cj{u}f + i \{ \TT_{h}(\dx(|u|^2)\cj{u}f)+\dx(|u|^2)\TT_{h}(\cj{u}f)\}  \\ 
& \hphantom{XXXX}+ \TT_{h}[ \TT_{h}\dx(|u|^2)\cdot \cj{u}f] -\TT_{h}\dx(|u|^2)\cdot \TT_{h}(\cj{u}f) \Big\}
.
\end{align*}
Taking the difference of these, we find: 
\begin{align*}
&\text{RHS}\eqref{cubic} - \text{LHS}\eqref{cubic} \\
& = \tfrac{1}{2}u \Big\{ \dx(|u|^2) \cj{u}f + \TT_{h}[ \TT_{h}\dx(|u|^2)\cdot \cj{u}f] -\TT_{h}\dx(|u|^2)\cdot \TT_{h}(\cj{u}f)  +\TT_{h}[ \dx( |u|^2) \TT_{h}(\cj{u}f)]  \Big\}.
\end{align*}
Using \eqref{Tilb1} and noting $M_{\dx(|u|^2)} = 0$, we see that 
\begin{align}
\text{RHS}\eqref{cubic} - \text{LHS}\eqref{cubic} = 0 ,
\notag
\end{align}
hence verifying \eqref{cubic}.
\end{proof}

Our above verification of \eqref{lax} hinged on the Cotlar-type identity \eqref{Tilb1}. It is conceivable that (a slightly modified version of) the Lax pair \eqref{Lax} would also work in the periodic case. However, \eqref{Tilb1} requires modifications due to the presence of the zero frequency, which cannot be avoided by imposing a mean-zero constraint: $\int_{\T}udx $ is not a conservation law for \eqref{INLS}. Consequently, we were not able at this point to give a Lax pair for the periodic \eqref{INLS} for finite~$h$.

In the next result, we make sense of the Lax operator $\L_{u;h}$ as a self-adjoint operator on $L^2(\R)$. To this end, we need to assume that $u\in H^{\frac 14}(\R)$. For the case of CCM~\eqref{CCM} in $L^2_{+}(\R)$, this restriction on the potentials can be weakened to $u\in L^2_{+}(\R)$.

\begin{proposition}[Lax operator] \label{PROP:Lax}
Fix $0<h\leq \infty$.  Given $u\in H^{\frac14}(\R)$, the operator
\begin{equation*}
\lax_{u;h} f = -i\dx f +\be u\Pih(\cj{u}f)
\end{equation*}
with domain $H^1$ is self-adjoint.  Moreover, there is a constant $C\geq 1$ so that for $\kk\in\R$ satisfying 
\begin{equation}
\kappa \geq C \big( 1 + \|u\|_{H^\frac14} \big)^4, 
\label{k0}
\end{equation}
we have
\begin{equation}
\tfrac12 ( \lax_0^2 + \kappa^2 ) \leq \lax_{u;h}^2 + \kappa^2 \leq \tfrac32 (\lax_0^2 + \kappa^2)
\label{mono}
\end{equation}
as quadratic forms, where $\L_0 : =  \L_{0;h} = - i \dx$. 
\end{proposition}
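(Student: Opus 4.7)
The plan is to realize $\lax_{u;h}$ as a symmetric Kato--Rellich perturbation of $\lax_0=-i\dx$. Writing $V:=\lax_{u;h}-\lax_0=\be\,u\Pih\cj u$, I would first verify that $V$ is symmetric by noting that $i\TT_h$ has the \emph{real} Fourier multiplier $\coth(h\xi)$, so $\Pih=\tfrac12(1+i\TT_h)$ is formally self-adjoint, whence
\begin{equation*}
  \jb{Vf,g} = \be\,\jb{\Pih(\cj u f),\cj u g} = \be\,\jb{\cj u f,\Pih(\cj u g)} = \jb{f,Vg}.
\end{equation*}

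The heart of the argument is the relative bound
\begin{equation}
  \|Vf\|_{L^2} \les \big(1+\|u\|_{H^{1/4}}\big)^2\|f\|_{H^{1/2}}, \label{VKR}
\end{equation}
with constant uniform for $h\in[1,\infty]$. To prove \eqref{VKR}, I would split $\Pih=\P_++\mathcal{M}$, where $\mathcal{M}:=\Pih-\P_+$ has Fourier multiplier $\mu_h(\xi)=\sgn(\xi)/(e^{2h|\xi|}-1)$; this decays exponentially as $|\xi|\to\infty$ and behaves like $1/(2h\xi)$ near zero. For the $\P_+$ contribution, the $L^p$-boundedness of $\P_+$ combined with the Sobolev embedding $H^{1/4}(\R)\hookrightarrow L^4(\R)$ and the 1D product estimate $\|\cj u f\|_{H^{1/4}}\les\|u\|_{H^{1/4}}\|f\|_{H^{1/2}}$ bounds $\|u\P_+(\cj u f)\|_{L^2}$ by $\|u\|_{H^{1/4}}^2\|f\|_{H^{1/2}}$. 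For the $\mathcal{M}$ contribution, the decisive observation is that the physical-space kernel $K_h=\Ft^{-1}\mu_h$ lies in $L^\infty(\R)$ with $\|K_h\|_{L^\infty}=O(1/h)$ uniformly for $h\ge 1$: exponential decay of $\mu_h$ at infinity makes $K_h$ smooth, while the $1/\xi$ Fourier singularity only produces bounded $\sgn$-like asymptotics of $K_h$ at spatial infinity (as one can see from the explicit limit of $\tfrac1h\int_0^\infty\sin(sx/(2h))/(e^s-1)\,ds$ as $|x|\to\infty$). Hence $\mathcal{M}:L^1\to L^\infty$ is bounded, and the outer multiplication by $u\in L^2$ absorbs the otherwise singular operator:
\begin{equation*}
  \|u\mathcal{M}(\cj u f)\|_{L^2}\le \|u\|_{L^2}\|\mathcal{M}(\cj u f)\|_{L^\infty}\les \|u\|_{L^2}\|\cj u f\|_{L^1}\le \|u\|_{L^2}^2\|f\|_{L^2}.
\end{equation*}
Combining both contributions yields \eqref{VKR}.

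Given \eqref{VKR}, the elementary interpolation $\|f\|_{H^{1/2}}^2\le\theta\|\lax_0 f\|^2+\theta^{-1}\|f\|^2$ (obtained from the pointwise bound $\sqrt{1+\xi^2}\le\theta(1+\xi^2)+(4\theta)^{-1}$) shows that $V$ is infinitesimally $\lax_0$-bounded, so Kato--Rellich gives self-adjointness of $\lax_{u;h}$ on $D(\lax_0)=H^1(\R)$. For the quadratic form inequality \eqref{mono}, I would expand $\|\lax_{u;h}f\|^2=\|\lax_0 f\|^2+2\Re\jb{\lax_0 f,Vf}+\|Vf\|^2$ and use $\jb{\xi}\le\sqrt{\xi^2+\kappa^2}$ with Cauchy--Schwarz in Fourier to obtain $\|f\|_{H^{1/2}}^2\le 2\kappa^{-1}(\|\lax_0 f\|^2+\kappa^2\|f\|^2)$ for $\kappa\ge 1$. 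Substituting into \eqref{VKR} and applying Cauchy--Schwarz followed by AM--GM to the cross term yields the estimate $|2\Re\jb{\lax_0 f,Vf}+\|Vf\|^2|\le\tfrac12(\|\lax_0 f\|^2+\kappa^2\|f\|^2)$ for $\kappa$ satisfying \eqref{k0} with $C$ chosen large enough, which delivers both bounds in \eqref{mono}. The main obstacle will be the $\mathcal{M}$ part of $V$: the unboundedness of $\Pih$ on $L^2$ forces a careful use of the bounded physical-space kernel, and this is also the step that yields the $h$-uniformity.
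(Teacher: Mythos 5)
Your overall strategy — realize $\lax_{u;h}$ as a Kato--Rellich perturbation of $\lax_0$ and then deduce \eqref{mono} by expanding the quadratic form — is the same as the paper's, and your alternative decomposition $\Pih = \P_+ + \mathcal{M}$ is a legitimate variant of the paper's split $\Pih = \tfrac{i}{2}\mathcal{J}_h + \tfrac12(1+i\mathcal{K}_h)$ (both isolate the $\xi=0$ singularity of $\TT_h$ in a piece whose physical-space kernel is $O(1/h)$ in $L^\infty$, and an $L^p$-bounded Calder\'on--Zygmund piece). Your heuristic computation of $K_h = \Ft^{-1}\mu_h$ is sound: the $1/(2h\xi)$ singularity of $\mu_h$ at the origin cancels against the corresponding singularity of $\coth$ in physical space, leaving an odd, bounded kernel with $\|K_h\|_{L^\infty} = O(1/h)$; this is equivalent to what the paper gets for $\mathcal{J}_h$, where the kernel $\tfrac{1}{2h}\sgn$ is manifestly bounded and requires no computation.

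However, there is a genuine gap in the relative bound \eqref{VKR}. The claimed estimate $\|u\P_+(\cj u f)\|_{L^2} \lesssim \|u\|_{H^{1/4}}^2\|f\|_{H^{1/2}}$ — and in particular the product estimate $\|\cj u f\|_{H^{1/4}} \lesssim \|u\|_{H^{1/4}}\|f\|_{H^{1/2}}$ — is an endpoint case that fails. With the outer $u$ and the inner $\cj u$ both in $L^4$ (which is the only way to exploit $u \in H^{1/4}$), H\"older forces the remaining factor into $L^\infty$, and $H^{1/2}(\R) \not\hookrightarrow L^\infty(\R)$. Concretely, the high$\cdot$low$\to$high interaction (with $f$ at low frequency) produces a logarithmically divergent sum, reflecting this failure of embedding. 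The paper avoids this precisely by never stating a $\kappa$-free relative bound: it estimates $\|u(1+i\mathcal{K}_h)\cj u R_0 f\|_{L^2}$ and $\|u\mathcal{J}_h\cj u R_0 f\|_{L^2}$ directly, exploiting that $R_0 = (|\dx|+\kappa)^{-1}$ smooths $f$ into $L^\infty$ with the quantitative gain $\|R_0 f\|_{L^\infty} \lesssim \kappa^{-1/2}\|f\|_{L^2}$ (Cauchy--Schwarz on the Fourier side). The fix for your proof is therefore to replace \eqref{VKR} by a bound on $\|V R_0\|_{\op}$: in the $\P_+$ piece put $u$ and $\cj u$ into $L^4$ and $R_0 f$ into $L^\infty$, producing the $\kappa^{-1/2}$ smallness; the $\mathcal{M}$ piece you already handle correctly. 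With that change the Kato--Rellich step and the quadratic form inequality \eqref{mono} go through exactly as you outline.
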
 

\begin{proof}
For $\kappa>0$, let $R_0(\kappa) = (|\dx|+\kappa)^{-1} .$ 
We write
\begin{equation}
\Pih f = \tfrac12 i \mathcal{J}_h f + \tfrac{1}{2}(1+i\mathcal{K}_h)f , \label{Pihdecomp}
\end{equation}
where  $\mathcal{K}_{h}$ is defined as in \eqref{Kh} and $\J_{h}$ is the integral operator 
\begin{align}
\mathcal{J}_{h}f(x) &= \frac{1}{2h} \text{p.v.} \int_{\R} \sgn\bigg( \frac{\pi(x-y)}{2h} \bigg) f(y)dy.  \label{Jh}
\end{align}

 Using Sobolev embedding and Lemma~\ref{LEM:Kh}, this yields
\begin{equation}
\| u(1+i\mathcal{K}_h)\cj{u} R_0 f \|_{L^2}
\leq \|u\|_{L^4}^2 \big( 1 + \| \mathcal{K}_h \|_{L^4\to L^4}\big) \|R_0 f\|_{L^\infty}
\lesssim \kappa^{-\frac12} \|u\|_{H^{\frac14}}^2 \|f\|_{L^2} .
\label{op 1}
\end{equation}

\noi
For the contribution of $\J_h$, we have
\begin{equation}
\| \mathcal{J}_h f \|_{L^\infty} \lesssim \tfrac{1}{h} \|f\|_{L^1},
\label{JLinfty}
\end{equation}
which again is uniform in $1\leq h<\infty$,
and so
\begin{equation}
\| u \J_h\cj{u} R_0 f \|_{L^2}
\lesssim \tfrac{1}{h} \|u\|_{L^2}^2 \| R_0f \|_{L^2} 
\lesssim \tfrac{1}{h \kk} \|u\|_{L^2}^2 \|f\|_{L^2} .
\label{op 2}
\end{equation}

Collecting the previous two steps, we see that we may choose $\kappa\geq 1$ sufficiently large, as in \eqref{k0}, so that
\begin{equation*}
\| u\Pih\cj{u} g \|_{L^2}
\leq \| u\Pih\cj{u}R_0 \|_{\op} \| (|\dx|+\kappa) g \|_{L^2}
\leq \tfrac12 \big( \|\lax_0g\|_{L^2} + \kappa\|g\|_{L^2}  \big) .
\end{equation*}
Self-adjointness then follows from the Kato--Rellich Theorem (see Th.~X.12 in \cite{ReedSimonVol2}).

From~\eqref{op 1} and~\eqref{op 2}, we also see that there is a choice of $C\geq 1$ so that~\eqref{k0} ensures
\begin{equation*}
\| u\Pi_{+,h}\cj{u} R_0 \|_{\op} \leq \tfrac{1}{10} .
\end{equation*}
This in turn guarantees that
\begin{align*}
\big| \big\langle f,(\lax_{u;h}^2+\kappa^2)f \big\rangle - \big\langle f, (\lax_0^2+\kappa^2) f \big\rangle \big|
&\leq 2\| u\Pih\cj{u}f \|_{L^2} \| \lax_0f\|_{L^2} + \|u\Pih\cj{u}f\|_{L^2}^2 \\
&\leq \tfrac{1}{5} \| (|\dx|+\kappa) f\|_{L^2} \| |\dx|f \|_{L^2} + \tfrac{1}{100} \| (|\dx|+\kappa) f\|_{L^2}^2 \\
&\leq \tfrac{21}{100} \| (|\dx|+\kappa) f\|_{L^2}^2 \\
&= \tfrac{21}{100} \langle f, (|\dx|+\kappa)^2 f \rangle \\
&\leq \tfrac{21}{50} \langle f, (\lax_0^2 + \kappa^2) f \rangle .
\end{align*}
The claim~\eqref{mono} then follows.
\end{proof}

We note that the Peter operator $\peter_{u;h}$ in \eqref{Lax} is special because~\eqref{INLSG} may be written as
\begin{equation}
\partial_tu = \peter_{u;h} u .
\label{the one true Peter op}
\end{equation}
Combining this with~\eqref{lax}, it is straightforward to verify that the ``polynomial'' quantities
\begin{align}
E_{k}^{h}(u) = \jb{u, \L^{k}_{u;h}u} \quad \text{for} \quad u\in H^{\infty}(\R)
\label{energies}
\end{align}
are conserved for any $k\geq 0$.  Moreover, these functionals extend continuously to~$H^{\frac k 2 }(\R)$; see Proposition~\ref{PROP:poly} below for details.

More generally, the following lemma implies that for arbitrary functions $f:\R\to\R$, the quantity
\begin{equation*}
t\mapsto \big\langle u(t) , f(\lax_{u(t);h})\,u(t) \big\rangle
\end{equation*}
is automatically conserved.  As we will discuss further below, by selecting a function $f$ that is not a polynomial we will be able to address the intermediate regularities $s \in (\frac14, 1]$ in Theorem~\ref{THM:GWP}.

\begin{lemma}
Let $u(t)$ be a global $H^\infty(\R)$ solution of \eqref{INLSG}.  For any $t_0\in\R$ and any $\psi_0\in L^2(\R)$, there exists a unique $C_tL^2\cap C^1_tH^{-2}$ solution to the initial value problem
\begin{equation}
\tfrac{d}{dt} \psi(t) = \peter_{u(t);h}\psi(t) \quad\text{with}\quad \psi(t_0)=\psi_0
\label{U1}
\end{equation}
and it is global in time.  Moreover, for each $t\in\R$ the mapping $\mathcal{U}(t;t_0) : \psi_0 \to \psi(t)$ is unitary on $L^2$,
\begin{equation}
u(t) = \mathcal{U}(t;t_0)u(t_0), \quad\text{and}\quad \lax_{u(t);h} = \mathcal{U}(t;t_0)\lax_{u(t_0);h} \mathcal{U}^*(t;t_0) .
\label{U2}
\end{equation}
Finally, if $\psi_0\in H^\infty(\R)$, then so too is $\psi(t)$ for all $t\in\R$.
\end{lemma}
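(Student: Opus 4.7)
The plan breaks into four components: skew-adjointness of $\mathcal{P}_{u(t);h}$ at each fixed time, construction of the unitary propagator via non-autonomous linear theory, propagation of higher regularity, and verification of the two identities in~\eqref{U2}.

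For the first part, freeze $t$ and decompose $\Pi_{+,h} = \tfrac12 + \tfrac{i}{2}\mathcal{T}_h$ via \eqref{Pi+h} to write
\[
\mathcal{P}_{u;h} = -i\dx^2 + \be u\dx \bar u + i\be u\dx \mathcal{T}_h \bar u.
\]
The leading summand is skew-adjoint on $L^2(\R)$ with domain $H^2(\R)$. The second is skew-symmetric on $H^\infty(\R)$ by one integration by parts (the conjugate of $u\dx\bar u\cdot$ acting on test pairs $(f,g)$ swaps cleanly). The third is skew-symmetric using that $\mathcal{T}_h$ has a real odd convolution kernel, so $\int\phi\,\mathcal{T}_h\psi\,dx = -\int \psi\,\mathcal{T}_h\phi\,dx$, together with the commutation $\dx\mathcal{T}_h = \mathcal{T}_h\dx$. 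Since $u\in H^\infty(\R)$, both lower-order summands extend to continuous maps $H^2\to L^2$ and hence are infinitesimally relatively bounded with respect to $-i\dx^2$; Kato--Rellich then yields skew-adjointness of $\mathcal{P}_{u;h}$ on the common domain $H^2(\R)$.

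Next, because $u\in C(\R;H^\infty(\R))$ and $\dt u\in C(\R;H^\infty(\R))$ through~\eqref{INLSG}, the family $\{\mathcal{P}_{u(t);h}\}_t$ is strongly continuously differentiable as a family of skew-adjoint generators sharing the core $H^2(\R)$. Standard non-autonomous linear evolution theory (e.g., Reed--Simon~X.70 or Kato's theorem for time-dependent skew-adjoint generators) then produces a jointly strongly continuous two-parameter family $\mathcal{U}(t;t_0)$ of unitary operators on $L^2(\R)$ such that, for $\psi_0\in H^2$, the orbit $\psi(t)=\mathcal{U}(t;t_0)\psi_0$ lies in $C_tH^2\cap C^1_tL^2$ and solves \eqref{U1} in $L^2$. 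Extension to $\psi_0\in L^2$ follows by density and unitarity: the orbit lies in $C_tL^2$, and \eqref{U1} holds in $H^{-2}$ because $\mathcal{P}_{u(t);h}\colon L^2\to H^{-2}$ is continuous. Uniqueness in $C_tL^2\cap C^1_tH^{-2}$ follows from the energy identity $\tfrac{d}{dt}\|\psi\|_{L^2}^2=0$, obtained by mollification together with the skew-adjointness from the previous paragraph. Propagation of $H^\infty$ regularity then follows by a commutator/Grönwall argument, since $[\dx^k,\mathcal{P}_{u(t);h}]$ is a differential operator of order at most $k+1$ with $H^\infty$ coefficients, yielding bounds $\tfrac{d}{dt}\|\psi\|_{H^k}^2 \les C_k(\|u(t)\|_{W^{k+1,\infty}})\|\psi\|_{H^k}^2$.

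Finally, the identity $u(t)=\mathcal{U}(t;t_0)u(t_0)$ is immediate from~\eqref{the one true Peter op}: both sides solve the linear IVP~\eqref{U1} with datum $u(t_0)\in H^\infty$, and the uniqueness above identifies them. For the intertwining relation, set $\mathcal{M}(t) := \mathcal{U}(t;t_0)\lax_{u(t_0);h}\mathcal{U}(t;t_0)^*$, regarded initially as a quadratic form on the invariant domain $\mathcal{U}(t;t_0)H^1$. Differentiating and using $\dt\mathcal{U} = \mathcal{P}_{u(t);h}\mathcal{U}$ gives $\tfrac{d}{dt}\mathcal{M}(t) = [\mathcal{P}_{u(t);h},\mathcal{M}(t)]$ as a form identity. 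Combining with the Lax equation~\eqref{lax}, both $\mathcal{M}(t)$ and $\lax_{u(t);h}$ satisfy the same first-order differential relation with the same initial value at $t=t_0$; pairing against a dense set of test functions and applying Grönwall identifies them. The principal technical obstacle lies in the second paragraph: verifying the hypotheses of the non-autonomous theorem (in particular that $\dt\mathcal{P}_{u(t);h}$ extends to a bounded family on $L^2$ uniformly on compact time intervals) and justifying the intertwining identity as an operator rather than form identity, which reduces to showing $\mathcal{U}(t;t_0)$ preserves $H^1$---a consequence of the regularity propagation applied to the adjoint equation, whose generator has the same form since $\mathcal{P}_{u;h}^*=-\mathcal{P}_{u;h}$.
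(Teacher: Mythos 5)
Your argument is correct in essence but follows a genuinely different route from the paper, which constructs $\mathcal{U}(t;t_0)$ by a Bona--Smith regularization combined with a contraction mapping, and then deduces unitarity a posteriori from anti-selfadjointness of $\peter_{u;h}$ and the resulting $L^2$ conservation (with a pointer to \cite[Prop.~2.3]{KLV} for the CCM model). You instead invoke the abstract Kato theory for non-autonomous hyperbolic evolution equations: prove skew-adjointness of each frozen $\peter_{u(t);h}$ on the constant domain $H^2$ via Kato--Rellich, check stability and strong continuity of the family, and read off the unitary propagator from the general theorem. Both routes are legitimate. The paper's approach is more self-contained and elementary; yours outsources the existence theory to a black box but has to verify its hypotheses, and this is where the one soft spot in your write-up lies. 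Reed--Simon Theorem X.70 treats perturbations of $-\Delta$ by time-dependent potentials that are $L^2+L^\infty$ bounded, which does not directly accommodate the first-order term $u\dx\Pih\cj{u}$; the appropriate reference is Kato's hyperbolic evolution theorem (Kato 1970, or Pazy, Thm.~5.3.1), whose hypothesis is not that $\dt\peter_{u(t);h}$ be bounded on $L^2$ but rather that $t\mapsto \peter_{u(t);h}\in\mathcal{B}(H^2,L^2)$ be norm (or strongly) continuous, together with stability, which here is automatic from skew-adjointness with $M=1$, $\omega=0$. With that correction, the verification goes through because $u\in C(\R;H^\infty)$. Two smaller remarks: your skew-symmetry computation tacitly uses that $\dx\TT_h$ has a bounded symbol $\xi\coth(h\xi)$ near $\xi=0$, so the term $u\dx\TT_h\cj{u}$ really is a first-order operator despite the singularity of $\TT_h$ itself; this is worth stating. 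And your uniqueness step via mollification needs the Friedrichs commutator lemma to control $[J_\eps, u\dx\Pih\cj{u}]$; an alternative that avoids mollification entirely is a duality argument pairing a putative zero-data solution against $\mathcal{U}(t_1;t)^*\chi$ for $\chi\in H^2$, which is what the contraction-mapping framework gives for free.
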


The proof of this lemma is elementary.  Indeed, the existence and uniqueness of solutions to~\eqref{U1} follows from a Bona--Smith regularization and a contraction mapping argument.  As $\peter_{u;h}$ is anti-selfadjoint, we deduce that the $L^2$ norm is conserved; this allows us to extend solutions of~\eqref{U1} globally in time, and demonstrates that $\mathcal{U}(t;t_0)$ is unitary.  A standard persistence of regularity argument then shows that $\psi(t)$ is as smooth as $\psi_0$ is.  Lastly, the two identities in~\eqref{U2} follow from the property~\eqref{the one true Peter op} satisfied by $\peter_{u;h}$ and the Lax pair relation~\eqref{lax}.  We refer to \cite[Prop.~2.3]{KLV} for the full details in the case of CCM.

We are now equipped to prove our a-priori estimates:
\begin{proof}[Proof of Theorem~\ref{THM:GWP}]
The main point is to establish \eqref{apriori}. Then, by iterating Theorem~\ref{THM:main}, we obtain the global well-posedness of solutions to \eqref{INLS} with small $L^2$-norm initial data.
Consider the quantity
\begin{equation*}
F(\lax_0^2+\kappa^2) := \big\langle u, (\lax_0^2+\kappa^2)^{\frac14} u \big\rangle .
\end{equation*}
By Loewner's Theorem, the function $x\mapsto x^{\frac14}$ on $(0,\infty)$ is operator monotone (see \cite{Simon} for details).  In particular, the relation~\eqref{mono} implies
\begin{equation*}
F\big(\tfrac12 (\lax_0^2 + \kappa^2)\big) \leq F( \lax_{u;h}^2 + \kappa^2 ) \leq F\big(\tfrac32(\lax_0^2 + \kappa^2)\big) ,
\end{equation*}
and so
\begin{equation*}
\tfrac{1}{C} F (\lax_0^2 + \kappa^2) \leq F( \lax_{u;h}^2 + \kappa^2 ) \leq C F(\lax_0^2 + \kappa^2)
\end{equation*}
for some constant $C>0$.  On the other hand, from~\eqref{U2} we see that
\begin{equation*}
\big\langle u(t), (\lax_{u(t);h}^2+\kappa^2)^{\frac14} u(t) \big\rangle = \big\langle u(0), (\lax_{u(0);h}^2+\kappa^2)^{\frac14} u(0) \big\rangle .
\end{equation*}
Combing the previous two steps, we find
\begin{align}
\| u(t)\|_{H^{\frac14}}^2
&\leq \big\langle u(t) , (\lax_0^2 + \kappa^2)^{\frac14}  u(t) \big\rangle 
\nonumber\\
&\leq C \big\langle u(t) , (\lax_{u(t);h}^2 + \kappa^2)^{\frac14}  u(t) \big\rangle 
\nonumber\\
&= C \big\langle u(0) , (\lax_{u(0);h}^2 + \kappa^2)^{\frac14}  u(0) \big\rangle 
\nonumber\\
&\leq C^2 \big\langle u(0) , (\lax_{0}^2 + \kappa^2)^{\frac14}  u(0) \big\rangle 
\nonumber\\
&\leq C^2 \| u(0) \|_{H^{\frac14}}^2 + C^2 \kappa^{\frac12} \| u(0) \|_{L^2}^2 .
\label{ap 2}
\end{align}

Now we use a bootstrap argument.  Given $u(0)$, consider
\begin{equation*}
\kappa = C \big( 1 + 2C\|u(0)\|_{H^{\frac14}} \big)^4 .
\end{equation*}
Here, $C\geq 1$ is larger than both the constant from the previous paragraph and the constant from~\eqref{k0}.  For this $\kappa$ and any time interval $[0,T]$ on which
\begin{equation}
\|u(t)\|_{H^{\frac14}} \leq 2C\|u(0)\|_{H^{\frac14}} ,
\label{boot 1}
\end{equation}
we know that the monotonicity relation~\eqref{mono} holds.  Then, ~\eqref{ap 2} demonstrates that 
\begin{equation*}
\|u(t)\|_{H^{\frac14}}^2
\leq C^2 \| u(0) \|_{H^{\frac14}}^2 + C^{\frac52} \big( 1 + 2C\|u(0)\|_{H^{\frac14}} \big)^2 \| u(0) \|_{L^2}^2 .
\end{equation*}
Taking $\| u(0) \|_{L^2}^2 \ll C^{-\frac52}$, we deduce
\begin{equation}
\|u(t)\|_{H^{\frac14}} \leq \tfrac32 C\|u(0)\|_{H^{\frac14}} .
\label{boot 2}
\end{equation}
Comparing this with~\eqref{boot 1}, we conclude that \eqref{boot 2} holds for all $t\in\R$.  This proves the $s = \frac14$ case of~\eqref{apriori}.

Now that we know
\begin{equation*}
\sup_{t\in\R} \|u(t)\|_{H^{\frac14}} < \infty ,
\end{equation*}
we may fix $\kappa$ sufficiently large so that the monotonicity relation~\eqref{mono} holds for all $t\in\R$.  An argument parallel to~\eqref{ap 2} using the quantity
\begin{equation*}
F_s(\lax_0^2+\kappa^2) = \big\langle u, (\lax_0^2+\kappa^2)^{s} u \big\rangle 
\end{equation*}
then demonstrates that~\eqref{apriori} holds for any $\frac14 < s \leq 1$.  Note that by Loewner's Theorem, the restriction $s\leq 1$ is necessary for the function $x\mapsto x^s$ to be operator monotone.
\end{proof}

Lastly, we establish the convergence of the polynomial conservation laws for INLS \eqref{INLS} (with $\g=0$) to the polynomial conservation laws of CCM \eqref{CCM}. 

\begin{proposition}\label{PROP:poly}
Let $0<h\leq \infty$, $s\geq -\frac 12$, and $u\in W^{\max(s,0),4}(\R)$. Then, 
\begin{align}
\| \L_{u;\infty}\|_{H^{s+1}(\R)\to H^{s}(\R)} &\les 1+ \|u\|_{W^{\max(s,0),4}(\R)}^{2}. \label{Ebd}
\end{align}
Furthermore, there exists $\ta=\ta(s)>0$ such that
\begin{align}
\| \L_{u;h}-\L_{u;\infty}\|_{H^{s+1}(\R)\to H^{s}(\R)} &\les h^{-\ta}(\|u\|_{W^{\max(s,0),4}(\R)}^{2}+\|u\|_{H^{\max(s,0)}}^2).
\label{Econv} 
\end{align}
with implicit constant uniform over $1\leq h \leq \infty$.
Moreover, if $E_{k}^{h}(u)$, $k\geq 0$, is the generalised energy in \eqref{energies}, it holds that
\begin{align}
 \lim_{h\to \infty} E_{k}^{h}(u)   = E_{k}^{\infty}(u) \label{Econv2}
\end{align}
for any $u\in H^{\frac k2}(\R)$.
\end{proposition}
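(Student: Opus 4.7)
The plan is to handle the three claims in order, each building on the previous. For \eqref{Ebd}, write $\L_{u;\infty} f = -i\dx f + \be u \P_{+}(\bar u f)$; the derivative piece trivially gives the ``$1$'' in the bound, and it remains to control $u \P_{+}(\bar u f)$ in $H^{s}$. Since $\P_{+}$ is bounded on $L^{p}$ for $1<p<\infty$, I would apply the fractional Leibniz rule (Lemma~\ref{LEM:leib}) with the split $1/2 = 1/4+1/4$:
\begin{align*}
\|J^{s}[u\P_{+}(\bar u f)]\|_{L^2} \les \|J^{s}u\|_{L^4}\|\bar u f\|_{L^4} + \|u\|_{L^4}\|J^{s}(\bar u f)\|_{L^4},
\end{align*}
then a second Leibniz with $1/4 = 1/4+1/\infty$ plus the embedding $H^{s+1}(\R)\embeds L^{\infty}(\R)$ (valid for $s>-\tfrac12$) handles $\|\bar u f\|_{L^4}$ and $\|J^{s}(\bar u f)\|_{L^4}$ without ever needing $\|u\|_{L^{\infty}}$. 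This yields $\les \|u\|_{W^{s,4}}^{2}\|f\|_{H^{s+1}}$ when $s\geq0$; for $-\tfrac12\le s<0$, I would instead pair against $g\in H^{-s}$, use self-adjointness of $\P_{+}$ to move it to the dual factor, and repeat the fractional Leibniz argument at the dual exponents.

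For \eqref{Econv}, the key identity is $\L_{u;h} - \L_{u;\infty} = \be u (\Pih - \P_{+})\bar u = \tfrac{i\be}{2} u(\TT_{h} - \H)\bar u$, combined with the factorisation $\TT_{h} - \H = -\GG_{h}\dx^{-1}$, valid at the Fourier symbol level since $\GG_{h}= (\H-\TT_{h})\dx$. Although $\dx^{-1}$ is singular at the zero frequency, the outer $\bar u$ absorbs it: setting $G(x) = \int_{-\infty}^{x}(\bar u f)(y)\,dy$, which is a bounded primitive of $\bar u f$ (with $\|G\|_{L^{\infty}} \les \|u\|_{L^2}\|f\|_{L^2}$), one has $u(\TT_{h}-\H)(\bar u f) = -u\,\GG_{h}G$, and Lemma~\ref{LEM:GGh} delivers $\|\GG_{h}\|_{L^{p}\to L^{p}}\les 1/h$ uniformly in $1\leq h<\infty$. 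For general $s$, I would combine the Kato-Ponce rule with the fact that $D^{s}$ commutes with $\GG_{h}$ (both are Fourier multipliers), together with a Littlewood-Paley decomposition of $\bar u f$: at dyadic frequency $N$, the multiplier $\GG_{h}\dx^{-1}$ has symbol bounded by $|m_{h}(N)| \les \min(1/(hN), e^{-hN})$, which upon interpolation and summation produces the net $h^{-\theta}$ gain for any small $\theta>0$.

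Finally, for \eqref{Econv2}, I would use the telescoping identity
\begin{align*}
\L_{u;h}^{k} - \L_{u;\infty}^{k} = \sum_{j=0}^{k-1}\L_{u;h}^{k-1-j}(\L_{u;h}-\L_{u;\infty})\L_{u;\infty}^{j},
\end{align*}
which together with the self-adjointness of $\L_{u;h}$ yields
\begin{align*}
E_{k}^{h}(u) - E_{k}^{\infty}(u) = \sum_{j=0}^{k-1}\jb{\L_{u;h}^{k-1-j} u,\, (\L_{u;h}-\L_{u;\infty})\L_{u;\infty}^{j}u}.
\end{align*}
Iterating \eqref{Ebd} (applied to $\L_{u;\infty}$ directly and to $\L_{u;h}$ via the triangle inequality together with \eqref{Econv}) places $\L_{u;\infty}^{j}u\in H^{k/2-j}$ and $\L_{u;h}^{k-1-j}u\in H^{j+1-k/2}$, with all intermediate Sobolev exponents staying in the valid range $[-\tfrac12,\infty)$ since $u\in H^{k/2}$. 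For $j\le (k-1)/2$, \eqref{Econv} at regularity $s=k/2-j-1\ge -\tfrac12$ provides $\|(\L_{u;h}-\L_{u;\infty})\L_{u;\infty}^{j}u\|_{H^{k/2-j-1}}\les h^{-\theta}P(\|u\|_{H^{k/2}})$, which pairs with $\L_{u;h}^{k-1-j}u\in H^{j+1-k/2}$ at conjugate exponents; for $j>(k-1)/2$, I would shift $(\L_{u;h}-\L_{u;\infty})$ to the opposite side of the inner product using its self-adjointness, ensuring the same $s\ge -\tfrac12$ constraint. Summation gives $|E_{k}^{h}(u)-E_{k}^{\infty}(u)| \les h^{-\theta}P(\|u\|_{H^{k/2}})$, which tends to zero as $h\to\infty$. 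The chief obstacle is the second step, where the zero-frequency singularity of $\TT_{h}-\H$ must be absorbed by conjugation with $u$ and $\bar u$ without sacrificing the $h^{-\theta}$ gain; the Sobolev bookkeeping in the third step is delicate but routine once the endpoint $s=-\tfrac12$ of \eqref{Ebd} and \eqref{Econv} is secured.
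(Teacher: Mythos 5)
Your parts for \eqref{Ebd} and \eqref{Econv2} track the paper closely: fractional Leibniz combined with $H^{s+1}\hookrightarrow L^\infty$ for $s>-\tfrac12$, duality at negative $s$, and the telescoping identity with self-adjointness to balance Sobolev exponents are all exactly what is done in the paper (the paper separates even and odd $k$, but your unified sum $\L_{u;h}^{k}-\L_{u;\infty}^{k}=\sum_{j}\L_{u;h}^{k-1-j}(\L_{u;h}-\L_{u;\infty})\L_{u;\infty}^{j}$ is equivalent).

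For \eqref{Econv} you take a genuinely different route. The paper uses the kernel split $\Pih=\tfrac12 i\J_h+\tfrac12(1+i\mathcal K_h)$ and treats the unbounded piece $\J_h$ (symbol $(ih\xi)^{-1}$) on the physical side with a BCD-style low-frequency calculus (the estimates \eqref{DeLinfty}--\eqref{DalJh}), plus a $\P_{\leq h^{-1/2}}/\P_{\geq h^{-1/2}}$ split and Mikhlin for $\mathcal K_h-\H$. You instead factor $\TT_h-\H=-\GG_h\dx^{-1}$ and absorb the $\dx^{-1}$ by taking the explicit primitive $G=\int_{-\infty}^x\bar uf$. That is a nice idea, and your $L^\infty$ bound on $G$ and the $1/h$ from $\GG_h$ do give \eqref{Econv} at $s\leq 0$.

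However, the argument you propose for $s>0$ has a concrete gap. You write that you would Littlewood--Paley decompose $\bar u f$, use the symbol bound $|m_h(N)|\les\min((hN)^{-1},e^{-hN})$ for $\GG_h\dx^{-1}$ on the $N$-shell, and sum. This sum does not converge: $\bar u f\in L^1$ typically has nonzero mean, so the shells $\P_N(\bar u f)$ for $N\to 0$ all carry mass, and Bernstein gives $\|\P_N(\bar u f)\|_{L^2}\les N^{1/2}\|\bar u f\|_{L^1}$, so the dyadic contribution is $\les (hN)^{-1}N^{1/2}\|\bar u f\|_{L^1}\sim h^{-1}N^{-1/2}$, which diverges over $N\le 1$. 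This is exactly the low-frequency singularity that the $\dx^{-1}$ carries, and cannot be fixed by interpolation. The correct route stays in physical space with your primitive $G\in L^\infty$: one needs that $D^\alpha\GG_h$ (not just $\GG_h$) is bounded from $L^\infty$ to $L^\infty$ with an explicit negative power of $h$, which in turn requires establishing that its convolution kernel is in $L^1$ — precisely the type of low-frequency Littlewood--Paley analysis the paper carries out in \eqref{Dalcomp}--\eqref{DeLinfty} and again in \eqref{PbloJh}--\eqref{DalJh}. Without that ingredient, the $s>0$ case of \eqref{Econv} (and hence the Sobolev bookkeeping in your third part, which uses \eqref{Econv} at various regularities) is not closed. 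A secondary but real point: Lemma~\ref{LEM:GGh} only gives $L^p\to L^p$ boundedness for $1<p<\infty$, while $G\in L^\infty\setminus L^p$; the $L^\infty\to L^\infty$ bound must be derived separately from the kernel, and one must also fix the additive constant in the primitive and check that $\GG_h G$ really agrees with $(\H-\TT_h)(\bar u f)$ as defined by the singular integral, since $\GG_h$ does not annihilate constants ($m_h(0)\neq 0$).
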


\begin{proof}
We first establish \eqref{Econv2} assuming \eqref{Ebd} and \eqref{Econv}. Note that \eqref{Ebd} and \eqref{Econv} imply
\begin{align}
\sup_{1\leq h \leq \infty} \| \L_{u;h}\|_{H^{s+1}\to H^{s}} \les 1+\|u\|_{W^{\max(s,0),4}}^{2}+\|u\|_{H^{\max(s,0)}}^{2}. 
\notag
\end{align}
for any $s\geq -\frac 12$.
It follows that 
\begin{align}
\sup_{1\leq h\leq \infty} \| \L_{u;h}^{j} u\|_{H^{s}} \les  (1+\|u\|_{W^{s+j-1,4}}^{2j}+\|u\|_{H^{s+j-1}}^{2j}) \| u\|_{H^{s+j}} \label{laxHsbd}
\end{align} 
for any $j\in \N$ and $s\geq -\frac 12$.  
By a direct computation, we see that $E_{j}^{h}(u) = E_{j}^{\infty}(u)$ for $j=0,1$. Indeed, $E_{0}^{h}(u)=\int |u|^2 dx$ and $E_{1}^{h}(u)$ is the momentum in \eqref{momentum}.
 Thus, to prove \eqref{Econv2}, it suffices to consider $k\geq 2$. Consider first the case when $k\in 2\mathbb{N}$. Then
 \begin{align*}
E_{k}^{h}(u) = \jb{ \L_{u;h}^{k/2}u, \L_{u;h}^{k/2}u},
\end{align*}
so that by \eqref{laxHsbd},
\begin{align*}
|E_{k}^{h}(u)  - E_{k}^{\infty}(u)|  & \les \Big(\|\L_{u;h}^{k/2} u\|_{L^2}+\|\L_{u;\infty}^{k/2} u\|_{L^2} \Big) \|( \L_{u;h}^{k/2} -\L_{u;\infty}^{k/2})u\|_{L^2} \\
& \les \tfrac{1}{h^\ta} (1+\|u\|_{H^{k/2}})^{2k}  \|u\|_{H^{k/2}}^2 \to 0
\end{align*}
as $h\to \infty$. 

Now consider the case when $k\in 2\N+1$ so that we may write $k=2\l+1$ for some $\l\in \N$. Then, we have
\begin{align*}
|E_{k}^{h}(u)  - E_{k}^{\infty}(u)| 
 & \leq \big|\jb{ \L_{u;h}^{\l}u, \L_{u;h} \L_{u;h}^{\l}u} - \jb{ \L_{u;\infty}^{\l}u, \L_{u;\infty} \L_{u;\infty}^{\l}u} \big| \\
 & \leq  \big|\jb{ (\L_{u;h}^{\l}-\L_{u;\infty}^{\l})u, \L_{u;h} \L_{u;h}^{\l}u}\big|+\big|\jb{ \L_{u;\infty}^{\l}u,( \L_{u;h}-\L_{u;\infty}) \L_{u;h}^{\l}u}\big| \\
 &  \hphantom{XX} + \big|\jb{ \L_{u;\infty}^{\l}u, \L_{u;\infty} (\L_{u;h}^{\l}-\L_{u;\infty}^{\l})u}\big|.
 \end{align*}
We just consider the first term since similar estimates will control the remaining two terms. By \eqref{laxHsbd} and \eqref{Econv}, we have
\begin{align*}
\big|\jb{ (\L_{u;h}^{\l}-\L_{u;\infty}^{\l})u, \L_{u;h}^{\l+1}u}\big|  & \les  \| (\L_{u;h}^{\l}-\L_{u;\infty}^{\l})u\|_{H^{\frac 12}}\| \L_{u;h}^{\l+1}u\|_{H^{-\frac 12}}  
\\
&
\les \tfrac{1}{h^\ta} (1+\|u\|_{H^{\l+\frac 12}})^{2k} \|u\|_{H^{\l+\frac 12}}^2
=\tfrac{1}{h^\ta} (1+\|u\|_{H^{\frac k2}})^{2k} \|u\|_{H^{\frac k2}}^2 \to 0  ,
\end{align*}

\noi
as $h\to\infty$. Repeating the argument for the remaining terms, we obtain \eqref{Econv2} for $k\in 2\N+1$ and thus for all $k\in \N$.

It remains to establish \eqref{Ebd} and \eqref{Econv}. We begin with \eqref{Ebd}. By \eqref{Lax}, we have
\begin{align*}
\| \L_{u;\infty} f\|_{H^{s}} \leq \|f\|_{H^{s+1}} + \|u \P_{+}(\cj{u}f)\|_{H^{s}}.
\end{align*}
It thus suffices to estimate the second term on the right-hand side above.  
If $s=0$, we already controlled this term for $f\in H^1$ in the proof of Proposition~\ref{PROP:Lax}.
If $s<0$, we use duality: with $\s=-s$ so $0<\s\leq \frac 12$, by Sobolev embedding, we have
\begin{align}
\begin{split}
\|u \P_{+}(\cj{u}f)\|_{H^{s}} &= \sup_{\|g\|_{L^2}\leq 1} \bigg| \int \jb{\dx}^{-\s}g \cdot u \P_{+}(\cj{u}f) dx \bigg| \\
& \leq  \sup_{\|g\|_{L^2}\leq 1}\| \jb{\dx}^{-\s} g\|_{L^{ \frac{2}{1-2\s} }} \|u\|_{L^4} \|uf \|_{L^{\frac{4}{1+4\s}}} \\
& \les \sup_{\|g\|_{L^2}\leq 1}\|g\|_{L^2} \|u\|_{L^4}^{2} \|f\|_{L^{\frac{1}{\s}}} \\
& \les \|u\|_{L^4}^{2}\|f\|_{H^{\frac 12}}.
\end{split} \label{sneg}
\end{align}
This proves \eqref{Ebd} for $s<0$.

Now we consider the case when $s>0$. By the fractional Leibniz rule, the boundedness of $\P_{+}$ on $L^4(\R)$, and Sobolev embedding, we have 
\begin{align}
\|u \P_{+}(\cj{u}f)\|_{H^{s}} & \les \|u\|_{W^{s,4}} \|\P_{+}(\cj{u}f)\|_{L^4} + \|u\|_{L^4}\| \P_{+}(\cj{u}f)\|_{W^{s,4}}  \les \|u\|_{W^{s,4}}^{2} \|f\|_{H^{s+1}}. \label{Ebd3}
\end{align}
This completes the proof of \eqref{Ebd}.

We now turn to \eqref{Econv}.
Using \eqref{Lax}, \eqref{Pihdecomp}, and the triangle inequality, we have
\begin{align}
\| \L_{u;h}-\L_{u;\infty}\|_{H^{s+1}\to H^s} &\leq \tfrac{1}{2}\| u(\mathcal{K}_{h}-\H)\cj{u}\|_{H^{s+1}\to H^s} + \tfrac{1}{2}\|u \mathcal{J}_{h} \cj{u}\|_{H^{s+1}\to H^s}. \label{Econv3}
\end{align}
We consider the first term on the right-hand side of \eqref{Econv3}, which we split further as
\begin{align*}
\|u(\mathcal{K}_{h}-\H)\P_{\leq \frac{1}{h^{1/2}}} \cj{u}\|_{H^{s+1}\to H^s}  + \|u(\mathcal{K}_{h}-\H)\P_{\geq  \frac{1}{h^{1/2}}}\cj{u}\|_{H^{s+1}\to H^s} = : \I + \II.
\end{align*}
When $s<0$, following the computation in \eqref{sneg} and using Bernstein's inequality and Lemma~\ref{LEM:Kh}, we have
\begin{align*}
\|u(\mathcal{K}_{h}-\H)\P_{\leq \frac{1}{h^{1/2}}} [\cj{u} f]\|_{H^s}  & \les \|u\|_{L^4} \| \P_{\leq \frac{1}{h^{1/2}}} (\cj{u}f)\|_{L^{\frac{4}{1-4s}}} \les h^{-\frac{3+4s}{8}}\|u\|_{L^{4}} \|u\|_{L^2}\|f\|_{L^2}.
\end{align*}
If $s\geq 0$, additionally using the fractional Leibniz rule, we have
\begin{align*}
\| u(\mathcal{K}_{h}-\H)\P_{\leq \frac{1}{h^{1/2}}} [\cj{u} f]\|_{H^s} &  \les \|u\|_{W^{s,4}} \| (\mathcal{K}_{h}-\H)\P_{\leq \frac{1}{h^{1/2}}}[\cj{u}f]\|_{W^{s,4}} \\
& \les  \|u\|_{W^{s,4}}\|\P_{\leq \frac{1}{h^{1/2}}}[\cj{u}f]\|_{L^{4}} \\
& \les h^{-\frac 18}\|u\|_{W^{s,4}} \|u\|_{L^{4}}\|f\|_{L^4} \\
&\les h^{-\frac 18}\|u\|_{W^{s,4}}^{2} \|f\|_{H^{s+1}}.
\end{align*}

\noi 
Combining these estimates, we have shown that there exists $\ta>0$ such that
\begin{align}
\I \les   h^{-\ta} ( \|u\|_{L^4}\|u\|_{L^2}\ind_{s<0}+ \|u\|_{W^{s,4}}^{2} \ind_{s\geq 0}) 
.
\label{Ibd}
\end{align}
Now we consider $\II$, for which we claim that
\begin{align}
\II \les h^{-\ta} \| u\|_{W^{\max(s,0),4}}^{2}. \label{IIbd}
\end{align}
By following the computations in \eqref{sneg} and \eqref{Ebd3}, it is enough to show that
\begin{align}
\| (\mathcal{K}_{h}-\H)\P_{\geq  \frac{1}{h^{1/2}}}\|_{L^{p}\to L^p} \les h^{-\frac 12}.
\label{KhHconv}
\end{align}
for any $\frac43 \leq p\leq 4$.
Let 
\begin{align*}
m(\xi) = \big[ {-i}[ \coth(h\xi) - \sgn(h\xi)] +\tfrac{i}{h\xi}\big] (1-\eta_{h^{-1/2}}(\xi))
\end{align*}
which is the Fourier multiplier associated with $(\mathcal{K}_{h}-\H)\P_{\geq  \frac{1}{h^{1/2}}}$ 
Then,  we have $|\dx^{\al} m(\xi)| \les h^{-\frac 12}|\xi|^{-\al}$ for any $\al \in \N$ and so \eqref{KhHconv} follows from the Mikhlin-H\"{o}rmander theorem \cite[Theorem 6.2.7]{Grafakos}.

It remains to control the second term on the right-hand side of \eqref{Econv3}. We define 
\begin{align}
\Pblo \mathcal{J}_{h} f :  =\mathcal{J}_{h} f -\Pbhi  \mathcal{J}_{h} f  . 
\label{PbloJh}
\end{align}
The operator $\mathcal{J}_{h}$ is a Fourier multiplier operator with multiplier $(ih \xi)^{-1}$, which is singular at the origin. For $F\in L^1(\R)$ and $\al>0$,  $D^{\al} \J_{h} F$ is a tempered distribution in the sense of \cite[Definition 1.26]{BCD} and thus the inhomogeneous Littlewood-Paley decomposition converges in the sense of tempered distributions: 
\begin{align*}
\sum_k \dot{\P_{k}}[ D^{\al} \J_{h} F ] = D^{\al} \J_{h} F.
\end{align*}
Similar to \eqref{Dalcomp}, we then have
\begin{align*}
D^{\al} \Pblo \J_{h} F= \sum_{k\leq 2} \dot{\P_{k}}[ D^{\al} \Pblo \J_{h} F ]
\end{align*}
and thus using \eqref{Jh} and arguing as in \eqref{DeLinfty}, we have 
\begin{align}
\| D^{\al} \Pblo \J_{h} F\|_{L^{\infty}_{x}}  \les \sum_{k<2} 2^{k\al} \| m_{1,\al}\|_{L^1} \|\J_{h}F\|_{L^{\infty}} \les \tfrac{1}{h} \|F\|_{L^1}. \label{DalJh}
\end{align}

We now write
\begin{align*}
\| u \mathcal{J}_{h} \cj{u}\|_{H^{s+1}\to H^s} \leq \| u \Pblo \mathcal{J}_{h} \cj{u}\|_{H^{s+1}\to H^s}+\| u \Pbhi\mathcal{J}_{h} \cj{u}\|_{H^{s+1}\to H^s}.
\end{align*} 
Consider first the contribution from $\Pbhi$. If $s\leq 0$, we have 
\begin{align*}
\| u\Pbhi \J_{h} (\cj{u}f)\|_{H^{s}} \leq \| u\Pbhi \J_{h} (\cj{u}f)\|_{L^2} 
&\leq \|u\|_{L^2} \|\Pbhi \J_{h} (\cj{u}f)\|_{L^{\infty}} \\
& \les \tfrac{1}{h}  \|u\|_{L^2} \| \cj{u}f\|_{L^{2-}} \\
&\les \tfrac{1}{h}\| u\|_{L^2}^2 \|f\|_{H^{\frac 12}},
\end{align*}

\noi 
which is sufficient. If $s>0$, then we use the fractional Leibniz rule as in \eqref{Ebd3} and simply note that $\| \Pbhi \mathcal{J}_{h}\|_{L^4 \to L^4} \les \frac 1h$. This implies the bound 
\begin{align*}
\| u \Pbhi\mathcal{J}_{h} \cj{u}\|_{H^{s+1}\to H^s} \les \tfrac{1}{h} \| u\|_{W^{s,4}}^2.
\end{align*}
Next, for the contribution from $\Pblo \mathcal{J}_{h}$, we also consider the cases $s\leq 0$ and $s>0$. If $s\leq 0$, we use the physical side formula \eqref{Jh}, the definition \eqref{PbloJh},  Bernstein's inequality, and \eqref{JLinfty}, to obtain
\begin{align}
\| u\Pblo \J_{h} (\cj{u}f)\|_{H^{s}} &\leq \| u\Pblo \J_{h} (\cj{u}f)\|_{L^2}  \notag \\
& \les  \| u\|_{L^2} \| \Pblo \J_{h} (\cj{u}f)\|_{L^{\infty}} \notag \\
& \les  \| u\|_{L^2} \big( \tfrac{1}{h} \|u\|_{L^2}\|f\|_{L^2} + \| \Pbhi \J_h (\cj{u}f)\|_{L^{\infty}} \big) \notag \\
& \les \tfrac{1}{h} \| u\|_{L^2}^2 \|f\|_{H^{\frac 12}} . \label{Jh1}
\end{align}

\noi
If $s>0$, then by the fractional Leibniz rule, similar computations as in \eqref{Jh1} and using \eqref{DalJh}, we have
\begin{align*}
\| D^{s} [ u \Pblo \J_{h}( \cj{u}f)]\|_{L^2} & \les \| D^{s}u\|_{L^2} \|\Pblo \J_{h}( \cj{u}f)\|_{L^{\infty}} + \|u\|_{L^2} \|D^{s} \Pblo \J_{h}( \cj{u}f)\|_{L^{\infty}}  \les \tfrac{1}{h}\|u\|_{H^{s}}^{2} \|f\|_{L^2}.
\end{align*} 
Thus, we have shown that 
\begin{align}
\| u \mathcal{J}_{h}\cj{u}\|_{H^{s+1}\to H^s} \les \tfrac{1}{h} (  \|u\|_{W^{\max(s,0),4}}^{2}+\|u\|_{H^{\max(s,0)}}^{2}).
\notag
\end{align}
Combining this with \eqref{Econv3}, \eqref{Ibd}, and \eqref{IIbd} then proves \eqref{Econv}.
\end{proof}

\begin{ackno}\rm 
A.C.~was supported by support CNRS-INSMI through a grant “PEPS Jeunes chercheurs et jeunes chercheuses 2025”.
J.F. was partially supported by the ARC project FT230100588.
T.L. was supported by an AMS-Simons Travel Grant.
\end{ackno}


\begin{thebibliography}{99}

\bibitem{Abanov}
A.~G. Abanov, E. Bettelheim, P.~B. Wiegmann, {\it Integrable hydrodynamics of Calogero-Sutherland model: bidirectional Benjamin-Ono equation}, J. Phys. A {42} (2009), no.~13, 135201, 24 pp.


\bibitem{Rana1}
R. Badreddine, 
{\it On the global well-posedness of the Calogero-Sutherland derivative nonlinear Schrödinger equation}, 
Pure Appl. Anal. {6} (2024), no. 2, 379--414.

\bibitem{Rana2}
R. Badreddine,
{\it Zero-dispersion limit of the Calogero-Moser derivative NLS equation}, 
SIAM J. Math. Anal. 56 (2024), no. 6, 7228--7249.

\bibitem{Rana3}
R. Badreddine, 
{\it Traveling waves and finite gap potentials for the Calogero-Sutherland derivative nonlinear Schrödinger equation},
Ann. Inst. H. Poincaré C Anal. Non Linéaire 42 (2025), no. 4, 1037--1092.

\bibitem{BCD}
H. Bahouri, J.-Y. Chemin, R. Danchin, 
{\it Fourier analysis and nonlinear partial differential equations},
Grundlehren der Mathematischen Wissenschaften [Fundamental Principles of Mathematical Sciences], 343.
Springer, Heidelberg, 2011, xvi+523 pp.

\bibitem{BdMS}
V. Barros, R.~P. de Moura, G. Santos,
{\it Local well-posedness for the nonlocal derivative nonlinear Schrödinger equation in Besov spaces}, 
Nonlinear Anal. 187 (2019), 320--338.


\bibitem{BO93}
J.~Bourgain, 
{\it Fourier transform restriction phenomena for certain lattice subsets and applications to nonlinear evolution equations, I: Schr\"odinger equations,} Geom. Funct. Anal. 3 (1993), 107--156.

\bibitem{BO98}
J. Bourgain,
{\it Refinements of Strichartz’ inequality and applications to 2D-NLS
 with critical nonlinearity},
Internat. Math. Res. Notices 1998, no. 5, 253–-283.

\bibitem{BL}
J.~Bourgain, D.~Li, 
{\it On an endpoint Kato-Ponce inequality},
 Differential Integral Equations 27 (2014), no. 11-12, 1037--1072.




\bibitem{CFL1}
A.~Chapouto, J.~Forlano, T.~Laurens, 
{\it  Well-posedness and convergence for the periodic nonlocal nonlinear Schr\"{o}dinger equation},
in preparation.

\bibitem{CFLOP-2}
A.~Chapouto, J.~Forlano, G.~Li, T.~Oh, D.~Pilod,
{\it  Intermediate long wave equation in negative Sobolev spaces},
Proc. Amer. Math. Soc. Ser. B 11 (2024), 452--468.



\bibitem{CLOP}
A.~Chapouto, G.~Li, T.~Oh, D.~Pilod,  { \it Deep-water limit of the intermediate long wave equation in $L^2$}, Math. Res. Lett. 31 (2024), no. 6, 1655--1692.


\bibitem{ChenPel}
J.~Chen, D.~Pelinovsky,
{\it Traveling periodic waves and breathers in the nonlocal derivative NLS equation},
Nonlinearity 38 (2025), no. 7, Paper No. 075016, 39 pp.

\bibitem{Chen}
X.~Chen,
{\it The defocusing Calogero–Moser derivative nonlinear Schr\"odinger equation with a non-vanishing condition at infinity},
arXiv:2502.17968 [math.AP] (2025).

\bibitem{CW}
M.~Christ, M.~Weinstein,
{\it Dispersion of small amplitude solutions of the generalized Korteweg-de Vries equation},
J. Funct. Anal.
100 (1991), 87--109.


\bibitem{demoura1}
R.~P. de Moura, 
{\it Well-posedness for the nonlocal nonlinear Schrödinger equation},
J. Math. Anal. Appl. 326 (2007), no. 2, 1254--1267.

\bibitem{PMP}
R.~P. de Moura, D. Pilod,
{\it Local well-posedness for the nonlocal nonlinear Schrödinger equation below the energy space},
Adv. Differential Equations 15 (2010), no. 9-10, 925--952.

\bibitem{FLZ}
J. Forlano, G. Li, T. Zhao, 
{\it Unconditional deep-water limit of the intermediate long wave equation in low-regularity}, 
NoDEA Nonlinear Differential Equations Appl. 32 (2025), no. 2, Paper No. 28, 31 pp.

\bibitem{FrankRead}
R.~L.~Frank, L.~Read,
{\it 
Jost solutions and direct scattering for the continuum Calogero--Moser equation},
arXiv: 2510.11403 [math.AP] (2025).

\bibitem{Gerard}
P. Gérard, 
{\it An explicit formula for the Benjamin-Ono equation}, 
Tunis. J. Math. 5 (2023), no. 3, 593–603.

\bibitem{GG}
P. Gérard, S. Grellier, 
{\it A survey of the Szeg\"{o} equation},
Sci. China, Math. 62 (2019), no. 6, 1087--1100.


 \bibitem{GL}
 P. Gérard, E. Lenzmann, 
{\it The Calogero-Moser derivative nonlinear Schr\"{o}dinger equation},
 Comm. Pure Appl. Math. 77 (2024), no. 10, 4008--4062.
 
  \bibitem{Grafakos}
 L. Grafakos, 
{\it Classical Fourier analysis. 3rd ed. },
Grad. Texts in Math., 249
Springer, New York, 2014, xviii+638 pp.
ISBN: 978-1-4939-1193-6; 978-1-4939-1194-3. 


\bibitem{GO}
L.~Grafakos, S.~Oh, 
{\it The Kato-Ponce inequality},
 Comm. Partial Differential Equations 39 (2014), no. 6, 1128--1157.




\bibitem{GLM}
Z. Guo, Y. Lin, L. Molinet,
{\it Well-posedness in energy space for the periodic modified Benjamin-Ono equation}, 
J. Differential Equations 256 (2014), no. 8, 2778--2806.


\bibitem{HGKNV}
B. Harrop-Griffiths, R. Killip, M. Ntekoume, M. Vi\c{s}an, 
{\it Global well-posedness for the derivative nonlinear Schrödinger equation in $L^2(\R)$},
to appear in J. Eur. Math. Soc.

\bibitem{HoganKowalski}
J.~Hogan, M.~Kowalski, 
{\it Turbulent threshold for continuum Calogero-Moser models},
Pure Appl. Anal. 6 (2024), no.~4, 941--954.

\bibitem{IS}
M.~Ifrim, J.-C.~Saut,
{\it The lifespan of small data solutions for Intermediate Long Wave equation (ILW)},
Comm. Partial Differential Equations 50 (2025), no. 3, 258--300.



\bibitem{JeongKim}
U.~Jeong, T.~Kim,
{\it Quantized blow-up dynamics for Calogero-Moser derivative nonlinear Schr\"odinger equation}
arXiv:2412.12518 [math.AP] (2024).


\bibitem{KLV}
R.~Killip, T.~Laurens, M.~Vi\c{s}an, 
{\it Scaling-critical well-posedness for continuum Calogero-Moser models on the line}, 
Commun.~Am.~Math.~Soc.~5 (2025), 284--320.



\bibitem{KKK2}
K. Kim, T. Kim, S. Kwon, 
{\it Construction of smooth chiral finite-time blow-up solutions to Calogero–Moser derivative nonlinear Schrödinger equation}, 
to appear in Mem. Amer. Math. Soc.


\bibitem{KKK1}
T. Kim, S. Kwon, {\it Soliton resolution for Calogero–Moser derivative nonlinear Schrödinger equation},
arXiv:2408.12843 [math.AP] (2024).





\bibitem{Kishi-1}
N. Kishimoto, Y. Tsutsumi, 
{\it Well-posedness of the Cauchy problem for the kinetic DNLS on $\T$},
J. Hyperbolic Differ. Equ. 20 (2023), no. 1, 27--75.

\bibitem{Kishi1}
N. Kishimoto, Y. Tsutsumi,
{\it Low regularity a priori estimate for KDNLS via the short-time Fourier restriction method},
Adv. Contin. Discrete Models 2023, Paper No. 10, 29 pp.


\bibitem{Kishi0}
N. Kishimoto, Y. Tsutsumi, 
{\it Gauge transformation for the kinetic derivative nonlinear Schrödinger equation on the torus},
J. Differential Equations 453 (2026), part 1, 113792.



\bibitem{Kishi2}
N. Kishimoto, K. Lee, 
{\it Local and global well-posedness for the kinetic derivative NLS on $\R$},
arXiv:2507.20271 [math.AP] (2025).

\bibitem{Kishi3}
N. Kishimoto, K. Lee, 
{\it Global solution and asymptotic behavior for the kinetic derivative NLS on $\R$},
 arXiv:2508.08647 [math.AP] (2025).


\bibitem{Gli}
G.~Li, {\it Deep-water and shallow-water limits of the intermediate long wave equation}, 
Nonlinearity 37 (2024), no.7, Paper No. 075001, 44 pp.


\bibitem{Matsuno1}
Y.~Matsuno, 
{\it Multiperiodic and multisoliton solutions of a nonlocal nonlinear Schr\"odinger equation for envelope waves}, 
Phys. Lett. A 278 (2000), no.~1-2, 53--58.


\bibitem{Matsuno2}
Y.~Matsuno, 
{\it Linear stability of multiple dark solitary wave solutions of a nonlocal nonlinear Schr\"odinger equation for envelope waves}, 
Phys. Lett. A 285 (2001), no.~5-6, 286--292.


\bibitem{Matsuno3}
Y. Matsuno, 
{\it Calogero-Moser-Sutherland dynamical systems associated with nonlocal nonlinear Schr\"{o}dinger equation for envelope waves}, 
J. Phys. Soc. Japan 71 (2002), no.6, 1415--1418.



\bibitem{Matsuno5}
Y.~Matsuno, 
{\it Exactly solvable eigenvalue problems for a nonlocal nonlinear Schr\"odinger equation}, 
Inverse Problems 18 (2002), no.~4, 1101--1125.


\bibitem{Matsuno4}
Y.~Matsuno, 
{\it Asymptotic solutions of the nonlocal nonlinear Schr\"odinger equation in the limit of small dispersion}, 
Phys. Lett. A 309 (2003), no.~1-2, 83--89.





\bibitem{Matsuno6}
Y.~Matsuno, 
{\it A Cauchy problem for the nonlocal nonlinear Schr\"odinger equation}, 
Inverse Problems 20 (2004), no.~2, 437--445.



\bibitem{Matsuno7}
Y.~Matsuno, 
{\it New representations of multiperiodic and multisoliton solutions for a class of nonlocal soliton equations}, 
J. Phys. Soc. Japan 73 (2004), no.~12, 3285--3293.

\bibitem{Matsuno8}
Y.~Matsuno, 
{\it Multiphase solutions and their reductions for a nonlocal nonlinear Schr\"odinger equation with focusing nonlinearity}, 
Stud. Appl. Math. {151} (2023), no.~3, 883--922.


\bibitem{MP}
L.~Molinet, D.~Pilod, 
{\it The Cauchy problem for the Benjamin-Ono equation in $L^2$ revisited},
 Anal. PDE 5 (2012), no. 2, 365--395. 
 
 \bibitem{MP2}
L.~Molinet, D.~Pilod, 
{\it Global well-posedness and limit behavior for a higher-order Benjamin-Ono equation}, 
Comm. Partial Differential Equations 37 (2012), no. 11, 2050--2080.



\bibitem{OT}
T. Ozawa, Y. Tsutsumi,
{\it Space-time estimates for null gauge forms and nonlinear Schrödinger equations}, 
 Differential Integral Equations 11 (1998), no. 2, 201--222.

 

\bibitem{PdM}
 J.~A. Pava, R.~P. de Moura, 
{\it Ill-posedness and the nonexistence of standing-waves solutions for the nonlocal nonlinear Schrödinger equation},
Differential Integral Equations 20 (2007), no. 10, 1107--1130.


 \bibitem{Pel1}
 D.~E. Pelinovsky, {\it Intermediate nonlinear Schr\"{o}dinger equation for internal waves in
a fluid of finite depth}, Phys. Lett. A, 197 (1995), 401--406.


\bibitem{Pel3}
D.~E. Pelinovsky, R.~H.~J. Grimshaw, {\it A spectral transform for the intermediate
nonlinear Schr\"{o}dinger equation}, 
J. Math. Phys., 36 (1995), 4203--4219.


\bibitem{Pel2}
D. E. Pelinovsky, R.~H.~J. Grimshaw, {\it Nonlocal Models for envelope waves in a
stratified fluid}, 
Stud. Appl. Math. 97 (1996), no. 4, 369--391.

 


\bibitem{ReedSimonVol2}
M.~C. Reed, B. Simon, 
{\it Methods of modern mathematical physics. II. Fourier analysis, self-adjointness}, 
Academic Press [Harcourt Brace Jovanovich, Publishers], New York-London, 1975, xv+361 pp.


\bibitem{Simon}
B.~Simon,
{\it Loewner's theorem on monotone matrix functions}, 
Grundlehren Math. Wiss., 354[Fundamental Principles of Mathematical Sciences]
Springer, Cham, 2019, xi+459 pp.


\bibitem{Takaoka}
H. Takaoka, 
{\it Well-posedness for the one-dimensional nonlinear Schr\"{o}dinger equation
with the derivative nonlinearity}, 
Adv. Differential Equations 4 (1999), no. 4, 561--580.


\bibitem{TAO04}
T.~Tao, 
{\it Global well-posedness of the Benjamin-Ono equation in $H^1(\mathbf R)$}, 
J. Hyperbolic Differ. Equ. 1 (2004), no. 1, 27--49. 

\bibitem{TAObook}
T.~Tao, 
\textit{Nonlinear dispersive equations.
Local and global analysis}, 
CBMS Reg. Conf. Ser. Math., 106.
Published for the Conference Board of the Mathematical Sciences, Washington, DC; by the American Mathematical Society, Providence, RI, 2006. xvi+373 pp.




\bibitem{Tutiya}
Y.~Tutiya, 
{\it Bright $N$-solitons for the intermediate nonlinear Schr\"odinger equation}, 
J. Nonlinear Math. Phys. 16 (2009), no.~1, 7--23.





\end{thebibliography}
\end{document}